\DeclareMathAlphabet{\mathpzc}{OT1}{pzc}{m}{it}
\DeclareMathOperator{\gr}{gr}
\DeclareMathOperator{\Hom}{Hom}
\DeclareMathOperator{\End}{End}
\DeclareMathOperator{\Der}{Der}
\DeclareMathOperator{\Sp}{Sp}
\DeclareMathOperator{\Sym}{Sym}
\DeclareMathOperator{\rig}{rig}
\DeclareMathOperator{\rk}{rk}
\DeclareMathOperator{\Coh}{Coh}
\DeclareMathOperator{\Loc}{Loc}
\DeclareMathOperator{\Dp}{dp}
\DeclareMathOperator{\id}{id}
\DeclareMathOperator{\Ab}{Ab}
\DeclareMathOperator{\Supp}{Supp}
\DeclareMathOperator{\op}{op}
\begin{document}
\newtheorem{MainThm}{Theorem}
\renewcommand{\theMainThm}{\Alph{MainThm}}
\theoremstyle{definition}
\newtheorem*{examps}{Examples}
\newtheorem*{defn}{Definition}
\newtheorem*{notn}{Notation}
\theoremstyle{remark}
\newtheorem*{rmk}{Remark}
\newtheorem*{rmks}{Remarks}
\theoremstyle{plain}
\newtheorem*{lem}{Lemma}
\newtheorem*{prop}{Proposition}
\newtheorem*{thm}{Theorem}
\newtheorem*{example}{Example}
\newtheorem*{examples}{Examples}
\newtheorem*{cor}{Corollary}
\newtheorem*{conj}{Conjecture}
\newtheorem*{hyp}{Hypothesis}
\newtheorem*{thrm}{Theorem}
\newtheorem*{quest}{Question}
\theoremstyle{remark}

\newcommand{\Zp}{{\mathbb{Z}_p}}
\newcommand{\Qp}{{\mathbb{Q}_p}}
\newcommand{\Fp}{{\mathbb{F}_p}}
\newcommand{\A}{\mathcal{A}}
\newcommand{\B}{\mathcal{B}}
\newcommand{\C}{\mathcal{C}}
\newcommand{\D}{\mathcal{D}}
\newcommand{\E}{\mathcal{E}}
\newcommand{\F}{\mathcal{F}}
\newcommand{\G}{\mathcal{G}}
\newcommand{\I}{\mathcal{I}}
\newcommand{\J}{\mathcal{J}}
\renewcommand{\L}{\mathcal{L}}
\newcommand{\sL}{\mathscr{L}}
\newcommand{\M}{\mathcal{M}}
\newcommand{\sM}{\mathscr{M}}
\newcommand{\N}{\mathcal{N}}
\newcommand{\sN}{\mathscr{N}}
\renewcommand{\O}{\mathcal{O}}
\newcommand{\cP}{\mathcal{P}}
\newcommand{\R}{\mathcal{R}}
\newcommand{\cS}{\mathcal{S}}
\newcommand{\T}{\mathcal{T}}
\newcommand{\U}{\mathcal{U}}
\newcommand{\sU}{\mathscr{U}}
\newcommand{\sV}{\mathscr{V}}
\newcommand{\V}{\mathcal{V}}
\newcommand{\W}{\mathcal{W}}
\newcommand{\sW}{\mathscr{W}}
\newcommand{\X}{\mathcal{X}}
\newcommand{\Y}{\mathcal{Y}}
\newcommand{\Z}{\mathcal{Z}}
\newcommand{\PFS}{\mathbf{PreFS}}
\newcommand{\h}[1]{\widehat{#1}}
\newcommand{\hA}{\h{A}}
\newcommand{\hK}[1]{\h{#1_K}}
\newcommand{\hsULK}{\hK{\sU(\L)}}
\newcommand{\hsUFK}{\hK{\sU(\F)}}
\newcommand{\hsULnK}{\hK{\sU(\pi^n\L)}}
\newcommand{\hn}[1]{\h{#1_n}}
\newcommand{\hnK}[1]{\h{#1_{n,K}}}
\newcommand{\w}[1]{\wideparen{#1}}
\newcommand{\wK}[1]{\wideparen{#1_K}}
\newcommand{\invlim}{\varprojlim}
\newcommand{\dirlim}{\varinjlim}
\newcommand{\fr}[1]{\mathfrak{{#1}}}
\newcommand{\LRU}[1]{\sU(\mathscr{#1})}
\newcommand{\et}{\acute et}

\newcommand{\ts}[1]{\texorpdfstring{$#1$}{}}
\newcommand{\st}{\mid}
\newcommand{\be}{\begin{enumerate}[{(}a{)}]}
\newcommand{\ee}{\end{enumerate}}
\newcommand{\qmb}[1]{\quad\mbox{#1}\quad}
\let\le=\leqslant  \let\leq=\leqslant
\let\ge=\geqslant  \let\geq=\geqslant

\title{$\w{\D}$-modules on rigid analytic spaces II: Kashiwara's equivalence}
\author{Konstantin Ardakov}
\address{Mathematical Institute\\University of Oxford\\Oxford OX2 6GG}
\email{ardakov@maths.ox.ac.uk}

\author{Simon Wadsley}
\address{Homerton College, Cambridge, CB2 8PH}
\email{S.J.Wadsley@dpmms.cam.ac.uk}

\thanks{The first author was supported by EPSRC grant EP/L005190/1. \\ \hspace*{0.33cm} To appear in the Journal of Algebraic Geometry.}
\subjclass[2010]{14G22; 32C38}
\begin{abstract} Let $X$ be a smooth rigid analytic space. We prove that the category of coadmissible $\w\D_X$-modules supported on a closed smooth subvariety $Y$ of $X$ is naturally equivalent to the category of coadmissible $\w\D_Y$-modules, and use this result to construct a large family of pairwise non-isomorphic simple coadmissible $\w\D_X$-modules.
\end{abstract}
\maketitle
\setcounter{tocdepth}{1}  
\tableofcontents
\section{Introduction}
\subsection{Modules over rings of infinite-order differential operators}
In our earlier work \cite{DCapOne} we introduced a sheaf of infinite-order differential operators $\w{\D}$ on every smooth rigid analytic space $X$. We regard $\w\D$ as a ``rigid analytic quantisation'' of the cotangent bundle of $X$: by construction, the set of sections of $\w\D$ over any sufficiently small admissible open subset of $X$ is naturally in bijection with the algebra of rigid analytic functions on the entire cotangent bundle of that open subset.

There are two main motivations for studying $\w\D$-modules. The sheaf $\D^\infty$ of infinite-order differential operators on complex manifolds was introduced by Sato's school in the early 70s as part of their microlocal approach to the theory linear partial differential equations  \cite{SKK}. It plays an important role in the classical Riemann-Hilbert correspondence \cite{KashRH1984}, \cite{MebkhRH}: regular holonomic $\D$-modules are derived equivalent not only to constructible sheaves, but also to holonomic $\D^\infty$-modules \cite{KK3}, \cite{MebkhBiDual}. The reader can find a modern treatment of the sheaf $\D^\infty$ as well as its role in the Riemann-Hilbert correspondence in Chapters III and VIII of Bjork's book \cite{BjorkAnDmod}. It has recently become apparent that our sheaf $\w\D$ in the $p$-adic setting is analogous in more than one way to the complex analytic $\D^\infty$: for example, in a forthcoming publication \cite{ABB} it will be shown that if the ground field $K$ is sufficiently large then $\w\D$ is naturally isomorphic to the sheaf of continuous $K$-linear endomorphisms of the structure sheaf $\O$. This $p$-adic analogue of a result of Ishimura \cite{Ishimura} encourages us to hope that some $p$-adic analogue of the Riemann-Hilbert correspondence can be established for an appropriate derived category of $\w\D$-modules. A more modest goal would be to establish a version of the Biduality/Reconstruction Theorem of \cite{KK3} and \cite{MebkhBiDual}, perhaps following the approach of \cite{ProsSch} and \cite{BBK}.

However, our main motivation for introducing and studying $\w\D$ on rigid analytic spaces is the belief that a detailed understanding of coadmissible $\w\D$-modules will lead to significant progress in the theory of \emph{locally analytic representations} of $p$-adic analytic groups. The category of admissible locally analytic representations of a $p$-adic analytic group $G$ is by definition anti-equivalent to the category of coadmissible modules over the locally analytic distribution algebra $D(G,K)$ of $G$. This distribution algebra naturally contains the Arens--Michael envelope $\w{U(\fr{g})}$ of the universal enveloping algebra of the Lie algebra $\mathfrak{g}$ of $G$ as the algebra of locally analytic distributions on $G$ supported at the identity. In \cite{DCapOne}, we introduced the category of \emph{coadmissible $\w{\D}$-modules} on $X$ and showed that it shares many of the features of the classical category of coherent $\D$-modules on a complex analytic manifold. In future work, we will establish an analogue of the Beilinson-Bernstein Localisation Theorem for coadmissible $\w{U(\fr{g})}$-modules by relating them to coadmissible $\w\D$-modules on an appropriate rigid analytic flag variety; the main results of the present paper, for example, Theorem C below, will then be used to give a \emph{geometric construction} of a large class of new examples of irreducible coadmissible $\w{U(\fr{g})}$-modules. 

\subsection{Main results}
An early fundamental result in the classical theory of $\D$-modules due to Kashiwara says that if $Y\to X$ is a closed embedding of smooth algebraic varieties then there is a natural equivalence of categories between the category of (coherent) $\D$-modules on $X$ that are supported on $Y$ and the category of (coherent) $\D$-modules on $Y$. In this paper we prove a version of this as follows.

\begin{MainThm}\label{Kashiwara}  Let $X$ be a smooth rigid analytic variety over a complete discretely valued field $K$ of characteristic zero. Let $Y$ be a smooth closed analytic subset of $X$. There is a natural equivalence of categories 
\[\left\{ 
				\begin{array}{c} 
					coadmissible\\
					\w\D_Y-\hspace{-0.1cm}modules\hspace{0.1cm}
				\end{array}
\right\} \cong \left\{
				\begin{array}{c}
				 coadmissible\hspace{0.1cm}\w\D_X\hspace{-0.1cm}-\hspace{-0.1cm}modules\\
				 supported \hspace{0.1cm} on \hspace{0.1cm} Y
				\end{array}
\right\}.\]
\end{MainThm}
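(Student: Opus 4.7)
The plan is to adapt the classical proof of Kashiwara's equivalence to the rigid analytic Fréchet–Stein setting of \cite{DCapOne}, with the principal new feature being the careful handling of the infinite-order differential operators in the normal directions to $Y$. Since the statement is local on $X$ and the category of co-admissible modules behaves well under étale localization, I would first reduce to the case of an affinoid $X$ admitting étale coordinates $x_1,\ldots,x_n$ such that $Y$ is cut out by $x_1,\ldots,x_d$; the corresponding dual vector fields then split as $\partial_1,\ldots,\partial_d$ transverse to $Y$ and $\partial_{d+1},\ldots,\partial_n$ tangent to $Y$.

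In this local setting I would construct the two candidate functors. The direct image $i_+$ sends a co-admissible $\w\D_Y$-module $\N$ to the $\w\D_X$-module whose sections are convergent formal series $\sum_{\alpha\in\mathbb{N}^d} n_\alpha\,\partial^\alpha$ with $n_\alpha\in\N$; the convergence conditions should be dictated level by level by the Banach algebra presentations $\hsULnK$ of $\w\D_X$ associated to suitable Lie lattices $\L\subset\T_X$, and the $\w\D_X$-action is determined by the commutation relations $[\partial_i,x_j]=\delta_{ij}$ together with the requirement that each $x_i$ annihilate the constant term. In the opposite direction, given a co-admissible $\w\D_X$-module $\M$ supported on $Y$, I would take $\M^Y$ to be the subsheaf of local sections annihilated by the ideal $\I_Y$. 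The normalizer of $\I_Y$ inside $\w\D_X$ acts on $\M^Y$ and surjects modulo $\I_Y\w\D_X$ onto $\w\D_Y$, giving $\M^Y$ a natural co-admissible $\w\D_Y$-module structure.

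The main obstacle, which I expect to absorb most of the work, is showing that the counit $i_+(\M^Y)\to\M$ is an isomorphism. The classical argument in the coherent algebraic setting exploits the fact that any local section $m$ of a coherent $\D_X$-module supported on $Y$ is annihilated by some finite power $\I_Y^N$, producing a \emph{finite} normal expansion $m=\sum_{|\alpha|<N}\partial^\alpha m_\alpha$ with $m_\alpha\in\M^Y$. Co-admissible modules generally fail to have this finite-annihilation property, and the expansion will be a genuine convergent infinite series. The key technical input will therefore be the following convergence statement: for every section $m$ of $\M$ the coefficients of its formal normal expansion lie in $\M^Y$ and the partial sums $\sum_{|\alpha|\le N}\partial^\alpha m_\alpha$ converge to $m$ in the Fréchet topology. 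I would approach this by fixing a compatible family of lattices $\M_n$ realising $\M$ as a co-admissible system, exploiting the support hypothesis to show that the action of each $x_i$ is in an appropriate sense topologically nilpotent on the Banach constituents, and carefully estimating coefficient norms to guarantee convergence of the expansion at each level of the inverse system.

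Finally, once the local equivalence has been established, I would verify that the two functors admit coordinate-free intrinsic descriptions — $(-)^Y$ as the $\I_Y$-torsion functor and $i_+$ as its left adjoint on the subcategory of modules supported on $Y$ — so that the local equivalences patch to the global equivalence of categories claimed in Theorem~\ref{Kashiwara}.
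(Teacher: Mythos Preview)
Your proposal is correct and follows essentially the same strategy as the paper: reduce to a local model with an $I$-standard basis, define the adjoint pair via the $\I_Y$-annihilator and a transfer bimodule, identify the support condition with topological nilpotence of the $\I_Y$-action, and prove the counit is an isomorphism by a convergence argument at each Banach level of the Fr\'echet--Stein presentation. The paper carries this out for \emph{right} modules (passing to left modules only at the end via the side-switching of Section~\ref{LRSwitch}) and makes your ``careful coefficient-norm estimates'' precise through two specific devices you should be aware of: the construction of a $\U$-lattice stable under all divided powers $f^n/n!$ (Proposition~\ref{DivPow}), and an explicit family of continuous projections $e_j\colon M_{\Dp}(f)\to M[f]$ (\S\ref{EeJay}) that produce the normal-expansion coefficients directly and yield the identity $v=\sum_j e_j(v)x^j$.
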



We should note in passing that there is a theory of arithemetic differential operators on formal schemes developed by Berthelot and that is being used by Huyghe, Patel, Schmidt and Strauch (see \cite{HPSS} for example) as alternative to our approach to the study of locally analytic representations of $p$-adic analytic groups. In \cite[\S5.3.1]{Berth2} Berthelot observes that, for a fixed finite level $m$, the analogue of Theorem \ref{Kashiwara} is false for his sheaves of rings $\hat{\mathscr{D}}_{\mathbb{Q}}^{[m]}$ and closed embeddings of smooth formal schemes. In \cite[Theorem 5.3.3]{Berth2} he states a version for his sheaves of rings $\hat{\mathscr{D}}_{\mathbb{Q}}^\dag$; the proof of this result is yet to appear.

Our next result forms a link between coadmissible $\w\D_X$-modules and the more classical theory of $p$-adic differential equations.
\begin{MainThm}\label{MainOxCoh}Let $X$ be a smooth rigid analytic space. Then the forgetful functor 
\[\left\{ 
				\begin{array}{c} 
					\O_X-coherent \hspace{0.1cm} \\
					coadmissible\hspace{0.1cm} \w\D_X-\hspace{-0.1cm}modules\hspace{0.1cm}
				\end{array}
\right\} \longrightarrow \left\{
				\begin{array}{c}
				 \O_X-coherent \hspace{0.1cm} \\
				 \D_X \hspace{-0.1cm}-\hspace{-0.1cm}modules\hspace{0.1cm} 
				\end{array}
\right\}\]
is an equivalence of categories.
\end{MainThm}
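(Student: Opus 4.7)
The plan is to construct a quasi-inverse to the forgetful functor. Since the forgetful functor is plainly compatible with restriction to admissible open affinoid subdomains, and since $\O_X$-coherent $\D_X$-modules form a sheaf on $X$, I would reduce to showing the equivalence locally on a smooth affinoid $X = \Sp(A)$ carrying an affine formal model $\A$ and a free $\A$-Lie lattice $\L \subseteq \T_X(X)$ satisfying $\L \cdot \A \subseteq \A$ and $[\L,\L] \subseteq \L$. In this situation $\w\D_X(X)$ is the Fr\'echet--Stein algebra $\invlim_n \hsULnK$.

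Given an $\O_X$-coherent $\D_X$-module $\M$, set $M := \M(X)$, a finitely generated $A$-module with an integrable $\T_X(X)$-connection. The core of the construction is to promote this connection to a $\w\D_X(X)$-action. I would choose a finitely generated $\A$-submodule $M_0 \subseteq M$ generating $M$ over $A$. Since $\L$ has a finite $\A$-generating set, each generator $\partial$ sends $M_0$ into $\pi^{-m_\partial}M_0$ for some integer $m_\partial$; taking $n$ larger than all these (and large enough to stabilise $\A$), we have both $\pi^n \L \cdot \A \subseteq \A$ and $\pi^n \L \cdot M_0 \subseteq M_0$. The action of $\pi^n \L$ on $M_0$ by $\A$-linear derivations then extends uniquely to an $\A$-linear action of the enveloping algebra $\sU(\pi^n \L)$ on $M_0$; inducting on the PBW filtration shows this action is $\pi$-adically continuous, so extends to the $\pi$-adic completion and, after inverting $\pi$, produces a $\hsULnK$-action on $M$. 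Taking the inverse limit over $n$ yields a $\w\D_X(X)$-module structure on $M$ extending the original $\D_X$-action.

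Co-admissibility follows immediately from the fact that $M$ is already finitely generated over $A \hookrightarrow \hsULnK$ at every level and every transition map in the inverse system is the identity on $M$. Full faithfulness of the forgetful functor follows because any $\D_X$-linear map between $\O_X$-coherent modules is in particular $\O_X$-linear, hence continuous for the canonical affinoid topology on finitely generated $A$-modules, and since the image of $\D_X(X)$ is dense in each $\hsULnK$, such a map automatically commutes with the extended action. Functoriality of the construction and the fact that the two compositions are naturally isomorphic to the respective identity functors are then formal consequences of uniqueness of the extension.

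The principal technical obstacle is verifying the PBW-continuity claim, namely that arbitrary iterated products of elements of $\pi^n \L$ act on $M_0$ by $\A$-linear operators with no $\pi^{-1}$ denominators. One handles this by induction on the filtration degree, unfolding identities such as $\partial_1\partial_2(am) = \partial_1(\partial_2(a))m + \partial_2(a)\partial_1(m) + \partial_1(a)\partial_2(m) + a\,\partial_1\partial_2(m)$ and noting that every summand remains in $M_0$ provided $\pi^n\L$ separately stabilises $\A$ and $M_0$. Once this uniform boundedness is in hand, the extension to the $\pi$-adic completion $\hsULnK$ and the subsequent inverse limit over $n$ are both formal.
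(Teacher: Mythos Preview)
Your approach is essentially the paper's: reduce to an affinoid, choose an $\A$-lattice $M_0 \subseteq M$ stable under $\pi^n\L$ for $n \gg 0$, note that $M_0$ is then a $\pi$-adically complete $U(\pi^n\L)$-module and hence an $\h{U(\pi^n\L)}$-module, and observe that the resulting inverse system on $M$ is constant, giving co-admissibility. Two remarks. First, your ``principal technical obstacle'' evaporates: $M$ is already a $U(L)$-module, so once $M_0$ is stable under $\A$ and under $\pi^n\L$ it is automatically a $U(\pi^n\L)$-submodule --- no PBW induction is needed, and the identity you unwind is just the $U(L)$-module axiom already in force. Second, the paper explicitly verifies the localisation compatibility $\w{\sU(\sL)}(Z)\,\w\otimes_{\w{\sU(\sL)}(Y)}\sM(Y) \cong \sM(Z)$ for affinoid $Z \subset Y$ (this is what co-admissibility of the \emph{sheaf} $\sM$ requires, beyond co-admissibility of each $\sM(Y)$), and it proves fullness via the $\w{U(L)}$-linear isomorphism $M \cong \w{U(L)} \otimes_{U(L)} M$ rather than by your density-and-continuity argument; both of your shortcuts work, but the gluing step deserves an explicit line rather than an appeal to uniqueness.
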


Using Theorems \ref{Kashiwara} and \ref{MainOxCoh}, we can construct a large family of pairwise non-isomorphic simple objects in the category of coadmissible $\w\D_X$-modules.

\begin{MainThm}\label{SimpleDcaps}
Let $X$ be a smooth rigid analytic variety over a complete discretely valued field $K$ of characteristic zero.
\be
\item $\iota_+ \O_Y$ is a simple coadmissible $\w\D_X$-module whenever $\iota : Y \to X$ is the inclusion of a smooth, connected, closed subvariety $Y$. 
\item If $\iota' : Y\to X$ is another such inclusion and $\iota_+ \O_Y \cong \iota'_+ \O_{Y'}$ as coadmissible $\w\D_X$-modules, then $Y = Y'$.
\ee 
\end{MainThm}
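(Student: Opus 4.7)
The plan is to reduce (a) to Kashiwara's equivalence (Theorem A) together with the $\O$-coherent case (Theorem B), and to prove (b) by computing supports on both sides.

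For (a), Theorem A provides an equivalence between the category of co-admissible $\w\D_Y$-modules and the full subcategory of co-admissible $\w\D_X$-modules supported on $Y$. Since any co-admissible $\w\D_X$-submodule of $\iota_+ \O_Y$ is itself supported on $Y$, simplicity of $\iota_+ \O_Y$ in the category of all co-admissible $\w\D_X$-modules is equivalent to simplicity of $\O_Y$ in the category of co-admissible $\w\D_Y$-modules. It therefore suffices to establish the latter.

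Let $\sN \subseteq \O_Y$ be a non-zero co-admissible $\w\D_Y$-submodule. Since $\O_Y$ embeds in $\w\D_Y$, $\sN$ is in particular an $\O_Y$-submodule of $\O_Y$; because $\O_Y$ is a coherent sheaf of rings on the rigid analytic space $Y$, this forces $\sN$ to be $\O_Y$-coherent. Theorem B then identifies $\sN$ with a classical $\D_Y$-submodule of $\O_Y$---a coherent differential ideal. On an affinoid neighbourhood of any point $y \in Y$ equipped with \'etale coordinates $x_1,\ldots,x_d$ and derivations $\partial_1,\ldots,\partial_d$, a non-zero $f \in \sN_y$ has a non-zero Taylor expansion at $y$; choosing a multi-index $\alpha$ of minimal order with non-zero coefficient, $\partial^\alpha f$ is a unit in $\O_{Y,y}$, so $\sN_y = \O_{Y,y}$. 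Thus $\sN_y \in \{0, \O_{Y,y}\}$ for every $y \in Y$, and both loci are open (being complements of the supports of the coherent sheaves $\O_Y/\sN$ and $\sN$, respectively). Connectedness of $Y$ together with $\sN \neq 0$ then force $\sN = \O_Y$.

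For (b), the plan is to use the compatibility of Kashiwara's equivalence with supports: $\Supp(\iota_+ \sF) = \iota(\Supp \sF)$ for every co-admissible $\w\D_Y$-module $\sF$. Taking $\sF = \O_Y$ (whose support is all of $Y$) yields $\Supp(\iota_+ \O_Y) = Y$, and similarly $\Supp(\iota'_+ \O_{Y'}) = Y'$. An isomorphism $\iota_+ \O_Y \cong \iota'_+ \O_{Y'}$ of co-admissible $\w\D_X$-modules then forces $Y = Y'$ as subsets of $X$. Since two smooth closed analytic subvarieties of $X$ with the same underlying set coincide (each being reduced and determined by this set), we conclude $Y = Y'$.

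The main obstacle is the rigid analytic differential-ideal step inside the proof of (a): while the argument is entirely classical for algebraic or complex-analytic varieties, one must verify that stalks of $\O_Y$ at smooth rigid analytic points inject into the appropriate formal completions so that the minimal-order Taylor argument detects a unit. Everything else---closure of subobjects in the co-admissible category under the support condition, coherence of $\sN$ as an $\O_Y$-module, the identification with classical $\D_Y$-submodules via Theorem B, and the support-compatibility of Theorem A---enters formally.
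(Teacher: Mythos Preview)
Your overall strategy matches the paper's: reduce (a) via Kashiwara to the simplicity of $\O_Y$ as a co-admissible $\w\D_Y$-module, show any co-admissible submodule is an $\O$-coherent ideal stable under $\T_Y$, argue stalkwise that such an ideal is $0$ or everything, and conclude by connectedness; for (b), recover $Y$ as the support of $\iota_+\O_Y$.

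There is, however, a genuine gap in your coherence step. The sentence ``because $\O_Y$ is a coherent sheaf of rings \ldots\ this forces $\sN$ to be $\O_Y$-coherent'' is not a valid deduction: coherence of $\O_Y$ only tells you that \emph{finite-type} ideal subsheaves are coherent, and an arbitrary $\O_Y$-subsheaf of $\O_Y$ on a rigid space need not be of finite type. What actually makes $\sN$ coherent is co-admissibility: on each affinoid $U$ in a suitable cover, $\sN(U)$ is an $\O(U)$-submodule of the Noetherian module $\O(U)$, hence finitely generated, and co-admissibility (together with the comparison in Theorem~B) forces $\sN|_U \cong \Loc(\sN(U)) \cong \widetilde{\sN(U)}$. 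This is exactly the content of the paper's Corollary in \S\ref{OxCoh}, which you should invoke rather than the bare coherence of $\O_Y$. Your stalkwise Taylor/minimal-order argument is then fine and is essentially the paper's associated-graded computation at a regular point (the injection of $\O_{Y,y}$ into its completion that you flag as an obstacle is immediate from Krull's intersection theorem, since $\O_{Y,y}$ is a Noetherian local ring).

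For (b) you assert $\Supp(\iota_+\sF)=\iota(\Supp\sF)$ as a general compatibility, but this is not something Theorem~A gives you for free, and the paper does not prove it in that generality. Instead the paper computes $\Supp(\iota_+\O_Y)=Y$ directly: on a member $U$ of the basis $\B$, one has $(\iota_+\O_Y)(U)=0$ iff $\O_Y(U\cap Y)=0$ iff $U\cap Y=\emptyset$, the first equivalence using faithful c-flatness of $\sU(U)$ over $\sW(U\cap Y)$. Your conclusion is correct, but the justification you give needs this faithful-flatness input rather than an unproved abstract support formula.
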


\subsection{Structure of this paper}
As in our previous paper \cite{DCapOne} we work in a more general framework than that described above: for each Lie algebroid $\sL$ on a rigid analytic space $X$ we have a sheaf of rings $\w{\sU(\sL)}$ that we call the completed enveloping algebra. When $X$ is smooth and $\sL=\T_X$, $\w{\sU(\sL)}=\w\D_X$. Our main results in the general framework depend on the normal sheaf $\mathcal{N}_{Y/X}$ being locally free on $Y$ and on the surjectivity of the natural map $\iota^\ast\sL\to \mathcal{N}_{Y/X}$. When $X$ and $Y$ are both smooth and $\sL=\T_X$ these conditions are automatically satisfied. As we outline the structure of the paper we will suppress mention of these and similar technical global conditions on these sheaves that we need to impose for intermediate steps. Full details may be found in the body of the paper.

In section \ref{Review} we review material from \cite{DCapOne}, restating results that we will use in this paper, for the convenience of the reader. 

In section \ref{LRSwitch} we establish an equivalence of categories between the category of coadmissible left modules and the category of coadmissible right modules for any completed enveloping algebra. This equivalence is entirely analogous to the classical equivalence for $\D$-modules and depends on the existence of a line bundle $\Omega_{\sL}$ on $X$ that may be equipped with a right action of the sheaf of completed enveloping algebras. This line bundle plays the role of $\Omega_X$ in the classical setting. 

Section \ref{ConthKUL} may be viewed as the heart of the paper with Theorem \ref{AffKash} providing a purely algebraic (right-module) version of Theorem \ref{Kashiwara} for Banach completions of $\w{\sU(\sL)}(X)$ when $X=\Sp(A)$ is affinoid and $Y=\Sp(A/(F))$. Most of the rest of the paper is concerned with proving that this result localises correctly. 

In section \ref{IplusInat} we consider the case that $Y\to X$ is a closed embedding of affinoid varieties. First we define the pullback $L_Y$ of a $(K,\O(X))$-Lie algebra $L$ to a $(K,\O(Y))$-Lie algebra. When $X$ and $Y$ are both smooth and $L=\T(X)$ this pullback is $\T(Y)$. After this we construct pushforward and pullback functors $i_+$ and $i^\natural$ between coadmissible right $\w{U(L)}$-modules and coadmissible right $\w{U(L_Y)}$-modules. These definitions follow the classical $\D$-module construction of pushforward and pullback via a `transfer bimodule' but some care is needed to ensure that $i^\natural$ preserves coadmissibility --- in fact some care is also needed for $i_+$ but the key results for that were established in \cite{DCapOne} and so that is less transparent in this paper. Section \ref{IplusInat} concludes with Theorem \ref{KashAffinoids} which is a (right-module) version of Theorem \ref{Kashiwara} for affinoids that is global on the base but `local along fibres' of the vector bundle underlying $L$. 

In Section \ref{Pullpush} we verify that the results and constructions of the previous sections localise correctly on the base, culminating in Theorem \ref{KashiwaraGeneral} that provides a right-module version of Theorem \ref{Kashiwara} in our fuller generality. 

Finally Section \ref{Main} brings all the above together to establish versions of Theorems \ref{Kashiwara}-\ref{SimpleDcaps} for general Lie algebroids. We also give some examples of Lie algebroids on $X$ other than $\T_X$ to which our results might be applied. The example of Atiyah algebras is especially pertinent for our representation theoretic applications since for these we will want to consider twisted versions of $\w{\D}$ that will arise as quotients of completed enveloping algebras of Atiyah algebras. 
  
It may be helpful at this point to trace different ways of interpreting the condition that a coadmissible $\w{\sU(\sL)}$ module $\sM$ is supported on $Y$ that appear at various points of the paper. 

In Theorem \ref{SuppMinfty} it is established that when $X$ is affinoid, a coadmissible $\w{\sU(\sL)}$-module $\sM$ is supported on $Y$ precisely if every element of $\O(X)$ that vanishes on $Y$ acts locally topologically nilpotently on $\sM(X)$. This result is a key step in the proof of Theorem \ref{Kashiwara} and is analogous to the familiar statement in algebraic geometry that, for $X$ affine, a quasi-coherent $\O_X$-module $\sM$ is supported on $V(I)$ precisely if each element of $I$ acts locally nilpotently on $\sM(X)$. 

In Corollary \ref{MinftyUsub} we are able to show that for every coadmissible $\w{\sU(\sL)}(X)$-module $M$ there is a natural coadmissible submodule $M_\infty(I)$ that consists of `sections supported on $Y$'; that is, those elements $m \in M$ with the property that every element $f$ of $\O(X)$ that vanishes on $Y$ satisfies $\lim_{n\to \infty}mf^n=0$. 

Proposition \ref{MinftyUsub} provides slightly more precise information about $M_{\infty}(I)$ in that it decribes the various Banach completions of $M_\infty(I)$ that realise it as a coadmissible module. This information is phrased in terms that enable us to apply Theorem \ref{AffKash} to these completions. In particular we see that the condition $M=M_{\infty}(I)$ corresponds precisely to each of these Banach completions $\h{M}$ satisfying the condition $\h{M}=\h{M}_{\Dp}(F)$ found in the statement of that Theorem (for a suitable choice of $F$ depending on the completion). This last condition is precisely what we need to construct the 'smoothing'  maps $e_j$ in \S\ref{EeJay} in order to show that the module $\h{M}$ is generated by its elements that are annihilated by $F$. 

\subsection{Acknowledgements} The authors would like to thank Kobi Kremnizer for pointing out the analogy between $\w\D$ and $\D^\infty$. They would also like to thank Oren Ben-Bassat and Joseph Bernstein for their interest in this work.

\subsection{Conventions} Throughout the paper $K$ will denote a complete discrete valuation field of characteristic zero with valuation ring  $\R$. We fix a non-zero non-unit $\pi \in \R$. Throughout Sections \ref{ConthKUL}, \ref{IplusInat} and \ref{Pullpush} we work with \emph{right} modules unless explicitly stated otherwise.

\section{Review of the basic theory of  \ts{\w\D}-modules}\label{Review}

In this section we review some of the notation and other material from \cite{DCapOne} that we will use in the remainder of the paper.

\subsection{Lie--Rinehart algebras and lattices}\label{RevLR}
Suppose that $R$ is a commutative ring and that $A$ is a commutative $R$-algebra. Recall that an \emph{$(R,A)$-Lie algebra} is a pair $(L,\rho)$ where \begin{itemize} \item $L$ is an $R$-Lie algebra and an $A$-module, and \item $\rho\colon L\to \Der_R(A)$ is an $A$-linear $R$-Lie algebra homomorphism \end{itemize} such that $[x,ay]=a[x,y]+\rho(x)(a)y$ for all $x,y\in L$ and $a\in A$. 

An $(R,A)$-Lie algebra $L$ is said to be \emph{coherent} if $L$ is coherent as an $A$-module and $L$ is said to be \emph{smooth} if it is both coherent and projective as an $A$-module.

Recall that a \emph{Tate algebra} is the subalgebra $K\langle x_1,\ldots,x_n \rangle$ of $K[[x_1,\ldots,x_n]]$ consisting of those power series $\sum_{\alpha\in\mathbb{N}^n} \lambda_\alpha x^\alpha$ with the property that $\lim_{\alpha \to \infty} \lambda_\alpha = 0$. It is a commutative Noetherian Banach algebra equipped with the Gauss norm given by $|\sum_{\alpha\in\mathbb{N}^n} \lambda_\alpha x^\alpha| = \sup_{\alpha \in \mathbb{N}^n} |\lambda_\alpha|$. The unit ball of $K\langle x_1,\ldots,x_n\rangle$ with respect to the Gauss norm is the algebra $\R \langle x_1,\ldots,x_n\rangle$ consisting of power series in $K\langle x_1,\ldots,x_n\rangle$ with all coefficients lying in $\R$.  

A \emph{$K$-affinoid algebra} is any homomorphic image of a Tate algebra, and an \emph{admissible $\R$-algebra} is any homomorphic image of $\R\langle x_1,\ldots,x_n \rangle$ which has no $\R$-torsion. We say that an admissible $\R$-algebra $\A$ is an \emph{affine formal model} in the $K$-affinoid algebra $A$ if $\A$ is a subalgebra of $A$ which spans it as a $K$-vector space.

\begin{defn}[{\cite[Definition 6.1]{DCapOne}}] Let $\A$ be an affine formal model in a $K$-affinoid algebra $A$,  let $L$ be a coherent $(K,A)$-Lie algebra and suppose that $\L$ is an $\A$-submodule of $L$. 
\be \item $\L$ is an $\A$-\emph{lattice} in $L$ if it is finitely generated as an $\A$-module and $K\L=L$. \item $\L$ is an $\A$-\emph{Lie lattice} if in addition it is a sub-$(\R,\A)$-Lie algebra of $L$.\ee \end{defn}

\subsection{Completions of enveloping algebras of Lie--Rinehart algebras}\label{RevComp}

Given an $(R,A)$-Lie algebra $L$ there is an associative $R$-algebra $U(L)$ called the \emph{enveloping algebra} of $L$ with the property that to give an $A$-module $M$ the structure of a left $U(L)$-module is equivalent to giving $M$ the structure of a left module for the $R$-Lie algebra $L$ such that for $a\in A$, $x\in L$ and $m\in M$ 
\begin{equation} \label{LeftModStr}
(ax)m= a(xm), \qmb{and} x(am)=ax(m)+\rho(x)(a)m.
\end{equation}

Similarly, to give a left $A$-module $M$ the structure of a right $U(L)$-module is equivalent to giving $M$ the structure of a right module for the $R$-Lie algebra $L$ such that for $a\in A$, $x\in L$ and $m\in M$ 
\begin{equation} \label{RightModStr}
m(ax)=a(mx)-\rho(x)(a)m \qmb{and} (am)x=a(mx)-\rho(x)(a)m.
\end{equation}

Let $\A$ be an affine formal model in a $K$-affinoid algebra $A$.
\begin{notn}
If $\L$ is an $(\R,\A)$-Lie algebra we write $\h{U(\L)}$ to denote the $\pi$-adic completion of $U(\L)$ and we write $\hK{U(\L)}$ to denote the Noetherian $K$-Banach algebra $K\otimes_\R\h{U(\L)}$.
\end{notn}

\begin{defn}[{\cite[Definition 6.2]{DCapOne}}] Let $L$ be a coherent $(K,A)$-Lie algebra. The \emph{Fr\'echet completion} of $U(L)$ is \[ \w{U(L)}:=\invlim \hK{U(\pi^n\L)} \] for any choice of $\A$-Lie lattice $\L$ in $L$.\end{defn}

As explained in \cite[\S6.2]{DCapOne}, $\w{U(L)}$ is a $K$-Fr\'echet-algebra that does not depend on the choice of affine formal model $\A$ nor on the choice of $\A$-Lie lattice $\L$. A key property of $\w{U(L)}$ used in our work is that it is frequently a \emph{Fr\'echet--Stein algebra} in the sense of \cite{ST}.

\begin{thm}[{\cite[Theorem 6.4]{DCapOne}}] Let $A$ be a $K$-affinoid algebra and let $L$ be a coherent $(K,A)$-Lie algebra. Suppose $L$ has a smooth $\A$-Lie lattice $\L$ for some affine formal model $\A$ in $A$. Then $\w{U(L)}$ is a two-sided Fr\'echet-Stein algebra.\end{thm}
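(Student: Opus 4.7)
The plan is to verify the three defining properties of a two-sided Fr\'echet-Stein algebra for the inverse system $U_n := \hK{U(\pi^n \L)}$: namely, that each $U_n$ is a Noetherian $K$-Banach algebra, that each transition map $U_{n+1} \to U_n$ has dense image, and that $U_n$ is flat as both a left and a right $U_{n+1}$-module.

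For Noetherianity I would invoke Rinehart's Poincar\'e--Birkhoff--Witt theorem for Lie--Rinehart algebras: since $\pi^n\L$ is projective over $\A$, the usual degree filtration $F_\bullet$ on $U(\pi^n\L)$ has $\gr U(\pi^n\L) \cong \Sym_\A(\pi^n\L)$. Working locally on a finite cover by affine formal subdomains trivialising $\L$ (with an $\A$-basis $y_1,\ldots,y_d$ of $\pi^n\L$), this filtration extends to the $\pi$-adic completion $\h{U(\pi^n\L)}$, whose associated graded is the restricted power series algebra $\A\langle y_1,\ldots,y_d\rangle$. Inverting $\pi$ one obtains $\gr U_n \cong A\langle y_1,\ldots,y_d\rangle$, a Tate algebra and hence Noetherian; Noetherianity of $U_n$ then follows from the standard filtered-to-graded lifting principle, and globalises by faithfully flat descent along the trivialising cover.

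Density of $U_{n+1}\to U_n$ follows because the common image of $U(L)$ sits densely in both Banach algebras. For flatness I would again reduce to the associated graded: if $\pi^n\L$ has local $\A$-basis $y_1,\ldots,y_d$, then $\pi^{n+1}\L$ has basis $\pi y_1,\ldots,\pi y_d$, and the transition map on associated gradeds becomes $A\langle \pi z_1,\ldots,\pi z_d\rangle \hookrightarrow A\langle z_1,\ldots,z_d\rangle$. This is the restriction map corresponding to the open immersion of rigid polydiscs $\{|z_i|\leq 1\}\hookrightarrow\{|z_i|\leq |\pi|^{-1}\}$, which is well known to be flat.

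The principal obstacle is this final step: transferring flatness from the graded level back to the filtered complete algebras $U_n$. The classical filtered-to-graded flatness criterion requires careful control of the filtration, and the PBW filtration on $U_n$ is unbounded above while the $\pi$-adic topology is transverse to it. A Rees-algebra argument that simultaneously tracks the PBW degree filtration and the $\pi$-adic filtration is needed in order to extract flatness on the completed algebras from flatness on the graded pieces. This is precisely where the hypothesis that $\L$ is smooth (not merely coherent) becomes essential: projectivity of $\pi^n\L$ over $\A$ is what makes PBW applicable and renders the associated gradeds as tractable as Tate polynomial algebras.
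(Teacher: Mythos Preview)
This statement is not proved in the present paper: it is simply \emph{cited} as \cite[Theorem 6.4]{DCapOne} and used as a black box. There is therefore no proof here to compare your proposal against.

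That said, your outline is broadly in the spirit of how such results are established. The three-part structure (Noetherianity of each $U_n$, dense image of the transition maps, two-sided flatness) is exactly what needs to be checked, and the PBW/associated-graded strategy is the natural one. Your own identification of the ``principal obstacle'' is accurate: the delicate point is indeed the lifting of flatness from the associated graded to the completed filtered ring, and it is here that the argument requires care with the interplay between the order filtration and the $\pi$-adic topology. The way this is typically handled (and how it is done in \cite{DCapOne}) is not via a simultaneous Rees construction as you suggest, but by first showing flatness of $\h{U(\pi^{n+1}\L)} \to \h{U(\pi^n\L)}$ at the integral level using the $\pi$-adic filtration alone --- here the associated graded is $U(\pi^n\L)\otimes_\R k$, and one compares these via PBW over $\A/\pi\A$ --- and then inverting $\pi$. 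The two-sided aspect comes for free because the PBW graded rings are commutative. If you want to complete the argument you should consult \cite{DCapOne} directly.
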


\subsection{The functor \ts{\w{\otimes}}}\label{Revwotimes}

\begin{defn}[{\cite[Definition 7.3]{DCapOne}}] Let $U$ and $V$ be left Fr\'echet--Stein algebras, and let $P$ be a coadmissible left $U$-module. We say that $P$ is a \emph{$U$-coadmissible $(U,V)$-bimodule} if there is a continuous homomorphism $V^{\op}\to \End_U(P)$ with respect to a natural Fr\'echet structure on $\End_U(P)$ defined in \cite[\S7.2]{DCapOne}.\end{defn}

The reason that $U$-coadmissible $(U,V)$-bimodules are useful is that they enable us to base-change coadmissible left $V$-modules to coadmissible left $U$-modules.

\begin{lem}[{\cite[Lemma 7.3]{DCapOne}}] Suppose that $P$ is a $U$-coadmissible $(U,V)$-bimodule. Then for every
coadmissible $V$-module $M$, there is a coadmissible $U$-module \[ P\w{\otimes}_V M\] and a $V$-balanced $U$-linear map
\[ \iota\colon P \times M \to P\w{\otimes}_V M \]
satisfying the following universal property: if $f \colon P \times M \to N$ is a $V$-balanced $U$-linear map with $N$ a coadmissible $U$-module then there is a unique $U$-linear map $g \colon P\w\otimes_V M \to N$ such that $g\iota = f$. Moreover, $P\w\otimes_V M$ is determined by its universal property up to canonical isomorphism.\end{lem}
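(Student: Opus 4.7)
The plan is to construct $P \w{\otimes}_V M$ level-by-level using the Fr\'echet--Stein structures on $U$ and $V$, and then obtain the universal property by assembling together the usual universal properties of ordinary tensor products at each finite stage.

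First I would use the Fr\'echet--Stein data to write $U = \invlim U_n$ and $V = \invlim V_n$, with the co-admissible modules expressed as $P = \invlim P_n$ and $M = \invlim M_n$, where each $P_n$ is a finitely generated left $U_n$-module, each $M_n$ is a finitely generated left $V_n$-module, and the coherence identities $U_n \otimes_{U_{n+1}} P_{n+1} \cong P_n$ and $V_n \otimes_{V_{n+1}} M_{n+1} \cong M_n$ hold. The continuity of $V^{\op} \to \End_U(P)$ with respect to the Fr\'echet topology defined in \cite[\S7.2]{DCapOne} should imply, perhaps after passing to a cofinal subsequence of indices, that the right $V$-action on $P$ restricts to a continuous right $V_n$-action on each $P_n$, so that $P_n$ becomes a $(U_n, V_n)$-bimodule and the transition maps $P_{n+1} \to P_n$ are bimodule homomorphisms. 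This compatibility is the technical heart of the construction.

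Once that is in place, I would define
\[ P \w{\otimes}_V M \; := \; \invlim_n \; P_n \otimes_{V_n} M_n, \]
where each $P_n \otimes_{V_n} M_n$ is the usual algebraic tensor product, equipped with its natural left $U_n$-module structure. Each factor is a finitely generated $U_n$-module since $P_n$ is finitely generated over $U_n$ and $M_n$ is finitely generated over $V_n$. To check $U$-co-admissibility, I would verify the coherence isomorphism
\[ U_n \otimes_{U_{n+1}} \bigl( P_{n+1} \otimes_{V_{n+1}} M_{n+1} \bigr) \;\cong\; P_n \otimes_{V_n} M_n, \]
which reduces by associativity of tensor product to the coherence of $P$ over $U$ and of $M$ over $V$, combined with the fact that $V_n \otimes_{V_{n+1}} M_{n+1} \cong M_n$ and that $U_n \otimes_{U_{n+1}} P_{n+1} \cong P_n$ as bimodules.

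For the universal property, let $f : P \times M \to N$ be a $V$-balanced $U$-linear map into a co-admissible $U$-module $N = \invlim N_n$. Because co-admissible modules carry canonical Fr\'echet topologies with respect to which $U$-linear maps are automatically continuous, $f$ induces compatible $V_n$-balanced $U_n$-linear maps $f_n : P_n \times M_n \to N_n$. Each $f_n$ factors uniquely through the ordinary tensor product as a $U_n$-linear map $g_n : P_n \otimes_{V_n} M_n \to N_n$, and these assemble into a $U$-linear map $g : P \w{\otimes}_V M \to N$ in the inverse limit. Uniqueness of $g$ follows from uniqueness at each finite stage, together with the fact that the images of the maps $\iota_n : P_n \times M_n \to P_n \otimes_{V_n} M_n$ generate their codomains over $U_n$; canonicity of $P \w{\otimes}_V M$ up to isomorphism is then a formal consequence.

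The main obstacle I anticipate is the very first step: producing compatible right $V_n$-actions on $P_n$ from the mere continuity of $V^{\op} \to \End_U(P)$. The Fr\'echet topology on $\End_U(P)$ is built precisely from the topologies on the $\End_{U_n}(P_n)$, and the hypothesis in the definition of a $U$-co-admissible bimodule is tailored to ensure that these local actions exist and are compatible; nevertheless extracting the structure in a canonical way, and verifying that the various associativity diagrams commute at the level of inverse systems, is where the real work lies.
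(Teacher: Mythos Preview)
This lemma is not proved in the present paper; it is quoted from \cite[Lemma 7.3]{DCapOne} as part of the review of background material in Section~2, so there is no proof here to compare against directly.

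That said, your approach is the natural one and is almost certainly what is done in the original reference. The construction $P \w{\otimes}_V M := \invlim P_n \otimes_{V_n} M_n$ is correct, and you have correctly identified the main technical point: extracting compatible right $V_n$-module structures on the $P_n$ from the continuity of $V^{\op} \to \End_U(P)$. This is exactly what the Fr\'echet topology on $\End_U(P)$ defined in \cite[\S 7.2]{DCapOne} is designed to guarantee.

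One step in your universal-property argument could be sharpened. You claim that $f$ induces level-wise maps $f_n : P_n \times M_n \to N_n$ by automatic continuity. Continuity in the first variable is clear: for fixed $m$, the map $P \to N_n$ is $U$-linear into a $U_n$-module and so factors through $P_n$. But factoring through $M_n$ in the second variable does not follow from this alone, since $f(p,-)$ is not $V$-linear. The cleaner route is to note the isomorphisms of left $U_n$-modules
\[ P_n \otimes_{V_n} M_n \;\cong\; P_n \otimes_V M \;\cong\; (U_n \otimes_U P) \otimes_V M \;\cong\; U_n \otimes_U (P \otimes_V M), \]
using the bimodule compatibility you established together with $M_n \cong V_n \otimes_V M$. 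The $U$-linear map $P \otimes_V M \to N_n$ induced by $f$ then factors through $U_n \otimes_U (P \otimes_V M)$ simply because $N_n$ is a $U_n$-module, and no separate continuity argument in the $M$-variable is required.
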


The base-change functor defined by the Lemma turns out to be associative:
\[ P\w\otimes_V (Q\w\otimes_W M) \cong (P\w\otimes_V Q)\w\otimes_W M\] 
as left $U$-modules, for appropriate choices of $U,V,W,P,Q$ and $M$. By considering opposite algebras we see that there are also right module versions of all these statements.

\subsection{Localisation on affinoid spaces}\label{RevLocaff}

Suppose that $X$ is a $K$-affinoid variety, $\A$ is an affine formal model in $\O(X)$, $\L$ is a smooth $(\R, \A)$-Lie algebra, and $L =  K\otimes_\R \L$.

\begin{defn}[{\cite[Definition 8.1]{DCapOne}}] For each affinoid subdomain $Y$ of $X$, define \[  \w{\sU(L)}(Y) := \w{U(\O(Y)\otimes_{\O(X)} L)}\] \end{defn}

This defines a presheaf of rings on the weak $G$-topology $X_w$ on $X$, and in fact we have the following

\begin{thm}[{\cite[Theorem 8.1]{DCapOne}}] $\w{\sU(L)}$ extends to a sheaf of rings on $X_{\rig}$. \end{thm}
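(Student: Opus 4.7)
The plan is to first establish that $\w{\sU(L)}$ is a sheaf on the weak G-topology $X_w$, after which the standard principle of rigid analytic geometry (that any sheaf on the weak G-topology extends uniquely to a sheaf on the strong G-topology) delivers the desired sheaf on $X_{\rig}$. For the weak sheaf property, by the Gerritzen--Grauert theorem and the usual reductions via Laurent coverings, it suffices to verify Tate acyclicity for a single admissible covering $Y = Y_1 \cup Y_2$ of an affinoid subdomain $Y$ of $X$ by two rational subdomains, i.e.\ exactness of
\[ 0 \to \w{\sU(L)}(Y) \to \w{\sU(L)}(Y_1) \oplus \w{\sU(L)}(Y_2) \to \w{\sU(L)}(Y_1 \cap Y_2) \to 0. \]

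Writing $\w{\sU(L)}(Y) = \invlim_n \hK{U(\pi^n \L_Y)}$ for a suitable affine formal model $\A_Y \subseteq \O(Y)$ and $\A_Y$-Lie lattice $\L_Y$ in $\O(Y) \otimes_{\O(X)} L$, I would argue level-by-level. At each $n$, the algebra $\hK{U(\pi^n \L_Y)}$ carries its Poincar\'e--Birkhoff--Witt filtration, whose associated graded is essentially a completed symmetric algebra over $\A_Y$, which after inverting $\pi$ becomes a Tate algebra over $\O(Y)$ in $\rk \L$ variables. Since $\O_X$ is a sheaf on $X_w$ by Tate's classical theorem, so are its Tate polynomial extensions, and a standard filtration-plus-associated-graded comparison (combined with the fact that $\pi$-adic completion behaves well on Noetherian admissible $\R$-algebras) lifts Tate acyclicity to $\hK{U(\pi^n \L_Y)}$ at each finite level. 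Taking $\invlim_n$, the fact that the transition maps have dense image (part of the Fr\'echet--Stein structure guaranteed by the preceding theorem) supplies the Mittag--Leffler condition needed to preserve exactness of the C\v{e}ch complex in the limit.

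The main technical obstacle will be the bookkeeping of affine formal models and Lie lattices under restriction to affinoid subdomains. For the covering $Y = Y_1 \cup Y_2$, one needs formal models $\A_{Y_i}$ of $\O(Y_i)$ receiving the image of $\A_Y$ and $\A_{Y_i}$-Lie lattices $\L_{Y_i}$ in $\O(Y_i) \otimes_{\O(X)} L$ receiving the image of $\L_Y$ (possibly after rescaling by a power of $\pi$), compatibly on $Y_1 \cap Y_2$. Constructing such data and controlling how the associated graded Tate algebras glue under restriction is where the smoothness hypothesis on $\L$ really enters; everything else reduces, more or less formally, to the classical acyclicity of $\O_X$ on $X_w$ together with the Mittag--Leffler argument in the Fr\'echet--Stein limit.
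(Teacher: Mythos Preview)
This theorem is not proved in the present paper: it appears in Section~2, which is explicitly a review of material from \cite{DCapOne}, and the statement is simply quoted as \cite[Theorem 8.1]{DCapOne} with no proof given here. So there is no proof in this paper to compare your proposal against.

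That said, your outline is broadly the shape of the argument one expects (and, roughly, the one carried out in \cite{DCapOne}): reduce to the weak $G$-topology, then to Laurent coverings via Gerritzen--Grauert, verify \v{C}ech acyclicity level-by-level on the Banach completions $\hK{U(\pi^n\L)}$, and pass to the inverse limit. One point to be careful about: your description of the associated graded is slightly off. The PBW filtration on $\hK{U(\pi^n\L)}$ does not have associated graded a Tate algebra over $\O(Y)$; rather, the relevant structure is that $\hK{U(\pi^n\L)}$ is a finite free module over a genuine Tate algebra (or one works with the $\pi$-adic filtration, whose associated graded is $k[t,t^{-1}]\otimes_k U(\L_k)$). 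The actual argument in \cite{DCapOne} proceeds via the notion of $\L$-admissible and $\L$-accessible affinoid subdomains and establishes acyclicity at each Banach level by a direct computation with these, rather than by a pure filtration-lifting trick. Your identification of the main technical obstacle --- tracking formal models and Lie lattices under restriction --- is exactly right, and is precisely what the machinery of $\L$-stable formal models in \cite{DCapOne} is designed to handle.
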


This sheaf of rings can be used to define a localisation functor
\[ \Loc : 
\left\{ 
				\begin{array}{c} 
					co\hspace{-0.1cm}-\hspace{-0.1cm}admissible\\
					\w{U(L)}-\hspace{-0.1cm}modules\hspace{0.1cm}
				\end{array}
\right\}
\to
\left\{ 
				\begin{array}{c} 
					sheaves \hspace{0.1cm}of \\
					\w{\sU(L)}-modules
				\end{array}
\right\}\] 
whose values on every affinoid subdomain $Y$ of $X$ are given by 
\[\Loc(M)(Y)=\w{\sU(L)}(Y)\underset{\w{U(L)}}{\w\otimes}{} M.\]

\begin{thm}[{\cite[Theorem 8.2]{DCapOne}}] $\Loc$ defines a full exact embedding of abelian categories from the category of coadmissible $\w{U(L)}$-modules to the category of sheaves of $\w{\sU(L)}$-modules with vanishing higher \v{C}ech cohomology groups.\end{thm}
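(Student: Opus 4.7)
The plan is to verify three properties of $\Loc$ in turn: exactness, full faithfulness, and vanishing of higher \v{C}ech cohomology on $\Loc(M)$ for every co-admissible $\w{U(L)}$-module $M$.

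For exactness, since $\Loc$ is defined sectionwise by $\Loc(M)(Y) = \w{\sU(L)}(Y)\w\otimes_{\w{U(L)}} M$, it suffices to show that $\w{\sU(L)}(Y)\w\otimes_{\w{U(L)}} (-)$ is exact on co-admissible modules for each affinoid subdomain $Y$. The key input is that $\w{\sU(L)}(Y)$ is a $\w{\sU(L)}(Y)$-co-admissible $(\w{\sU(L)}(Y), \w{U(L)})$-bimodule with appropriate flatness: the restriction homomorphism $\w{U(L)}\to\w{\sU(L)}(Y)$ arises from a compatible family of morphisms between the Banach-level algebras $\hK{U(\pi^n\L)}$, and the corresponding maps are flat after inverting $\pi$. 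Exactness of $\w\otimes$ on co-admissible modules then follows from their characterisation as compatible families of finitely generated Banach modules.

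For full faithfulness, the global-sections functor provides a candidate inverse, since $\Loc(M)(X) = \w{U(L)}\w\otimes_{\w{U(L)}} M \cong M$. Given co-admissible modules $M$ and $N$, any morphism $\Loc(M) \to \Loc(N)$ of sheaves restricts on global sections to a $\w{U(L)}$-linear map $M \to N$; conversely any such map extends uniquely by the universal property of $\w\otimes$ to a sectionwise family of maps $\Loc(M)(Y) \to \Loc(N)(Y)$ which commute with restrictions by naturality, yielding the required bijection.

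The main difficulty lies in proving vanishing of higher \v{C}ech cohomology, which simultaneously yields the sheaf property. For a finite admissible affinoid covering $\{Y_i\}_{i=1}^n$ of an affinoid subdomain $Y$, the augmented \v{C}ech complex
\[ 0 \to \w{\sU(L)}(Y) \to \prod_i \w{\sU(L)}(Y_i) \to \prod_{i,j} \w{\sU(L)}(Y_i\cap Y_j) \to \cdots \]
is a complex of $\w{\sU(L)}(Y)$-co-admissible $(\w{\sU(L)}(Y),\w{U(L)})$-bimodules. I would first show this sequence is exact, and then apply $(-)\w\otimes_{\w{U(L)}} M$, using exactness of the base change to deduce the \v{C}ech vanishing for $\Loc(M)$. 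Exactness at the bimodule level is reduced to exactness at each Banach piece $\hK{U(\pi^n\L)}$, where the corresponding \v{C}ech sequence for the affinoid algebras $\O(Y_i)$ is acyclic by Tate's classical acyclicity theorem and this acyclicity is preserved under the flat completed base change producing $\hK{U(\pi^n\L)}$. Passing from Banach-level exactness back to the Fr\'echet completion requires vanishing of $\varprojlim^1$ terms attached to the Banach-level approximations, which is exactly the co-admissibility condition; ensuring that the relevant transition systems satisfy this condition will be the main technical hurdle.
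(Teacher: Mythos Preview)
This theorem is not proved in the present paper: it appears in the review Section 2 as a citation of \cite[Theorem 8.2]{DCapOne}, with no argument given here. There is therefore no proof in this paper to compare your proposal against.

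That said, your outline is broadly in the spirit of how such results are established in \cite{DCapOne}: exactness via c-flatness of the restriction maps $\w{U(L)}\to\w{\sU(L)}(Y)$, full faithfulness from $\Loc(M)(X)\cong M$, and \v{C}ech acyclicity by bootstrapping Tate's theorem through the Banach-level algebras $\hK{U(\pi^n\L)}$ and then passing to the inverse limit. A few cautions if you intend to flesh this out. First, the flatness you invoke is not automatic: one needs the notion of \emph{c-flatness} for morphisms of Fr\'echet--Stein algebras and a separate argument (carried out in \cite[\S7]{DCapOne}) that $\w{\sU(L)}(Y)$ is c-flat over $\w{U(L)}$. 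Second, the Banach-level acyclicity is more delicate than ``Tate plus flat base change'': one must work with the auxiliary sheaves $\cS_n=\hK{U(\pi^n\L)}$ on an $\L$-admissible $G$-topology and control how coverings interact with the Lie lattice (this is the content of \cite[\S4]{DCapOne}). Third, your passage from Banach-level exactness to the Fr\'echet level via vanishing of $\varprojlim^1$ is correct in principle, but the relevant Mittag-Leffler type input is packaged in \cite{ST} as Theorem B for coherent sheaves over Fr\'echet--Stein algebras; you should cite that rather than argue it ad hoc.
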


\subsection{Lie algebroids}\label{RevLiealg}
Suppose that $X$ is any rigid $K$-analytic space. We recall the notion of a Lie algebroid on $X$.
 
\begin{defn}[{\cite[Definition 9.1]{DCapOne}}] A \emph{Lie algebroid} on $X$ is a pair $(\rho,\sL)$ such that
\begin{itemize} \item $\sL$ is a locally free sheaf of $\O$-modules of finite rank on $X_{\rig}$,
\item $\sL$ has the structure of a sheaf of $K$-Lie algebras, and
\item $\rho \colon\sL\to \T$ is an $\O$-linear map of sheaves of Lie algebras such that
\[ [x, ay] = a[x, y] + \rho(x)(a)y\] whenever $U$ is an admissible open subset of $X$, $x, y\in \sL (U)$ and $a\in \O(U)$.\end{itemize}\end{defn}

\begin{defn}[{\cite[Definition 9.3]{DCapOne}}] Let $\sL$ be a Lie algebroid on the rigid $K$-analytic space $X$, and let $Y$ be an affinoid subdomain of $X$. We say that $\sL(Y)$ \emph{admits a smooth Lie lattice} if there is an affine formal model $\A$ in $\O(Y)$ and a smooth $\A$-Lie lattice $\L$ in $\sL(Y)$. We let $X_w(\sL)$ denote the set of affinoid subdomains $Y$ of $X$ such that $\sL(Y)$ admits a smooth Lie lattice.\end{defn}

By \cite[Lemma 9.3]{DCapOne} $X_w(\sL)$ forms a basis for the $G$-topology on $X$. It enables us to define a `Fr\'echet-completion' of the sheaf $\sU(\sL)$ on $X$ via the following Theorem.

\begin{thm}[{\cite[Theorem 9.3]{DCapOne}}] Let $X$ be a rigid $K$-analytic space. There is a natural functor $\w{\sU(-)}$ from Lie algebroids on $X$ to sheaves of $K$-algebras on $X_{\rig}$ such that there is a canonical isomorphism $\w{\sU(\sL)}|_Y\cong \w{\sU(\sL(Y))}$ for every $Y\in  X_w(\sL)$.\end{thm}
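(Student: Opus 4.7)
The plan is to glue the affinoid-case construction recalled in Subsection \ref{RevLocaff}. For each $Y \in X_w(\sL)$, picking an affine formal model $\A$ of $\O(Y)$ and a smooth $\A$-Lie lattice $\L$ of $\sL(Y)$, that theorem produces a sheaf of $K$-algebras $\w{\sU(\sL(Y))}$ on $Y_{\rig}$ whose value on an affinoid subdomain $Z \subseteq Y$ is $\w{U(\O(Z) \otimes_{\O(Y)} \sL(Y))}$. By the discussion in Subsection \ref{RevComp} this value is independent of the choices of $\A$ and $\L$, and since $\sL$ is a sheaf of locally free $\O$-modules we have $\sL(Z) = \O(Z)\otimes_{\O(Y)} \sL(Y)$, so the value is canonically $\w{U(\sL(Z))}$.

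The core step is to verify that these local sheaves glue coherently on $X_w(\sL)$. Given $Y_1, Y_2 \in X_w(\sL)$, cover $Y_1 \cap Y_2$ by affinoid subdomains $Z \in X_w(\sL)$ (possible since $X_w(\sL)$ is a basis for the $G$-topology), and observe that on every affinoid subdomain $W \subseteq Z$ both $\w{\sU(\sL(Y_1))}|_Z(W)$ and $\w{\sU(\sL(Y_2))}|_Z(W)$ are canonically identified with $\w{U(\sL(W))}$. Provided the restriction maps on both sides also agree under these identifications, this produces canonical gluing isomorphisms $\w{\sU(\sL(Y_1))}|_Z \cong \w{\sU(\sL(Y_2))}|_Z$ satisfying the cocycle condition on triple overlaps, and standard descent on $X_{\rig}$ yields a sheaf $\w{\sU(\sL)}$ on $X$ with the required property. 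Functoriality in $\sL$ then follows, on each $Y$, from functoriality of the Fr\'echet completion with respect to morphisms of coherent $(K,\O(Y))$-Lie algebras, and the local morphisms glue to a global one by the sheaf property.

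The main obstacle is the compatibility of restriction maps alluded to above: for $W \subseteq Z \subseteq Y$ in $X_w(\sL)$ one must show that the restriction map of the sheaf $\w{\sU(\sL(Y))}$ from $Z$ to $W$ matches, under the identifications $\w{\sU(\sL(Y))}(Z) = \w{U(\sL(Z))}$ and $\w{\sU(\sL(Y))}(W) = \w{U(\sL(W))}$, the map induced by the natural Lie-algebroid restriction $\sL(Z) \to \sL(W)$. This reduces to combining the associativity of the completed base-change functor $\w\otimes$ from Subsection \ref{Revwotimes} with the independence of the Fr\'echet completion from the choice of Lie lattice, but one must carefully unwind how the universal property of $\w\otimes$ interacts with the completion of enveloping algebras to ensure genuine naturality, rather than merely the existence of some isomorphism.
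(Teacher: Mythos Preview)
This theorem is quoted from \cite[Theorem 9.3]{DCapOne} in the review section and is not proved in the present paper; there is therefore no proof here to compare your proposal against. Your sketch of a gluing argument on the basis $X_w(\sL)$ is the natural strategy and is broadly consistent with how such results are established, but any detailed assessment would have to be made against the argument in \cite{DCapOne} rather than against anything in this paper.
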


\begin{defn}[{\cite[Definition 9.3]{DCapOne}}] We call the sheaf $\w{\sU(\sL)}$ defined by the Theorem the \emph{Fr\'echet completion} of $\sU(\sL)$. If $X$ is smooth, $\sL = \T$ and $\rho = 1_\T$, we call $\w{\D} := \w{\sU(\T)}$ the Fr\'echet completion of $\D$.\end{defn}

\subsection{Co-admissible sheaves of modules}\label{RevCoad}

Suppose that $\sL$ is a Lie algebroid on a $K$-analytic space $X$. The following defines the notion of \emph{coadmissible $\w{\sU(\sL)}$-modules} that is central to this paper.

\begin{defn} A sheaf of $\w{\sU(\sL)}$-modules $\sM$ on $X_{\rig}$ is \emph{coadmissible} if there is an admissible covering $\{U_i\st i\in I\}$ of $X$ by affinoids in $X_w(\sL)$ such that for each $i\in I$, $\sM(U_i)$ is a coadmissible $\w{\sU(\sL(U_i))}$-module and $\sM|_{U_i}\cong \Loc(\sM(U_i))$.\end{defn}

Co-admissible $\w{\sU(\sL)}$-modules behave well on affinoids in the following sense.

\begin{thm}[{\cite[\S 9.5]{DCapOne}}]  Suppose that $\sL$ is a Lie algebroid on a $K$-affinoid variety $X$ such that $\sL(X)$ admits a smooth Lie lattice. Then $\Loc$ and $\Gamma(X,-)$ are mutually inverse  equivalences of abelian categories
\[
\left\{ 
				\begin{array}{c} 
					co\hspace{-0.1cm}-\hspace{-0.1cm}admissible \\ 
					\w{\sU(\sL)}(X)-\hspace{-0.1cm}modules
				\end{array}
\right\} \cong \left\{
				\begin{array}{c}
				 co\hspace{-0.1cm}-\hspace{-0.1cm}admissible \hspace{0.1cm} sheaves \hspace{0.1cm} of\\ 
				  \w{\sU(\sL)} \hspace{-0.1cm}-\hspace{-0.1cm}modules \hspace{0.1cm} on \hspace{0.1cm} X
				\end{array}
\right\}.
\] 
\end{thm}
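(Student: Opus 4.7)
The plan is to leverage the preceding theorem, which already tells us that $\Loc$ is a full exact embedding whose values have vanishing higher \v{C}ech cohomology. What remains is to verify that $\Gamma(X,-)$ sends co-admissible sheaves to co-admissible $\w{U(L)}$-modules and provides a quasi-inverse to $\Loc$. Here I write $L := \sL(X)$ throughout.

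The composition $\Gamma(X,-)\circ\Loc$ is handled directly: since $\sL(X)$ admits a smooth Lie lattice we have $X\in X_w(\sL)$, so $\w{\sU(\sL)}(X)=\w{U(L)}$, and the definition $\Loc(M)(X) = \w{\sU(\sL)}(X) \w\otimes_{\w{U(L)}} M$ together with the universal property of $\w\otimes$ yields a natural isomorphism $\Gamma(X,\Loc(M))\cong M$. For the opposite composition, let $\sM$ be co-admissible with admissible covering $\{U_i:i\in I\}$ witnessing this; by quasi-compactness of $X$ we may take $I$ finite and each $U_i$ in $X_w(\sL)$. Set $M:=\Gamma(X,\sM)$. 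The natural map $\eta\colon\Loc(M)\to\sM$ adjoint to $\id_M$ can be checked to be an isomorphism by restriction to each $U_i$, using associativity of $\w\otimes$, the identification $\sM|_{U_i}\cong\Loc(\sM(U_i))$, and \v{C}ech acyclicity (from the preceding theorem) to identify $M$ with the equalizer of $\prod_i \sM(U_i) \rightrightarrows \prod_{i,j} \sM(U_i \cap U_j)$; base-change along $\w{U(L)}\to\w{\sU(\sL)}(U_i)$ then preserves this equalizer presentation.

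The main obstacle is showing that $M$ itself is co-admissible over $\w{U(L)}$. The strategy is to produce, for each $n$, a finitely generated $\hK{U(\pi^n\L)}$-module $M_n$ together with compatible base-change isomorphisms $\hK{U(\pi^{n-1}\L)}\otimes_{\hK{U(\pi^n\L)}} M_n \cong M_{n-1}$, so that $M\cong\invlim_n M_n$ is co-admissible in the sense of Schneider--Teitelbaum. Fix a smooth $\A$-Lie lattice $\L$ in $L$; after refining the cover we may assume each $\sL(U_i)$ admits a smooth Lie lattice $\L_i$ containing the image of $\L$, so that for every $n$ the module $\sM(U_i)$ carries a compatible coherent $\hK{U(\pi^n\L_i)}$-module $\sM(U_i)_n$ coming from its co-admissible structure. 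Each $M_n$ is then defined as the equalizer of the level-$n$ \v{C}ech complex $\prod_i \sM(U_i)_n \rightrightarrows \prod_{i,j} \sM(U_i\cap U_j)_n$. Finite generation of $M_n$ over $\hK{U(\pi^n\L)}$ and compatibility of the transition maps reduce, via Tate acyclicity of the cover applied to the noetherian Banach algebras $\hK{U(\pi^n\L)}$ and their finitely generated modules, to purely algebraic statements that in turn follow from the Fr\'echet--Stein formalism together with the exactness of $\pi$-adic completion on finitely presented lattices established in \cite{DCapOne}.
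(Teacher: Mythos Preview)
The paper does not prove this theorem. It appears in Section~\ref{RevCoad}, which is part of the review of material from \cite{DCapOne}, and the statement is simply attributed to \cite[\S 9.5]{DCapOne} with no argument given in the present paper. There is therefore no proof here to compare your proposal against.

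As to the substance of your sketch: the overall shape is plausible and broadly in line with how such results are established, but the argument you give for co-admissibility of $M=\Gamma(X,\sM)$ has real gaps. Defining $M_n$ as the equaliser of the level-$n$ \v{C}ech diagram does not automatically yield a finitely generated $\hK{U(\pi^n\L)}$-module, nor does it automatically give the base-change isomorphisms $\hK{U(\pi^{n-1}\L)}\otimes_{\hK{U(\pi^n\L)}} M_n\cong M_{n-1}$: equalisers are left-exact constructions and do not in general commute with tensor products, so you need a flatness or acyclicity input at each finite level, not just at the Fr\'echet limit. Your appeal to ``Tate acyclicity of the cover applied to the noetherian Banach algebras $\hK{U(\pi^n\L)}$'' is doing all the work here, and that is precisely the non-trivial content proved in \cite{DCapOne} (the sheaves $\hnK{\sU(\L)}$ and their acyclicity on suitable coverings). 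Likewise, the assertion that after refining the cover one can arrange for smooth Lie lattices $\L_i$ containing the image of $\L$, compatibly across overlaps, is exactly the kind of technical point that requires the $\L$-admissible/$\L$-accessible machinery of \cite{DCapOne}. If you want a self-contained proof you must either reproduce that machinery or cite it precisely; as written, the final paragraph is a sketch of where the difficulties lie rather than a proof.
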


\section{Side-switching operations}\label{LRSwitch}

In this section we explain that, as in the classical theory of $\D$-modules, there is an equivalence of categories between the categories of (coadmissible) left $\w{\sU(\sL)}$-modules and of (coadmissible) right $\w{\sU(\sL)}$-modules for any Lie algebroid $\sL$ on a rigid analytic space $X$. 
\subsection{Comparison of left and right modules for Lie--Rinehart algebras}\label{LRequiv}

Suppose that $R$ is a commutative ring and that $A$ is a commutative $R$-algebra and $(L,\rho)$ is an $(R,A)$-Lie algebra.

We recall some well-known constructions; see, for example, \cite[\S 2]{Hue99}. Let $M$ and $N$ be two left $U(L)$-modules, and fix $m \in M$, $n \in N, x \in L$. Analogously to Oda's rule for $\D$-modules, using Equation (\ref{LeftModStr}) above we can make $M \otimes_AN$ into a left $U(L)$-module via the formula
\begin{equation} \label{MAN}
x(m\otimes n)=(xm)\otimes n + m\otimes (xn)
\end{equation}
and $\Hom_A(M,N)$ into a left $U(L)$-module via the formula 
\begin{equation} \label{HomAMNleft}
(xf)(m)=x f(m)- f(xm).
\end{equation}

Similarly, given two right $U(L)$-modules $M$ and $N$ we may make $\Hom_A(M,N)$ into a left $U(L)$-module via the formula 
\begin{equation} \label{HomAMNright}
(xf)(m)=f(mx)-f(m)x. 
\end{equation}

Finally, given a left $U(L)$-module $M$ and a right $U(L)$-module $N$ we may make $N\otimes_A M$ into a right $U(L)$-module via the formula 
\begin{equation}\label{NAM}
(n\otimes m)x=nx\otimes m - n\otimes xm
\end{equation}
and $\Hom_A(M,N)$ into a right $U(L)$-module via the formula 
\begin{equation}\label{HomAMNmixed}
(fx)(m)=f(m)x + f(xm).
\end{equation}
In the last three cases, the given right $L$-action extends to a right $U(L)$-module structure using (\ref{RightModStr}) and the natural left $A$-module structures on $\Hom_A(M,N)$ and $N \otimes_AM$.
\begin{prop} Suppose that $P$ is a right $U(L)$-module that is projective of constant rank $1$ as an $A$-module. Then there are mutually inverse equivalences of categories from left $U(L)$-modules to right $U(L)$-modules given by $P\otimes_A -$ and $\Hom_A(P,-)$.
\end{prop}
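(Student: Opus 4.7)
The plan is to build an adjoint equivalence directly: define a unit and counit between the two composite functors and the respective identities, show both are $A$-linear isomorphisms using that $P$ is a line bundle, and then verify $U(L)$-linearity by direct cancellation in the explicit formulas from \S\ref{LRequiv}.

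First, I would check that the two functors are well-defined on objects. If $M$ is a left $U(L)$-module then $P\otimes_A M$ becomes a right $U(L)$-module by \eqref{NAM}; if $N$ is a right $U(L)$-module then $\Hom_A(P,N)$ becomes a left $U(L)$-module by \eqref{HomAMNright}. Functoriality in $M$ and in $N$ is immediate from the constructions.

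Next I would introduce the candidate unit and counit:
\[ \eta_M : M \to \Hom_A(P, P\otimes_A M), \quad m \mapsto (p \mapsto p\otimes m), \]
\[ \epsilon_N : P\otimes_A \Hom_A(P, N) \to N, \quad p\otimes f \mapsto f(p). \]
Since $P$ is finitely generated projective of constant rank $1$ over the commutative ring $A$, it is an invertible $A$-module: the natural map $A \to \End_A(P)$ is an isomorphism, and for any $A$-modules $Q$ and $M$ the natural map $\Hom_A(P,Q)\otimes_A M \to \Hom_A(P, Q\otimes_A M)$ is an isomorphism. Taking $Q = P$ identifies $\Hom_A(P, P\otimes_A M)$ with $A \otimes_A M = M$ via $\eta_M$; a dual argument using $P\otimes_A\Hom_A(P,A)\cong A$ shows $\epsilon_N$ is an $A$-linear isomorphism.

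The only genuine computation is to check $L$-linearity of $\eta$ and $\epsilon$. For $x\in L$, $m\in M$, $p\in P$, using \eqref{HomAMNright} and then \eqref{NAM},
\[ (x\cdot \eta_M(m))(p) = \eta_M(m)(px) - \eta_M(m)(p)\cdot x = (px)\otimes m - (p\otimes m)\cdot x = p\otimes(xm) = \eta_M(xm)(p), \]
and dually, for $f \in \Hom_A(P,N)$,
\[ \epsilon_N\bigl((p\otimes f)\cdot x\bigr) = \epsilon_N(px\otimes f - p\otimes xf) = f(px) - (xf)(p) = f(px) - \bigl(f(px) - f(p)\cdot x\bigr) = f(p)\cdot x. \]
Thus $\eta$ and $\epsilon$ upgrade to $U(L)$-linear natural isomorphisms, so $P\otimes_A -$ and $\Hom_A(P,-)$ are mutually inverse equivalences. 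The main potential obstacle is sign-sensitive bookkeeping between \eqref{NAM} and \eqref{HomAMNright}, but the relevant cross terms cancel in pairs, so no real difficulty arises.
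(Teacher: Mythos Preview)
Your proposal is correct and follows essentially the same route as the paper: define the usual unit and counit, use that $P$ is a rank-one projective to see they are $A$-linear isomorphisms, and verify $U(L)$-linearity directly from formulas \eqref{NAM} and \eqref{HomAMNright}. The paper's proof is simply a terse summary of exactly these steps, so your version is a faithful expansion with the sign computations filled in.
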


\begin{proof} Using formula (\ref{NAM}) above, one can check that the natural functor $P\otimes_A -$ on $A$-modules does indeed induce a functor from left $U(L)$-modules to right $U(L)$ modules. Formula (\ref{HomAMNright}) shows that $\Hom_A(P,-)$ gives a functor in the opposite direction. Moreover the usual natural morphisms of $A$-modules 
\[M\to \Hom_A(P,P\otimes_A M) \qmb{and} P\otimes_A \Hom_A(P,N)\to N\]
are morphisms of $U(L)$-modules when $M$ is a left $U(L)$-module and $N$ is a right $U(L)$-module. The assumption on $P$ implies that they are all isomorphisms.
\end{proof}   
We note that if $L$ is a projective $A$-module of constant rank $d$, then by \cite[Proposition 2.8]{Hue99} the Lie derivative induces the structure of a right $U(L)$-module on the left $A$-module $\Hom_A(\bigwedge^dL,A)$. Since the $A$-module $\Hom_A(\bigwedge^dL,A)$ is projective of constant rank $1$, by the Proposition this condition on $L$ suffices for there to be an equivalence of categories between left $U(L)$-modules and right $U(L)$-modules. 

\begin{examps} \noindent
\be
\item Suppose that $X$ is a smooth irreducible affine variety of dimension $d$ over a field $k$ of characteristic zero and $A = \O(X)$. The $A$-module of derivations $L = \Der_k(A)$ is projective of constant rank $d$, and $U(L) = \D(X)$ is the usual ring of global differential operators on $X$. Then $\Hom_A(\bigwedge^dL,A)$ is the usual dualising module $\Omega(X)$, and in this case the Proposition gives the standard equivalence between left and right $\D(X)$-modules.
\item Returning to full generality, let $L$ be a projective $A$-module and let $U = U(L)$ so that $\gr U \cong \Sym(L)$ by Rinehart's Theorem \cite[Theorem 3.1]{Rinehart}. Suppose that $P$ is a right $U$-module with a single free generator $e$ as an $A$-module and that $I$ is an ideal in $A$. Then there is an isomorphism \[ U/IU\to P\otimes_A U/UI\] of right $U$-modules given by $\alpha+IU\mapsto (e\otimes 1)\alpha$.
This can be seen by giving both sides their natural filtrations and observing that the induced morphism between associated graded modules is the isomorphism of symmetric algebras $
\Sym(L/IL)\to \Sym(L/IL)$ given by multiplication by $(-1)^i$ in degree $i$. 
\ee \end{examps}

\subsection{An involution on \ts{P \otimes_A U(L)}} \label{PAUL}
Suppose that $P$ is a projective module of constant rank $1$ with the structure of a right $U(L)$-module. Then there are two natural ways to view $P\otimes_A U(L)$ as a right $U(L)$-module: one of these comes from the left action of $U(L)$ on itself and formula (\ref{NAM}) above; the other, which we'll denote by $\circ$, comes from the right action of $U(L)$ on itself. We'll write $P\oslash_AU(L)$ to denote the left $A$-module $P\otimes_A U(L)$ equipped with the $\circ$-action of $U(L)$ on the right. The following Lemma can be viewed as saying that there is an automorphism $\alpha$ of the $A$-module $P\otimes_A U(L)$ that exchanges these two structures. 

\begin{lem} Let $\alpha\colon P\otimes_A U(L)\to P\otimes_A U(L)$ be defined by $\alpha( p \otimes u ) = (p \otimes 1)u.$ Then 
\be \item $\alpha(t\circ u)=\alpha(t)u$ for all $t \in P \otimes_A U(L)$ and $u \in U(L)$,
\item $\alpha( (p \otimes u)v ) = (p \otimes v)u$ for all $p \in P$ and $u,v \in U(L)$, and
\item $\alpha^2 = \id$.
\ee
\end{lem}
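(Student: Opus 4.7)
The plan is to prove (a) directly from the definitions by a short associativity argument, then establish (b) by induction on the PBW-type filtration degree of $v$, and finally read off (c) as a one-line consequence of (b) with $u = 1$. For (a), by $R$-bilinearity I reduce to the case $t = p \otimes v$ of a pure tensor. Since $t \circ u = p \otimes vu$, combining the defining formula $\alpha(p \otimes u) = (p \otimes 1) u$ with associativity of the right action gives $\alpha(t \circ u) = (p \otimes 1)(vu) = ((p \otimes 1) v) u = \alpha(p \otimes v) u = \alpha(t) u$.

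For (b), I induct on the filtration degree of $v$. A preliminary point worth emphasising: the right action denoted by juxtaposition is built by extending formula \eqref{NAM} via the natural left $A$-module structure on $P \otimes_A U(L)$ and formula \eqref{RightModStr}, which forces its restriction to $A \subset U(L)$ to be $(p \otimes u) a = a (p \otimes u) = p \otimes au$, i.e.\ left multiplication on the $U(L)$ factor. This differs from the $\circ$-restriction $(p \otimes u) \circ a = p \otimes ua$ in general, since $A$ is not central in $U(L)$. The base case $v = a \in A$ then reads $\alpha((p \otimes u) a) = (p \otimes 1)(au) = ((p \otimes 1) a) u = (p \otimes a) u$. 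For $v$ of positive degree, the PBW-type filtration on $U(L)$ (whose lower-order terms are covered by the inductive hypothesis) together with $R$-linearity let me assume $v = xw$ with $x \in L$ and $w$ of strictly smaller degree. Associativity and \eqref{NAM} give $(p \otimes u) v = (px \otimes u - p \otimes xu) w$; applying $\alpha$, invoking the inductive hypothesis for $w$ on each summand, and using one further instance of \eqref{NAM} in the form $(p \otimes w) x = px \otimes w - p \otimes xw$ reorganises the expression to $(p \otimes xw) u = (p \otimes v) u$.

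Part (c) drops out by specialising $u = 1$ in (b): the identity becomes $\alpha((p \otimes 1) v) = p \otimes v$, which is exactly $\alpha^2(p \otimes v) = p \otimes v$, and $R$-linearity extends this to $\alpha^2 = \id$. The main obstacle is the inductive step of (b): the delicate part is to distinguish the two restrictions to $A$ (they differ because $A$ and $L$ do not commute in $U(L)$) and to interleave associativity of the juxtaposition action correctly with formula \eqref{NAM} used in both directions, so that the four-term expansion collapses to $(p \otimes xw) u$. Once this bookkeeping is clean, (a) is a one-line definition chase and (c) is a trivial corollary.
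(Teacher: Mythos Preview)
Your proof is correct and follows essentially the same approach as the paper: part (a) is the same one-line associativity check, part (b) is the same induction on the filtration degree of $v$ with the identical base case and the identical use of \eqref{NAM} twice in the inductive step, and part (c) is the same specialisation. Your additional remarks distinguishing the two restrictions to $A$ are accurate but not strictly needed for the argument.
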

\begin{proof} (a) We may assume that $t = p \otimes v$ for some $p \in P$ and $v \in U(L)$. Then
\[\alpha((p \otimes v)\circ u) = \alpha(p \otimes vu) = (p \otimes 1)vu = \alpha(p \otimes v)u.\]
(b) There is a natural exhaustive positive filtration $F_\bullet$ on $U(L)$ such that $F_0 = A$ and $F_n = L \cdot F_{n-1}+F_{n-1}$ for all $n \geq 1$. Proceed by induction on the filtration degree $n$ of $v \in U(L)$. When $n = 0$, $v \in A$ and $\alpha( (p \otimes u)v ) = \alpha( pv \otimes u ) = (pv \otimes 1)u = (p \otimes v)u$
 as claimed. Suppose now that $v = xw$ for some $x \in L$ and $w \in U(L)$ of degree strictly less than $n$; since the statement is linear in $v$ this case suffices. Then
\[\alpha( (p \otimes u) v ) = \alpha( ((p \otimes u)x) w)=  \alpha( (px \otimes u - p \otimes xu)w ) \]
which by the inductive hypothesis is equal to 
\[(px \otimes w)u - (p \otimes w)(xu)= (px \otimes w - (p \otimes w)x)u = (p \otimes xw)u = (p \otimes v)u.\]
(c) From part (b) we have $\alpha^2(p \otimes u) = \alpha(( p\otimes 1)u) = (p \otimes u)1 = p \otimes u$. 
\end{proof}
It follows immediately that $\alpha$ is invertible and that 
\[\alpha(tu) = \alpha(t) \circ u \qmb{for all} t \in P \otimes_A U(L)\qmb{and} u \in U(L).\]

\subsection{Comparison of left and right \ts{\hK{U(\L)}}-modules}\label{LRequivBan}

Suppose now that $\A$ is an affine formal model in a $K$-affinoid algebra $A$ and $\L$ is a smooth $(\R,\A)$-Lie algebra of constant rank $d$. Let $\Omega_{\L}:= \Hom_{\A}(\bigwedge^d\L,\A)$. This is a projective $\A$-module of constant rank $1$ with the structure of a right $U(\L)$-module by the discussion in Subsection \ref{LRequiv} above.

\begin{lem} If $\M$ is a $\pi$-adically complete $\A$-module then $\Omega_\L\otimes_\A \M$ and $\Hom_\A(\Omega_\L, \M)$ are also $\pi$-adically complete.
\end{lem}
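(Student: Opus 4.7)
The plan is to exploit the fact that $\Omega_\L$, being the dual of the top exterior power of the smooth (hence finitely generated projective of constant rank $d$) $\A$-module $\L$, is itself a finitely generated projective $\A$-module of rank $1$. In particular, $\Omega_\L$ is a direct summand of a free $\A$-module $\A^n$ of finite rank: there exists an $\A$-module $\mathcal{Q}$ and an isomorphism $\Omega_\L \oplus \mathcal{Q} \cong \A^n$. The proof then reduces to two easy stability properties of $\pi$-adic completeness, namely closure under finite direct sums and closure under direct summands.

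More concretely, I would first tensor the splitting $\Omega_\L \oplus \mathcal{Q} \cong \A^n$ with $\M$ over $\A$ to get
\[(\Omega_\L \otimes_\A \M) \oplus (\mathcal{Q} \otimes_\A \M) \cong \A^n \otimes_\A \M \cong \M^n,\]
and then apply $\Hom_\A(-, \M)$ to the same splitting to get
\[\Hom_\A(\Omega_\L, \M) \oplus \Hom_\A(\mathcal{Q}, \M) \cong \Hom_\A(\A^n, \M) \cong \M^n.\]
In both cases the module of interest is realised as a direct summand of $\M^n$, so it suffices to show that $\M^n$ is $\pi$-adically complete and that direct summands of $\pi$-adically complete modules are $\pi$-adically complete.

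For the finite direct sum, one just notes that $\pi^k \M^n = (\pi^k \M)^n$, so $\M^n / \pi^k \M^n \cong (\M/\pi^k \M)^n$, and inverse limits commute with finite direct sums; hence $\M^n \cong \varprojlim_k (\M/\pi^k \M)^n \cong \varprojlim_k \M^n/\pi^k \M^n$. For the direct summand, if $N = N_1 \oplus N_2$ is $\pi$-adically complete, then $\pi^k N$ decomposes compatibly and $\varprojlim_k N/\pi^k N \cong \varprojlim_k N_1/\pi^k N_1 \oplus \varprojlim_k N_2/\pi^k N_2$, so the natural map $N_i \to \varprojlim_k N_i/\pi^k N_i$ is identified with the corresponding summand of the isomorphism $N \xrightarrow{\sim} \varprojlim_k N/\pi^k N$, and is therefore itself an isomorphism.

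There is no real obstacle here; the whole argument is essentially the observation that $\Omega_\L$ is a direct summand of a finite free module, together with two elementary stability properties. The only point one must be slightly careful about is that the splittings of $\Omega_\L$ used for the tensor product and for the Hom are the same abstract $\A$-module splitting $\Omega_\L \oplus \mathcal{Q} \cong \A^n$, so no additional projectivity hypothesis on $\M$ is needed to ensure exactness of $-\otimes_\A \M$ or $\Hom_\A(-,\M)$ when applied to the splitting.
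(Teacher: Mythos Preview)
Your proposal is correct and follows essentially the same approach as the paper: the paper also observes that $\Omega_\L$ is a direct summand of a finite free $\A$-module, deduces that $\Omega_\L\otimes_\A \M$ and $\Hom_\A(\Omega_\L,\M)$ are direct summands of $\M^r$, and invokes the stability of $\pi$-adic completeness under finite direct sums and direct summands. Your write-up is in fact more detailed than the paper's, since you spell out why these two stability properties hold.
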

\begin{proof} $\Omega_L$ is a direct summand of some free $\A$-module $\A^r$. Then $\Omega_\L\otimes_\A \M$ and $\Hom_{\A}(\Omega_\L,\M)$ may be viewed as direct summands of $\M^r$. Since finite direct sums and direct summands of $\pi$-adically complete modules are $\pi$-adically complete, the result follows.
\end{proof}

\begin{prop} There is an equivalence of categories between the category of $\pi$-adically complete left $\h{U(\L)}$-modules and the category of $\pi$-adically complete right $\h{U(\L)}$-modules given by $\Omega_{\L}\otimes_{\A}-$. Moreover this functor restricts to an equivalence of categories between finitely generated left $\h{U(\L)}$-modules and finitely generated right $\h{U(\L)}$-modules.
\end{prop}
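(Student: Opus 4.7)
The plan is to deduce the completed equivalence from the uncompleted Proposition of Subsection~\ref{LRequiv} by applying it to each reduction modulo $\pi^n$ and passing to the inverse limit. For every $n$, the quotient $\L/\pi^n\L$ is an $(\R/\pi^n,\A/\pi^n\A)$-Lie algebra that is again projective of constant rank $d$, we have $U(\L/\pi^n\L) = U(\L)/\pi^n U(\L)$, and the identification $\Omega_\L/\pi^n\Omega_\L = \Omega_{\L/\pi^n\L}$ holds because $\bigwedge^d\L$ is $\A$-projective and so forming its $\A$-linear dual commutes with $-\otimes_\A \A/\pi^n\A$. The uncompleted Proposition therefore supplies, for each $n$, mutually inverse equivalences between left and right $U(\L)/\pi^n U(\L)$-modules given by $\Omega_{\L/\pi^n\L} \otimes_{\A/\pi^n\A} -$ and $\Hom_{\A/\pi^n\A}(\Omega_{\L/\pi^n\L},-)$.

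Since $\Omega_\L$ is a finitely generated projective $\A$-module, tensoring and $\Hom$-ing with it commute with reduction mod $\pi^n$, so for any $\A$-module $M$
\[ (\Omega_\L \otimes_\A M)/\pi^n \cong \Omega_{\L/\pi^n\L} \otimes_{\A/\pi^n\A} M/\pi^n M, \]
and similarly for $\Hom_\A(\Omega_\L,-)$. Any $\pi$-adically complete left $\h{U(\L)}$-module $M$ is the same data as a compatible inverse system of left $U(\L)/\pi^n U(\L)$-modules $(M/\pi^n M)_n$, so I would apply the mod-$\pi^n$ equivalence level-wise and take the inverse limit to produce a $\pi$-adically complete right $\h{U(\L)}$-module. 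Its underlying $\A$-module is $\Omega_\L \otimes_\A M$, by the previous Lemma together with the displayed identification, and formula~(\ref{NAM}) describes the induced action of $\L$ at each level, so the resulting functor agrees with $\Omega_\L \otimes_\A -$ as in the statement. The inverse functor is constructed identically using formula~(\ref{HomAMNright}), and the natural transformations certifying that the two are mutually inverse are inherited from the uncompleted case by passage to the limit.

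For the finite generation statement, the key observation is that $\Omega_\L \otimes_\A \h{U(\L)}$ with the right action~(\ref{NAM}) is a finitely generated right $\h{U(\L)}$-module: by the Lemma of Subsection~\ref{PAUL} it is isomorphic, via the involution $\alpha$ (whose formulae extend by continuity to the completion), to $\Omega_\L \oslash_\A \h{U(\L)}$, which is generated over $\h{U(\L)}$ on the right by any set of $\A$-module generators $\omega_1, \dots, \omega_s$ of $\Omega_\L$. Given a finitely generated left $\h{U(\L)}$-module $M$, any surjection $\h{U(\L)}^t \twoheadrightarrow M$ then induces a surjection $(\Omega_\L \otimes_\A \h{U(\L)})^t \twoheadrightarrow \Omega_\L \otimes_\A M$ of right $\h{U(\L)}$-modules, and the claim follows. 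A symmetric argument using $\Hom_\A(\Omega_\L, -) \cong \Omega_\L^\vee \otimes_\A -$ for the finitely generated projective dual $\Omega_\L^\vee = \Hom_\A(\Omega_\L, \A)$ handles the inverse functor.

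The only real subtlety in the whole argument is verifying that the $U(\L)$-actions defined by formulae~(\ref{NAM}) and~(\ref{HomAMNright}) are $\pi$-adically continuous, so that they extend to actions of $\h{U(\L)}$. The mod-$\pi^n$ construction above is designed precisely to automate this compatibility, so I do not anticipate any genuine obstacle beyond keeping track of definitions.
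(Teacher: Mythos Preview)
Your argument is correct, but it is organised differently from the paper's and uses a genuinely different idea for the finite-generation part.

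For the first statement, the paper avoids the level-by-level reduction mod $\pi^n$: it observes directly that the uncompleted equivalence of Proposition~\ref{LRequiv} restricts to $\pi$-adically complete $U(\L)$-modules (using the preceding Lemma), and then shows separately that restriction along $U(\L)\to\h{U(\L)}$ is an equivalence between $\pi$-adically complete $U(\L)$-modules and $\pi$-adically complete $\h{U(\L)}$-modules. Your inverse-system formulation is essentially the same content unpacked one level at a time; the paper's phrasing is slightly slicker because it isolates once and for all the statement that a $\pi$-adically complete $U(\L)$-module automatically carries a unique compatible $\h{U(\L)}$-action.

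For the finite-generation statement the two proofs really diverge. The paper argues by the ascending chain condition: since $\h{U(\L)}$ is Noetherian and the equivalence on $\pi$-adically complete modules preserves inclusions, any ascending chain of finitely generated right submodules of $\Omega_\L\otimes_\A\M$ pulls back to a chain of left submodules of $\M$, which terminates. Your route via the involution $\alpha$ of Subsection~\ref{PAUL} is more explicit and has the virtue of anticipating exactly how $\alpha$ is used later in Lemma~\ref{LRequivCoad}; on the other hand it requires you to justify that $\alpha$ extends continuously to the completion, which the paper only does in that later Lemma. Both arguments are valid.
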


\begin{proof} It follows from the Lemma that the equivalence of categories $\Omega_{\L}\otimes_{\A}-$ from left $U(\L)$-modules to right $U(\L)$-modules given by Proposition \ref{LRequiv} restricts to an equivalence of categories between $\pi$-adically complete left $U(\L)$-modules and $\pi$-adically complete right $U(\L)$-modules.

We will show that restriction along $U(\L)\to \h{U(\L)}$ defines equivalences of categories from the category of $\pi$-adically complete left (respectively, right) $\h{U(\L)}$-modules and $\pi$-adically complete left (respectively, right) $U(\L)$-modules. Certainly the restriction functors are faithful so we must show that they are also full and essentially surjective on objects. Suppose that $f\colon \M\to \N$ is a $U(\L)$-linear map between $\pi$-adically complete $\h{U(\L)}$-modules. We must show that $f$ is also $\h{U(\L)}$-linear. But $f$ induces $U(\L)/\pi^n U(\L)$-linear morphisms $f_n\colon \M/\pi^n \M\to \N/\pi^n \N$ for each $n\ge 0$ and we may identify $f$ with $\invlim f_n\colon \M\to \N$ which is $\h{U(\L)}$-linear as required. Similarly, if $\M$ is any $\pi$-adically complete $U(\L)$-module, then $\M/\pi^n\M$ is naturally a $U(\L)/\pi^n U(\L)$-module for each $n\ge 0$ and so $\M\cong \invlim \M/\pi^n \M$ is a naturally a $\h{U(\L)}$-module as required. 

We now see that $\Omega_{\L}\otimes_{\A}-$ is an equivalence of categories between $\pi$-adically complete left $\h{U(\L)}$-modules and $\pi$-adically complete right $\h{U(\L)}$-modules. Now, every finitely generated $\h{U(\L)}$-module is $\pi$-adically complete, and $\Omega_{\L}\otimes_{\A}-$ preserves inclusions between finitely generated modules. So if $\M$ is a Noetherian left $\h{U(\L)}$-module, then an ascending chain of finitely generated right $\h{U(\L)}$-submodules of $\Omega_{\L}\otimes_{\A} \M$ corresponds to an ascending chain of left submodules of $\M$, which terminates. So $\Omega_{\L}\otimes_{\A}\M$ is a Noetherian right $\h{U(\L)}$-module.\end{proof}
Let $\Omega_L:= K\otimes_\R \Omega_\L$. For any $\A$-module $\M$, the natural $A$-module isomorphism 
\[ \Omega_L \otimes_A (K \otimes_\R \M) \cong K \otimes_\R (\Omega_\L \otimes_\A \M)\]
induces a left $\hK{U(\L)}$-module structure on $\Omega_L \otimes_A (K \otimes_\R \M)$ whenever $\M$ is a left $\h{U(\L)}$-module. Similary, the natural $A$-module isomorphism
\[\Hom_A(\Omega_L, K \otimes_\R\M) \cong K \otimes_\R \Hom_\A(\Omega_\L,\M)\]
induces a right $\hK{U(\L)}$-module structure on $\Hom_A(\Omega_L, K \otimes_\R \M)$ whenever $\M$ is a right $\h{U(\L)}$-module. These constructions are functorial in $\M$.
\begin{thm} The functors $\Omega_L\otimes_A-$ and $\Hom_A(\Omega_L,-)$ are a mutually inverse pair of equivalences of categories between finitely generated left $\hK{U(\L)}$-modules and finitely generated right $\hK{U(\L)}$-modules.
\end{thm}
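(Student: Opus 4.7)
The plan is to deduce this from the previous Proposition by the standard device of lattices. Every finitely generated left $\hK{U(\L)}$-module $M$ admits a finitely generated $\h{U(\L)}$-lattice: if $m_1,\ldots,m_r$ generate $M$, then $\M := \sum_{i=1}^r \h{U(\L)}\,m_i$ is a finitely generated $\h{U(\L)}$-submodule of $M$, $\pi$-torsion-free as a submodule of the $K$-vector space $M$, and the natural map $K\otimes_\R \M \to M$ is an isomorphism. The analogous statement holds for finitely generated right modules.

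Given such a lattice $\M$, the previous Proposition supplies a finitely generated right $\h{U(\L)}$-module $\Omega_\L \otimes_\A \M$, and the natural $A$-module isomorphism $\Omega_L \otimes_A M \cong K \otimes_\R (\Omega_\L \otimes_\A \M)$ recorded immediately before the theorem transports its right $\h{U(\L)}$-structure to a right $\hK{U(\L)}$-structure, making $\Omega_L \otimes_A M$ a finitely generated right $\hK{U(\L)}$-module. Symmetrically, $\Hom_A(\Omega_L,-)$ sends finitely generated right $\hK{U(\L)}$-modules to finitely generated left ones. To show these are mutually inverse, I would exhibit the natural $A$-linear unit and counit
\[ \eta_M \colon M \to \Hom_A(\Omega_L, \Omega_L \otimes_A M), \qquad \epsilon_N \colon \Omega_L \otimes_A \Hom_A(\Omega_L, N) \to N, \]
which are isomorphisms of $A$-modules because $\Omega_L$ is projective of constant rank $1$ over $A$. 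Their $\hK{U(\L)}$-linearity follows by choosing lattices, observing (via the previous Proposition) that the corresponding $\h{U(\L)}$-level maps $\eta_\M$ and $\epsilon_\N$ are $\h{U(\L)}$-linear, and then tensoring with $K$.

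The main technical point will be checking that the $\hK{U(\L)}$-structure on $\Omega_L \otimes_A M$ (and likewise on $\Hom_A(\Omega_L,N)$) is intrinsic, i.e.\ independent of the chosen lattice $\M$, and varies functorially in $M$. Both points follow from the observation that any two $\h{U(\L)}$-lattices in a finitely generated $\hK{U(\L)}$-module are commensurable (each sits inside a $\pi$-power rescaling of the other), and that any $\hK{U(\L)}$-linear morphism between finitely generated modules is, after multiplication by a suitable power of $\pi$, the $K$-linear extension of an $\h{U(\L)}$-linear morphism of chosen lattices. These reductions let one transfer the functoriality of $\Omega_\L \otimes_\A -$ supplied by the previous Proposition to the level of $\hK{U(\L)}$-modules, completing the proof.
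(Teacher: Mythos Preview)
Your proposal is correct and follows essentially the same approach as the paper: choose an $\h{U(\L)}$-lattice, apply the preceding Proposition, transport structure along the isomorphism $\Omega_L\otimes_A M \cong K\otimes_\R(\Omega_\L\otimes_\A\M)$, and verify that the unit and counit of the $A$-module adjunction are $\hK{U(\L)}$-linear isomorphisms. You are in fact slightly more careful than the paper in spelling out the independence of the resulting module structure from the choice of lattice and the functoriality via commensurability; the paper leaves these points implicit.
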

\begin{proof} Suppose that $M$ is a finitely generated left $\hK{U(\L)}$-module. Then we can find a finitely generated $\h{U(\L)}$-submodule $\M\subset M$ such that $K\otimes_\R \M\cong M$. Therefore  $\Omega_L\otimes_A M\cong K\otimes_\R (\Omega_\L\otimes_\A \M)$ is a finitely generated right $\hK{U(\L)}$-module by the Proposition. The same argument shows that $\Hom_A(\Omega_L,-)$ sends finitely generated right $\hK{U(\L)}$-modules to finitely generated left $\hK{U(\L)}$-modules. 

The functor $\Omega_L \otimes_A -$ is left adjoint to $\Hom_A(\Omega_L,-)$ on $A$-modules, and the unit and counit morphisms for this adjunction are isomorphisms on all modules of the form $\M \otimes_\R K$ by the Proposition. So the restrictions of these functors to finitely generated $\hK{U(\L)}$-modules are mutually inverse equivalences.
\end{proof}

\subsection{Comparison of left and right coadmissible \ts{\w{U(L)}}-modules}\label{LRequivCoad}

Suppose now that $A$ is a $K$-affinoid algebra and $L$ is a smooth $(K,A)$-Lie algebra of constant rank $d$. Suppose that $L$ admits a smooth $\A$-Lie lattice $\L$ for some affine formal model $\A$ in $A$, and recall the Fr\'echet--Stein algebra $\w{U(L)}$ from \cite[Theorem 6.4]{DCapOne}. As in Subsection \ref{LRequivBan}, let $\Omega_L$ denote $\Hom_A(\bigwedge^dL,A)$. 

Recall the isomorphism $\alpha\colon \Omega_L\otimes_A U(L)\to \Omega_L\oslash_A U(L)$ from Lemma \ref{PAUL} that switches the two natural right $U(L)$-module structures on $\Omega_L\otimes_A U(L)$.

\begin{lem} There is a commutative diagram \[ \xymatrix{ \Omega_L\otimes_A \hK{U(\pi\L)} \ar[r] \ar[d] & \Omega_L\otimes_A \hK{U(\L)} \ar[d] \\ 
                                                                 \Omega_L\oslash_A \hK{U(\pi\L)} \ar[r]        & \Omega_L\oslash_A \hK{U(\L)} }\] of right $\hK{U(\pi\L)}$-modules with horizontal arrows given by inclusions and vertical arrows continuous extensions of $\alpha$.
\end{lem}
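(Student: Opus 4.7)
The plan is to reduce the claim to the single technical point that $\alpha$ preserves the integral lattice $\Omega_\L\otimes_\A U(\pi\L)$, and similarly $\Omega_\L\otimes_\A U(\L)$. Once that is in hand, the vertical arrows arise as continuous extensions, and commutativity of the square follows by density.

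For the lattice preservation I would apply Lemma \ref{PAUL} not to $(K,A,L)$ but to the integral data $(\R,\A,\pi\L)$ with $P=\Omega_\L$. Since $\Omega_\L$ is projective of rank $1$ over $\A$ and is a right $U(\pi\L)$-module via restriction along $U(\pi\L)\hookrightarrow U(\L)$, the lemma produces an involution of $\Omega_\L\otimes_\A U(\pi\L)$ defined by exactly the same formula $p\otimes u\mapsto (p\otimes 1)u$ as the ambient $\alpha$ on $\Omega_L\otimes_A U(L)$, and hence agrees with its restriction. The analogous statement with $\L$ in place of $\pi\L$ handles the right-hand column.

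Now, although $\alpha$ is not $A$-linear (the left $A$-action on $\Omega_L\otimes_A U(L)$ does not commute with the right $U(L)$-action, because of terms in $\rho$), it is $\R$-linear: elements of $\R\subseteq \A$ are killed by $\rho(\L)$. Combined with lattice preservation, this shows that $\alpha$ is continuous for the $\pi$-adic topology on $\Omega_\L\otimes_\A U(\pi\L)$ and therefore extends to a continuous map on the $\pi$-adic completion, which coincides with $\Omega_\L\otimes_\A \h{U(\pi\L)}$ because $\Omega_\L$ is finitely generated projective. Tensoring with $K$ yields the left vertical arrow of the diagram; the right vertical arrow is constructed identically. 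By Lemma \ref{PAUL}(a) applied at the algebraic level, plus continuity, each vertical map intertwines the usual and $\circ$-actions, and so is $\hK{U(\pi\L)}$-linear once the target is equipped with its $\circ$-action.

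Finally, the horizontal arrows are the continuous $\hK{U(\pi\L)}$-linear extensions of the inclusion $\Omega_\L\otimes_\A U(\pi\L)\hookrightarrow \Omega_\L\otimes_\A U(\L)$ induced by $U(\pi\L)\hookrightarrow U(\L)$. Commutativity of the square is then immediate on the dense $K$-subspace $\Omega_L\otimes_A U(\pi\L)$, since both compositions reduce there to applying $\alpha$ and then including (equivalently, including and then applying $\alpha$), and extends to the full Banach completion by continuity of all four arrows. The only non-formal step is the lattice preservation above; the remainder is routine.
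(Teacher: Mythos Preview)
Your proposal is correct and takes essentially the same approach as the paper: apply Lemma \ref{PAUL} at the integral level (to $\Omega_\L$ over $\A$ with the Lie lattices $\L$ and $\pi\L$), $\pi$-adically complete, invert $\pi$, and conclude by density. The paper's own proof is a terse version of exactly this argument; your extra care about $\R$-linearity, lattice preservation, and the density step for commutativity merely spells out what the paper leaves implicit.
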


\begin{proof}  

By Lemma \ref{PAUL}, there is an isomorphism $\alpha: \Omega_\L\otimes_\A U(\L)\to \Omega_\L\oslash_\A U(\L)$ of right $U(\L)$-modules. We may $\pi$-adically complete this map and then invert $\pi$ in order to obtain $\hK{\alpha}$, the unique continuous extension of $\alpha$ to an isomorphism \[ \hK{\alpha}\colon \Omega_L\otimes_A \hK{U(\L)}\to \Omega_L\oslash_A \hK{U(\L)} .\] The result follows immediately.   \end{proof}

\begin{thm} The pair $\Omega_L\otimes_A -$ and $\Hom_A(\Omega_L,-)$ define mutually inverse equivalences of categories between coadmissible left $\w{U(L)}$-modules and coadmissible right $\w{U(L)}$-modules.
\end{thm}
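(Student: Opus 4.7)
The strategy is to reduce to the Banach-level equivalence established in the preceding Theorem of Subsection \ref{LRequivBan} and pass to the inverse limit. By the Fr\'echet--Stein structure $\w{U(L)} = \invlim_n \hK{U(\pi^n\L)}$, a co-admissible left $\w{U(L)}$-module $M$ corresponds to a compatible system $\{M_n\}_{n \geq 0}$ of finitely generated left $\hK{U(\pi^n\L)}$-modules such that $\hK{U(\pi^{n-1}\L)} \otimes_{\hK{U(\pi^n\L)}} M_n \cong M_{n-1}$, and similarly for co-admissible right modules (with the base change applied on the right). Since $\Omega_L$ is finitely generated projective of rank $1$ over $A$, the functors $\Omega_L \otimes_A -$ and $\Hom_A(\Omega_L, -)$ commute with arbitrary inverse limits of $A$-modules, and at each level $n$ the preceding Theorem already produces a mutually inverse pair of equivalences between finitely generated left and right $\hK{U(\pi^n\L)}$-modules.

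The crux of the proof is to verify that applying $\Omega_L \otimes_A -$ termwise is compatible with the transition maps of the Fr\'echet--Stein system: for every $n \geq 1$ the canonical map
\[ (\Omega_L \otimes_A M_n) \otimes_{\hK{U(\pi^n\L)}} \hK{U(\pi^{n-1}\L)} \longrightarrow \Omega_L \otimes_A M_{n-1} \]
should be an isomorphism of right $\hK{U(\pi^{n-1}\L)}$-modules. This is precisely where the preceding Lemma is used: its commuting square identifies, via the continuous extension $\hK\alpha$, the right $\hK{U(\pi^n\L)}$-module structure on $\Omega_L \otimes_A \hK{U(\pi^{n-1}\L)}$ coming from left multiplication on the second factor with the structure coming from right multiplication. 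Once this identification is in place, the base change on the left side above reduces to the base change $\hK{U(\pi^{n-1}\L)} \otimes_{\hK{U(\pi^n\L)}} M_n \cong M_{n-1}$ of left modules, followed by a single application of $\Omega_L \otimes_A -$, yielding $\Omega_L \otimes_A M_{n-1}$ as required.

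With this compatibility in hand, one defines $\Omega_L \otimes_A M := \invlim_n (\Omega_L \otimes_A M_n)$, which is a co-admissible right $\w{U(L)}$-module, and dually defines $\Hom_A(\Omega_L, N)$ from any co-admissible right $\w{U(L)}$-module $N$. The unit $M \to \Hom_A(\Omega_L, \Omega_L \otimes_A M)$ and counit $\Omega_L \otimes_A \Hom_A(\Omega_L, N) \to N$ of the desired adjunction arise as inverse limits of their counterparts at each finite level, where they are isomorphisms by the preceding Theorem; hence so are the limits. The main obstacle is the bimodule compatibility verified in the middle paragraph, which is isolated and resolved by the preceding Lemma; everything else is a formal inverse-limit argument layered on top of the Banach-level equivalence.
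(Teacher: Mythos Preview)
Your proposal is correct and follows essentially the same approach as the paper: reduce to the Banach level via Theorem \ref{LRequivBan}, check compatibility with the transition maps using the preceding Lemma, and pass to the inverse limit. The paper makes the compatibility step slightly more explicit by writing down a natural transformation $\theta_Q \colon (\Omega_L \otimes_A Q) \otimes_{U_{n+1}} U_n \to \Omega_L \otimes_A (U_n \otimes_{U_{n+1}} Q)$ between right exact functors in $Q$, observing that the Lemma shows $\theta_{U_{n+1}}$ is an isomorphism, and then invoking the Five Lemma to conclude for all finitely generated $Q$; your phrase ``reduces to the base change of left modules followed by a single application of $\Omega_L \otimes_A -$'' is exactly this, but the passage from the free case to the general case deserves the one-line justification via right exactness and the Five Lemma.
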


\begin{proof} Let $\L$ be a smooth $\A$-Lie lattice in $L$, and write $U_n=\hK{U(\pi^n\L)}$ and $U=\w{U(L)}$. Suppose that $(M_\bullet)$ is a family of finitely generated left $U_\bullet$-modules and let $N_\bullet:=\Omega_L\otimes_A M_\bullet$. By Theorem \ref{LRequivBan}, $(N_\bullet)$ is a family of finitely generated right $U_\bullet$-modules. We will verify that $(N_\bullet)$ is coherent if and only if $(M_\bullet)$ is coherent, that is, that there are isomorphisms $N_{n+1}\otimes_{U_{n+1}}U_n\cong N_n$ for each $n\ge 0$ if and only if there are isomorphisms $U_n\otimes_{U_{n+1}}M_{n+1}\cong M_n$ for each $n\ge 0$. 

For each finitely generated left $U_{n+1}$-module $Q$, define a right $U_n$-linear map
\[\theta_{Q}\colon (\Omega_L\otimes_A Q)\otimes_{U_{n+1}} U_n\to \Omega_L\otimes_A (U_n\otimes_{U_{n+1}}Q)\]
by setting $\theta_{Q} \left((\omega\otimes m)\otimes r\right) = (\omega\otimes 1\otimes m)r$. Then $\theta$ is a natural transformation between two right exact functors, and it follows from the Lemma that $\theta_{U_{n+1}}$ as an isomorphism. Hence $\theta_Q$ is an isomorphism for all $Q$ by the Five Lemma. Thus $N_{n+1}\otimes_{U_{n+1}}U_n\cong \Omega_L\otimes_A (U_n\otimes_{U_{n+1}}M_{n+1})$. So $N_n\cong N_{n+1}\otimes_{U_{n+1}}U_n$ if and only if $M_n\cong U_n\otimes_{U_{n+1}}M_{n+1}$.

It now follows from Theorem \ref{LRequivBan} that $(M_\bullet) \mapsto (N_\bullet)$ is an equivalence of categories between coherent sheaves of left $U_\bullet$-modules and coherent sheaves of right $U_\bullet$-modules. Finally, since $\Omega_L$ is a direct summand of a free $A$-module, for every coadmissible left $U$-module $M$ there are canonical isomorphisms
\[ \Omega_L \otimes_A M \cong \Omega_L \otimes_A(\invlim U_n \otimes_U M) \cong \invlim \Omega_L \otimes_A (U_n \otimes_U M) \]
of left $A$-modules. Using the composition of these isomorphisms we can define a right $U$-module structure on $\Omega_L \otimes_A M$. Similarly, the canonical isomorphisms
\[ \Hom_A(\Omega_L, N) \cong \Hom_A(\Omega_L, N \otimes_U U_n) \cong U_n\otimes_U \Hom_A(\Omega_L,N)\]
induce a left $U$-module structure on $\Hom_A(\Omega_L, N)$ for every coadmissible right $U$-module $N$. The result now follows from \cite[Corollary 3.3]{ST}.
\end{proof}

\subsection{Comparison of left and right coadmissible \ts{\w{\sU(\sL)}}-modules}\label{wULswitch}
Suppose now that $X$ is a rigid analytic space and that $\sL$ is a Lie algebroid on $X$ with constant rank $d$. Then we have at our disposal the invertible sheaf
\[\Omega_{\sL}:=\mathpzc{Hom}_{\O}(\bigwedge^d \sL, \O).\]
\begin{lem} Suppose that $Z\subset Y$ are open affinoid subvarieties of $X$ such that $\sL(Y)$ admits a smooth Lie lattice $\L$ for some affine formal model $\A$ in $\O(Y)$ and $\B$ is an $\L$-stable affine formal model in $\O(Z)$. Then there is a commutative diagram of $\hK{U(\L)}$-modules \[ \xymatrix{ \Omega_{\sL}(Y)\underset{\O(Y)}{\otimes} \hK{U(\L)} \ar[r] \ar[d] & \Omega_{\sL}(Z)\underset{\O(Z)}{\otimes}\hK{U(\B\otimes_\A \L)} \ar[d] \\ \Omega_{\sL}(Y)\underset{\O(Y)}{\oslash}\hK{U(\L)}\ar[r] & \Omega_{\sL}(Z)\underset{\O(Z)}{\oslash}  \hK{U(\B\otimes_\A \L)}}\] whose vertical maps are isomorphisms.
\end{lem}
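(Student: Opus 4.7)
The approach mirrors that of the preceding Lemma: first establish commutativity at the uncompleted level, then $\pi$-adically complete and invert $\pi$.

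At the uncompleted level, I would consider the $\A$-linear base-change map
\[\beta: \Omega_\L \otimes_\A U(\L) \to \Omega_{\B \otimes_\A \L} \otimes_\B U(\B \otimes_\A \L), \qquad p \otimes u \mapsto (1 \otimes p) \otimes u,\]
where the identification $\B \otimes_\A \Omega_\L \cong \Omega_{\B \otimes_\A \L}$ uses that $\sL$ is locally free of rank $d$. This fits into an obvious square with vertical arrows given by the side-switching involutions $\alpha_\L$ and $\alpha_{\B \otimes_\A \L}$ from Lemma \ref{PAUL}, which are isomorphisms by Lemma \ref{PAUL}(c). To verify commutativity, I would show that $\beta$ intertwines both right-module structures on the two sides: the $\circ$-intertwining is immediate from the ring map $U(\L) \to U(\B \otimes_\A \L)$, and the NAM-intertwining follows by induction on the filtration-degree of $v \in U(\L)$, reducing to the case $v = x \in \L$. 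Here the identity $(1 \otimes p) \cdot x = 1 \otimes (p \cdot x)$ in $\Omega_{\B \otimes_\A \L}$ holds because the right-module formula (\ref{RightModStr}) with $a = 1$ kills the derivation correction, as $\rho(x)(1) = 0$. Given the intertwining property, the defining formula $\alpha(p \otimes u) = (p \otimes 1) \cdot u$ immediately yields $\beta \circ \alpha_\L = \alpha_{\B \otimes_\A \L} \circ \beta$.

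To conclude, I would $\pi$-adically complete the square and invert $\pi$. The horizontal arrows extend continuously via the ring map $\hK{U(\L)} \to \hK{U(\B \otimes_\A \L)}$; the vertical isomorphisms extend to $\hK{\alpha}$'s exactly as in the preceding Lemma; and commutativity is preserved by continuity, yielding the square of $\hK{U(\L)}$-modules in the statement (the right column viewed via restriction of scalars). The main subtlety is the NAM-intertwining property of $\beta$: the right $U(\L)$-module structure on $\Omega_\L$ is not strictly $\A$-linear, so the derivation-correction term in (\ref{RightModStr}) must vanish on the image of $\beta$, which it does precisely because the $\B$-coefficient of $1 \otimes p$ is the unit.
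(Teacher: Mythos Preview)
Your approach is essentially the paper's: construct the square at the uncompleted integral level using the involution $\alpha$ of Lemma \ref{PAUL}, then $\pi$-adically complete and invert $\pi$. The paper's proof says little more, citing Lemma \ref{LRequivCoad} for the vertical arrows, \cite[\S 3.3]{DCapOne} for the horizontal ones, and Lemma \ref{PAUL} for commutativity.

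One small correction. Your justification of the key identity $(1\otimes p)\cdot x = 1\otimes(p\cdot x)$ via formula (\ref{RightModStr}) with $a=1$ is tautological: setting $a=1$ in $(am)x = a(mx)-\rho(x)(a)m$ just yields $mx=mx$ and tells you nothing about how the two Lie-derivative actions compare. What you actually need is that the natural map $\Omega_{\L}\to\Omega_{\B\otimes_{\A}\L}$ intertwines the right $\L$-actions. This holds because the Lie-derivative formula for $p\cdot x$ (cf.\ \cite[Proposition 2.8]{Hue99}) involves only $\rho(x)$ applied to values of $p$ in $\A$ together with brackets in $\L$, and both are preserved under the inclusions $\A\hookrightarrow\B$ and $\L\hookrightarrow\B\otimes_{\A}\L$ since $\B$ is $\L$-stable. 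With this functoriality in hand, your induction on filtration degree goes through and the remainder of the argument is correct.
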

\begin{proof} We construct the vertical arrows by following the proof of Lemma \ref{LRequivCoad}. The horizontal arrows are induced by the functoriality of $\hK{U(-)}$. It is straightforward to use Lemma \ref{PAUL} to verify that the diagram commutes.
\end{proof}

Recall the sheaf $\sU := \w{\sU(\sL)}$ from Subsection \ref{RevLiealg}.

\begin{prop} Suppose that $Y$ is an open affinoid subvariety of $X$ such that $\sL(Y)$ admits a smooth Lie lattice.
\be \item Let $M$ be a coadmissible left $\w{U(\sL(Y))}$-module. Then there is an isomorphism of  right $\sU|_{Y_w}$-modules 
\[ \Loc\left(\Omega_{\sL}(Y)\otimes_{\O(Y)}M\right)\cong  \Omega_{\sL|Y_w}\otimes_{\O_Y}\Loc(M).\] 
\item Let $N$ be a coadmissible right $\w{U(\sL(Y))}$-module. Then there is an isomorphism of left $\sU|_{Y_w}$-modules 
\[ \Loc\left(\Hom_{\O(Y)}(\Omega_{\sL}(Y),N)\right)\cong \mathpzc{Hom}_{\O_Y}\left( \Omega_{\sL|Y_w}, \Loc(N)\right).\] 
\ee
\end{prop}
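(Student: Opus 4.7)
The plan is to check both stated isomorphisms on each affinoid $Z \in Y_w(\sL)$ (these form a basis for the $G$-topology on $Y$ by \cite[Lemma 9.3]{DCapOne}), to construct them at each finite level, and then to pass to the inverse limit. I focus on the tensor statement; the Hom statement follows either symmetrically, or by applying the side-switching equivalence of Theorem \ref{LRequivCoad} and the invertibility of $\Omega_\sL$.

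Fix $Z \in Y_w(\sL)$ together with an $\L$-stable affine formal model $\B$ in $\O(Z)$, and set $U_n := \hK{U(\pi^n\L)}$, $V_n := \hK{U(\pi^n(\B \otimes_\A \L))}$, so that $U := \w{U(\sL(Y))} = \invlim U_n$ and $V := \w{\sU(\sL)}(Z) = \invlim V_n$. The heart of the argument is a finite-level comparison: for each finitely generated left $U_n$-module $M_n$, I will construct a natural right $V_n$-linear isomorphism
\[
\theta_n(M_n) \colon \bigl(\Omega_\sL(Y) \otimes_{\O(Y)} M_n\bigr) \otimes_{U_n} V_n \;\longrightarrow\; \Omega_\sL(Z) \otimes_{\O(Z)} \bigl(V_n \otimes_{U_n} M_n\bigr),
\]
where the left-hand side is a tensor of a right $U_n$-module (via side-switching over $\O(Y)$) with $V_n$, and the right-hand side carries its right $V_n$-structure via side-switching over $\O(Z)$. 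When $M_n = U_n$, both sides become canonically identified with $\Omega_\sL(Z) \otimes_{\O(Z)} V_n$ via the commutative square of the preceding Lemma (Subsection \ref{wULswitch}), which is precisely the mechanism that reconciles the two right-module structures on $\Omega_L \otimes_A V_n$. Both sides of $\theta_n$ are right-exact in $M_n$; so by the five-lemma argument used in the proof of Theorem \ref{LRequivCoad}, $\theta_n(M_n)$ exists and is an isomorphism for arbitrary finitely generated $M_n$.

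The second step is to pass to the inverse limit. For a co-admissible left $U$-module $M$ with coherent family $M_n = U_n \otimes_U M$, Theorem \ref{LRequivCoad} tells us that $(\Omega_\sL(Y) \otimes_{\O(Y)} M_n)_n$ is precisely the coherent family for the co-admissible right $U$-module $\Omega_\sL(Y) \otimes_{\O(Y)} M$; hence its $\Loc$-sections over $Z$ are $\invlim (\Omega_\sL(Y) \otimes_{\O(Y)} M_n) \otimes_{U_n} V_n$. On the other side, since $\Omega_\sL(Z)$ is a direct summand of a finitely generated free $\O(Z)$-module, the functor $\Omega_\sL(Z) \otimes_{\O(Z)} (-)$ commutes with the inverse limits in sight, so $\Omega_\sL(Z) \otimes_{\O(Z)} \Loc(M)(Z) = \invlim \Omega_\sL(Z) \otimes_{\O(Z)} (V_n \otimes_{U_n} M_n)$. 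Taking $\invlim \theta_n(M_n)$ yields the required isomorphism of right $V$-modules on $Z$, and the preceding Lemma guarantees compatibility with inclusions $Z' \subset Z$, so these local isomorphisms glue to an isomorphism of sheaves. As a by-product, $\Omega_{\sL|Y_w} \otimes_{\O_Y} \Loc(M)$ is co-admissible because the left-hand side is, by definition.

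The main obstacle throughout is the algebraic bookkeeping around the two competing right-module structures on $\Omega_\sL \otimes_A U$ (ordinary tensor action versus side-switched action), related by the involution $\alpha$ of Lemma \ref{PAUL}: verifying that $\theta_n$ is genuinely right $V_n$-linear requires careful tracking of which structure is in play at each tensor step. Once the preceding Lemma is invoked to absorb this subtlety at the bimodule level, the remaining work is routine manipulation of right-exactness and countable inverse limits of $\pi$-adically complete modules.
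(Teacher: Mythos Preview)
Your proposal is correct and follows essentially the same approach as the paper: the paper also fixes an affinoid subdomain $Z$ of $Y$, sets up the finite-level comparison map $\tau_Q \colon (\Omega_{\sL}(Y)\otimes_{\O(Y)} Q)\otimes_{U_n}V_n \to \Omega_{\sL}(Z)\otimes_{\O(Z)} (V_n\otimes_{U_n}Q)$, verifies it is an isomorphism for $Q=U_n$ via the preceding Lemma, extends to all finitely generated $Q$ by right-exactness and the Five Lemma, and then passes to the inverse limit. The only minor point you gloss over is that the existence of an $\L$-stable affine formal model $\B$ in $\O(Z)$ requires first rescaling $\L$ and invoking \cite[Lemma 7.5(b)]{DCapOne}, which the paper makes explicit.
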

\begin{proof} Let $Z$ be an affinoid subdomain of $Y$.  Consider $\A$ an affine formal model in $\O(Y)$ such that $\sL(Y)$ admits a smooth $\A$-Lie lattice $\L$. By rescaling $\L$ if necessary and applying \cite[Lemma 7.6(b)]{DCapOne}, we may assume that $\O(Z)$ admits an $\L$-stable affine formal model $\B$. Write $U_n:=\hK{U(\pi^n\L)}$ and $V_n:= \hK{U(\B\otimes_\A\pi^n\L)}$ so that $\sU(Y) = \invlim U_n$ and $\sU(Z) = \invlim V_n$. For each finitely generated left $U_n$-module $Q$, define a right $V_n$-linear map
\[ \tau_Q\colon  (\Omega_{\sL}(Y)\otimes_{\O(Y)} Q)\otimes_{U_n}V_n\to  \Omega_{\sL}(Z)\otimes_{\O(Z)} (V_n\otimes_{U_n}Q)\]
by setting $\tau_Q\left((\omega\otimes m)\otimes v\right)= (\omega|_Z\otimes (1 \otimes m))v$. Then $\tau$ is a natural transformation between two right exact functors, and using the Lemma we see that $\tau_{U_n}$ is an isomorphism. Hence $\tau_{Q}$ is an isomorphism for all $Q$ by the Five Lemma. 

Now there is a natural isomorphism 
\[  \left(\Omega_{\sL}(Y)\underset{\O(Y)}{\otimes}M\right)\underset{\sU(Y)}{\w{\otimes}}\sU(Z)\cong \Omega_{\sL}(Z)\underset{\O(Z)}{\otimes} \left(\sU(Z)\underset{\sU(Y)}{\w\otimes}M\right)\] of right $\sU(Z)$-modules, and the isomorphism
\[\Loc\left(\Omega_{\sL}(Y)\otimes_{\O(Y)}M\right)\cong  \Omega_{\sL|Y_w}\otimes_{\O_Y}\Loc(M)\]
follows. This proves part (a), and part (b) has a similar proof.
\end{proof}

\begin{cor} Keep the notation of the Proposition.
\be \item $\Omega_{\sL|Y_w} \otimes_{\O_Y} \Loc(M)$ is a coadmissible right $\sU|_{Y_w}$-module.
\item $\mathpzc{Hom}_{\O_Y}\left( \Omega_{\sL|Y_w}, \Loc(N)\right)$ is a coadmissible left $\sU|_{Y_w}$-module. 
\ee
\end{cor}

We are now ready to prove the main Theorem in this section.
\begin{thm} There is a mutually inverse pair of equivalences of categories $\Omega_{\sL}\otimes_{\O_X}-$ and $\mathpzc{Hom}_{\O_X}(\Omega_{\sL},-)$ between coadmissible left $\w{\sU(\sL)}$-modules and coadmissible right $\w{\sU(\sL)}$-modules. 
\end{thm}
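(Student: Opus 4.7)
The strategy is to define the two functors globally by gluing the local constructions supplied by the preceding Proposition, and then deduce that they are mutually inverse from the fact that both properties can be checked on the basis $X_w(\sL)$ of the $G$-topology.

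First I would form the sheaf $\Omega_{\sL}\otimes_{\O_X}\sM$ of $\O_X$-modules on $X_{\rig}$ as usual; since $\Omega_{\sL}$ is invertible this construction is well-behaved. The task is to equip it with the structure of a sheaf of right $\w{\sU(\sL)}$-modules. Since $X_w(\sL)$ is a basis for the $G$-topology on $X$, it suffices to specify a compatible right module structure on each $Y\in X_w(\sL)$. On such a $Y$, co-admissibility of $\sM$ gives $\sM|_Y\cong \Loc(\sM(Y))$, and the preceding Proposition provides a canonical isomorphism
\[ \Omega_{\sL}|_Y\otimes_{\O_Y}\sM|_Y \;\cong\; \Loc\bigl(\Omega_{\sL}(Y)\otimes_{\O(Y)}\sM(Y)\bigr), \]
where the right-hand side carries a canonical right $\w{\sU(\sL)}|_Y$-module structure by Theorem \ref{LRequivCoad} applied to the co-admissible left $\w{U(\sL(Y))}$-module $\sM(Y)$.

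Next I would verify that these local right actions glue. This is exactly what Lemma \ref{wULswitch} is designed to do: for $Z\subset Y$ in $X_w(\sL)$ it exhibits the required compatibility, so the right $\w{\sU(\sL)}(Y)$-actions on sections assemble into a right $\w{\sU(\sL)}$-module structure on $\Omega_{\sL}\otimes_{\O_X}\sM$. By construction, restricted to each $Y\in X_w(\sL)$ this sheaf is isomorphic to the localisation of a co-admissible right module, so it is itself co-admissible. The second half of the preceding Proposition yields in the same way a functor $\mathpzc{Hom}_{\O_X}(\Omega_{\sL},-)$ from co-admissible right $\w{\sU(\sL)}$-modules to co-admissible left $\w{\sU(\sL)}$-modules.

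Finally I would construct the natural unit and counit morphisms
\[ \sM\to \mathpzc{Hom}_{\O_X}\!\bigl(\Omega_{\sL},\Omega_{\sL}\otimes_{\O_X}\sM\bigr),\qquad \Omega_{\sL}\otimes_{\O_X}\mathpzc{Hom}_{\O_X}\!\bigl(\Omega_{\sL},\sN\bigr)\to \sN \]
coming from the adjunction $\Omega_{\sL}\otimes_{\O_X}- \dashv \mathpzc{Hom}_{\O_X}(\Omega_{\sL},-)$ at the level of $\O_X$-modules, and check that both are isomorphisms. This may be done on each $Y\in X_w(\sL)$, where $\Loc$ is fully faithful (by the Theorem of Subsection \ref{RevLocaff}) and everything reduces, via the two displayed isomorphisms above, to the corresponding statement at the level of global sections, which is precisely Theorem \ref{LRequivCoad}.

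The main obstacle is showing that the right module structures defined affinoid-locally glue correctly: the two natural right $\w{U(\L)}$-module structures on $\Omega_L\otimes_A U(\L)$ (one from the left and one from the right regular action on $U(\L)$) are distinct, and one must know that the side-switching map $\alpha$ of Lemma \ref{PAUL} is compatible with restriction from $Y$ to $Z$. Lemma \ref{wULswitch} is tailored precisely to resolve this point, so once it is invoked the remaining work is routine diagram-chasing.
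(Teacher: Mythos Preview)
Your proposal is correct and follows essentially the same approach as the paper: reduce to the basis $X_w(\sL)$, invoke the Proposition of this subsection to identify $\Omega_{\sL}\otimes_{\O_X}\sM$ and $\mathpzc{Hom}_{\O_X}(\Omega_{\sL},\sN)$ locally with localisations of co-admissible modules, and then appeal to Theorem \ref{LRequivCoad} on each $Y\in X_w(\sL)$ to see that the unit and counit are isomorphisms. The paper's own proof is simply the two-line compression of your argument: ``the statement is local so we may reduce to the case where $X$ is affinoid and $\sL(X)$ admits a smooth Lie lattice; this case follows from the Proposition and Theorem \ref{LRequivCoad}.'' Your more explicit treatment of the gluing via the Lemma of this subsection is not strictly needed once the Proposition is in hand, but it does no harm.
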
 

\begin{proof} The statement is local so by \cite[Lemma 9.3]{DCapOne} we may reduce to the case where $X$ is affinoid and $\sL(X)$ admits a smooth Lie lattice. This case follows from the Corollary and Theorem \ref{LRequivCoad}.
\end{proof}

\textbf{Unless explicitly stated otherwise, until the end of Section \ref{Pullpush} the term "module" will mean \emph{right} module.}

\section{Kashiwara's equivalence for \ts{\hK{U(\L)}}}\label{ConthKUL}
From now on, we assume that the characteristic of our ground field $K$ is zero. 
\subsection{Centralisers in \ts{L}}\label{Cent}

Let $R$ be a commutative base ring. Suppose that  $\varphi\colon A\to B$ is an injective homomorphism of commutative $R$-algebras, $(L,\rho)$ is an ($R$,$A$)-Lie algebra, and $\sigma\colon L\to \Der_R(B)$ is an $A$-linear Lie algebra homomorphism such that
\[ \sigma(x) \circ \varphi = \varphi \circ \rho(x) \qmb{for all} x \in L.\]
For every subset $F$ of $B$, we define 
\[C_L(F) := \{y \in L \st \sigma(y) \cdot f = 0 \qmb{for all} f\in F\} \] to be the \emph{centraliser of $F$ in $L$}. It is straightforward to verify that $C_L(F)$ is always an ($R$,$A$)-Lie subalgebra of $L$. We will abuse notation and simply write $x \cdot b$ to mean $\sigma(x)(b)$ if $x \in L$ and $b \in B$.

\begin{lem} Suppose that $f_1,\ldots, f_r\in B$ are such that $L\cdot f_i\subset A$ for each $1\le i\le r$ and there exist $x_1,\ldots x_r\in L$ with $x_i\cdot f_j=\delta_{ij}$. Then 
\[L = \left(\bigoplus_{i=1}^r A x_i\right) \oplus C_L(\{f_1,\ldots,f_r\}).\] In particular, if $L$ is smooth then $C_L(\{f_1,\ldots,f_r\})$ is smooth. 
\end{lem}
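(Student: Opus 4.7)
The plan is to write an explicit direct sum decomposition of any $y \in L$. Given $y \in L$, the hypothesis $L \cdot f_i \subset A$ ensures that the elements $a_i := y \cdot f_i = \sigma(y)(f_i)$ lie in $A$ for each $1 \le i \le r$, so that $\sum_{i=1}^r a_i x_i$ is a well-defined element of the $A$-module $L$. I would then set
\[
y' := y - \sum_{i=1}^r (y \cdot f_i)\, x_i
\]
and verify using the $A$-linearity of $\sigma$ together with the relations $x_i \cdot f_j = \delta_{ij}$ that $y' \cdot f_j = 0$ for each $j$, so that $y' \in C_L(\{f_1,\ldots,f_r\})$. This proves $L = \sum_{i=1}^r A x_i + C_L(\{f_1,\ldots,f_r\})$.

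Next I would show that this sum is direct. Given any relation $\sum_{i=1}^r a_i x_i + z = 0$ with $a_i \in A$ and $z \in C_L(\{f_1,\ldots,f_r\})$, applying $\sigma(-)(f_j)$ and using $A$-linearity of $\sigma$, the relations $x_i \cdot f_j = \delta_{ij}$, and the vanishing $z \cdot f_j = 0$ yields $a_j = 0$ for every $j$, whence also $z = 0$. The same calculation applied to an arbitrary $y \in L$ shows that the coefficients $a_i$ of the decomposition are uniquely determined as $a_i = y \cdot f_i$, giving the direct sum $L = \bigl(\bigoplus_{i=1}^r A x_i\bigr) \oplus C_L(\{f_1,\ldots,f_r\})$ as claimed. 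It is automatic from the definition that $C_L(\{f_1,\ldots,f_r\})$ is an $(R,A)$-Lie subalgebra of $L$, as already noted in the text.

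Finally, for the smoothness assertion: if $L$ is smooth, then $L$ is finitely generated and projective as an $A$-module. Since $C_L(\{f_1,\ldots,f_r\})$ is a direct summand of $L$ as an $A$-module by what we just proved, it is itself finitely generated and projective, hence smooth as a $(R,A)$-Lie subalgebra.

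I do not anticipate any serious obstacle: the whole argument is a straightforward manipulation once one notices the candidate projection $y \mapsto \sum (y \cdot f_i)\, x_i$. The only point requiring mild care is to use the $A$-linearity of $\sigma$ (rather than merely $R$-linearity) when computing $\sigma(a x_i)(f_j) = a\, \sigma(x_i)(f_j) = a\, \delta_{ij}$, which is what makes the coefficients collapse cleanly.
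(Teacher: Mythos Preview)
Your proof is correct and follows essentially the same approach as the paper: both use the projection $y \mapsto \sum_i (y\cdot f_i)\,x_i$ to split $L$, verifying spanning and directness via the relations $x_i\cdot f_j=\delta_{ij}$. The only difference is that you treat spanning before directness and spell out the smoothness consequence explicitly, which the paper leaves implicit.
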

\begin{proof} Write $C=C_L(\{f_1,\ldots,f_r\})$. If $u = \sum_{i=1}^r a_ix_i \in C$ for some $a_i \in A$ then for each $j$ we can compute $0=u \cdot f_j = \sum_{i=1}^r a_i (x_i \cdot f_j) = a_j$. Hence $u = \sum_{i=1}^r a_ix_i = 0$ and the sum $\sum_{i=1}^r A x_i + C$ is direct.  On the other hand, if $u \in L$ then \[(u - \sum_{i=1}^r (u\cdot f_i)x_i) \cdot f_j = u \cdot f_j - \sum_{i=1}^r (u\cdot f_i) (x_i \cdot f_j) = 0.\] Hence $u = \sum_{i=1}^r(u\cdot f_i)x_i + (u - \sum_{i=1}^r (u\cdot f_i)x_i) \in \sum_{i=1}^r A x_i + C$. \end{proof}

\subsection{The submodules $M[F]$ and $M_{\Dp}(F)$}\label{MdpF}
Until the end of Section \ref{ConthKUL}, we fix an affine formal model $\A$ in a $K$-affinoid algebra $A$ and a smooth $(\R, \A$)-Lie algebra $\L$.

\begin{defn} Let $M$ be a finitely generated  $\hK{U(\L)}$-module and $F\subset A$.
\be
\item $M[F] := \{m \in M : mf = 0\mbox{ }\forall f\in F\}$.
\item $M_{\Dp}(F) := \{ m \in M : \lim\limits_{n\to \infty} m\frac{f^n}{n!}  = 0\mbox{ }\forall f\in F\}$.
\ee\end{defn}

Let $\C = C_{\L}(F)$. Because the elements of $F$ are central in $\hK{U(\C)}$, we see that $M[F] \subseteq M_{\Dp}(F)$ are $\hK{U(\C)}$-submodules of $M$. We will soon see that if $\L\cdot F\subset \A$ then $M_{\Dp}(F)$ is even a $\hK{U(\L)}$-submodule of $M$. 

\begin{prop} The functor $(-)[F]$ from $\hK{U(\L)}$-modules to $\hK{U(\C)}/(F)$-modules is right adjoint to $-\otimes_{\hK{U(\C)}}\hK{U(\L)}$.\end{prop}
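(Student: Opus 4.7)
My plan is to reduce the statement to the standard tensor--hom adjunction. First I would recall that, as already noted in the paragraph preceding the statement, every $f \in F$ is central in $\hK{U(\C)}$ (because $\C = C_\L(F)$ forces $\rho(x)(f) = 0$, and hence $xf = fx$ in $U(\L)$, for every $x \in \C$). This immediately implies that $M[F]$ is stable under the right $\hK{U(\C)}$-action on $M$: for $m \in M[F]$, $u \in \hK{U(\C)}$ and $f \in F$, associativity together with centrality give
\[ (mu)f \;=\; m(uf) \;=\; m(fu) \;=\; (mf)u \;=\; 0. \]
Since $M[F]$ is annihilated by $F$ on the right by construction, the $\hK{U(\C)}$-action descends to a $\hK{U(\C)}/(F)$-action, so $(-)[F]$ really is a functor landing in $\hK{U(\C)}/(F)$-modules.

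Next I would apply the standard tensor--hom adjunction along the $K$-algebra homomorphism $\hK{U(\C)} \to \hK{U(\L)}$ induced by the inclusion $\C \hookrightarrow \L$: for any right $\hK{U(\C)}/(F)$-module $N$ and right $\hK{U(\L)}$-module $M$,
\[ \Hom_{\hK{U(\L)}}\bigl(N \otimes_{\hK{U(\C)}} \hK{U(\L)},\, M\bigr) \;\cong\; \Hom_{\hK{U(\C)}}(N,\, M), \]
where $N$ is regarded as a right $\hK{U(\C)}$-module via the quotient $\hK{U(\C)} \twoheadrightarrow \hK{U(\C)}/(F)$, and $M$ via restriction along $\hK{U(\C)} \to \hK{U(\L)}$. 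Any $\hK{U(\C)}$-linear map $\phi : N \to M$ automatically satisfies $\phi(n)f = \phi(nf) = 0$ for all $n \in N$ and $f \in F$, so $\phi(N) \subseteq M[F]$; conversely, any $\hK{U(\C)}/(F)$-linear map $N \to M[F]$ composes with the inclusion $M[F] \hookrightarrow M$ to a $\hK{U(\C)}$-linear map into $M$. These operations are mutually inverse and natural in both variables, yielding
\[ \Hom_{\hK{U(\C)}}(N, M) \;\cong\; \Hom_{\hK{U(\C)}/(F)}(N, M[F]). \]
Composing the two displayed isomorphisms will produce the required adjunction.

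I do not anticipate any real obstacle here: once the preliminary stability of $M[F]$ under $\C$ has been recorded, everything else is a routine restriction/extension-of-scalars argument. In particular no Fr\'echet-analytic subtleties arise at this stage, since we are working inside the single Banach algebra $\hK{U(\L)}$ and $N \otimes_{\hK{U(\C)}} \hK{U(\L)}$ is an ordinary algebraic tensor product; the only ingredient beyond pure formalism is the centrality of $F$ in $\hK{U(\C)}$, which is built into the definition of $\C$.
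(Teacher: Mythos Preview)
Your proposal is correct and follows essentially the same approach as the paper's own proof: first invoke the tensor--hom adjunction along $\hK{U(\C)} \to \hK{U(\L)}$, then use centrality of $F$ in $\hK{U(\C)}$ to see that any $\hK{U(\C)}$-linear map from an $F$-annihilated module lands in $M[F]$. The only difference is that you spell out in slightly more detail why $M[F]$ is a $\hK{U(\C)}/(F)$-submodule, which the paper had already recorded before stating the proposition.
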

\begin{proof} Suppose that $M$ is a $\hK{U(\C)}/(F)$-module and $N$ is a $\hK{U(\L)}$-module. By the universal property of $\otimes$ there is a natural isomorphism \[ \Hom_{\hK{U(\L)}}(M\otimes_{\hK{U(\C)}}\hK{U(\L)},N)\cong \Hom_{\hK{U(\C)}}(M,N). \] Since $MF=0$ and $F$ is central in $\hK{U(\C)}$, any $\hK{U(\C)}$-linear map from $M$ to $N$ will have image in $N[F]$. Thus there is also a natural isomorphism \[ \Hom_{\hK{U(\C)}}(M,N)\cong \Hom_{\hK{U(\C)}}(M,N[F]).\]Putting these two isomorphisms together we obtain the result. 
\end{proof}

 It is clear that $(-)\otimes_{\hK{U(\C)}}\hK{U(\L)}$ sends finitely generated $\hK{U(\C)}$-modules to finitely generated $\hK{U(\L)}$-modules. We'll see that if $F=\{f_1,\ldots,f_r\}$ is a finite subset of $A$ such that $\L\cdot (f_1,\ldots,f_r)=\A^r$ then $(-)[F]$ also sends finitely generated modules to finitely generated modules. Moreover we'll show that under this hypothesis, when the adjunction is restricted to finitely generated modules, its unit is an isomorphism and its counit isomorphic to the inclusion $M_{\Dp}(F)\to M$.

If $\M$ is any $\R$-module, we'll write $\M_k:=\M\otimes_\R k$ for its reduction modulo $\pi$ in what follows.

\subsection{Proposition}\label{Lieflat} 
Let $\C$ be a sub-$(\R,\A$)-Lie algebra of $\L$, and suppose that $\C$ has an $\A$-module complement in $\L$. Then $\hK{U(\L)}$ is a faithfully flat $\hK{U(\C)}$-module.
\begin{proof} The assumptions on $\C$ and $\L$ force $\C$ to be a projective $\A$-module. Now, with respect to the $\pi$-adic filtrations on $\h{U(\L)}$ and $\h{U(\C)}$ respectively, 
\[\gr \hK{U(\L)}\cong k[t,t^{-1}]\otimes_k U(\L_k) \qmb{and} \gr \hK{U(\C)}\cong k[t,t^{-1}]\otimes_k U(\C_k).\]
These algebras carry a natural positive filtration with $t$ and $\A_k$ in degree zero, and $\L_k$, $\C_k$ in degree one. Since $\L_k$ and $\C_k$ are smooth, the respective associated graded rings are $k[t,t^{-1}]\otimes_k\Sym(\L_k)$ and $k[t,t^{-1}]\otimes_k\Sym(\C_k)$ by \cite[Theorem 3.1]{Rinehart}. The map $\hK{U(\C)}\to \hK{U(\L)}$ induces the natural inclusion $k[t,t^{-1}]\otimes_k \Sym(\C_k)\to k[t,t^{-1}]\otimes_k \Sym(\L_k)$ which is faithfully flat, because $\C_k$ has an $\A_k$-module complement in $\L_k$ by assumption. We can now apply \cite[Chapter II, Proposition 1.2.2]{LVO} twice.
\end{proof}

\begin{cor} Let $F = \{f_1,\ldots, f_r\} \subset A$ be such that $\L \cdot (f_1,\ldots,f_r) = \A^r$, and let $\C = C_{\L}(F)$. Then $\hK{U(\L)}$ is a faithfully flat $\hK{U(\C)}$-module.
\end{cor}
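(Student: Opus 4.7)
The plan is to reduce this directly to the preceding Proposition (Lieflat) by exhibiting an $\A$-module complement to $\C$ inside $\L$; the Lemma of Subsection \ref{Cent} is designed precisely to do this, so the only work is to verify its hypotheses.

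First I would unpack the assumption $\L \cdot (f_1,\ldots,f_r) = \A^r$. This says that the $\A$-linear map
\[ \psi \colon \L \longrightarrow \A^r, \qquad y \longmapsto (y\cdot f_1, \ldots, y\cdot f_r) \]
is well-defined and surjective (in particular $\L \cdot f_i \subset \A$ for each $i$, which is one of the hypotheses of the Lemma in \ref{Cent}). Because $\A^r$ is a free $\A$-module, $\psi$ admits an $\A$-linear section, and choosing such a section produces elements $x_1,\ldots,x_r \in \L$ with $x_i\cdot f_j = \delta_{ij}$.

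Next I would invoke the Lemma of Subsection \ref{Cent} with these $x_i$ and $f_i$. It gives the direct sum decomposition
\[ \L = \bigoplus_{i=1}^r \A x_i \;\oplus\; \C \]
as $\A$-modules, so in particular $\C$ has an $\A$-module complement in $\L$ (namely the free submodule $\bigoplus_{i=1}^r \A x_i$).

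Finally, $\C$ is by construction a sub-$(\R,\A)$-Lie algebra of $\L$, and it now admits an $\A$-module complement in $\L$. The hypotheses of the Proposition \ref{Lieflat} are therefore satisfied, and that Proposition immediately yields that $\hK{U(\L)}$ is faithfully flat as a $\hK{U(\C)}$-module. There is no real obstacle here: the Lemma of \ref{Cent} and the Proposition \ref{Lieflat} were set up exactly so that this corollary falls out by concatenation.
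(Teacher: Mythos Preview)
Your proof is correct and is precisely the argument the paper has in mind; the paper's own proof is the single line ``This follows from Lemma \ref{Cent} and the Proposition,'' and you have simply spelled out how the hypothesis $\L\cdot(f_1,\ldots,f_r)=\A^r$ yields the $x_i$ required by Lemma \ref{Cent} (via splitting the surjection onto the free module $\A^r$).
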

\begin{proof} This follows from Lemma \ref{Cent} and the Proposition.
\end{proof}

\subsection{Construction of a lattice that is stable under divided powers}\label{DivPow}

For the remainder of this section we'll write $U=\hK{U(\L)}$ and $\U=\h{U(\L)}$. Suppose that $f\in A$ is such that $\L\cdot f\subset \A$ and that $M$ is a finitely generated $U$-module. Let $N=M_{\Dp}(f)\cdot U$. Since $U$ is Noetherian, $N$ is also a finitely generated $U$-module and we may fix a finite generating set $\{v_1,\ldots,v_s\}\subset M_{\Dp}(f)$ for $N$ as a $U$-module. We say that a $\U$-submodule of $N$ is a \emph{$\U$-lattice} in $N$ if it is finitely generated over $\U$ and generates $N$ as a $K$-vector space. Thus in particular,
\[\M_0 := \sum_{j=1}^s v_j \U \]
is a $\U$-lattice in $N$.
\begin{lem} There exists an integer $t$ such that
\[ v_j\left(\frac{f^{\alpha_1}}{\alpha_1!}\right)^{\beta_1} \cdots \left(\frac{f^{\alpha_m}}{\alpha_m!}\right)^{\beta_m}  \in \pi^{-t} \M_0 \]
for all $\alpha,\beta\in \mathbb{N}^m$ and all $j = 1,\ldots, s$.
\end{lem}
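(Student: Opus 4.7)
The plan is to reduce the general product to the sequence $(v_j \cdot f^n/n!)_{n\geq 0}$, which is controllable by hypothesis, via an elementary multinomial identity.

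First I would observe that $\{v_j \cdot f^n/n! : n \geq 0\}$ is a bounded subset of $N$ for each $j$. Indeed, $N$ is finitely generated over the Noetherian Banach algebra $U$, and the $\U$-lattice $\M_0$ realises the canonical Banach topology on $N$ in the sense that $\{\pi^k \M_0\}_{k \in \mathbb{Z}}$ is a fundamental system of neighbourhoods of zero. Since $v_j \in M_\Dp(f)$, the sequence $v_j \cdot f^n/n!$ converges to zero in this topology, and any null sequence in a Banach space is bounded: there is an integer $t_j$ with $v_j \cdot f^n/n! \in \pi^{-t_j}\M_0$ for all $n \geq 0$. Taking $t := \max_{1 \leq j \leq s} t_j$ yields a bound uniform in $j$.

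Next, since $f$ lies in the commutative subalgebra $A$ of $U$, for any $\alpha, \beta \in \mathbb{N}^m$ I would simplify
\[ \prod_{i=1}^m \left(\frac{f^{\alpha_i}}{\alpha_i!}\right)^{\beta_i} \;=\; \frac{f^N}{D}, \qquad N := \sum_{i=1}^m \alpha_i \beta_i, \quad D := \prod_{i=1}^m (\alpha_i!)^{\beta_i}. \]
The key arithmetic fact is that $N!/D$ is a multinomial coefficient, namely the number of ways to partition an $N$-element set into ordered blocks consisting of $\beta_i$ blocks of size $\alpha_i$ for each $i$. In particular, $N!/D$ is a positive integer, so $N!/D \in \mathbb{Z} \subseteq \R$.

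Finally, the conclusion will follow from the factorisation
\[ v_j \cdot \frac{f^N}{D} \;=\; \frac{N!}{D} \cdot \left( v_j \cdot \frac{f^N}{N!} \right). \]
The right-hand factor lies in $\pi^{-t}\M_0$ by the first step, and $\pi^{-t}\M_0$ is an $\R$-submodule of $N$ (being a $\pi$-power-scaled $\U$-submodule), hence closed under left multiplication by the scalar $N!/D \in \R$. The most delicate conceptual point is the identification of the topology used in the definition of $M_\Dp(f)$ with the $\pi$-adic topology induced by $\M_0$; once that is settled, the remainder reduces to the combinatorial observation about $N!/D$ and straightforward scalar bookkeeping.
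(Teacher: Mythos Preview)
Your proof is correct and follows essentially the same route as the paper's own argument: bound the null sequence $v_jf^n/n!$ inside some $\pi^{-t}\M_0$, then rewrite the general monomial as the multinomial coefficient $n!/(\alpha_1!^{\beta_1}\cdots\alpha_m!^{\beta_m})$ times $v_jf^n/n!$ with $n=\sum_i\alpha_i\beta_i$, and conclude using that this coefficient lies in $\R$. The only cosmetic issue is that you use the symbol $N$ both for the module $M_{\Dp}(f)\cdot U$ and for the integer $\sum_i\alpha_i\beta_i$; it would be cleaner to call the latter $n$ as the paper does.
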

\begin{proof} Since each $v_j$ lies in $M_{\Dp}(f)$ by assumption, $v_jf^n/n! \to 0$ as $n \to \infty$ for each $j$. So each of these sequences is bounded in $N$ and is therefore contained in $\pi^{-t}\M_0$ for some $t \geq 0$. Let $n = \sum_{i=1}^m \alpha_i\beta_i$. Then 
\[ v_j\left(\frac{f^{\alpha_1}}{\alpha_1!}\right)^{\beta_1} \cdots \left(\frac{f^{\alpha_m}}{\alpha_m!}\right)^{\beta_m}  = \frac{ n! } { \alpha_1!^{\beta_1} \cdots \alpha_m!^{\beta_m}} \frac{v_jf^n}{n!}\in \pi^{-t}\M_0\]
because $\frac{ n! } { \alpha_1!^{\beta_1} \cdots \alpha_m!^{\beta_m}}$ is a multinomial coefficient and is therefore an integer.
\end{proof}

\begin{prop} There is at least one $\U$-lattice $\M$ in $M_{\Dp}(f)\cdot U$ which is stable under the action of all of the divided powers $f^i/i!$.
\end{prop}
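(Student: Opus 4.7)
The plan is to take $\M$ to be the $\U$-submodule of $N$ generated by all the elements
\[v_j\Big(\tfrac{f^{\alpha_1}}{\alpha_1!}\Big)^{\beta_1}\cdots\Big(\tfrac{f^{\alpha_m}}{\alpha_m!}\Big)^{\beta_m}\]
appearing in the preceding Lemma, as $j$, $m$, $\alpha$, $\beta$ vary. That Lemma forces $\M\subseteq\pi^{-t}\M_0$, so $\M$ is a finitely generated module over the Noetherian ring $\U$, being a submodule of a finitely generated one. Taking $m=0$ shows $v_j\in\M$, so $\M\supseteq\M_0$, whence $K\M=N$ and $\M$ is a $\U$-lattice in $N$.

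The substance of the argument is to check stability under right multiplication by each $f^i/i!$. If $g$ is one of the listed generators, then $g\cdot(f^i/i!)$ is simply another such generator (append one more factor), so the subtlety lies in handling an element $gu\in\M$ with $u\in\U$. I would first establish a commutation identity in $U(\L)$ by induction on the natural filtration degree, starting from the case
\[x\cdot\tfrac{f^i}{i!}=\tfrac{f^i}{i!}\cdot x + \tfrac{f^{i-1}}{(i-1)!}\rho(x)(f)\qquad (x\in\L),\]
which makes sense because $\rho(x)(f)\in\A$ by the standing hypothesis $\L\cdot f\subseteq\A$. Iterating yields, for every $u\in U(\L)$,
\[u\cdot\tfrac{f^i}{i!}=\sum_k\tfrac{f^{i-k}}{(i-k)!}\cdot u_k\]
with each $u_k\in U(\L)$ and only finitely many terms nonzero. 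Applying this to $(gu)(f^i/i!)=g\cdot(u(f^i/i!))$ and using that each $g\cdot(f^{i-k}/(i-k)!)$ is again one of the listed generators of $\M$ gives $(gu)(f^i/i!)\in\M$ whenever $u\in U(\L)$.

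To extend to arbitrary $u\in\U$ I would argue by density and continuity. The finitely generated $U$-module $M$ carries a canonical $K$-Banach structure, and the finitely generated $\U$-submodule $\M$ is closed in $M$: it is itself $\pi$-adically complete, being finitely generated over the Noetherian $\pi$-adically complete ring $\U$, and an Artin--Rees argument identifies the $\pi$-adic topology on $\M$ with the subspace topology inherited from $M$. The map $u\mapsto(gu)(f^i/i!)$ is then a continuous $\R$-linear map $\U\to M$ that takes the dense subring $U(\L)$ into $\M$, and hence takes all of $\U$ into $\M$. This establishes $\M\cdot(f^i/i!)\subseteq\M$ for every $i$, completing the proof. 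The main obstacle I anticipate is precisely this closure/continuity step: the commutation identity in $U(\L)$ itself is a routine induction, but transferring its consequences to the $\pi$-adic completion $\U$ requires the auxiliary fact that the chosen lattice $\M$ is closed in the Banach module $M$.
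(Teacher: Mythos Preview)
Your argument is correct, and in fact your lattice $\M$ coincides with the one constructed in the paper: both equal the closure in $N$ of the $U(\L)$-module generated by the elements $v_j$ times divided-power monomials in $f$. The difference lies only in how stability under $f^i/i!$ is verified.

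The paper packages your commutation identity into an algebraic structure. It observes that $\L\cdot\B\subset\B$, where $\B=\A[f^i/i!:i\geq 0]\subset A$, so one can form the $(\R,\B)$-Lie algebra $\B\otimes_\A\L$ and its enveloping algebra $T=U(\B\otimes_\A\L)\cong\B\otimes_\A U(\L)$. Then $\sum_j v_jT$ is automatically stable under right multiplication by every $f^i/i!\in\B\subset T$, and taking the closure preserves this because multiplication by $f^i/i!\in A$ is continuous on the Banach module $N$. This avoids both the explicit inductive commutation formula and the separate density argument: the algebra structure of $T$ encodes the commutation, and the closure handles the passage from $U(\L)$ to $\U$ in one step.

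Your route is more elementary in that it does not invoke the base-change construction for Lie--Rinehart algebras, at the cost of proving by hand what the algebra $T$ gives for free. Your commutation identity is correct (indeed $u_k=E^k(u)/k!$ with $E=[-,f]$, and the fact that $E(\L)\subset\A$, $E(\A)=0$ forces $E^k(u)/k!\in U(\L)$ by the usual divided-power combinatorics), and your closure argument for $\M\subset\pi^{-t}\M_0$ via Artin--Rees and completeness is sound.
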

\begin{proof} Let $N=M_{\Dp}(f) \cdot U$ and let $\B$ be the subalgebra of $A$ generated by $\A$ and $\{f^i/i! : i \geq 0\}$. Since $\L\cdot f\subset \A$ by assumption, an easy induction on $i$ shows that $y \cdot \frac{f^i}{i!} = \frac{f^{i-1}}{(i-1)!}(y\cdot f)\in \B$ for all $y \in \L$ and all $i \geq 0$. So, as also $\L \cdot \A \subset \B$ we see that $\L \cdot \B \subset \B$. Thus the action of $\L$ on $\A$ lifts to $\B$, so we can form the $(\R, \B)$-Lie algebra $\B \otimes_{\A} \L$ by \cite[Lemma 2.2]{DCapOne}.  Now $T := U(\B \otimes_{A} \L) \cong \B\otimes_{\A}U(\L)$ by \cite[Proposition 2.3]{DCapOne}, so $T$ is generated as a $U(\L)$-module by all possible monomials 
\[ \left(\frac{f^{\alpha_1}}{\alpha_1!}\right)^{\beta_1} \cdots \left(\frac{f^{\alpha_m}}{\alpha_m!}\right)^{\beta_m} \in \B, \quad \alpha,\beta \in \mathbb{N}^m.\]
Since $\M_0$ is a $U(\L)$-submodule of $N$, by the Lemma we can find an integer $t$ such that 
\[    v_jT \subseteq \pi^{-t} \M_0 \qmb{for all} j=1,\ldots, s.\]
Let $\M$ be the closure of $\sum_{j=1}^s v_jT$ in $N$. Since $\U$ is the $\pi$-adic completion of the subalgebra $U(\L)$ of $T$, $\M$ is a $\U$-submodule of $N$ containing $v_1,\ldots,v_s$. Thus
\[ \M_0 \subseteq \M \subseteq \pi^{-t}\M_0\]
because $\pi^{-t}\M_0$ is closed in $N$. Since $\pi^{-t}\M_0$ is a finitely generated module over the Noetherian ring $\U$, $\M$ is finitely generated over $\U$. Therefore it is a $\U$-lattice in $N$ which is stable under all divided powers $f^i/i!$ by construction.
\end{proof}

\subsection{Proposition} \label{DpUSubmod} Suppose that $F\subset A$ is such that $\L\cdot F\subset \A$, and $M$ is a finitely generated $U$-module. Then $M_{\Dp}(F)$ is a $U$-submodule of $M$. 
\begin{proof} Because $M_{\Dp}(F)=\cap_{f\in F}M_{\Dp}(f)$, we may assume that  $F=\{f\}$. Using Proposition \ref{DivPow}, choose a $\U$-lattice $\M$ in $M_{\Dp}(f)\cdot U$ stable under all $f^i/i!$ and write $\M_{\Dp}(f)=M_{\Dp}(f)\cap \M$. Certainly $\M_{\Dp}(f)$ is an $\A$-submodule of $M$. 

Suppose that $x\in \L$, $m\in \M_{\Dp}(f)$ and $n\ge 0$. Then \[ (mx)\frac{f^n}{n!}=m\frac{f^n}{n!}x+m(x\cdot f)\frac{f^{n-1}}{(n-1)!}. \] Since $m\in M_{\Dp}(f)$, and $x\cdot f\in \A$ and $x\in \L$ preserve the lattice $\M$ of $M$, we see that $mx\in \M_{\Dp}(f)$. Thus we may view $\M_{\Dp}(f)$ as a $U(\L)$-submodule of $M$. 

Next we show that $\M_{\Dp}(f)$ is $\pi$-adically closed. Suppose that $m_n$ is a sequence in $\M_{\Dp}(f)$ converging  $\pi$-adically to $m\in \M$. Then for all $r>0$ there is $n_0$ such that $m-m_{n_0}\in \pi^r\M$ and there is $n_1\ge n_0$ such that $m_{n_0}f^n/n!\in \pi^r\M$ for all $n>n_1$. Thus $mf^n/n!\in \pi^r\M$ for all $n>n_1$ because $\M$ is stable under all $f^i/i!$. So $m\in \M_{\Dp}(f)$ as required.

It follows that $\M_{\Dp}(f)$ is a $\U$-module, so $M_{\Dp}(f)$ is a $U$-module as required. \end{proof}

\subsection{Reduction mod \ts{\pi}}\label{CounitModPi} 

Suppose now in addition to $\L \cdot f \subset \A$ that there is an element $x\in \L$ such that $x \cdot f = 1.$ This is equivalent to assuming that $\L \cdot f = \A$. Let $M$ be a finitely generated $U$-module, and write $\C := C_\L(f)$ and $\V:=\h{U(\C)}$. 

\begin{lem} Let $\M$ be a $\U$-lattice in $M_{\Dp}(f)$ stable under all divided powers in $f$, and let $\bar{x}$ denote the image of $x \in \L$ in $\L_k$. 
\be
\item $\U_k$ is a free left $\V_k$-module with basis $\{\bar{x}^j : j \geq 0\}$.
\item The natural map $ \M[f]_k\otimes_{\V_k} \U_k \to \M_k$ is injective.
\ee 
\end{lem}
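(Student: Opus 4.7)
The plan is to prove (a) via a PBW filtration argument based on the centraliser decomposition from Lemma~\ref{Cent}, and then deduce (b) using (a) together with the divided-power stability of $\M$ and the fact that $\M$ is $\pi$-torsion-free.

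First, since $x\cdot f = 1$, Lemma~\ref{Cent} supplies an $\A$-module decomposition $\L = \A x \oplus \C$, which reduces modulo $\pi$ to $\L_k = \A_k \bar x \oplus \C_k$. Rinehart's theorem gives $\gr U(\L) \cong \Sym_\A(\L)$, which is $\pi$-torsion-free because $\L$ is smooth; hence $U(\L)$ is itself $\pi$-torsion-free, and $\U_k \cong U(\L)/\pi U(\L) \cong U(\L_k)$, and similarly $\V_k \cong U(\C_k)$. For (a), I would consider the $\A_k$-linear multiplication map $\V_k \otimes_{\A_k} \A_k[\bar x] \to \U_k$ sending $v \otimes \bar x^j \mapsto v \bar x^j$. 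Filtering both sides by total PBW degree, this is a filtered map whose associated graded is the canonical isomorphism $\Sym(\C_k) \otimes_{\A_k} \A_k[\bar x] \cong \Sym(\L_k)$ coming from $\L_k = \C_k \oplus \A_k\bar x$. Hence the multiplication map is itself an isomorphism, giving $\U_k = \bigoplus_{j \geq 0} \V_k \bar x^j$ as left $\V_k$-modules.

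For (b), part (a) identifies $\M[f]_k \otimes_{\V_k} \U_k$ as an abelian group with $\bigoplus_{j \geq 0} \M[f]_k \otimes \bar x^j$, and the natural map to $\M_k$ with $(\bar m_j) \mapsto \sum_j \bar m_j \bar x^j$. I will prove injectivity by induction on $N$: suppose $\sum_{j=0}^N m_j x^j \in \pi\M$ with $m_j \in \M[f]$; we want $m_j \in \pi\M[f]$ for each $j$. Writing $F_i := f^i/i! \in U$, the relation $[x,f]=1$ inductively yields $x^j F_N = \sum_{k=0}^{\min(j,N)} \binom{j}{k} F_{N-k} x^{j-k}$. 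Multiplying the relation on the right by $F_N$, which preserves $\M$ by the divided-power stability hypothesis, and using that $m_j f = 0$ forces $m_j F_i = 0$ for all $i \geq 1$, only the summand $j = k = N$ survives, yielding $m_N \in \pi\M$. Since $\M$ embeds in the $K$-vector space $M_{\Dp}(f)\cdot U$, it is $\pi$-torsion-free, so $m_N = \pi m'$ with $m' \in \M$; the identity $m_N f = 0$ then gives $\pi(m' f) = 0$, hence $m' f = 0$, so $m_N \in \pi\M[f]$. Subtracting $m_N x^N \in \pi\M$ (using $\U$-stability of $\M$) reduces to the case $N-1$.

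The main obstacle is getting the PBW statement in (a) cleanly: one must verify that $\U/\pi\U$ really equals $U(\L_k)$ (via Rinehart and $\pi$-torsion-freeness of $U(\L)$) and that the filtered multiplication map is strict enough that an isomorphism on associated gradeds forces the full result. The divided-power calculation in (b) is routine once $\M$ has been chosen stable under all $F_i$ as in Proposition~\ref{DivPow}, but care with $\pi$-torsion is essential because the residue field of $K$ need not be of characteristic zero.
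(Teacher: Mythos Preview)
Your proposal is correct and follows essentially the same route as the paper's proof. For (a) you are somewhat more explicit than the paper (you justify $\U_k\cong U(\L_k)$ via $\pi$-torsion-freeness of $U(\L)$ and spell out the filtered multiplication map, whereas the paper simply invokes Rinehart's theorem after the decomposition $\L_k=\A_k\bar x\oplus\C_k$), and for (b) your divided-power computation using $x^jF_N=\sum_k\binom{j}{k}F_{N-k}x^{j-k}$ is exactly the paper's argument, which records the equivalent identity $q_jx^jf=jq_jx^{j-1}$ and iterates.
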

\begin{proof} (a) By Lemma \ref{Cent}, $\L = \A x \oplus \C$. Therefore $\L_k = \A_k \bar{x} \oplus \C_k$ so the powers of $\bar{x}$ in the symmetric algebra $\Sym(\L_k)$ form a homogeneous basis for $\Sym(\L_k)$ as a graded left $\Sym(\C_k)$-module. Since $\U_k = U(\L_k)$ and $\V_k = U(\C_k)$, we can apply  \cite[Theorem 3.1]{Rinehart}.

(b) Let $\xi \in \M[f]_k  \otimes_{\V_k} \U_k$ map to zero in $\M_k$. By part (a), we can write $\xi$ uniquely in the form $\xi = \sum_{j=0}^n \overline{q_j}\otimes \bar{x}^j$ for some $q_j \in \M[f]$. Now 
\[ \sum_{j=0}^n  q_jx^j \in \pi \M, \qmb{so} \sum_{j=0}^n  q_jx^j \frac{f^n}{n!}  \in \pi \M\]
because the lattice $\pi \M$ is stable under $f^n/n!$ by construction. Now 
\[ q_jx^jf = q_j[x^j,f] + q_jfx^j = jq_jx^{j-1} \qmb{for each} j\]
because $[x,f] = x\cdot f = 1$ and $q_j f= 0$. Therefore
\[  \sum_{j=0}^n  q_j x^j\frac{f^n}{n!}= \sum_{j=0}^n  \binom{j}{n} q_jx^{j-n} = q_n \in \pi \M \cap \M[f].\]
But $\pi \M \cap \M[f] = \pi \M[f]$ because $\M$ is $\pi$-torsion-free, so $\overline{q_n} = 0$. Continuing like this, we see that $\overline{q_j} = 0$ for all $j \leq n$, so $\xi = 0$.
\end{proof}

\subsection{The counit \ts{\epsilon_M} is an injection}\label{CounitInj}

We continue to write $\V:=\h{U(\C)}$ and also write $V:=\hK{U(\C)}$. 

\begin{prop} Let $f\in A$ be such that $\L\cdot f=\A$ and let $M$ be a finitely generated $U$-module. Then the natural map 
\[\epsilon_M\colon M[f]\otimes_VU\to M\] 
of $U$-modules is injective. In particular $M[f]$ is finitely generated as a $V$-module.
\end{prop}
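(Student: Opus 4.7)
The plan is to reduce everything to the mod-$\pi$ injectivity already established in Lemma \ref{CounitModPi}(b), via carefully chosen $\R$-lattices. First, apply Proposition \ref{DivPow} to choose a $\U$-lattice $\M$ in $N := M_{\Dp}(f) \cdot U$ stable under all divided powers $f^i/i!$, and set $\M[f] := M[f] \cap \M$. Three elementary observations hold: $(i)$ since $M[f] \subset N = K\M$, every element of $M[f]$ can be cleared into $\M$, so $M[f] = K \cdot \M[f]$; $(ii)$ because $\C$ centralises $f$ in $U(\L)$ and $\M$ is $\U$-stable, $\M[f]$ is a $\V$-submodule of $\M$; and $(iii)$ since $\M$ is stable under multiplication by $f = f^1/1!$, the operator $m \mapsto mf$ on $\M$ is continuous, so $\M[f]$ is $\pi$-adically closed, hence $\pi$-adically complete, inside $\M$.

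The heart of the argument is to promote the mod-$\pi$ injection $\M[f]_k \otimes_{\V_k} \U_k \hookrightarrow \M_k$ of Lemma \ref{CounitModPi}(b) to an integral statement. Since $\M_k$ is finitely generated over the Noetherian ring $\U_k$, so is the source of this injection; and by the argument of Proposition \ref{Lieflat} applied to the reduction $\C_k \subset \L_k$, the inclusion $\V_k \hookrightarrow \U_k$ is faithfully flat. Hence $\M[f]_k$ is finitely generated over $\V_k$. A topological Nakayama argument now lifts this: choosing lifts in $\M[f]$ of a generating set of $\M[f]_k$ and using $\pi$-adic completeness of both $\V$ and $\M[f]$, these lifts generate $\M[f]$ over $\V$. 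Passing to $K$, $M[f] = K \cdot \M[f]$ is then finitely generated over $V$, which gives the second assertion of the proposition.

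For injectivity, set $P := \M[f] \otimes_\V \U$. By the previous step, $P$ is finitely generated over the Noetherian complete ring $\U$, hence $\pi$-adically complete and separated. The multiplication map $\phi \colon P \to \M$ reduces modulo $\pi$ to the injection of Lemma \ref{CounitModPi}(b). If $\xi \in \ker\phi$ then $\bar\xi$ vanishes in $P_k$, so $\xi \in \pi P$; writing $\xi = \pi \xi_1$ and using that $\M$ is $\pi$-torsion-free forces $\phi(\xi_1) = 0$, and iteration gives $\xi \in \bigcap_n \pi^n P = 0$. Thus $\phi$ is injective, and inverting $\pi$ yields injectivity of $\epsilon_M$. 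The main technical obstacle is the topological Nakayama step promoting the mod-$\pi$ finite generation to integral finite generation of $\M[f]$ over $\V$; without this the domain $P$ need not be $\pi$-adically separated, and the lift-to-injectivity argument would break down.
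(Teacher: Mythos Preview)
Your proof is correct and follows essentially the same route as the paper's: both choose a divided-power-stable $\U$-lattice $\M$ via Proposition \ref{DivPow}, use Lemma \ref{CounitModPi}(b) together with Noetherianity of $\M_k$ and faithful flatness of $\V_k \hookrightarrow \U_k$ to get $\M[f]_k$ finitely generated over $\V_k$, lift this to $\M[f]$ finitely generated over $\V$ by a completeness/separatedness argument, and then deduce injectivity of the integral map from its mod-$\pi$ injectivity. The only differences are cosmetic: where the paper invokes \cite[Chapter I, Theorems 4.2.4(5) and 5.7]{LVO} for the last two steps, you spell out the topological Nakayama and the iterative $\ker\phi \subseteq \bigcap_n \pi^n P$ arguments directly; and you work entirely with the integral map $\M[f]\otimes_{\V}\U \to \M$ and invert $\pi$ at the end, whereas the paper first identifies $\M[f]\otimes_{\V}\U$ as a lattice inside $M[f]\otimes_V U$ via flatness of $\U$ over $\V$.
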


\begin{proof} Using Proposition \ref{DpUSubmod} we see that the image $M[f]\cdot U$ of $\epsilon_M$ is contained in the $U$-module $M_{\Dp}(f)$. Thus we may view $\epsilon_M$ as a map $M[f]\otimes_V U\to M_{\Dp}(f)$. Using Proposition \ref{DivPow}, choose a $\U$-lattice $\M$ in $M_{\Dp}(f)$ which is stable under all divided powers in $f$. Let $N_1 \subseteq N_2 \subseteq \cdots$ be an ascending chain of $\V_k$-submodules of $\M[f]_k$. Since $\M_k$ is a finitely generated module over the Noetherian ring $\U_k$, its chain $N_1\U_k\subseteq N_2\U_k \subseteq \cdots$ of submodules must stabilize. It now follows from Lemma \ref{CounitModPi}(b) that the chain 
\[ N_1\otimes_{\V_k}\U_k  \subseteq N_2\otimes_{\V_k}\U_k \subseteq \cdots\]
of submodules of $\M[f]_k\otimes_{\V_k}\U_k$ also terminates. But $\U_k$ is a faithfully flat left $\V_k$-module by Lemma \ref{CounitModPi}(a), so the original chain $N_1 \subseteq N_2 \subseteq \cdots $ stops. Therefore $\M[f]_k = \M[f] / \pi \M[f]$ is a finitely generated $\V_k$-module. 

The $\pi$-adic filtration on $\M$ is separated by Nakayama's Lemma, because it is a finitely generated module over the $\pi$-adically complete Noetherian algebra $\U$. So the $\pi$-adic filtration on $\M[f]$ is also separated, and hence  $\M[f]$ is finitely generated over $\V$ by \cite[Chapter I, Theorem 5.7]{LVO}. So $M[f] = \M[f].K$ is finitely generated over $V = \V_K$.

The multiplication map $V \otimes_{\V} \U \to U$ is bijective, so there is an isomorphism of $\U$-modules
\[ M[f] \otimes_{\V} \U \cong M[f]\otimes_V (V \otimes_{\V} \U) \cong  M[f]\otimes_V U.\]
Now $\U$ is a flat $\V$-module by Lemma \ref{CounitModPi}(a) and \cite[Proposition 1.2]{ST}, which implies that $\M[f]\otimes_{\V}\U$ embeds into $M[f]\otimes_{\V}\U \cong M[f] \otimes_V U$. We saw above that $\M[f]$ is finitely generated over $\V$, so $\M[f]\otimes_{\V}\U$ is a $\U$-lattice in $M[f] \otimes_V U$. Now 
\[ (\M[f]\otimes_{\V}\U)_k \cong \M[f]_k \otimes_{\V_k} \U_k\]
embeds into $\M_k$ by Lemma \ref{CounitModPi}(b). This means that the associated graded of $\epsilon_M$ (with respect to the $\pi$-adic filtrations on $M[f]\otimes_V U$ and $M_{\Dp}(f)$ determined by the $\U$-lattices $\M[f] \otimes_{\V} \U$ and $\M$ respectively) is injective. Therefore $\epsilon_M$ is also injective by \cite[Chapter I, Theorem 4.2.4(5)]{LVO}.

That $M[f]$ is finitely generated follows from the facts that $U$ and $V$ are Noetherian and that $U$ is a faithfully flat left $V$-module by Corollary \ref{Lieflat}. 
\end{proof}

\begin{cor} Let $F=\{f_1,\ldots,f_r\}\subset A$ be such that $\L\cdot (f_1,\ldots, f_r)=\A^r$ and let $\C := C_{\L}(F)$. Then for any finitely generated $U$-module $M$, the natural map \[ M[F]\otimes_{\hK{U(\C)}}U\to M \] is an injection and $M[F]$ is finitely generated as a $\hK{U(\C)}$-module.
\end{cor}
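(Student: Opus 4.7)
The plan is to induct on $r = |F|$. The base case $r = 1$ is exactly the preceding Proposition, so all the content lies in the inductive step, which reduces the statement for $F$ to the statement for $F' := \{f_1, \ldots, f_{r-1}\}$ together with a single further application of the Proposition.

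Set $\C' := C_\L(F')$. The hypothesis provides $x_1, \ldots, x_r \in \L$ with $x_i \cdot f_j = \delta_{ij}$. Projecting the surjection $\L \to \A^r$, $y \mapsto (y \cdot f_1, \ldots, y \cdot f_r)$, onto its first $r-1$ coordinates shows that $\L \cdot (f_1, \ldots, f_{r-1}) = \A^{r-1}$, so the inductive hypothesis applies to $F'$: it gives that $M[F']$ is a finitely generated $\hK{U(\C')}$-module and that the natural map $M[F'] \otimes_{\hK{U(\C')}} U \to M$ is injective. The crucial observation is now that the preceding Proposition applies to the finitely generated $\hK{U(\C')}$-module $M[F']$ with the single element $f_r$: since $x_r \cdot f_j = 0$ for $j < r$ we have $x_r \in \C'$, and $x_r \cdot f_r = 1$ so that $\C' \cdot f_r = \A$; moreover $C_{\C'}(f_r) = C_\L(F) = \C$. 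The Proposition therefore yields that $M[F'][f_r]$ is finitely generated as a $\hK{U(\C)}$-module and that the natural map
\[ M[F'][f_r] \otimes_{\hK{U(\C)}} \hK{U(\C')} \to M[F'] \]
is injective. Since $M[F'][f_r] = M[F]$, the finite-generation statement for $M[F]$ follows.

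To promote this injection from $\hK{U(\C')}$ up to $U$, I would invoke Corollary \ref{Lieflat} applied to $F'$ to see that $U$ is faithfully flat as a $\hK{U(\C')}$-module. Tensoring the displayed injection with $U$ over $\hK{U(\C')}$ and composing with the inductive injection $M[F'] \otimes_{\hK{U(\C')}} U \hookrightarrow M$ then yields the required injection $M[F] \otimes_{\hK{U(\C)}} U \hookrightarrow M$.

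I do not anticipate a serious obstacle: the argument is essentially the bookkeeping observation that $C_\L(F) = C_{C_\L(F')}(f_r)$ and that the dual basis element $x_r$ still lies in $\C'$, which is precisely what allows the Proposition to be iterated. The only point needing any care is that the last step uses flatness of $U$ over $\hK{U(\C')}$, which is guaranteed by Corollary \ref{Lieflat} since $\C'$ has an $\A$-module complement in $\L$ coming from the $x_i$ with $i < r$.
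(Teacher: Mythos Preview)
Your argument is correct and follows essentially the same inductive strategy as the paper. The only difference is the order of the two steps: the paper first applies the Proposition to peel off the single element $f_r$ (working with $\L$), obtaining that $M[f_r]$ is finitely generated over $\hK{U(C_\L(f_r))}$, and then applies the inductive hypothesis to $\{f_1,\ldots,f_{r-1}\}$ inside the smaller Lie algebra $C_\L(f_r)$; you instead apply the inductive hypothesis first to $F'=\{f_1,\ldots,f_{r-1}\}$ inside $\L$, and then invoke the Proposition for the single element $f_r$ inside $\C'=C_\L(F')$. Both orderings work and require the same ingredients (Lemma~\ref{Cent} for smoothness of the intermediate centraliser, Corollary~\ref{Lieflat} for flatness), so neither buys anything the other does not.
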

\begin{proof}  We proceed by induction on $r$, the case $r=1$ being given by the Proposition. Let $V:=\hK{U(C_\L(f_r))}$ and $W:=\hK{U(\C)}$. Since $\L\cdot f_r=\A$, the Proposition implies that $M[f_r]$ is finitely generated as a $V$-module and $M[f_r]\otimes_V U\to M$ is injective. However, $C_{\L}(f_r)$ is a smooth $(\R,\A)$-Lie algebra by Lemma \ref{Cent} and $C_{\L}(f_r)\cdot (f_1,\ldots,f_{r-1})=\A^{r-1}$ because $\L \cdot (f_1,\ldots,f_r) = \A^r$, so the induction hypothesis implies that $M[F] = M[f_r][\{f_1,\ldots,f_{r-1}\}]$ is finitely generated as a $W$-module and $M[F]\otimes_W V\to M[f_r]$ is injective. 

Since $U$ is a flat $V$-module by Corollary \ref{Lieflat}, $(M[F] \otimes_W V) \otimes_V U \to M[f_r]\otimes_VU$ is also injective, and the result follows by the associativity of tensor product.
\end{proof}
 
It follows immediately from the Corollary that the adjoint pair of functors in Proposition \ref{MdpF} restricts to an adjoint pair of functors between finitely generated $\hK{U(\L)}$-modules and finitely generated $\hK{U(C_\L(F))}/(F)$-modules, whenever $\L \cdot (f_1,\ldots,f_r) = \A^r$.
 
\subsection{Constructing maps from \ts{M_{\Dp}(f)} to \ts{M[f]}}\label{EeJay}

Suppose again that $f\in A$ and $x\in \L$ are such that $x\cdot f=1$. For each $j\ge 0$ and $v \in M_{\Dp}(f)$, the infinite series
\[ e_j(v) := \sum_{n = j}^\infty v\frac{f^n}{n!}\binom{n}{j}(-x)^{n-j} \]
converges to an element of $M$ because $vf^n/n! \to 0$ and because $\binom{n}{j} (-x)^{n-j} \in \U$ for all $n \geq j$. Thus we have defined an infinite collection of functions\[e_j\colon M_{\Dp}(f)\to M, \quad\quad j \geq 0.\]
\begin{lem}  \be
\item $e_j$ is continuous.
\item $e_j(M_{\Dp}(f))\subset M[f]$.
\item If $v\in M_{\Dp}(f)$ then $e_j(v) \to 0$ as $j \to \infty$.
\ee\end{lem}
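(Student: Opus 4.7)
The whole argument hinges on Proposition~\ref{DivPow}: fix a $\U$-lattice $\M$ in $M_{\Dp}(f)\cdot U$ that is stable under every divided power $f^i/i!$. Being a $\U$-submodule, $\M$ is also stable under right multiplication by each element $\binom{n}{j}(-x)^{n-j}\in\U$. Topologize $M$ by declaring $\{\pi^N\M : N\geq 0\}$ a fundamental system of neighborhoods of $0$, and topologize $M_{\Dp}(f)$ by the subspace topology; since $M_{\Dp}(f)$ is $K$-stable, $\pi^N\M\cap M_{\Dp}(f) = \pi^N(\M\cap M_{\Dp}(f))$ for all $N$, so this is the topology induced by the submodule $\M\cap M_{\Dp}(f)$.

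For (a), it suffices to show that $e_j(\M\cap M_{\Dp}(f))\subseteq \M$, because then $K$-linearity of $e_j$ forces $e_j(\pi^N(\M\cap M_{\Dp}(f)))\subseteq\pi^N\M$ and hence continuity. Given $w\in\M\cap M_{\Dp}(f)$, each term $w\tfrac{f^n}{n!}\binom{n}{j}(-x)^{n-j}$ lies in $\M$ by divided-power- and $\U$-stability, and these terms tend to $0$ because $w\in M_{\Dp}(f)$. Since $\M$ is $\pi$-adically closed in $M$, the limit $e_j(w)$ lies in $\M$.

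For (b), I compute $e_j(v)\cdot f$ directly. The hypothesis $x\cdot f=1$ gives $[x,f]=1$ in $U$, whence by a routine induction $(-x)^k f = f(-x)^k - k(-x)^{k-1}$ for all $k\geq 0$. Applying right multiplication by $f$ term-by-term (justified by its continuity) and substituting $k=n-j$ yields
\[
e_j(v)f \;=\; \sum_{n\geq j} v\tfrac{f^{n+1}}{n!}\binom{n}{j}(-x)^{n-j} \;-\; \sum_{n\geq j+1} v\tfrac{f^n}{n!}(n-j)\binom{n}{j}(-x)^{n-j-1}.
\]
Re-indexing the first sum by $n\mapsto n-1$ and using the identity $\tfrac{n-j}{n!}\binom{n}{j} = \tfrac{1}{(n-1)!}\binom{n-1}{j}$ in the second shows that the two series are termwise identical, so $e_j(v)f = 0$ and $e_j(v)\in M[f]$.

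For (c), given $N\geq 0$, choose $n_0$ with $v\tfrac{f^n}{n!}\in\pi^N\M$ for all $n\geq n_0$. Then for every $j\geq n_0$ and every $n\geq j$, the term $v\tfrac{f^n}{n!}\binom{n}{j}(-x)^{n-j}$ lies in $\pi^N\M$ by $\U$-stability, and hence so does the limit $e_j(v)$. The one delicate step is (b): one must justify rearranging two infinite series in $M$ and locate the combinatorial identity that makes them cancel; everything else is standard bookkeeping with $\pi$-adic lattices.
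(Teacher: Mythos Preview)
Your proof is correct and follows essentially the same approach as the paper's: in each part you invoke the divided-power-stable $\U$-lattice $\M$ from Proposition~\ref{DivPow} and argue via $\U$-stability of $\M$, exactly as the paper does. The only cosmetic difference is in (b): the paper first substitutes $v' := v f^j/j!$ so that $e_j(v) = \sum_{i\geq 0} v' \frac{f^i(-x)^i}{i!}$ and then observes the telescoping identity $\frac{f^i(-x)^i}{i!} f = \frac{f^{i+1}(-x)^i}{i!} - \frac{f^i(-x)^{i-1}}{(i-1)!}$, whereas you keep the original indexing and verify the equivalent binomial identity $\frac{n-j}{n!}\binom{n}{j} = \frac{1}{(n-1)!}\binom{n-1}{j}$; these are the same computation.
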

\begin{proof}(a) By Proposition \ref{DivPow} we can find a $\U$-lattice $\M$ in $M$ which is stable under the action of all $f^i/i!$. We may view $\M$ as the unit ball with respect to a Banach norm on $M$. Then $M_{\Dp}(f)\cap \M$ is the unit ball in the closed subspace $M_{\Dp}(f)$. We see by examining the definition of $e_j$ that $e_j(\M\cap M_{\Dp}(f))\subset \M$ and so $e_j$ is continuous.

(b) Suppose $v\in M_{\Dp}(f)$. We can rewrite $e_j(v)$ as $e_j(v)= \sum_{i=0}^\infty v'\frac{f^i(-x)^i}{i!}$ where $v' := vf^j/j!$. Now 
\[ \frac{f^i(-x)^i}{i!}f = \frac{f^{i+1}(-x)^{i}}{i!} - \frac{f^i(-x)^{i-1}}{(i-1)!} \qmb{for} i \geq 1\]
so the expression for $e_j(v)f$ telescopes to give zero:
\[e_j(v)f = v'f + \sum_{i=1}^\infty v'\left(\frac{f^{i+1}(-x)^i}{i!}- \frac{f^i(-x)^{i-1}}{(i-1)!} \right) = 0.\]
(c) This is clear from the defining formula for $e_j(v)$, because $v\frac{f^n}{n!} \to 0$ as $n \to \infty$ and because $ \binom{n}{j} x^{n-j}$ preserves the $\U$-lattice $\M$ for all $n$ and $j$.\end{proof}

\begin{cor} Let $G \subset A$ be such that $x \cdot G = 0$. Then
\[e_j\left(M_{\Dp}(\{f\} \cup G)\right)\subset M[f]_{\Dp}(G) \qmb{for all} j \geq 0.\]
\end{cor}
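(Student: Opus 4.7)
The plan is to fix $v \in M_{\Dp}(\{f\} \cup G)$ and $g \in G$, and show two things: $e_j(v) \in M[f]$, and $e_j(v) \cdot \frac{g^m}{m!} \to 0$ as $m \to \infty$. The first is already given by part (b) of the lemma, so the real work is the second. My strategy is to show the stronger identity
\[ e_j(v) \cdot \frac{g^m}{m!} = e_j\!\left( v \cdot \frac{g^m}{m!} \right) \qmb{for every} m \geq 0,\]
and then invoke continuity of $e_j$ (part (a) of the lemma) together with the fact that $v \cdot \frac{g^m}{m!} \to 0$ as $m \to \infty$, which holds because $v \in M_{\Dp}(g)$.

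For this identity to make sense I first need $v \cdot \frac{g^m}{m!} \in M_{\Dp}(f)$. Since $A$ is commutative, $f$ and $g$ commute, so $\frac{f^n}{n!}$ and $\frac{g^m}{m!}$ commute inside $A \subseteq U$. Therefore
\[ \left( v \cdot \frac{g^m}{m!} \right) \frac{f^n}{n!} = v \cdot \frac{f^n}{n!} \cdot \frac{g^m}{m!} \xrightarrow{n \to \infty} 0\]
because $v \in M_{\Dp}(f)$ and right multiplication by the fixed element $\frac{g^m}{m!}$ is continuous. The boxed identity itself then follows by commuting $\frac{g^m}{m!}$ past each term $\frac{f^n}{n!} \binom{n}{j}(-x)^{n-j}$ in the definition of $e_j(v)$: the factor $\frac{f^n}{n!} \in A$ commutes with $\frac{g^m}{m!}$ as noted, and the hypothesis $x \cdot g = 0$ translates to $[x,g] = 0$ in $U(\L)$, so $x^{n-j}$ also commutes with $\frac{g^m}{m!}$; passing to the limit is justified by the convergence that defines $e_j$.

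The only mildly subtle point is the commutation past the sum, which requires the continuity of right multiplication by $\frac{g^m}{m!}$ on $M$ — standard for a Banach module over a Banach algebra. I expect no further obstacle: once the identity and the verification $v \cdot \frac{g^m}{m!} \in M_{\Dp}(f)$ are in hand, continuity of $e_j$ closes the argument. Since $g \in G$ was arbitrary, $e_j(v)$ lies in $M[f] \cap M_{\Dp}(G) = M[f]_{\Dp}(G)$, as required.
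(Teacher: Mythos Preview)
Your proposal is correct and follows essentially the same argument as the paper: establish the identity $e_j(v)\frac{g^m}{m!} = e_j\!\left(v\frac{g^m}{m!}\right)$ using that $g$ commutes with both $f$ and $x$ in $U$, then invoke continuity of $e_j$ from part (a) of the Lemma together with $v\frac{g^m}{m!}\to 0$. Your write-up is simply more explicit than the paper's in checking that $v\frac{g^m}{m!}\in M_{\Dp}(f)$ and in justifying the term-by-term commutation.
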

\begin{proof}  Part (b) of the Lemma gives the inclusion $e_j\left(M_{\Dp}(\{f\} \cup G)\right) \subset M[f]$. Let $v\in M_{\Dp}(\{f\} \cup G)$ and $g \in G$. Then since $g$ commutes with $x$ and $f$ inside $U$, \[e_j(v)\frac{g^n}{n!}=e_j\left(v\frac{g^n}{n!}\right)\to 0\] as $n\to \infty$ by part (a) of the Lemma.
\end{proof}

\subsection{The counit \ts{\epsilon_M} has image \ts{M_{\Dp}(F)}}\label{CounitIm}
In this Subsection, we will show that $M_{\Dp}(F)$ is generated as a $U$-module by $M[F]$. The heart of the proof of this statement is contained in the following technical
\begin{lem} Let $f \in A$ and $x \in \L$ be such that $x \cdot f = 1$. Suppose that $G \subset A$ is such that $x \cdot G = 0$ and let $F = \{f\} \cup G$. Then
\[ M_{\Dp}(F) \subset M[f]_{\Dp}(G) \cdot U\]
for every finitely generated $U$-module $M$.
\end{lem}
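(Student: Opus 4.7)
The plan is to establish the explicit reconstruction formula
\[
v \;=\; \sum_{j=0}^\infty e_j(v) \cdot x^j
\]
as a convergent series in $M$ for every $v \in M_{\Dp}(F)$. Since each $e_j(v)$ lies in $M[f]_{\Dp}(G)$ by the Corollary in \S\ref{EeJay}, the partial sums lie in the $U$-submodule $M[f]_{\Dp}(G) \cdot U$, and closedness of this submodule will then finish the argument.

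\emph{Convergence.} Apply Proposition \ref{DivPow} to $M_{\Dp}(f) \cdot U$ to fix a $\U$-lattice $\M$ which is stable under every divided power $f^i/i!$. Then $\M$ is automatically preserved by $x \in \L$, and hence by every $x^j$. By part (c) of Lemma \ref{EeJay} we have $e_j(v) \to 0$ in $M$, so the norms $\|e_j(v) x^j\|$ are bounded by $\|e_j(v)\|$ and tend to zero; thus the series converges in $M$.

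\emph{Identifying the sum with $v$.} Using the identity $(-x)^{n-j} x^j = (-1)^{n-j} x^n$ and interchanging summations, the $N$-th partial sum becomes
\[
\sum_{j=0}^N e_j(v) x^j \;=\; \sum_{n=0}^\infty v \cdot \tfrac{f^n}{n!} \cdot x^n \cdot S_{n,N}, \qmb{where} S_{n,N} := \sum_{j=0}^{\min(n,N)}\binom{n}{j}(-1)^{n-j}.
\]
For $n \le N$ the sum $S_{n,N}$ is the binomial expansion of $(1-1)^n$, hence equals $\delta_{n,0}$, so the terms with $n \le N$ collapse to $v$. For $n > N$ a standard finite-difference identity gives $S_{n,N} = (-1)^{n+N}\binom{n-1}{N}$, which is an integer in $K$ and so has norm at most $1$. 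Since $v \in M_{\Dp}(f)$ one has $\|v f^n/n!\| \to 0$, and since $x^n$ preserves $\M$, the tail term is bounded in norm by $\sup_{n > N}\|v f^n/n!\|$, which vanishes as $N \to \infty$. Therefore $\sum_{j=0}^\infty e_j(v) x^j = v$ in $M$.

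\emph{Conclusion.} Each partial sum lies in the $U$-submodule $M[f]_{\Dp}(G) \cdot U$ of the finitely generated $U$-module $M$. Since $U$ is a Noetherian Banach $K$-algebra, this submodule is itself finitely generated and hence closed in the canonical Banach topology on $M$, so $v \in M[f]_{\Dp}(G) \cdot U$.

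The main obstacle is the combinatorial bookkeeping in the middle step: the ``diagonal'' cancellations $S_{n,N} = \delta_{n,0}$ for $n \le N$ and the integrality bound $|S_{n,N}| \le 1$ for $n > N$ must interact correctly with the divided-power decay of $v f^n/n!$ so that passage to the limit in $N$ yields exactly $v$ rather than $v$ plus some uncontrolled tail.
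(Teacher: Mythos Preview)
Your proof is correct and follows essentially the same route as the paper: the same reconstruction identity $v = \sum_{j\ge 0} e_j(v)\,x^j$ via the $e_j$ of \S\ref{EeJay}, and the same binomial cancellation $\sum_{j=0}^n(-1)^{n-j}\binom{n}{j}=\delta_{n,0}$. The only packaging difference is the last step: the paper fixes a finite $V$-generating set $w_1,\dots,w_m$ for $M[f]_{\Dp}(G)\cdot V$ (using Proposition \ref{CounitInj}), writes $e_j(v)=\sum_i w_i v_{ij}$ with $v_{ij}\to 0$, assembles $z_i:=\sum_j v_{ij}x^j\in U$, and exhibits $v=\sum_i w_i z_i$ directly as a finite combination, whereas you pass to the limit of partial sums and invoke closedness of the finitely generated $U$-submodule $M[f]_{\Dp}(G)\cdot U$ --- both arguments are valid.
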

\begin{proof} Let $\C = C_{\L}(f)$ and $V=\hK{U(\C)}$, and define
\[ N := M[f]_{\Dp}(G) \cdot V \subset M[f].\] 
Since $V$ is Noetherian and $M[f]$ is finitely generated over $V$ by Proposition \ref{CounitInj}, we can find a finite generating set $w_1,\ldots, w_m$ in $M[f]_{\Dp}(G)$ for $N$. 

Define $e_j\colon M_{\Dp}(f)\to M$ as in Subsection \ref{EeJay}. Given $v\in M_{\Dp}(F)$, every $e_j(v)$ lies in $M[f]_{\Dp}(G)$ by Corollary \ref{EeJay}, so we can choose $v_{ij} \in V$ such that \[ e_j(v) = \sum_{j=1}^m w_iv_{ij} \qmb{for all} j \geq 0.\]
Because $e_j(v) \to 0$ as $j \to \infty$ by Lemma \ref{EeJay}(c) and because the topology on $N$ can be defined by the $\V$-lattice $\sum_{j=1}^m  w_j\V$, we may assume that $\lim\limits_{j\to\infty} v_{ij} = 0$ for each $i$. Therefore the series $\sum_{j=0}^\infty  v_{ij}x^j$ converges to an element $z_i \in U$ for each $i = 1, \ldots, m$. Now
\begin{eqnarray*} \sum_{i=1}^m w_i z_i &=& \sum_{i=1}^m \sum_{j=0}^\infty w_i v_{ij} x^j = \sum_{j=0}^\infty  e_j(v)x^j =\\
&=& \sum_{j=0}^\infty  \sum_{n=j}^\infty v\frac{f^n}{n!} \binom{n}{j} (-x)^{n-j} x^j = \\
&=& \sum_{n=0}^\infty \left(\sum_{j=0}^n (-1)^j \binom{n}{j} \right)  v\frac{f^n}{n!}(-x)^n = \\
&=& \sum_{n=0}^\infty (1 - 1)^n  v\frac{f^n}{n!}(-x)^n = v,\end{eqnarray*}
so $M_{\Dp}(F)$ is contained in $M[f]_{\Dp}(G)\cdot U$ as required.
\end{proof}

\begin{thm} Let $F=\{f_1,\ldots,f_r\}\subset A$ and suppose that $\L\cdot (f_1,\ldots,f_r)=\A^r$. Then
\[ M[F] \cdot U = M_{\Dp}(F)\]
for every finitely generated $U$-module $M$.
\end{thm}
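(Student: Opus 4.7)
The containment $M[F] \cdot U \subseteq M_{\Dp}(F)$ is immediate: $M[F] \subseteq M_{\Dp}(F)$ tautologically, and $M_{\Dp}(F)$ is a $U$-submodule of $M$ by Proposition \ref{DpUSubmod}, since the assumption $\L \cdot (f_1,\ldots,f_r) = \A^r$ forces $\L \cdot F \subset \A$. So the real content is the reverse inclusion $M_{\Dp}(F) \subseteq M[F]\cdot U$, which I will prove by induction on $r$, using the Lemma just proved in Subsection \ref{CounitIm} as the engine of the induction.

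The surjectivity of the $\A$-linear map $\L \to \A^r$, $y \mapsto (y\cdot f_1,\ldots,y\cdot f_r)$, lets me choose $x_1,\ldots,x_r \in \L$ with $x_i\cdot f_j = \delta_{ij}$. For the base case $r=1$ I apply the Lemma with $f = f_1$, $x = x_1$ and $G = \emptyset$ to obtain $M_{\Dp}(\{f_1\}) \subset M[f_1]_{\Dp}(\emptyset)\cdot U = M[f_1]\cdot U$, which is exactly what is needed.

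For the inductive step I set $f = f_r$, $x = x_r$ and $G = \{f_1,\ldots,f_{r-1}\}$; note that $x_r \cdot G = 0$ by the choice of the $x_i$, so the Lemma applies and gives
\[ M_{\Dp}(F) \;\subset\; M[f_r]_{\Dp}(G) \cdot U. \]
Now I wish to apply the inductive hypothesis to the module $M[f_r]$ over the algebra $V := \hK{U(C_\L(f_r))}$. Proposition \ref{CounitInj} tells me that $M[f_r]$ is a finitely generated $V$-module. Lemma \ref{Cent} tells me that $\C := C_\L(f_r)$ is a smooth $(\R,\A)$-Lie algebra, and since $x_1,\ldots,x_{r-1}$ all lie in $\C$ (as $x_i\cdot f_r = 0$ for $i < r$) and satisfy $x_i \cdot f_j = \delta_{ij}$ for $i,j \leq r-1$, we have $\C \cdot (f_1,\ldots,f_{r-1}) = \A^{r-1}$. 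So the inductive hypothesis applies to $M[f_r]$ as a $V$-module and yields
\[ M[f_r][G]\cdot V = M[f_r]_{\Dp}(G). \]
Since $M[f_r][G] = M[F]$, combining the two displays gives $M_{\Dp}(F) \subseteq M[F]\cdot V\cdot U = M[F]\cdot U$, completing the induction.

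The only potentially delicate point is making sure that the inductive hypothesis is correctly transferred from the original setting (Lie algebra $\L$, algebra $U$, module $M$) to the smaller setting ($\C$, $V$, $M[f_r]$); but Lemma \ref{Cent} hands us the smoothness of $\C$, Proposition \ref{CounitInj} hands us the finite generation of $M[f_r]$ over $V$, and the choice of a dual basis $x_1,\ldots,x_r$ to $f_1,\ldots,f_r$ hands us the residual surjectivity condition on $\C$. With those three inputs verified the induction runs without obstruction.
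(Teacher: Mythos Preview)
Your proof is correct and follows essentially the same route as the paper's: induction on $r$ using the Lemma of Subsection \ref{CounitIm} to pass from $M_{\Dp}(F)$ to $M[f_r]_{\Dp}(G)\cdot U$, then applying the inductive hypothesis to the finitely generated $\hK{U(C_\L(f_r))}$-module $M[f_r]$ after verifying the smoothness of $C_\L(f_r)$ (Lemma \ref{Cent}), the finite generation of $M[f_r]$ (Proposition \ref{CounitInj}), and the residual surjectivity $C_\L(f_r)\cdot(f_1,\ldots,f_{r-1})=\A^{r-1}$. The only cosmetic difference is that you fix a full dual system $x_1,\ldots,x_r$ at the outset rather than choosing one $x$ at each stage.
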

\begin{proof} The forward inclusion follows from Proposition \ref{DpUSubmod}. To prove the reverse inclusion we proceed by induction on $r$, the base case $r=1$ being given by the Lemma with $G = \emptyset$.

Suppose that $r>1$, write $f := f_r$ and $G := \{f_1,\ldots,f_{r-1}\}$. By the assumption on $\L$ we can find $x \in \L$ such that $x \cdot f = 1$ and $x \cdot G = 0$. Then 
\[M_{\Dp}(F) \subset M[f]_{\Dp}(G) \cdot U \]
by the Lemma. Let $\C = C_\L(f)$; then $\C \cdot (f_1,\ldots, f_{r-1}) = \A^{r-1}$ because $\L \cdot (f_1,\ldots,f_r)=  \A^r$, and $\C$ is a smooth $(\R,\A)$-Lie algebra by Lemma \ref{Cent}. Moreover $M[f]$ is a finitely generated $V := \hK{U(\C)}$-module by Corollary \ref{CounitInj}, so 
\[M[f]_{\Dp}(G) \subset M[f][G] \cdot V  \]
by the induction hypothesis. Therefore
\[ M_{\Dp}[F] \subset M[f]_{\Dp}(G) \cdot U \subset (M[f][G] \cdot V) \cdot U = M[F] \cdot U\]
because $M[f][G] = M[F]$.
\end{proof}

\subsection{Kashiwara's equivalence for \ts{\hK{U(\L)}}}\label{AffKash} 
The following elementary result will be useful on more than one occasion; we could not locate a reference to it in the literature.

\begin{prop} Let $\C$ and $\D$ be two categories, let $T : \D \to \C$ be a functor and let $S : \C \to \D$ be a left adjoint to $T$. Suppose that
\be \item the counit morphism $\epsilon : ST\to 1_{\D}$ is an isomorphism, and
\item $S$ reflects isomorphisms.
\ee Then $S$ and $T$ are mutually inverse equivalences of categories.
\end{prop}\begin{proof} Let $\eta : 1_{\C} \to TS$ be the unit morphism. It will be sufficient to show that $\eta_N : N \to TSN$ is an isomorphism for any $N \in \C$. Now $\epsilon_{SN} \circ S(\eta_N) = 1_{SN}$ by a counit-unit equation and $\epsilon_{SN}$ is an isomorphism by (a), so $S(\eta_N)$ is also an isomorphism. Hence $\eta_N$ is an isomorphism by (b).
\end{proof}

Here is the main result of Section \ref{ConthKUL}.
\begin{thm} Let $F=\{f_1,\ldots,f_r\}$ be a subset of $A$ such that $\L\cdot(f_1,\ldots,f_r)=\A^r$ and write $\C=C_\L(F)$. The functors 
\[ N \mapsto N \otimes_{\hK{U(\C)}}\hK{U(\L)} \qmb{and} M \mapsto M[F]\] 
are mutually inverse equivalences of categories between the category of finitely generated $\hK{U(\C)}/(F)$-modules and the category of finitely generated $\hK{U(\L)}$-modules $M$ such that $M=M_{\Dp}(F)$.
\end{thm}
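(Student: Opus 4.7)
The plan is to apply Proposition \ref{CatEq} to the adjoint pair $(S,T)$ provided by Proposition \ref{MdpF}, where $S = (-)\otimes_{\hK{U(\C)}}\hK{U(\L)}$ and $T = (-)[F]$. Write $U := \hK{U(\L)}$ and $V := \hK{U(\C)}$.

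First I would check that $S$ and $T$ restrict to functors between the asserted full subcategories. For $T$: if $M$ is a finitely generated $U$-module, then Corollary \ref{CounitInj} shows $M[F]$ is finitely generated over $V$, and $M[F]\cdot F = 0$ by definition, so $T(M)$ is a finitely generated $V/(F)$-module. For $S$: given a finitely generated $V/(F)$-module $N$, since $\C = C_\L(F)$ the elements $f_1,\ldots,f_r$ are central in $V$, so for any $n \in N$ and any $i$ we have $(n\otimes 1)f_i = n \otimes f_i = nf_i \otimes 1 = 0$, i.e. $n \otimes 1 \in (N\otimes_V U)[F]$. As $N \otimes_V U$ is generated over $U$ by such elements, $N\otimes_V U = (N\otimes_V U)[F]\cdot U$. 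Applying Theorem \ref{CounitIm} gives $N \otimes_V U = (N\otimes_V U)_{\Dp}(F)$, so $SN$ indeed lies in the target subcategory. Since the adjunction of Proposition \ref{MdpF} restricts to these subcategories, $(S,T)$ forms an adjoint pair between them.

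Next I would verify hypothesis (a) of Proposition \ref{CatEq}: the counit $\epsilon_M \colon M[F]\otimes_V U \to M$ is an isomorphism for every $M$ with $M = M_{\Dp}(F)$. Injectivity is exactly Corollary \ref{CounitInj}. The image of $\epsilon_M$ is $M[F]\cdot U$, which equals $M_{\Dp}(F)$ by Theorem \ref{CounitIm}, and this equals $M$ by our standing assumption on $M$. Hence $\epsilon_M$ is an isomorphism.

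Finally, hypothesis (b): $S$ reflects isomorphisms. This is immediate from Corollary \ref{Lieflat}, which tells us that $U$ is faithfully flat as a left $V$-module; a morphism $N_1 \to N_2$ of $V/(F)$-modules is an isomorphism iff it becomes one after applying $-\otimes_V U$. Proposition \ref{CatEq} then yields that $S$ and $T$ are mutually inverse equivalences. I do not expect genuine obstacles at this final stage: all the hard work (finite generation of $M[F]$, injectivity of the counit, surjectivity onto $M_{\Dp}(F)$, faithful flatness) has been carried out in Subsections \ref{Lieflat}--\ref{CounitIm}, and the proof of the theorem is essentially just assembly of these pieces via the categorical criterion \ref{CatEq}.
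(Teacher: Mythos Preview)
Your proposal is correct and follows essentially the same route as the paper's proof: restrict the adjunction of Proposition \ref{MdpF} to the two full subcategories, verify that the counit is an isomorphism using Corollary \ref{CounitInj} and Theorem \ref{CounitIm}, and use faithful flatness from Corollary \ref{Lieflat} to see that the left adjoint reflects isomorphisms, then conclude via Proposition \ref{CatEq}. The only cosmetic difference is that the paper shows $N\otimes_V U = (N\otimes_V U)_{\Dp}(F)$ by invoking Proposition \ref{DpUSubmod} directly (noting that $(N\otimes_V U)_{\Dp}(F)$ is a $U$-submodule containing $N\otimes 1$), whereas you reach the same conclusion via Theorem \ref{CounitIm}; both are valid.
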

    
\begin{proof} Write $V=\hK{U(\C)}$ and let $S=(-)[F]$ denote the functor from finitely generated $U$-modules to finitely generated $V/(F)$-modules given by Corollary \ref{CounitInj}, and let $T:= -\otimes_VU$ denote its left adjoint. If $N$ is a finitely generated $V/(F)$-module then $(TN)_{\Dp}(F)\supset STN\supset N\otimes 1$. Now $(TN)_{\Dp}(F)$ is a $U$-submodule of $TN$ by Proposition \ref{DpUSubmod}, and it contains $N \otimes 1$ which generates $TN = N \otimes_V U$ as a $U$-module, so $TN=(TN)_{\Dp}(F)$.

If $M$ is a finitely generated $U$-module such that $M=M_{\Dp}(F)$, then the counit morphism $\epsilon_M\colon TSM\to M$ is an isomorphism by Corollary \ref{CounitInj} and Theorem \ref{CounitIm}. Since $U$ is a faithfully flat left $V$-module by Corollary \ref{Lieflat}, the functor $S$ reflects isomorphisms. The result now follows from the Proposition.
\end{proof}

\section{Kashiwara's equivalence for \ts{\w{U(L)}}} \label{IplusInat}

\subsection{Normalisers in \ts{L}}\label{Norm}

Suppose that $I$ is an ideal in a commutative $R$-algebra $A$ and $L$ is a $(R,A)$-Lie algebra.

\begin{defn} The \emph{normaliser of $I$ in $L$}, $N_L(I):=\{x\in L\st \rho(x)(I)\subset I \}.$
\end{defn}

We will use the abbreviation $N := N_L(I)$ in this subsection.

\begin{lem} \hfill \be \item $N$ is an $(R,A)$-Lie subalgebra of $L$. \item $IL$ is an $(R,A)$-Lie ideal in $N$. \item $N/IL$ is naturally an $(R,A/I)$-Lie algebra. \item $U(L)/IU(L)$ is naturally a $U(N/IL)-U(L)$-bimodule.
\ee
\end{lem}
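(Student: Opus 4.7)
The plan is to verify each of the four parts by direct computation using only the Lie--Rinehart axiom
\[ [x,ay] = a[x,y] + \rho(x)(a)y, \]
the $A$-linearity of $\rho$, and the fact that $\rho\colon L \to \Der_R(A)$ is a homomorphism of $R$-Lie algebras. All four statements are essentially bookkeeping; what makes part (d) interesting is checking that the two-sided action is well-defined on the quotient.

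For (a), I would check closure of $N_L(I)$ under the $R$-linear, $A$-linear, and Lie bracket operations: the first is immediate, the second follows from $\rho(ax)=a\rho(x)$ together with $aI \subseteq I$, and for the bracket I use $\rho([x,y]) = [\rho(x),\rho(y)]$ acting as a commutator of derivations on $A$, each of which stabilises $I$. For (b), first observe $IL \subseteq N_L(I)$ since for $a \in I$, $y \in L$ one has $\rho(ay)(I) = a\rho(y)(I) \subseteq I$; then for $x \in N_L(I)$ and $ay \in IL$ the formula $[x,ay] = a[x,y] + \rho(x)(a) y$ displays $[x,ay]$ as a sum of two elements of $IL$, using $\rho(x)(a) \in I$.

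For (c), the $A$-module structure of $N_L(I)/IL$ descends to an $A/I$-module structure because $IN_L(I) \subseteq IL$, the Lie bracket descends by (b), and the anchor $\rho$ descends to a map $\bar\rho\colon N_L(I)/IL \to \Der_R(A/I)$ because $\rho(x)$ preserves $I$ for $x \in N_L(I)$, and $\rho(ay)(A) \subseteq I$ when $a \in I$. The Lie--Rinehart axiom passes to the quotient by naturality.

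The main work is part (d). I would first show that $IU(L)$ is stable under left multiplication by any $x \in N_L(I)$: writing a generator as $au$ with $a \in I$ and $u \in U(L)$, the commutation relation in $U(L)$ gives $x(au) = \rho(x)(a) u + a(xu)$, both summands of which lie in $IU(L)$ since $\rho(x)(a) \in I$. Together with the obvious stability under left multiplication by $A$, this equips $U(L)/IU(L)$ with compatible left actions of the $R$-Lie algebra $N_L(I)$ and of $A$. The subalgebra $IL$ acts trivially because $(ay)\cdot u = a(yu) \in IU(L)$ when $a \in I$, so the action descends to $N_L(I)/IL$; and the $A$-action factors through $A/I$. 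The Lie--Rinehart compatibility $x(am) = a(xm) + \bar\rho(\bar x)(\bar a) m$ on $U(L)/IU(L)$ is exactly the image mod $IU(L)$ of the identity $xa = ax + \rho(x)(a)$ holding in $U(L)$, so the universal property of $U(N_L(I)/IL)$ produces the required left module structure. Finally the right $U(L)$-action is just right multiplication on the quotient, and it commutes with the left action because left and right multiplications in $U(L)$ commute. The only subtlety to watch is that the quotient is being formed by a \emph{left} ideal $IU(L)$, so stability of $IU(L)$ under left multiplication by $N_L(I)$ is the condition that forces the left action to land in $U(N_L(I)/IL)$ rather than some larger algebra --- this is where the normaliser condition is used essentially.
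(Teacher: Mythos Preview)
Your proof is correct and follows essentially the same approach as the paper: the same direct verifications in (a)--(c), and in (d) the same computation $x(au)=\rho(x)(a)u+a(xu)\in IU(L)$ showing that $U(N)$ lands in the idealizer of $IU(L)$, followed by the observation that $IL$ acts trivially and an appeal to the universal property of the enveloping algebra. Your exposition is somewhat more explicit than the paper's, but the logical structure is identical.
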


\begin{proof} (a) It suffices to show that $N$ is an $A$-linear submodule of $L$ and a sub-$R$-Lie algebra. Both facts are immediate from the definitions.

(b) Clearly $IL$ is $A$-submodule of $N$. Suppose $n\in N$, $a\in I$ and $x\in L$. Then $[n,ax]=a[n,x]+\rho(n)(a) x\in IL$, so $[L, IL] \subseteq IL$.

(c) Certainly $N/IL$ is naturally an $A/I$-module and an $R$-Lie algebra. Let the anchor map $\rho_{N/IL}\colon N/IL\to \Der_R(A/I)$ be given by \[\rho_{N/IL}(x+IL)(a+I)=\rho(x)(a)+I\] which is well-defined by the definition of $N$. The verification that this satisfies the conditions for an anchor map is routine.

(d) Consider the natural left-action of $N$ on $U(L)$ given by restricting the action of $L$. If $n\in N$, $a\in I$ and $u\in U(L)$, then we can compute \[n(au)=\rho(n)(a)u+a(nu)\in IU(L) \] by considering the definition of $N$. Thus the image of $U(N)$ in $U(L)$ is contained in the idealizer subring of $IU(L)$. Since $IL$ acts trivially on $U(L)/IU(L)$, using the universal property of enveloping algebras we see that the left-action of $U(N)$ on $U(L)$ descends to a left-action of $U(N/IL)$ on $U(L)/IU(L)$ that commutes with the natural right-action of $U(L)$.  
\end{proof}

\subsection{Standard bases}\label{Basis} Suppose that $I$ is an ideal in a commutative $R$-algebra $A$ and $L$ is a $(R,A)$-Lie algebra.

\begin{defn} We say that a subset $\{x_1,\ldots, x_d\}$ of $L$ is an \emph{$I$-standard basis} if
\be\item $\{x_1,\ldots, x_d\}$ is an $A$-module basis for $L$,
\item there is a generating set $\{f_1,\ldots, f_r\}$ for $I$ with $r \leq d$ such that
\item $x_i \cdot f_j = \delta_{ij}$ for all $1\leq i \leq d$ and $1 \leq j \leq r$.
\ee\end{defn}

Under the assumption that $L$ has an $I$-standard basis, we can explicitly compute the normaliser $N_L(I)$ as follows.

\begin{prop} Suppose that $\{x_1,\ldots,x_d\}$ is an $I$-standard basis for $L$ with corresponding generating set $F = \{f_1,\ldots, f_r\}$ for $I$. Then
\be \item  $C := C_L(F) = Ax_{r+1} \oplus \cdots \oplus Ax_d$,
\item  $N := N_L(I) = I x_1 \oplus \cdots \oplus I x_r \oplus C$,
\item $N / IL = (A/I)\overline{x_{r+1}} \oplus \cdots \oplus (A/I)\overline{x_d}$,
\item $N / IL \cong C / IC$ as $(R,A/I)$-Lie algebras.
\ee
\end{prop}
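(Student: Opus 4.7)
The plan is to do everything by writing a general element $y \in L$ uniquely as $y = \sum_{i=1}^d a_i x_i$ and testing membership in $C$ and $N$ against the generators $f_j$ of $I$. Since $x_i \cdot f_j = \delta_{ij}$ for $1 \leq i \leq d$ and $1 \leq j \leq r$, we have $y \cdot f_j = a_j$ whenever $j \leq r$. This single identity drives (a)--(d).

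For (a), observe that $y \in C_L(F)$ iff $a_j = y \cdot f_j = 0$ for all $1 \leq j \leq r$, giving the claimed decomposition. (Alternatively this is a direct application of Lemma \ref{Cent} to the $f_i$.) For (b), the only real content is the reduction from membership in $N_L(I)$ to checking the condition $\rho(y)(f_j) \in I$ on the generators. For any $a \in A$ and any generator $f_j$, the Leibniz-type rule $\rho(y)(af_j) = \rho(y)(a) f_j + a \rho(y)(f_j)$ lies in $I$ as soon as $\rho(y)(f_j) \in I$, since $\rho(y)(a)f_j \in I$ automatically. Extending by $R$-linearity, $\rho(y)(I) \subset I$ iff $\rho(y)(f_j) \in I$ for each $j$. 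But $\rho(y)(f_j) = a_j$ for $j \leq r$, so this is precisely the condition that $a_1,\ldots,a_r \in I$. Combining with (a) gives
\[ N = Ix_1 \oplus \cdots \oplus Ix_r \oplus Ax_{r+1} \oplus \cdots \oplus Ax_d = Ix_1 \oplus \cdots \oplus Ix_r \oplus C. \]

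For (c), since $\{x_1,\ldots,x_d\}$ is an $A$-basis for $L$ we have $IL = \bigoplus_{i=1}^d Ix_i$. Quotienting the description of $N$ from (b) by $IL$ kills the summands $Ix_1,\ldots,Ix_r$ and reduces $Ax_i$ to $(A/I)\overline{x_i}$ for $i > r$, yielding the stated decomposition of $N/IL$ as an $A/I$-module. The anchor map on the quotient $(R,A/I)$-Lie algebra structure is the one supplied by Lemma \ref{Norm}(c). For (d), the composition $C \hookrightarrow N \twoheadrightarrow N/IL$ is a map of $(R,A)$-Lie algebras (hence of $(R,A/I)$-Lie algebras once its kernel is identified), and by (a) and (c) its kernel is $C \cap IL = Ix_{r+1} \oplus \cdots \oplus Ix_d = IC$ and its image is all of $N/IL$. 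Hence it induces the claimed isomorphism $C/IC \cong N/IL$.

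There is no real obstacle: once one sees that the $I$-standard basis condition reduces every computation to the identity $y \cdot f_j = a_j$, each part is a one-line verification. The only mildly non-trivial ingredient is the Leibniz reduction in (b) showing that it suffices to test $\rho(y)$ on a generating set of $I$.
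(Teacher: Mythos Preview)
Your proof is correct and is exactly the routine verification the paper has in mind; the paper's own proof reads in its entirety ``This is routine.'' Your expansion---writing $y = \sum a_i x_i$, using $y\cdot f_j = a_j$, and the Leibniz reduction to generators for part (b)---is the natural way to carry this out.
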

\begin{proof} This is routine.\end{proof}

\noindent \textbf{Until the end of Section \ref{IplusInat}, we assume that :}
\begin{itemize}
\item $I$ is a radical ideal in the $K$-affinoid algebra $A$,
\item $\{x_1,\ldots, x_n\}$ is an $I$-standard basis for the $(K,A)$-Lie algebra $L$.
\end{itemize}
We also fix an affine formal model $\A$ in $A$. Because $(\pi^n x_i) \cdot (f_j / \pi^n) = \delta_{ij}$ for all $i,j$, we see that $\{\pi^n x_1,\ldots, \pi^n x_d\}$ is again an $I$-standard basis for $L$ for any integer $n$. So by replacing $\{x_1,\ldots,x_d\}$ by a $\pi$-power multiple if necessary and applying \cite[Lemma 6.1(c)]{DCapOne}, we will assume that
\[\L := \A x_1 + \ldots + \A x_d\]
is a free $\A$-Lie lattice in $L$. We also fix a generating set $F := \{f_1,\ldots,f_r\}$ for $I$ such that $x_i\cdot f_j = \delta_{ij}$ for all $i,j$.

\subsection{The transfer-bimodule}\label{TransBi} 
With the notation established above, we have a closed embedding of affinoid varieties 
\[ Y := \Sp(A/I) \hookrightarrow X:= \Sp(A)\]
defined by the ideal $I$. We call the $(K,A/I)$-Lie algebra
\[L_Y:=\frac{N_L(I)}{IL}\]
 the \emph{pullback} of $L$ to $Y$. Since $L$ and $L_Y$ are free modules of finite rank by Proposition \ref{Basis}, it follows from Theorem \ref{RevComp} that $\w{U(L)}$ and $\w{U(L_Y)}$ are Fr\'echet-Stein algebras.  We will use this basic fact without further mention in what follows.
\begin{defn}  The \emph{transfer-bimodule} $\w{U(L)}_{Y\to X}$ is defined by \[\w{U(L)}_{Y\to X}:=A/I\otimes_A \w{U(L)}\cong \w{U(L)}/I\w{U(L)}.\]
\end{defn}

\begin{prop} $\w{U(L)}_{Y\to X}$ is a $\w{U(L)}$-coadmissible $(\w{U(L_Y)},\w{U(L)})$-bimodule.
\end{prop}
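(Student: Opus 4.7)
The proposition has two parts: that $\w{U(L)}_{Y \to X} = \w{U(L)}/I\w{U(L)}$ is co-admissible as a right $\w{U(L)}$-module, and that it carries a continuous left $\w{U(L_Y)}$-action commuting with the right action. The plan is to establish these in turn.

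Co-admissibility is quick: since $I = (f_1, \ldots, f_r)$ is finitely generated and $\w{U(L)}$ is Fr\'echet--Stein (Theorem \ref{RevComp}), the right ideal $I\w{U(L)}$ is finitely generated, so the quotient $\w{U(L)}/I\w{U(L)}$ is a cokernel in the abelian category of co-admissible right $\w{U(L)}$-modules \cite{ST} and is therefore itself co-admissible.

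For the left action, the plan is to lift Lemma \ref{Norm}(d) to the $\A$-Lie lattice level. After rescaling the $f_j$ by an appropriate $\pi$-power we may assume $f_1, \ldots, f_r \in \A$; set $\I := (f_1,\ldots,f_r)\A$ so that $I = \I \otimes_\R K$. Using the smooth basis $\{x_1,\ldots,x_d\}$ of $\L$, define $\C := \A x_{r+1} \oplus \cdots \oplus \A x_d$, $\N := \I x_1 \oplus \cdots \oplus \I x_r \oplus \C \subseteq \L$, and $\L_Y := \N/\I\L$, which is naturally isomorphic to $\C/\I\C$. A case-by-case computation with the Leibniz bracket rule, using the relations $x_i \cdot f_j = \delta_{ij}$ and the fact that $x_j \cdot \I \subseteq \I$ for $j > r$ (since $x_j$ centralises the $f_k$), shows that $\N$ is a sub-$(\R,\A)$-Lie algebra of $\L$, $\I\L$ is an $(\R,\A)$-Lie ideal of $\N$, and $\L_Y$ is a smooth $(\R,\A/\I)$-Lie lattice in $L_Y$. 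The lattice-level analogue of Lemma \ref{Norm}(d) then endows $U(\L)/\I U(\L)$ with a $U(\L_Y)$-$U(\L)$-bimodule structure.

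To obtain the Fr\'echet bimodule, I would repeat this construction with $\pi^n \L, \pi^n \N, \pi^n \L_Y$ for each $n \geq 0$, $\pi$-adically complete (using that $\I \h{U(\pi^n \L)}$ is a finitely generated and hence closed submodule), and then invert $\pi$, giving compatible $\hK{U(\pi^n \L_Y)}$-$\hK{U(\pi^n \L)}$-bimodule structures on $\hK{U(\pi^n \L)}/I\hK{U(\pi^n \L)}$. Passing to the inverse limit in $n$ yields a continuous ring homomorphism $\w{U(L_Y)} \to \End_{\w{U(L)}}(\w{U(L)}_{Y\to X})$ with respect to the Fr\'echet topology on $\End$ from \cite[\S7.2]{DCapOne}, continuity following from continuity at each Banach level. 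The main obstacle is the bracket verification for $\N$—it splits into several cases depending on whether indices are at most $r$ or exceed $r$—but once it is in place, the passage through the tower of completions is a routine application of the methods of \cite{DCapOne}.
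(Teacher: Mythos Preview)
Your overall strategy matches the paper's: establish co-admissibility via finite presentation, construct the bimodule structure at each Banach level using an integral form of Lemma \ref{Norm}(d), and pass to the inverse limit. The point of divergence is your choice of integral ideal $\I$.

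The rescaling step is where things go wrong. Replacing $f_j$ by $\pi^m f_j$ to force $f_j \in \A$ changes the relations to $x_i \cdot f_j = \pi^m\delta_{ij}$, so you cannot then invoke $x_i \cdot f_j = \delta_{ij}$ in the bracket computation as you do. More seriously, the ideal $\I' = (\pi^m f_1,\ldots,\pi^m f_r)\A$ is in general strictly contained in $I \cap \A$, so $\A/\I'$ acquires $\pi$-torsion and is not an affine formal model in $A/I$; this undermines the claim that $\N/\I'\L$ is a smooth Lie lattice in $L_Y$. It also breaks the $\pi^n$-compatibility you need: one has $N_{\pi^n\L}(\I') = \pi^n\L \cap N_\L(\I')$, and this equals $\pi^n N_\L(\I')$ only when $\pi^{-n}\I' \cap \A = \I'$, which fails for $\I'$ but holds for $I \cap \A$.

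The paper sidesteps all of this by taking $\I := I \cap \A$ and working with the abstract normaliser $\N := N_\L(\I)/\I\L$, appealing directly to Lemma \ref{Norm} rather than doing any case-by-case bracket check. With this choice $\A/\I$ is automatically $\pi$-torsion-free, multiplication by $\pi^n$ gives $N_\L(\I)/\I\L \cong N_{\pi^n\L}(\I)/\I(\pi^n\L)$ (using that $\L$ is $\R$-flat), and the check $N_\L(\I)\cap IL = \I\L$ (immediate since $\L$ is free over $\A$) shows $\N$ embeds as a genuine Lie lattice in $L_Y$. If you replace your $\I$ by $I \cap \A$, your explicit formula $\N = \I x_1 \oplus \cdots \oplus \I x_r \oplus \C$ does coincide with $N_\L(\I)$ and your argument can be repaired; but the paper's route via Lemma \ref{Norm} is shorter and avoids the pitfall entirely.
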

\begin{proof} $\w{U(L)}_{Y\to X}$ is finitely presented as a right $\w{U(L)}$-module, so it is coadmissible by \cite[Corollary 3.4]{ST}. Let  $\I := I \cap A$ and $\N:=\frac{N_\L(\I)}{\I\L}$. Then $\N$ is an $(\R,\A/\I)$-Lie algebra by Lemma \ref{Norm}(c). Now $U(\L)/\I U(\L)$ is  a $U(\N)$ -- $U(\L)$-bimodule by Lemma \ref{Norm}(d), so $\hK{U(\L)}/I\hK{U(\L)}$ is a $\hK{U(\N)}$ -- $\hK{U(\L)}$-bimodule. 

Since $\L$ is a flat $\R$-module, multiplication by $\pi^n$ induces an isomorphism 
\[\frac{N_{\L}(\I)}{\I\L} \stackrel{\cong}{\longrightarrow} \frac{N_{\pi^n \L}(\I)}{ \I(\pi^n \L)}.\] 
Hence $\hK{U(\pi^n \L)}/I\hK{U(\pi^n \L)}$ is a $\hK{U(\pi^n \N)}$ -- $\hK{U(\pi^n \L)}$-bimodule and the  homomorphism $\hK{U(\pi^n\N)}\to \End_{\hK{U(\pi^n \L)}}(\hK{U(\pi^n \L)}/I\hK{U(\pi^n \L)})^{\op}$ is continuous.

Finally, $N_{\L}(\I) \cap IL = \I \L$ because $\L \cap IL = \I \L$ as $\L$ is a free $\A$-module.  Hence $\N$ embeds naturally into $N_L(I) / IL = L_Y$ and its image is an $\A/\I$-Lie lattice in $L_Y$. Hence $\w{U(L_Y)} \cong \invlim \hK{U(\pi^n \N)}$ by Definition \ref{RevComp}, and therefore 
\[\w{U(L)}_{Y\to X} \cong \invlim \hK{U(\pi^n \L)}/I\hK{U(\pi^n \L)}\]
is a $\w{U(L)}$-coadmissible $(\w{U(L_Y)},\w{U(L)})$-bimodule by \cite[Definition 7.3]{DCapOne}.
\end{proof}

\subsection{The functors \ts{\iota_+} and \ts{\iota^\natural}}\label{FrechKash} By Proposition \ref{TransBi}, the bimodule $\w{U(L)}_{Y\to X}$ satisfies the conditions for the right-module version of Lemma \ref{Revwotimes}. Thus 
\[N   \underset{\w{U(L_Y)}}{\w{\otimes}} \w{U(L)}_{Y\to X}\]
is a coadmissible $\w{U(L)}$-module whenever $N$ is a coadmissible $\w{U(L_Y)}$-module.
\begin{defn} Let $M$ be a coadmissible $\w{U(L)}$-module and let $N$ be a coadmissible $\w{U(L_Y)}$-module.
\be
\item The \emph{pushforward} of $N$ to $X$ is the coadmissible $\w{U(L)}$-module \[ \iota_+N:= N\underset{\w{U(L_Y)}}{\w{\otimes}} \w{U(L)}_{Y\to X}. \]
\item The \emph{pullback} of $M$ to $Y$ is the $\w{U(L_Y)}$-module 
\[ \iota^\natural M:=\Hom_{\w{U(L)}}\left(\w{U(L)}_{Y\to X},M\right). \]
\ee\end{defn} 

Note the right action of $\w{U(L_Y)}$ on $\iota^\natural M$ comes from its left action on $\w{U(L)}_{Y\to X}$ given in Proposition \ref{TransBi}. That $\iota^\natural$ defines a functor from $\w{U(L)}$-modules to $\w{U(L_Y)}$-modules is clear, and the universal property of $\w\otimes$ shows that $\iota_+$ is also a functor. 

\subsection{Vectors where \ts{S \subset A} acts topologically nilpotently}\label{Minfty}

Before we can prove the main theorem of Section \ref{IplusInat} we will need some more definitions.

\begin{defn} Let $M$ be a coadmissible $U$-module and let $S$ be a subset of $A$. Define
\[M_\infty(S):=\{ m\in M \st ms^k\to 0\mbox{ as }k\to \infty\mbox{ for all }s\in S\}.\]
We say that $S$ acts \emph{topologically nilpotently} on $m \in M$ precisely when $m \in M_\infty(S)$.  We say that $S$ acts \emph{locally topologically nilpotently} on $M$ if $M = M_\infty(S)$.
\end{defn}

We begin our study of $M_\infty(S)$ by doing some analysis. We will write
\[ U := \w{U(L)} \qmb{and} U_n := \hK{U(\pi^n \L)} \qmb{ for any } n\geq 0.\]
If $M$ is a coadmissible $U$-module, then $M_n$ will always mean the finitely generated $U_n$-module $M \otimes_U U_n$.

\begin{lem} Let $M$ be a coadmissible $U$-module. Then 
\be \item $M_\infty(f) \subset M_\infty(af)$ for any $f \in A$ and $a \in \A$, and 
\item $M_\infty(Kf) \cap M_\infty(Kg) \subset M_\infty(K(f+g))$ for any $f,g \in A$. \ee
\end{lem}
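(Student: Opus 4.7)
The plan is to reduce each statement to estimates in the Banach slices $M_n := M \otimes_U U_n$, where $U := \w{U(L)}$ and $U_n := \hK{U(\pi^n\L)}$. Each $M_n$ is a finitely generated $U_n$-module, hence carries a submultiplicative Banach $U_n$-module norm $|\cdot|_n$ in which elements of the affine formal model $\A$ act as operators of norm at most one. Since convergence to zero in the Fr\'echet module $M$ is equivalent to convergence in every $M_n$, I can argue one $n$ at a time.

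Part (a) is a direct calculation. Since $A$ is commutative and its image in $U$ is commutative, $(af)^k = a^k f^k = f^k a^k$ in $U$, so $m(af)^k = (mf^k)a^k$. As $a \in \A$, the operator $a^k$ on $M_n$ has norm at most one, so $|m(af)^k|_n \leq |mf^k|_n$. Taking $k \to \infty$ in each slice gives the claim.

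For part (b), the first step is the equivalent characterisation $m \in M_\infty(Kf) \iff |mf^k|_n^{1/k} \to 0$ in every $M_n$; this comes from unpacking $|c|^k |mf^k|_n \to 0$ at arbitrarily small $|c|$, which forces super-geometric decay. Using $fg = gf$ in $U$, I binomially expand $(f+g)^k = \sum_{i=0}^k \binom{k}{i} f^i g^{k-i}$ and combine $|\binom{k}{i}| \leq 1$ with the ultrametric inequality to obtain
\[|m(f+g)^k|_n \leq \max_{0 \leq i \leq k} |mf^i g^{k-i}|_n.\]
Each term admits two submultiplicative bounds,
\[|mf^i g^{k-i}|_n \leq |mf^i|_n \cdot |g|_n^{k-i} \qmb{and} |mf^i g^{k-i}|_n \leq |mg^{k-i}|_n \cdot |f|_n^i.\]
Splitting the maximum at $i = k/2$ and applying whichever bound involves the larger exponent on $m$, the hypotheses allow one to bound the $m$-factor by $\epsilon^{k/2}$ (for any prescribed $\epsilon > 0$ and all large $k$) while the other factor is at most $C^k$ with $C := \max(1, |f|_n, |g|_n)$. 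Choosing $\epsilon$ so that $\epsilon^{1/2} C$ lies below any prescribed $\delta$ gives $|m(f+g)^k|_n^{1/k} \to 0$, which is exactly the condition $m \in M_\infty(K(f+g))$.

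The main obstacle is purely organisational: one must fix Banach $U_n$-module norms on each $M_n$ so that both $|vu|_n \leq |v|_n |u|_n$ and $|va|_n \leq |v|_n$ for $a \in \A$ hold simultaneously, and verify the equivalent characterisation of $M_\infty(Kf)$ in terms of root-type decay. Once these bookkeeping items are in place, each part is a short non-Archimedean estimate using only the ultrametric triangle inequality and the integrality of binomial coefficients.
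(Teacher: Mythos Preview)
Your proof is correct and follows essentially the same approach as the paper. The paper works with a $\U_n$-lattice $\M_n$ in each $M_n$ rather than a Banach norm, but these are equivalent formulations: choosing such a lattice automatically gives both the submultiplicativity and the $\A$-contractivity you ask for, since $\A \subset \h{U(\pi^n\L)}$. For part (b) the paper does exactly your splitting-at-$k/2$ argument, except that instead of first isolating the root-decay characterisation of $M_\infty(Kf)$, it picks $r$ with $\pi^r f, \pi^r g \in \A$ and uses the hypothesis directly in the form $m(f/\pi^r)^k \to 0$; this amounts to the same estimate.
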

\begin{proof} Note that $M$ is the inverse limit of the $K$-Banach spaces $M_n$, so a sequence in $M$ converges to an element $z$ of $M$ if and only if the image of this sequence in each $M_n$ converges to the image of $z$ in $M_n$. Fix $n \geq 0$ and a $U_n$-lattice $\M_n$ in $M_n$.

(a) Since $a \in \A$ preserves $\M_n$, $mf^k \to 0$ in $M_n$ forces $m f^k a^k \to 0$ in $M_n$.

(b) Suppose that $m \in M_\infty(Kf) \cap M_\infty(Kg)$; it will be enough to show that $m(f+g)^k \to 0$ as $k\to \infty$ in $M_n$. Choose $r \geq 0$ such that $\pi^r f$ and $\pi^r g$ both lie in $\A$. Now since $m \in M_\infty(f/\pi^r) \cap M_\infty(g/\pi^r)$, 
\[ m (f/\pi^r)^k \to 0 \qmb{and} m (g/\pi^r)^k \to 0\qmb{as} k\to \infty\]
in $M_n$ so for every $s \geq 0$ there exists an integer $T\geq 0$ such that 
\[ m(f/\pi^r)^k \in \pi^s \M_n \qmb{and} m(g/\pi^r)^k \in \pi^s \M_n \qmb{whenever} k \geq T.\]
Now suppose that $i, j\geq 0$ are such that $i + j \geq 2T$. Then either $i \leq j$ in which case $j \geq T$ and 
\[ m f^i g^j = m (g/\pi^r)^j (\pi^r f)^i \pi^{r(j-i)} \in \pi^s \M_n,\]
or $i \geq j$ in which case $i \geq T$ and 
\[ m f^i g^j = m (f/\pi^r)^i (\pi^r g)^j \pi^{r(i-j)} \in \pi^s \M_n,\]
because $(\pi^rf)^i$ and $(\pi^rg)^j$ preserve $\pi^s \M_n$ by construction. Therefore for all $s \geq 0$ there exists $T \geq 0$ such that
\[ m (f+g)^k = \sum_{i+j=k} m f^i g^j \binom{k}{i} \in \pi^s \M_n\]
whenever $k \geq 2T$, and hence $m(f+g)^k \to 0$ in $M_n$ as $k\to \infty$ as required. \end{proof}

It is interesting to note here that the fact that $f$ and $g$ commute is crucial, since for example the sum of the two non-commuting square-zero elements $\begin{pmatrix} 0 & 1 \\ 0 & 0 \end{pmatrix}$ and $\begin{pmatrix} 0 & 0 \\ 1 & 0 \end{pmatrix}$ in the matrix ring $M_2(K)$ is a unit.
\begin{cor} Let $M$ be a coadmissible $U$-module. Then 
\[M_\infty(W) = M_\infty(A W)\]
for any $K$-vector subspace $W$ of $A$.
\end{cor}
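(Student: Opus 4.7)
The plan is to prove the two inclusions separately. The inclusion $M_\infty(AW) \subseteq M_\infty(W)$ is immediate from the definition of $M_\infty$ since $W \subseteq AW$. For the reverse inclusion, I would fix $m \in M_\infty(W)$ and an arbitrary $t = \sum_{i=1}^r a_i w_i \in AW$ with $a_i \in A$ and $w_i \in W$, and aim to show that $mt^k \to 0$ in $M$. The argument will proceed in two stages: first handle a single product $aw$, and then assemble a finite sum via Lemma (b) of Subsection \ref{Minfty}.

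The heart of the argument is the claim that $m \in M_\infty(aw)$ for every $a \in A$ and every $w \in W$. To see this, I would use that $\A$ is an affine formal model in $A$, so $A = \bigcup_{r \geq 0} \pi^{-r} \A$, and write $a = \pi^{-r} a'$ for some $r \geq 0$ and $a' \in \A$. The crucial observation is that $W$ is a $K$-vector subspace of $A$, so the element $\pi^{-r} w$ still lies in $W$. Consequently, the hypothesis $m \in M_\infty(W)$ directly gives $m (\pi^{-r} w)^k \to 0$, and then Lemma (a) of Subsection \ref{Minfty}, applied with $f := \pi^{-r} w \in A$ and coefficient $a' \in \A$, yields $m(aw)^k = m(a' \cdot \pi^{-r} w)^k \to 0$. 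The same argument works with $\lambda a \in A$ in place of $a$ for any $\lambda \in K$, so in fact $m \in M_\infty(K \cdot aw)$.

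Applying this to each summand, I obtain $m \in M_\infty(K \cdot a_i w_i)$ for every $i = 1, \ldots, r$. Iterating Lemma (b) of Subsection \ref{Minfty} finitely many times then gives $m \in M_\infty(K(a_1 w_1 + \cdots + a_r w_r)) = M_\infty(Kt)$, and in particular $mt^k \to 0$, completing the proof. I do not anticipate any serious obstacle; the one delicate point is that Lemma (a) alone permits multiplication only by elements of $\A$, so a direct iteration would yield merely $m \in M_\infty(\A W)$. Extending to $M_\infty(AW)$ requires the $K$-subspace hypothesis on $W$ to absorb the negative power of $\pi$ appearing in the decomposition $a = \pi^{-r} a'$.
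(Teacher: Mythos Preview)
Your proposal is correct and follows essentially the same approach as the paper. The only cosmetic difference is that the paper records the identity $AW = \A K W = \A W$ at the outset (using $A = K\A$ and $KW = W$) so that it may take the coefficients $a_i$ in $\A$ from the start, whereas you achieve the same effect element-by-element via the decomposition $a = \pi^{-r}a'$ and the absorption $\pi^{-r}w \in W$; in both cases the $K$-subspace hypothesis on $W$ is exactly what makes this work, and the rest is Lemma~\ref{Minfty}(a) followed by iterated Lemma~\ref{Minfty}(b).
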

\begin{proof} Note that $AW = \A K W = \A W$, and that the inclusion $M_\infty(AW) \subset M_\infty(W)$ is trivial.  Let $m\in M_\infty(W)$ and suppose that $w_1,\ldots,w_t \in W$ and $a_1,\ldots,a_t \in \A$. Then $m \in M_\infty(K w_i)$ for each $i$ and hence $m \in M_\infty(K a_i w_i)$ for each $i$ by part (a) of the Lemma. Hence $m \in M_\infty\left( K \cdot \sum_{i=1}^t a_iw_i \right)$ by part (b) of the Lemma, so $m \in M_\infty(w)$ for all $w\in AW$.
\end{proof}

\subsection{Proposition}\label{MinftyUsub} Let $M$ be a coadmissible $U$-module. Then 
\[ M_{\infty}(KS)\cong \invlim (M_n)_{\Dp}(S/\pi^n)\]
for any subset $S$ of $A$.
\begin{proof} Let $m_n$ denote the image of $m\in M$ in $M_n$ and recall from Subsection \ref{MdpF} that $m_n$ lies in $N_n := (M_n)_{\Dp}(S/\pi^n)$ if and only if 
\[ \lim\limits_{k\to\infty} m_n(s/\pi^n)^k/k! =0 \qmb{for all} s \in S.\]
Since the map $M_{n+1} \to M_n$ is continuous,  $(N_n)$ forms an inverse system.

Choose an integer $r \geq 0$ such that $\pi^{rk} / k! \to 0 \qmb{as} k\to \infty.$ Now if $m\in M_\infty(KS)$ and $s \in S$ then $\lim\limits_{k\to\infty} m (s / \pi^{n+r})^k \to 0$, so 
\[ \lim\limits_{k\to\infty}m \frac{(s/\pi^n)^k}{k!} = \lim\limits_{k\to\infty}m \left(\frac{s}{\pi^{n+r}}\right)^k \cdot \frac{\pi^{rk}}{k!} = 0\]
in $M$. Thus $m_n \in N_n$ for each $n \geq 0$ because the map $M \to M_n$ is continuous. Conversely, suppose that $m_n \in N_n$ for all $n$. Let $s \in S$ and fix $n, t \geq 0$. Now $m_{n+t} (s / \pi^{n+t})^k/k! \to 0$ in $M_{n+t}$ by assumption and the map $M_{n+t} \to M_n$ is continuous, so 
\[ \lim\limits_{k\to\infty} m_n(s / \pi^t)^k = \lim\limits_{k\to\infty} m_n \frac{ (s / \pi^{n+t})^k }{k!} \cdot \pi^{nk}k! =0 \qmb{for all} n\geq 0.\]
Hence $\lim\limits_{k\to\infty} m (s / \pi^t)^k \to 0$ in $M$ for all $t \geq 0$ and $m \in M_\infty(KS)$.
\end{proof}

\begin{cor} $M_{\infty}(I)$ is a coadmissible $U$-module whenever $M$ is a coadmissible $U$-module.
\end{cor}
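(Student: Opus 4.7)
The plan is to combine Proposition \ref{MinftyUsub} with Corollary \ref{Minfty} to exhibit $M_\infty(I)$ as an explicit inverse limit at the Fr\'echet-Stein level, and then to verify that this inverse limit is a coherent family so that the Schneider-Teitelbaum correspondence delivers co-admissibility.

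Since $F = \{f_1, \ldots, f_r\}$ generates $I$ as an ideal of $A$, we have $AKF = I$, so Corollary \ref{Minfty} applied with $W := KF$ gives $M_\infty(I) = M_\infty(KF)$. Writing $U_n := \hK{U(\pi^n\L)}$ and $M_n := M \otimes_{\w{U(L)}} U_n$, Proposition \ref{MinftyUsub} then identifies $M_\infty(I)$ with $\varprojlim_n N_n$, where $N_n := (M_n)_{\Dp}(F/\pi^n)$. Since $\{\pi^n x_1, \ldots, \pi^n x_d\}$ is an $I$-standard basis for $L$ whose corresponding generators $\{f_j/\pi^n\}$ satisfy $\pi^n\L\cdot(f_1/\pi^n, \ldots, f_r/\pi^n) = \A^r$, the results of Section \ref{ConthKUL} apply at each level: Proposition \ref{DpUSubmod} shows that $N_n$ is a $U_n$-submodule of $M_n$; Theorem \ref{CounitIm} identifies $N_n = M_n[F]\cdot U_n$; Corollary \ref{CounitInj} gives that $M_n[F]$ is finitely generated over $V_n := \hK{U(\pi^n C_\L(F))}$; and Theorem \ref{AffKash} yields $N_n \cong M_n[F]\otimes_{V_n} U_n$ as right $U_n$-modules. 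In particular each $N_n$ is a finitely generated $U_n$-module.

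The remaining and main step is to verify that $(N_n)$ is a coherent family over $U_\bullet$, i.e.\ that the natural $U_n$-linear map $N_{n+1}\otimes_{U_{n+1}} U_n \to N_n$ is an isomorphism. Flatness of $U_n$ over $U_{n+1}$ in the Fr\'echet-Stein structure embeds $N_{n+1}\otimes_{U_{n+1}} U_n$ as a $U_n$-submodule of $M_n = M_{n+1}\otimes_{U_{n+1}} U_n$, and a $\pi$-adic estimate paralleling the proof of Proposition \ref{MinftyUsub} places this image inside $N_n$. For the reverse inclusion, the Kashiwara description $N_n \cong M_n[F]\otimes_{V_n} U_n$ together with the faithful flatness of $U_n$ over $V_n$ from Corollary \ref{Lieflat} reduces matters to showing that $(M_n[F])$ is a coherent family of $V_n$-modules over the Fr\'echet-Stein tower $V_\bullet$ of $\w{U(C_\L(F))}$. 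This last coherence is the main obstacle: because the map $m \mapsto (mf_1, \ldots, mf_r)$ defining $M_n[F]$ is only $\w{U(C_\L(F))}$-linear and not $\w{U(L)}$-linear, it cannot be handled by applying Fr\'echet-Stein exactness directly to a morphism of coadmissible $\w{U(L)}$-modules, and instead one transfers coherence between the two towers by combining Kashiwara's equivalence at each level with the faithful flatness of Corollary \ref{Lieflat}. Once this coherence is established, $M_\infty(I) \cong \varprojlim N_n$ is co-admissible over $\w{U(L)}$ by the Schneider-Teitelbaum correspondence.
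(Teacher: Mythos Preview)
Your setup is fine up through the identification $M_\infty(I)\cong\varprojlim N_n$ with $N_n=(M_n)_{\Dp}(F/\pi^n)$, and the observation that each $N_n$ is a finitely generated $U_n$-submodule of $M_n$ is correct. The gap is exactly where you locate it yourself: you have not proved that $(N_n)$ is a coherent family over $U_\bullet$. You reduce this to the coherence of $(M_n[F])$ over $V_\bullet$, but then only gesture at how this might follow. In fact this coherence is precisely the content of Theorem~\ref{MFcoadm}, and if you examine that proof you will see that its first move is to replace $M$ by $M_\infty(I)$ \emph{using the present Corollary}, so that $N_n=M_n$ and the coherence of $(M_n[F])$ can be read off from the coherence of $(M_n)$ via the isomorphisms $\epsilon_n$. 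Without that replacement there is no obvious reason for the transition maps $M_{n+1}[F]\otimes_{V_{n+1}}V_n\to M_n[F]$ to be surjective, and your sketch does not supply one. So as written the argument is either incomplete or circular.

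The paper avoids all of this with a much shorter argument. One only needs that $M_\infty(I)$ is a \emph{closed} $U$-submodule of $M$: each $N_n$ is a closed $U_n$-submodule of $M_n$ by Proposition~\ref{DpUSubmod}, so $\varprojlim N_n$ is closed in $\varprojlim M_n=M$, and it is a $U$-submodule because it is the inverse limit of $U_n$-submodules. Then one invokes \cite[Lemma~3.6]{ST}, which says that any closed submodule of a co-admissible module over a Fr\'echet--Stein algebra is again co-admissible. This bypasses the coherence verification entirely; the coherence of $(M_n[F])$ is established only afterwards, in Theorem~\ref{MFcoadm}, and only under the additional hypothesis $M=M_\infty(I)$.
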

\begin{proof} Recall from Subsection \ref{Basis} that $F = \{f_1,\ldots,f_r\}$ generates the ideal $I$ of $A$. Note that for any $n \geq 0$, $\pi^n \L$ is again a free $\A$-Lie lattice in $L$ such that 
\[ (\pi^n \L) \cdot \left(\frac{f_1}{\pi^n}, \cdots, \frac{f_r}{\pi^n}\right) = \A^r,\]
so $(M_n)_{\Dp}(F/\pi^n)$ is a closed $U_n$-submodule of $M_n = M \otimes_U U_n$ by Proposition \ref{DpUSubmod}. By Corollary \ref{Minfty} and Proposition \ref{MinftyUsub},
\[ M_\infty(I) = M_\infty(A \cdot KF) = M_\infty(KF) \cong \invlim (M_n)_{\Dp}(F/\pi^n).\]
Hence $M_\infty(I)$ is a closed $U$-submodule of $M$. Now apply \cite[Lemma 3.6]{ST}.
\end{proof} 

\subsection{The module \ts{M[F]} is coadmissible} \label{MFcoadm}
It follows from Lemma \ref{MdpF} that $\C := C_{\L}(F)$ is a smooth $\A$-Lie lattice in $C := C_L(F)$, so the Banach algebras $V_n := \hK{U(\pi^n \C)}$ give a presentation $\invlim V_n$ of $V := \w{U(C)}$ as a Fr\'echet-Stein algebra by Theorem \ref{RevComp}. There is a natural continuous map of Fr\'echet-Stein algebras
\[ V = \w{U(C)} \longrightarrow U = \w{U(L)}.\]

\begin{thm} Let $M$ be a coadmissible $U$-module. Then $M[F]$ is a coadmissible $V$-module.
\end{thm}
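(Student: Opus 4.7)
The strategy is to reduce to the case $M = M_\infty(I)$ and then apply the affine Kashiwara equivalence of Theorem \ref{AffKash} levelwise in the Fr\'echet--Stein presentation $U = \invlim U_n$.

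First, since $A$ is commutative and $F$ generates $I$, any $m \in M[F]$ is annihilated by every element of $I$, so $M[F] \subseteq M_\infty(I)$ and $M[F] = M_\infty(I)[F]$. As $M_\infty(I)$ is itself a co-admissible $U$-module by Corollary \ref{Minfty}, we may replace $M$ by $M_\infty(I)$ and assume throughout that $M = M_\infty(I)$. Under this assumption, Proposition \ref{MinftyUsub} shows that the image of any element of $M$ in $M_n$ lies in $(M_n)_{\Dp}(F/\pi^n)$; since $(M_n)_{\Dp}(F/\pi^n)$ is a $U_n$-submodule of $M_n$ by Proposition \ref{DpUSubmod} applied to the free $\A$-Lie lattice $\pi^n\L$ (which satisfies $(\pi^n\L)\cdot(F/\pi^n)=\A^r$), we conclude that $M_n = (M_n)_{\Dp}(F/\pi^n)$ for all $n \geq 0$.

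Next, Theorem \ref{AffKash} applied to the lattice $\pi^n\L$ and the set $F/\pi^n$, with centraliser $C_{\pi^n\L}(F/\pi^n) = \pi^n\C$ from Lemma \ref{Cent}, gives a natural isomorphism $M_n \cong M_n[F] \otimes_{V_n} U_n$ with $M_n[F]$ finitely generated over $V_n$; here we use that $M_n[F/\pi^n] = M_n[F]$ because $M_n$ is a $K$-vector space and $\pi \in K^\times$. The main step is to verify that $(M_n[F])$ is a coherent family of $V_n$-modules, i.e.\ that the natural map
\[ \varphi_n : M_{n+1}[F] \otimes_{V_{n+1}} V_n \longrightarrow M_n[F] \]
is an isomorphism. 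Tensoring with $- \otimes_{V_n} U_n$, the source becomes
\[ (M_{n+1}[F]\otimes_{V_{n+1}} U_{n+1}) \otimes_{U_{n+1}} U_n \;\cong\; M_{n+1}\otimes_{U_{n+1}} U_n \;\cong\; M_n \]
by Theorem \ref{AffKash} and the coherence of $(M_n)$ as a $U$-module, while the target becomes $M_n[F]\otimes_{V_n} U_n \cong M_n$. A naturality check identifies the resulting map with the identity, so by faithful flatness of $U_n$ over $V_n$ (Corollary \ref{Lieflat}), $\varphi_n$ is itself an isomorphism.

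Finally, $M[F] = \invlim M_n[F]$ since forming $F$-annihilators commutes with inverse limits, so $M[F]$ is the co-admissible $V$-module associated to the coherent family $(M_n[F])$. The main obstacle is the coherence verification; faithful flatness of $U_n$ over $V_n$ is the essential ingredient that allows coherence of $(M_n[F])$ to be descended from coherence of $(M_n)$.
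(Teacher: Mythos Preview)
Your proposal is correct and follows essentially the same approach as the paper: reduce to $M=M_\infty(I)$, use Theorem \ref{AffKash} at each level to obtain $M_n \cong M_n[F]\otimes_{V_n}U_n$, descend coherence of $(M_n[F])$ from coherence of $(M_n)$ via faithful flatness of $U_n$ over $V_n$, and identify $M[F]$ with $\invlim M_n[F]$. The paper makes the ``naturality check'' explicit via a commutative diagram and spells out the last identification using the exact sequence $0\to M_n[F]\to M_n\to M_n^r$, but these are exactly the arguments your sketch abbreviates.
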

\begin{proof} By Corollary \ref{MinftyUsub}, $M_\infty(I)$ is a coadmissible $U$-submodule of $M$. Since 
\[ M[F] = M_\infty(I)[F]\]
we may assume that $M = M_\infty(I)$. Let $M_n := M \otimes_U U_n$ and $F_n := F/\pi^n$, and note that $M_n[F_n] = M_n[F]$ for all $n \geq 0$. Now the image of $M$ generates $M_n$ as a $U_n$-module, and this image is contained in $N_n := (M_n)_{\Dp}(F_n)$ by Proposition \ref{MinftyUsub} since $M = M_\infty(AF)$.  It follows from Proposition \ref{DpUSubmod} that $N_n = M_n$, and therefore the counit morphism 
\[ \epsilon_n : M_n[F_n] \otimes_{V_n} U_n \longrightarrow M_n\]
is an isomorphism for all $n \geq 0$ by Theorem \ref{AffKash}.	

The $U_{n+1}$-linear map $M_{n+1}\to M_n$ induces a $V_{n+1}$-linear map $M_{n+1}[F_{n+1}]\to M_n[F_n]$ and therefore a $V_n$-linear map
\[ \varphi_n : M_{n+1}[F_{n+1}] \otimes_{V_{n+1}} V_n \longrightarrow M_n[F_n]\]
which features in the following commutative diagram:
\[ \xymatrix{ (M_{n+1}[F_{n+1}] \underset{V_{n+1}}{\otimes}V_n) \underset{V_n}{\otimes} U_n \ar^{\cong}[r]\ar_{\varphi_n \otimes 1}[dd]  &   (M_{n+1}[F_{n+1}] \underset{V_{n+1}}{\otimes} U_{n+1}) \underset{U_{n+1}}{\otimes} U_n \ar^{\epsilon_{n+1} \otimes 1}[d]  \\    &  M_{n+1} \underset{U_{n+1}}{\otimes}U_n \ar[d]^{\alpha_n} \\   M_n[F_n] \underset{V_n}{\otimes} U_n \ar[r]_{\epsilon_n} & M_n.}\]
The map $\alpha_n$ is an isomorphism because $M$ is a coadmissible $U$-module, so $\varphi_n \otimes 1$ is also an isomorphism since $\epsilon_n$ and $\epsilon_{n+1}$ are isomorphisms. But $U_n$ is a faithfully flat $V_n$-module by Corollary \ref{Lieflat}, so $\varphi_n$ is also an isomorphism.  Since each $M_n[F_n]$ is a finitely generated $V_n$-module by Corollary \ref{CounitInj},  $(M_n[F_n])_n$ is a coherent sheaf for $V_\bullet$, so $\invlim M_n[F_n]$ is a coadmissible $V$-module. 

Finally, since $M_n[F_n] = M_n[F]$, the sequence of $V_n$-modules 
\[ 0 \to M_n[F_n] \to M_n \stackrel{\psi_n}{\longrightarrow} M_n^r\]
is exact, where $\psi_n(m) = (mf_1,\ldots,mf_r)$ for all $m\in M_n$. Hence the sequence 
\[ 0 \to \invlim M_n[F_n] \to M \stackrel{\psi}{\longrightarrow} M^r\]
of $V$-modules is also exact, where $\psi(m) = (mf_1,\ldots, mf_r)$ for all $m\in M$. Hence $M[F] \cong \invlim M_n[F_n]$ is a coadmissible $V$-module as required.
\end{proof}

\begin{cor} Let $M$ be a coadmissible $U$-module such that $M = M_\infty(I)$. Then the natural map $\beta_n : M[F] \otimes_V V_n \to M_n[F]$ is an isomorphism of $V_n$-modules for all $n \geq 0$.
\end{cor}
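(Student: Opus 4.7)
The plan is to observe that this corollary is an immediate consequence of what was established during the proof of Theorem \ref{MFcoadm}. In that proof we showed three key statements about the family $(M_n[F_n])_{n \geq 0}$: each $M_n[F_n]$ is a finitely generated $V_n$-module (via Corollary \ref{CounitInj}); the natural transition maps $\varphi_n : M_{n+1}[F_{n+1}] \otimes_{V_{n+1}} V_n \to M_n[F_n]$ are isomorphisms; and $M[F] \cong \invlim M_n[F_n]$. In other words, the proof actually produces a coherent sheaf $(M_n[F_n])_n$ for the Fr\'echet--Stein presentation $V = \invlim V_n$ whose inverse limit is the co-admissible $V$-module $M[F]$.

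Next, I would invoke the general property of the Schneider--Teitelbaum equivalence of categories for Fr\'echet--Stein algebras \cite[Corollary 3.1]{ST}: if a co-admissible $V$-module $N$ is realised as the inverse limit of its coherent sheaf $(N_n)$, then the canonical comparison map $N \otimes_V V_n \to N_n$ is an isomorphism for every $n \geq 0$. Applying this to $N = M[F]$ and $N_n = M_n[F_n]$ immediately yields a natural isomorphism $M[F] \otimes_V V_n \stackrel{\cong}{\longrightarrow} M_n[F_n]$.

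To conclude, I would observe that $M_n[F_n] = M_n[F]$. This holds because $F_n = \{f_1/\pi^n,\ldots,f_r/\pi^n\} = \pi^{-n} F$, and since $M_n$ is a $K$-vector space on which $\pi^{-n} \in K^\times$ acts invertibly, the conditions $m f_i = 0$ and $m (f_i/\pi^n) = 0$ are equivalent. Composing the preceding isomorphism with this identification produces the desired isomorphism $\beta_n : M[F] \otimes_V V_n \stackrel{\cong}{\longrightarrow} M_n[F]$. There is no substantive obstacle to overcome here, since the entire content of the corollary has in effect been assembled in the course of proving Theorem \ref{MFcoadm}; only the routine translation through the Schneider--Teitelbaum formalism remains.
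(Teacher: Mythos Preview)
Your proposal is correct and follows essentially the same approach as the paper. The paper phrases the argument slightly differently---assembling the $\beta_n$ into a morphism $\beta$ in $\Coh(V_\bullet)$, noting that $\Gamma(\beta)$ is the isomorphism $M[F]\to\invlim M_n[F]$ established in the Theorem, and then invoking \cite[Corollary~3.3]{ST} rather than \cite[Corollary~3.1]{ST}---but this is the same Schneider--Teitelbaum machinery applied in the same way.
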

\begin{proof} The maps $\beta_n$ assemble to give a morphism $\beta :  (M[F] \otimes_V V_n)_n \to (M_n[F])_n$ in $\Coh(V_\bullet)$, and $\Gamma(\beta) : M[F] \to \invlim M_n[F]$ is an isomorphism by the proof of the Theorem. Therefore $\beta$ is an isomorphism in $\Coh(V_\bullet)$ by \cite[Corollary 3.3]{ST}.
\end{proof}

\subsection{The algebra \ts{\w{U(L_Y)}} }\label{VoverFV}
We are now close to the proof of Theorem \ref{KashAffinoids}. 

\begin{lem} There is a natural isomorphism of Fr\'echet-Stein algebras $V / FV \cong \w{U(L_Y)}$.
\end{lem}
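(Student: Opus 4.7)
The plan is to identify $V/FV$ with $\w{U(L_Y)}$ at each layer of the canonical Fr\'echet--Stein presentation and then pass to the inverse limit.

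First I would recall from the proof of Proposition \ref{TransBi} that $L_Y = N_L(I)/IL$ admits the free $\A/\I$-Lie lattice $\N = N_\L(\I)/\I\L$ and that $\w{U(L_Y)} = \invlim \hK{U(\pi^n\N)}$. By Proposition \ref{Basis}(d), the composition $\C \hookrightarrow N_\L(\I) \twoheadrightarrow \N$ identifies $\N$ with $\C/\I\C$, and consequently $\pi^n\N \cong \pi^n\C / \I(\pi^n\C)$ as $(\R,\A/\I)$-Lie algebras for every $n \geq 0$. I also note that since $\C \cdot F = 0$, the elements of $F$ are central in each $V_n = \hK{U(\pi^n\C)}$ and in $V$, so $FV_n$ and $FV$ are two-sided ideals.

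Next, for each $n \geq 0$ I would construct a natural isomorphism $\psi_n \colon V_n/FV_n \cong \hK{U(\pi^n\N)}$ of $K$-Banach algebras. Because $\C \cdot F = 0$, the Leibniz rule gives $\C \cdot \I \subset \I$, so $\I(\pi^n\C)$ is a Lie ideal of $\pi^n\C$ and the quotient $\pi^n\C/\I(\pi^n\C)$ inherits the structure of an $(\R, \A/\I)$-Lie algebra. The universal property of enveloping algebras, combined with \cite[Proposition 2.3]{DCapOne}, yields a canonical algebra isomorphism
\[ U(\pi^n\C)/\I\, U(\pi^n\C) \stackrel{\cong}{\longrightarrow} U(\pi^n\N). \]
$\pi$-adically completing both sides --- using that the left ideal $\I\, U(\pi^n\C)$ is finitely generated and that finitely generated modules over the Noetherian ring $\h{U(\pi^n\C)}$ are $\pi$-adically separated --- and then inverting $\pi$, together with the observation that $I V_n = F V_n$ (since $I = AF$ and $A$ commutes with $F$), produces the desired $\psi_n$.

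Finally, the maps $\psi_n$ are manifestly compatible with the transition maps on both sides by functoriality of $U(-)$ and of $\pi$-adic completion, so $\invlim \psi_n$ is an isomorphism $\invlim V_n/FV_n \cong \w{U(L_Y)}$. It remains to match $\invlim V_n/FV_n$ with $V/FV$: the left ideal $FV$ is finitely generated, hence co-admissible, hence closed in $V$, so $V/FV$ is a co-admissible $V$-module whose layers are obtained by applying $- \otimes_V V_n$ to the short exact sequence $0 \to FV \to V \to V/FV \to 0$. This gives $V/FV \otimes_V V_n \cong V_n/FV_n$ and thus $V/FV \cong \invlim V_n/FV_n$ by \cite[Corollary 3.3]{ST}; combining with $\invlim \psi_n$ completes the proof. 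The main obstacle will be the verification that $\pi$-adic completion commutes with the quotient by $\I\, U(\pi^n\C)$; the rest is a routine unwinding of PBW bases and of the definitions of $\w{U(L_Y)}$ and the Fr\'echet--Stein structure on $V/FV$.
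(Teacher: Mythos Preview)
Your argument is correct and follows the same route as the paper: identify a lattice in $L_Y \cong C/IC$ coming from $\C$, prove $V_n/FV_n \cong \hK{U(-)}$ at each Banach level by completing the enveloping-algebra quotient, and then pass to the inverse limit via co-admissibility of $V/FV$. One small caveat: Proposition \ref{Basis}(d) does not literally apply at the $(\R,\A)$-level because the $f_j$ need not generate $\I = I\cap\A$ in $\A$, but the identification $\N \cong \C/\I\C$ you need is easily checked directly (use that $\pi^m f_j \in \I$ for large $m$ and that $\A/\I$ is $\pi$-torsion-free), and the paper sidesteps this by working with the image of $\B\otimes_\A\C$ in $C/IC$ as its lattice and invoking \cite[\S 3.2.3(ii)]{Berth} for the completion step.
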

\begin{proof} Let $\C = C_{\L}(F)$, let $g_1,\ldots, g_s$ generate the ideal $I\cap \A$ in $\A$ and let $\B := \A / I \cap\A$. The exact sequence $\C^s \to \C \to \B \otimes_{\A}\C \to 0$ of $\A$-modules induces a complex of $U(\C)$-modules
\[ U(\C)^s \to U(\C) \to U(\B \otimes_{\A} \C) \to 0.\]
Note that $C := \C \otimes_\R K = C_L(F)$ is a free $(K,A)$-Lie algebra by Proposition \ref{Basis}, so $U(A/IA \otimes_A C) \cong A/IA \otimes_A U(C)$ by \cite[Proposition 2.3]{DCapOne}. Hence this complex becomes exact after inverting $\pi$. Since $U(\C)$ is Noetherian, its cohomology modules are killed by a power of $\pi$, so \cite[\S 3.2.3(ii)]{Berth} implies that the complex
\[ \hK{U(\C)}^s \to \hK{U(\C)} \to \hK{U(\B \otimes_{\A} \C)} \to 0\]
is exact. Recalling that $V_n = \hK{U(\pi^n \C)}$, we see similarly that 
\[ V_n^s \to V_n \to \hK{U( \B \otimes_{\A} \pi^n \C)} \to 0\]
is exact for any $n \geq 0$. Now $C/IC \cong L_Y$ by Proposition \ref{Basis} and the image $\D$ of $\B \otimes_{\A} \C$ in $C/IC$ is a $\B$-Lie lattice, so that
\[ \w{U(L_Y)} \cong \invlim \hK{U(\pi^n \D)}\]
by Definition \ref{RevComp}. Since $\sum_{i=1}^s g_iA = I = FA$, we see that 
\[ \hK{U(\pi^n \D)}  \cong \hK{U( \B \otimes_{\A}\pi^n \C)} \cong V_n / FV_n\] 
by \cite[Lemma 2.5]{DCapOne}. Since $\Gamma$ is exact on $\Coh(V_\bullet)$, the sequence
\[ V^s \to V \to \invlim V_n / FV_n \to 0\]
is exact, and hence $\w{U(L_Y)} \cong \invlim \hK{U(\pi^n \D)} \cong \invlim V_n/FV_n \cong V / FV$. \end{proof}

Here is the main result of Section \ref{IplusInat}.

\subsection{Theorem}\label{KashAffinoids} Let $I$ be an ideal in the $K$-affinoid algebra $A$, and let $L$ be a $(K,A)$-Lie algebra which admits an $I$-standard basis.
\be
\item The functor $\iota^\natural$ preserves coadmissibility.
\item The restriction of $\iota^\natural$ to coadmissible $\w{U(L)}$-modules is right adjoint to $\iota_+$.
\item These functors induce an equivalence of categories between the category of coadmissible $\w{U(L_Y)}$-modules and the category of coadmissible $\w{U(L)}$-modules $M$ such that $I$ acts locally topologically nilpotently on $M$.
\ee
\begin{proof}
(a) Using the notation from Subsection \ref{MFcoadm}, we have 
\[\w{U(L_Y)}_{Y\to X} = U/IU = U/FU.\]
Hence for any $U$-module $M$, the rule $\theta \mapsto \theta(1 + FU)$ gives a natural bijection 
\[\iota^\natural M = \Hom_U( U / FU, M) \stackrel{\cong}{\longrightarrow} M[F]\] 
which sends the $\w{U(L_Y)}$-module structure on $\iota^\natural M$ to the $V/FV$-module structure on $M[F]$ under the identification of $\w{U(L_Y)}$ with $V/FV$ given by Lemma \ref{VoverFV}. Hence $\iota^\natural M$ is a coadmissible $\w{U(L_Y)}$-module by  Theorem \ref{MFcoadm}.

(b) For any coadmissible $U$-module $M$ and coadmissible $V/FV$-module $N$, the restriction map
\[ \Hom_U( N \underset{V/FV}{\w\otimes} U/FU, M) \longrightarrow \Hom_U(N \underset{V/FV}{\otimes} U/FU, M)\]
is a bijection by the universal property of $\w\otimes$, leading to natural isomorphisms
\[ \Hom_U( \iota_+N, M) \cong \Hom_{V/FV}(N, \Hom_U(U/FU,M)) \cong \Hom_{V/FV}(N, \iota^\natural M).\]

(c) Let $M$ be a coadmissible $U$-module such that $I$ acts locally topologically nilpotently on $M$ and let $n\geq 0$. Then $M = M_\infty(I)$ by definition, and $M[I] \otimes_V V_n \cong M_n[I]$ by Corollary \ref{MFcoadm}. Therefore the natural map
\[ M[I] \otimes_V U_n \cong (M[I] \otimes_V V_n) \otimes_{V_n} U_n \cong M_n[I] \otimes_{V_n} U_n \longrightarrow M_n\]
is an isomorphism by Theorem \ref{AffKash}. Since $M[I] \otimes_V U_n \cong M[I] \otimes_{V/FV} U_n / FU_n$, passing to the limit shows that the counit of the adjunction
\[ \iota_+ \iota^\natural M = M[I] \underset{V/FV}{\w\otimes} U/FU \longrightarrow M\]
is an isomorphism.  Next, let $N$ be a coadmissible $V/FV$-module and let $N_n = N \otimes_V V_n$. Then 
\[ \iota_+ N = N \underset{V/FV}{\w\otimes} U/FU = \invlim \left(N_n \underset{V_n/FV_n}{\otimes} U_n/FU_n \right)\cong \invlim N_n \underset{V_n}{\otimes} U_n \cong N \underset{V}{\w\otimes} U,\]
so we may apply Corollary \ref{Minfty} and Proposition \ref{MinftyUsub} to obtain
\[(\iota_+N)_\infty(I) = (\iota_+N)(KF) \cong \invlim (N_n \underset{V_n}{\otimes} U_n)_{\Dp}(F_n).\]
Hence $(\iota_+N)_\infty(I) = \iota_+N$ by Theorem \ref{AffKash}, so $I$ acts locally topologically nilpotently on $\iota_+N$. Finally, Corollary \ref{Lieflat} and the right-module version of \cite[Proposition 7.5(c)]{DCapOne} show that $U$ is a faithfully c-flat right $V$-module. Since $\iota_+N \cong N\underset{V}{\w\otimes} U$ for any coadmissible $V/FV$-module $N$, $\iota_+$ reflects isomorphisms. 

Now apply Proposition \ref{AffKash}.
\end{proof}

\section{Kashiwara's equivalence for \ts{\w{\sU(\sL)}}}\label{Pullpush}
Throughout this section, we assume that $\iota\colon Y\to X$ is a closed embedding of rigid $K$-analytic spaces defined by the radical, coherent $\O_X$-ideal $\I$.

\subsection{Restriction of \ts{\sL_X} to \ts{Y}}\label{LXsmooth}
See Appendix A below for a discussion of the pushforward functor $\iota_\ast$ from abelian sheaves on $Y$ to abelian sheaves on $X$, and the pullback functor $\iota^{-1}$ in the opposite direction. We define the \emph{conormal sheaf} on $Y$ \[ \N_{Y/X}^\ast:= \iota^{-1} (\I/\I^2) \]  and the \emph{normal sheaf} on $Y$ \[ \N_{Y/X}:=\Hom_{\O_Y}(\N_{Y/X}^\ast,\O_Y)\] is its dual. Taking the dual of the second fundamental exact sequence \cite[Proposition 1.2]{BLR3} gives an exact sequence 
of coherent sheaves on $Y$
\[ 0 \to \T_Y \to \iota^\ast \T_X \stackrel{\theta}{\to} \N_{Y/X}.\]
For any Lie algebroid $\sL_X$ on $X$, there is a commutative diagram of coherent sheaves on $Y$ with exact rows \[ \xymatrix{   0\ar[r] &  \ar[r] \ar[d]^{\rho_Y} \sL_Y & \ar[d]^{\iota^\ast\rho} \iota^\ast \sL_X &  \\
                                                                                          \ar[r] 0 & \ar[r]         \T_Y &    \ar[r]_\theta        \iota^\ast\T_X &  \N_{Y/X}  
} \] where $\sL_Y := \ker(\theta \circ \iota^\ast\rho :\iota^\ast \sL_X \to \N_{Y/X})$. We can compute the local sections of these sheaves as follows.

\begin{lem} Let $U$ be an open affinoid subvariety of $X$. Write $I = \I(U)$, $L = \sL_X(U)$ and $V=U \cap Y$. Then there are natural isomorphisms of $\O_Y(V)$-modules
\be \item $\N_{Y/X}^\ast(V) \cong I / I^2$, and
\item $\sL_Y(V) \cong N_L(I) / IL$.
\ee\end{lem}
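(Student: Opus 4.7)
The plan is to reduce both assertions to direct algebraic computations on the affinoid level. The starting point is that $V = U \cap Y$ is an admissible open affinoid subvariety of $Y$ with $\O_Y(V) = A/I$, and that coherent sheaves on rigid affinoids are acyclic so their sections are computed by the expected base-change.

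For part (a), I would observe that $\I/\I^2$ is a coherent $\O_X$-module annihilated by $\I$, so Kiehl's theorem applied on $U$ gives $(\I/\I^2)(U) = \I(U)/\I(U)^2 = I/I^2$. Since $\I/\I^2$ is set-theoretically supported on $Y$, the adjunction $\iota^{-1} \dashv \iota_\ast$ from Appendix A provides a natural identification $\iota^{-1}(\I/\I^2)(V) \cong (\I/\I^2)(U) = I/I^2$, compatible with the $(A/I)$-module structure.

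For part (b), the analogous reasoning shows that $\iota^\ast\sL_X(V) \cong (A/I) \otimes_A L = L/IL$ and $\N_{Y/X}(V) \cong \Hom_{A/I}(I/I^2, A/I)$. Tracing the definitions, the composition $\theta \circ \iota^\ast\rho$ evaluated at $V$ sends the class of $x \in L$ to the $(A/I)$-linear map $I/I^2 \to A/I$ given by $f + I^2 \mapsto \rho(x)(f) + I$, which is well-defined because $\rho(x)$ is a derivation and therefore $\rho(x)(I^2) \subseteq I$. Hence an element $x + IL$ lies in the kernel precisely when $\rho(x)(I) \subseteq I$, i.e.\ when $x \in N_L(I)$, yielding $\sL_Y(V) \cong N_L(I)/IL$.

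The main obstacle is the sheaf-theoretic identification $\iota^{-1}\mathcal{F}(V) \cong \mathcal{F}(U)$ for a coherent $\O_X$-module $\mathcal{F}$ supported on $Y$: since $\iota^{-1}$ is defined by sheafification of a presheaf of colimits, one must verify that the natural map from $\mathcal{F}(U)$ to $\iota^{-1}\mathcal{F}(V)$ is an isomorphism by choosing an admissible cover of $V$ by sets of the form $U' \cap Y$ for affinoids $U' \subseteq U$ and exploiting the acyclicity of $\mathcal{F}$ on each $U'$. Once this input from Appendix A is in hand, the rest of the argument is a purely formal unwinding of the definitions of $\N_{Y/X}^\ast$, $\N_{Y/X}$ and $\sL_Y$.
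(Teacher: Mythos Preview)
Your proposal is correct and follows essentially the same route as the paper: use the equivalence of Appendix~A (Theorem~\ref{ShSuppClsdSub}) to identify $\iota^{-1}(\I/\I^2)(V)$ with $(\I/\I^2)(U)$, then Kiehl's theorem to compute this as $I/I^2$, and for part (b) unwind the map $\theta\circ\iota^\ast\rho$ on sections to the explicit formula $x+IL \mapsto (f+I^2 \mapsto \rho(x)(f)+I)$ with kernel $N_L(I)/IL$. The only point the paper treats more carefully than your sketch is the identification $\I^2(U)=I^2$: a priori Kiehl gives $(\I/\I^2)(U)\cong \I(U)/\I^2(U)$, and one still needs the surjectivity of $(\I\otimes_{\O}\I)(U)\to \I^2(U)$ to conclude $\I^2(U)=\I(U)^2$.
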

\begin{proof} (a) Since $\I / \I^2$ is supported on $Y$, it is naturally isomorphic to $\iota_\ast \N_{Y/X}^\ast$ by Theorem \ref{ShSuppClsdSub}, so $\Gamma( U \cap Y, \N_{Y/X}^\ast) \cong \Gamma(U, \I/\I^2)$. Since $U$ is affinoid and $0 \to \I^2 \to \I \to \I / \I^2 \to 0$ is an exact sequence of coherent sheaves on $X$, Kiehl's Theorem \cite[Theorem 9.4.3/3]{BGR} implies that $\Gamma(U, \I/\I^2) \cong \I(U) / \I^2(U)$. Similarly, the natural map $(\I \otimes_\O \I)(U) \to \I^2(U)$ is surjective, so the image of $\I^2(U)$ in $I$ equals $I^2$.

(b) Let $A = \O(U)$ so that $\O_Y(V) \cong A/I$. In view of part (a), there is a commutative diagram of $A/I$-modules 
\[ \xymatrix{ \iota^\ast\sL_X(V) \ar[r]^{(\theta \circ \iota^\ast \rho)(V)}\ar[d]_{\cong} & \N_{Y/X}(V) \ar[d]^{\cong} \\ L/IL \ar[r]_(0.35)\alpha &  \Hom_A(I/I^2,A/I) }\]
where the bottom map $\alpha : L / IL \to \Hom_A(I/I^2,A/I)$ is given by 
\[ \alpha(x + IL)(f + I) = \rho(x)(f) + I \qmb{for all} x \in L \qmb{and} f \in A.\]
Thus the kernel of $(\theta \circ \iota^\ast\rho)(V)$ is isomorphic to $\ker \alpha = N_L(I) / IL$ as claimed.
\end{proof}

\begin{cor} If $\sL_Y$ is locally free, then $\sL_Y$ is a Lie algebroid on $Y$.
\end{cor}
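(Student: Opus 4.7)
The plan is to verify the three conditions in Definition \ref{RevLiealg}: that $\sL_Y$ is locally free of finite rank on $Y_{\rig}$, carries the structure of a sheaf of $K$-Lie algebras, and admits an $\O_Y$-linear anchor map $\rho_Y : \sL_Y \to \T_Y$ of sheaves of Lie algebras satisfying the Leibniz identity. Local freeness of finite rank is given by hypothesis (together with the fact that $\sL_X$ and $\T_Y$ already have finite rank), and the anchor map $\rho_Y$ has already been constructed as the factorisation of $\theta \circ \iota^\ast \rho$ through its kernel $\sL_Y$ in the diagram preceding Lemma \ref{LXsmooth}. So the real content is to equip $\sL_Y$ with a $K$-Lie bracket which is compatible with $\rho_Y$.

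First I would construct the bracket locally. For any open affinoid $V = U \cap Y$ with $U \subset X$ affinoid, Lemma \ref{LXsmooth}(b) identifies $\sL_Y(V)$ with $N_L(I) / IL$, where $L = \sL_X(U)$ and $I = \I(U)$. By Lemma \ref{Norm}(c), this quotient carries a natural $(K, A/I)$-Lie algebra structure, whose anchor map is precisely the map $\alpha$ appearing in the proof of Lemma \ref{LXsmooth}(b); under the vertical isomorphisms of that diagram, $\alpha$ corresponds to $\rho_Y(V)$.

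Next I would check that these local brackets are compatible with restriction to open affinoid subdomains $V' \subset V$: if $U' \subset U$ is the corresponding affinoid in $X$ and $L' = \sL_X(U')$, $I' = \I(U')$, then the restriction map $L \to L'$ sends $N_L(I)$ into $N_{L'}(I')$ and $IL$ into $I'L'$, and the induced map $N_L(I)/IL \to N_{L'}(I')/I'L'$ is a $K$-Lie algebra homomorphism because the bracket on each side is just inherited from the bracket on $L$ and $L'$ respectively. Since open affinoids with the additional property that $\sL_Y$ is locally free form a basis of the $G$-topology on $Y$, these compatible local brackets glue uniquely to give $\sL_Y$ the structure of a sheaf of $K$-Lie algebras.

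Finally the Leibniz identity $[x, ay] = a[x,y] + \rho_Y(x)(a)y$ and $\O_Y$-linearity of $\rho_Y$ are both purely local conditions, and they hold on each $V$ as above because they hold in the $(K, A/I)$-Lie algebra $N_L(I)/IL$ by Lemma \ref{Norm}(a),(c). The only part that requires care is the gluing step, since we must ensure that the $K$-Lie algebra structure from Lemma \ref{Norm}(c) depends functorially on the pair $(A, I)$ rather than on the auxiliary choice of affinoid $U$; but this is immediate from the explicit formula (the bracket on $N_L(I)/IL$ is induced from the $K$-Lie bracket on $L$, which is itself a restriction from $\sL_X$). Hence $(\sL_Y, \rho_Y)$ is a Lie algebroid on $Y$.
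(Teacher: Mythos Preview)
Your proposal is correct and follows exactly the same approach as the paper, whose proof is the single line ``This follows from Lemma \ref{Norm}(c) and Lemma \ref{LXsmooth}(b).'' You have simply unpacked what that citation means: identify local sections via Lemma \ref{LXsmooth}(b), import the Lie--Rinehart structure from Lemma \ref{Norm}(c), and check compatibility with restriction so the brackets glue.

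One small slip in your exposition: the map $\alpha$ in the proof of Lemma \ref{LXsmooth}(b) is the map $L/IL \to \Hom_A(I/I^2, A/I)$, which under the vertical identifications corresponds to $(\theta \circ \iota^\ast\rho)(V)$, not to $\rho_Y(V)$. The anchor map you want is the one $\rho_{N/IL} : N_L(I)/IL \to \Der_K(A/I)$ supplied by Lemma \ref{Norm}(c), and it is this that agrees with $\rho_Y(V)$ under the identification of Lemma \ref{LXsmooth}(b). This does not affect the argument.
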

\begin{proof} This follows from Lemma \ref{Norm}(c) and Lemma \ref{LXsmooth}(b).\end{proof}

 {\bf From now on, $\sL = \sL_X$ is a Lie algebroid on $X$.}
 
 \subsection{The existence of local standard bases}\label{StdBasis} Our next result is a rigid analytic version of \cite[Theorem A.5.3]{HTT}, which follows from the well-known fact from algebraic geometry that a smooth subvariety of a smooth variety is locally a complete intersection. First, a preliminary
 
\begin{prop} Let $A$ be an affinoid $K$-algebra with an ideal $I$ such that $I^2 \neq I$. Let $L$ be a $(K,A)$-Lie algebra which is free as an $A$-module of rank $d$. Suppose that $I/I^2$ and $N_L(I)/IL$ are free as $A/I$-modules, and that the map $L/IL \to \Hom_{A/I}(I/I^2, A/I)$ is surjective. Then there exists $g \in A$ such that $A \langle 1/g \rangle \otimes_A L$ has an $A\langle 1/g \rangle \otimes_A I$-standard basis, and $I \cdot A\langle g \rangle = A\langle g\rangle$.
\end{prop}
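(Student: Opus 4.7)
The plan is to construct $g$ as a scaling of a product of Jacobian-type determinants that encodes the three obstructions to producing a standard basis on $A$ itself, paralleling the classical algebraic fact that a smooth subvariety of a smooth variety is locally a complete intersection, but adapted to rational subdomains.

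First I would exploit the hypotheses to produce candidate elements. Lift an $A/I$-basis of $I/I^2$ to $f_1, \ldots, f_r \in I$ and, using surjectivity of the map $L/IL \to \Hom_{A/I}(I/I^2, A/I)$, lift a dual basis to $x_1, \ldots, x_r \in L$, so that $x_i \cdot f_j \equiv \delta_{ij} \pmod I$. The short exact sequence $0 \to N_L(I)/IL \to L/IL \to \Hom_{A/I}(I/I^2, A/I) \to 0$ together with rank comparison force $N_L(I)/IL$ to be free of rank $d-r$; lift a basis to $x_{r+1}, \ldots, x_d \in N_L(I)$, so that $(x_1, \ldots, x_d)$ projects to an $A/I$-basis of $L/IL$. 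I then identify three elements of $A$, each a unit modulo $I$, that obstruct $(x_i; f_j)$ from being a standard basis over $A$ itself: the determinant $\Delta_1 := \det P$ of the transition matrix from a fixed $A$-basis of $L$ to $(x_1, \ldots, x_d)$; the determinant $\Delta_2 := \det(x_i \cdot f_j)_{1 \leq i,j \leq r}$, which lies in $1 + I$; and $\Delta_3 := 1 - \iota$ with $\iota \in I$ obtained from Nakayama's Lemma applied to the finitely generated $A$-module $M := I/(f_1,\ldots,f_r)A$ (which satisfies $IM = M$ precisely because $\bar f_1, \ldots, \bar f_r$ generate $I/I^2$).

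Set $u := \Delta_1 \Delta_2 \Delta_3$; this is a unit modulo $I$, so its image $\bar u^{-1}$ has finite supremum norm $\alpha$ on the reduced affinoid $Y = \Sp(A/I)$. Choose an integer $N$ with $|\pi|^N < 1/\alpha$ and put $g := u/\pi^N \in A$. Then for every $y \in Y$, $|g(y)| = |u(y)|/|\pi|^N \geq \alpha^{-1}/|\pi|^N > 1$, so $Y \cap \{|g| \leq 1\} = \emptyset$ and hence $I \cdot A\langle g \rangle = A\langle g \rangle$ by the rigid Nullstellensatz. On the complementary rational domain $A\langle 1/g \rangle = \{|g| \geq 1\} \supset Y$, $g$ is invertible and hence so is each $\Delta_i$: invertibility of $\Delta_1$ makes $(x_1, \ldots, x_d)$ a basis of $A\langle 1/g \rangle \otimes_A L$; invertibility of $\Delta_2$ lets me replace the first $r$ vectors by $x'_i := \sum_k (J^{-1})_{ik} x_k$ to achieve $x'_i \cdot f_j = \delta_{ij}$ for $i, j \leq r$; the unipotent substitution $x''_i := x_i - \sum_{j \leq r}(x_i \cdot f_j) x'_j$ for $i > r$ kills $x''_i \cdot f_j$ for $j \leq r$; and invertibility of $\Delta_3$ ensures that $f_1, \ldots, f_r$ generate $I \cdot A\langle 1/g\rangle$. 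Thus $(x'_1, \ldots, x'_r, x''_{r+1}, \ldots, x''_d)$ together with $\{f_1, \ldots, f_r\}$ form the desired $I \cdot A\langle 1/g\rangle$-standard basis.

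The main subtlety I anticipate is arranging a single $g \in A$ with $|g| > 1$ strictly on all of $Y$: this step combines the algebraic fact that $u$ is a unit modulo $I$ (so $\bar u^{-1}$ has finite supremum norm on the reduced affinoid $Y$) with the availability of elements of arbitrarily large absolute value in $K$ via powers of $\pi^{-1}$. Once this geometric step is in place, the algebraic rearrangements on $A\langle 1/g \rangle$ reduce to elementary linear algebra together with a standard application of Nakayama's Lemma.
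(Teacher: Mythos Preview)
Your proof is correct and follows essentially the same strategy as the paper: lift bases of $I/I^2$ and $N_L(I)/IL$, identify three obstruction elements that are units modulo $I$, divide their product by a power of $\pi$, and then perform the same Jacobian inversion and unipotent adjustment over $A\langle 1/g\rangle$. The paper makes one simplification worth noting: by using Nakayama's Lemma for the basis obstruction as well (obtaining $b\in 1+I$ with $b\cdot L\subset\sum Ay_i$) rather than a transition-matrix determinant, all three factors $a,b,c$ lie in $1+I$, so one can simply take $g=abc/\pi$ and argue algebraically that $1-\pi g=1-abc\in I$ is a unit in $A\langle g\rangle$, bypassing your supremum-norm and Nullstellensatz step entirely.
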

\begin{proof} Since $L/IL$ is a free $A/I$-module of rank $d$ and the sequence
\[ 0\to N_L(I) / IL \to L / IL \to \Hom_{A/I}(I/I^2,A/I) \to 0\]
is exact by assumption, the ranks of $N_L(I)/IL$ and $\Hom_{A/I}(I/I^2,A/I)$ as free $A/I$-modules add up to $d$. Choose an $A/I$-module basis $\{f_1 + I^2,\ldots,f_r + I^2\}$ for $I / I^2$; then we can find $y_1,\ldots,y_d\in L$ such that $y_i\cdot f_j\in \delta_{ij}+ I$ whenever $1 \leq i,j \leq r$, and such that the images of $y_{r+1},\ldots, y_d \in N_L(I)$ in $N_L(I) / IL$ form an $A/I$-module basis for $N_L(I) / IL$. Now $I / \sum_{i=1}^r Af_i$ is killed by some element $a \in 1 + I$ by Nakayama's Lemma. By construction, the image of $\{y_1,\ldots,y_d\}$ in $L/IL$ generates $L/IL$ as an $A/I$-module, so $L / \sum_{i=1}^d Ay_i$ is killed by some element $b \in 1 + I$, again by Nakayama's Lemma. Finally, the determinant $c$ of the matrix $M := (y_i\cdot f_j)_{r \times r}$ lies in $1 + I$ because $M$ is congruent to the identity matrix modulo $I$.

Let $g := abc/\pi \in K$ and let $B := A \langle 1/g \rangle$. Since $a \in B^\times$, the images of $f_1,\ldots,f_r$ in $B$ generate $I \cdot B$. Let $y_j' := 1 \otimes y_j \in B \otimes_AL$; since $b \in B^\times$, the set $\{y'_1,\ldots,y'_d\}$ in $B \otimes_A L$ generates $B \otimes_A L$ as a $B$-module. Since $c \in B^\times$, the matrix $M$ has an inverse with entries in $B$. Define
\[x_i := \sum_{k=1}^r M^{-1}_{ik}y'_k\in B \otimes_AL \qmb{for} i=1,\ldots, r.\]
Then $x_i \cdot f_j = \sum_{j=1}^r M^{-1}_{ik} y'_k \cdot f_j = (M^{-1} M)_{ij} = \delta_{ij}$ for any $1 \leq i,j \leq r$ by construction. Now, define
\[x_i := y_i' - \sum_{\ell=1}^r (y_i' \cdot f_\ell) x_l \qmb{for} i > r.\]
Then $x_i \cdot f_j = 0$ for all $i > r$ and all $j \leq r$. Clearly $\{x_1,\ldots,x_d\}$ still generates $B \otimes_AL$ as a $B$-module; but $B \otimes_AL$ is a free $B$-module of rank $d$ by assumption, and this forces $\{x_1,\ldots,x_d\}$ to be a $B$-module basis.  Thus $\{x_1,\ldots, x_d\}$ is the required $I\cdot B$-standard basis for $B\otimes_A L$. 

Finally, since $a,b,c \in 1 + I$ we see that $1 - abc \in I$. Hence $I \cdot A \langle g \rangle$ contains $1 - abc = 1 - \pi g$ which is invertible in $A\langle g \rangle$, so $I \cdot A\langle g \rangle = A \langle g \rangle$.
\end{proof}

\begin{thm} Suppose that $\theta \circ \iota^\ast \rho : \iota^\ast \sL \to \N_{Y/X}$ is surjective, and that $\N_{Y/X}$ is locally free. Then there is an admissible affinoid covering $\{X_j\}$ of $X$ such that for all $j$, either $\sL(X_j)$ has an $\I(X_j)$-standard basis or $\I(X_j)^2 = \I(X_j)$.
\end{thm}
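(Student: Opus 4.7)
My plan is to construct a single admissible affinoid cover $\{U\}$ of $X$ on which all the freeness conditions of Proposition \ref{StdBasis} are simultaneously met, and then to refine each $U$ using the rational subdomain produced by that Proposition. Because $\theta \circ \iota^\ast \rho$ is surjective and its target $\N_{Y/X}$ is locally free, the short exact sequence
\[ 0 \to \sL_Y \to \iota^\ast\sL \to \N_{Y/X} \to 0 \]
of coherent sheaves on $Y$ splits locally, so its kernel $\sL_Y$ is locally free as well.

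Combining local freeness of $\sL$ on $X$ with that of $\N_{Y/X}^\ast$ and of $\sL_Y$ on $Y$, I refine to an admissible affinoid cover $\{U\}$ of $X$ such that for each $U$, writing $A = \O(U)$, $I = \I(U)$ and $L = \sL(U)$, the modules $L$ over $A$, $I/I^2 \cong \N_{Y/X}^\ast(U \cap Y)$ over $A/I$, and $N_L(I)/IL \cong \sL_Y(U \cap Y)$ over $A/I$ are all free of finite rank, using the identifications of Lemma \ref{LXsmooth}. Moreover, the map $L/IL \to \Hom_{A/I}(I/I^2,A/I)$ required by Proposition \ref{StdBasis} corresponds under these identifications to the $(U \cap Y)$-sections of $\theta \circ \iota^\ast \rho$, so it is surjective by the sheaf-level hypothesis and Kiehl's theorem applied to the affinoid $U \cap Y$.

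On each such $U$ there are two cases: either $I^2 = I$, in which case $U$ is taken as a piece of the final cover (satisfying the second alternative), or $I^2 \neq I$, in which case Proposition \ref{StdBasis} supplies $g \in A$ with $A\langle 1/g\rangle \otimes_A L$ admitting an $A\langle 1/g\rangle \otimes_A I$-standard basis and with $I \cdot A\langle g\rangle = A\langle g\rangle$. The two rational subdomains $U' = \{x \in U : |g(x)| \geq 1\}$ and $U'' = \{x \in U : |g(x)| \leq 1\}$ form a standard rational (hence admissible) affinoid covering of $U$. By construction $\sL(U') \cong A\langle 1/g\rangle \otimes_A L$ has an $\I(U')$-standard basis, while $\I(U'') = I \cdot A\langle g\rangle = \O(U'')$ forces $\I(U'')^2 = \I(U'')$. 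Concatenating these rational coverings across $\{U\}$ yields the desired admissible affinoid cover of $X$.

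The main technical obstacle is the first step: arranging all three freeness conditions on a common admissible affinoid cover and establishing local freeness of $\sL_Y$. Both points are routine once one recalls that short exact sequences of coherent sheaves split locally when the cokernel is locally free, and that a locally free coherent sheaf on a rigid analytic space becomes free on sufficiently small affinoid opens; but it is in this bookkeeping that one must be most careful.
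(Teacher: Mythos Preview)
Your proof is correct and follows essentially the same route as the paper: deduce local freeness of $\sL_Y$ from the locally split short exact sequence, pass to an affinoid cover where $L$, $I/I^2$ and $N_L(I)/IL$ are simultaneously free and the map to $\Hom_{A/I}(I/I^2,A/I)$ is surjective (via Kiehl), then apply Proposition~\ref{StdBasis} and split each piece by the rational cover $\{U(1/g),U(g)\}$. The one place the paper is more explicit than you is the ``bookkeeping'' you flag: to lift a trivializing cover from $Y$ to $X$, it takes a Zariski cover $\{D(h_j)\}$ of $Y$, lifts each $h_j$ to $g_j\in\O(X)$, adds some $g_0\in\I(X)$ so that $\{D(g_0),\ldots,D(g_m)\}$ covers $X$, and then refines to an admissible affinoid cover via \cite[Corollary~9.1.4/7]{BGR}.
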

\begin{proof} Since the problem is local on $X$ and since $\sL$ is locally free, we may assume that $X$ is affinoid and that $\sL(X)$ is a free $\O(X)$-module of rank $d$ say. Now by assumption, we have a short exact sequence of $\O_Y$-modules
\[ 0 \to \sL_Y \to \iota^\ast \sL \stackrel{\theta\circ\iota^\ast \rho}{\longrightarrow} \N_{Y/X} \to 0\]
with the second and third term locally free. Hence $\N_{Y/X}^\ast$ and $\sL_Y$ are both locally free. Therefore there is a Zariski covering $\{D(h_1),\ldots,D(h_m)\}$ of $Y$ such that $\Gamma(D(h_j), \N_{Y/X}^\ast)$ and $\Gamma(D(h_j), \sL_Y)$ are free $\O(D(h_j))$-modules of finite rank for all $j = 1,\ldots, m$. Choose a preimage $g_j \in \O(X)$ for $h_j \in \O(Y)$; then $\I(X) + \sum_{j=1}^m \O(X) g_j = \O(X)$ so we can find $g_0 \in \I(X)$ such that $\{D(g_0),\ldots, D(g_m)\}$ is a Zariski covering of $X$. By \cite[Corollary 9.1.4/7]{BGR}, this covering has a finite affinoid refinement $\X$. For every $U \in \X$, $\N_{Y/X}^\ast(U \cap Y)$ and $\sL_Y(U\cap Y)$ are free $\O(U \cap Y)$-modules of finite rank by construction.

Let $U\in \X$ be such that $\I(U) \neq \I(U)^2$. Since $\iota^\ast \sL\to \N_{Y/X}$ is a surjective morphism between two coherent sheaves,  Kiehl's Theorem \cite[Theorem 9.4.3/3]{BGR} implies that $\Gamma(U \cap Y, \iota^\ast \sL) \to \Gamma(U \cap Y, \N_{Y/X})$ is surjective. Hence $\sL(U) / \I(U)\sL(U) \to \Hom_{\O(U)/\I(U)}(\I(U)/\I(U)^2, \O(U)/\I(U))$ is surjective by Lemma \ref{LXsmooth}(a) so by the Proposition there is an admissible covering $\{U_1,U_2\}$ of $U$ where $\sL(U_1)$ has an $\I(U_1)$-standard basis, and $U_2 \cap Y$ is empty. Since $\I(U_2) = \O(U_2) = \I(U_2)^2$,
\[ \{U_1 : U \in \X\} \cup \{ U_2 : U\in \X\} \cup \{U\in \X : \I(U) = \I(U)^2\}\]
is an admissible affinoid covering of $X$ with the required properties.\end{proof}
 
\subsection{The basis \ts{\B}}\label{BasisB}
We will henceforth assume that
\begin{itemize}
\item $\theta \circ \iota^\ast \rho : \iota^\ast \sL \to \N_{Y/X}$ is surjective,
\item $\N_{Y/X}$ is locally free,
\item $\B$ is the set of affinoid subdomains $U$ of $X$ such that $\sL(U)$ has a smooth Lie lattice and moreover it has an $\I(U)$-standard basis whenever $\I(U) \neq \I(U)^2$.
\end{itemize}
Note that $\B$ is closed under passing to smaller affinoid subdomains, and that under the first two assumptions Theorem \ref{StdBasis} implies that $X$ has an admissible covering by objects in $\B$. Note also that the $\O_Y$-module $\sL_Y$ is then locally free and is therefore a Lie algebroid on $Y$ by Corollary \ref{LXsmooth}. Regarding the condition $\I(U) = \I(U)^2$ in the definition, we remind the reader of the following elementary

\begin{lem} Let $X$ be a connected affinoid variety. Then $\O(X)$ contains no non-trivial idempotent ideals.
\end{lem}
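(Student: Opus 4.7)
The plan is to combine the Noetherianity of affinoid algebras with a standard form of Nakayama's lemma for finitely generated idempotent ideals, then invoke the fact that the ring of global sections of a connected affinoid variety has no nontrivial idempotents.

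First I would observe that $\O(X)$ is a Noetherian $K$-affinoid algebra, so any ideal $I \subseteq \O(X)$ is finitely generated. Suppose $I^2 = I$. Write $I = \sum_{i=1}^n \O(X) a_i$ for some $a_1,\ldots,a_n \in I$. Since each $a_i \in I = I^2$, we can write $a_i = \sum_{j=1}^n b_{ij} a_j$ for some $b_{ij} \in I$. Setting $B = (b_{ij})$ and letting $v = (a_1,\ldots,a_n)^T$, we have $(I_n - B) v = 0$ in $\O(X)^n$, where $I_n$ is the identity matrix. Multiplying by the adjugate gives $\det(I_n - B) \cdot a_i = 0$ for all $i$. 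Expanding this determinant, it has the form $1 - a$ for some $a \in I$, so there exists $a \in I$ with $(1-a) I = 0$.

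Next I would exploit that $a \in I$ together with $(1-a)I = 0$ forces $a = a^2$, so $a$ is an idempotent element of $\O(X)$. Moreover for any $x \in I$ we have $x = ax \in a\O(X)$, so $I = a \O(X)$. The final step is to use the hypothesis that $X$ is connected: by standard rigid analytic geometry (see for example \cite[\S 9.1.4]{BGR}), the idempotents of $\O(X)$ are in bijection with the clopen subsets of $X$, so connectedness implies that the only idempotents are $0$ and $1$. Thus $a \in \{0,1\}$ and correspondingly $I \in \{0, \O(X)\}$.

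None of the steps should pose a real obstacle; the only mildly delicate point is recalling the determinant-trick version of Nakayama in a form that produces an element of $I$ rather than of the Jacobson radical. Once that is in place, the argument is essentially formal and purely ring-theoretic.
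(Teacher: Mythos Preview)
Your proposal is correct and follows essentially the same approach as the paper: use Noetherianity plus the determinant-trick form of Nakayama to produce $e\in I$ with $(1-e)I=0$, observe that $e$ is idempotent, and then invoke connectedness of $X$ to force $e\in\{0,1\}$. The paper's proof is nearly identical, just compressing your steps 3--6 into a one-line appeal to Nakayama's Lemma.
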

\begin{proof} Since $\O(X)$ is Noetherian, if $I$ is an idempotent ideal in $\O(X)$ then it is finitely generated and Nakayama's Lemma implies that $I(1-e) = 0$ for some $e \in I$. But then $e(1-e) = 0$ so $e$ is an idempotent in $I$. Since $X$ is connected by assumption, $e = 0$ or $e = 1$. In the first case, $I = I(1 - 0) = 0$ and in the second case, $1 \in I$ so $I = \O(X)$.
\end{proof}

Thus if $U$ is a \emph{connected} affinoid subdomain of $X$ then $\I(U) = \I(U)^2$ if and only if either $U \subset Y$ or $U \cap Y = \emptyset$.

\subsection{The pushforward functor \ts{\iota_+}} We have at our disposal the sheaves $\sU := \w{\sU(\sL)}$ on $X$ and $\sW := \w{\sU(\sL_Y)}$ on $Y$. If $U \in \B$ then $\sW(U \cap Y) = \w{U(\sL_Y(U\cap Y))}$ by Theorem \ref{RevLiealg} and
\[ \sL_Y(U \cap Y) \cong \frac{N_{\sL(U)}\left(\I(U)\right)}{\I(U)\sL(U)}\]
by Lemma \ref{LXsmooth}. Hence Proposition \ref{TransBi} implies that $\frac{ \sU(U) }{ \I(U) \sU(U) }$ is a $\sU(U)$-coadmissible $\sW(U\cap Y) - \sU(U)$-bimodule. For every coadmissible $\sW$-module $\sN$ on $Y$, we can therefore form the coadmissible $\sU(U)$-module
\[ (\iota_+\sN)(U) := \sN(U \cap Y)\hspace{0.2cm}\underset{\sW(U\cap Y)}{\w{\otimes}}\hspace{0.2cm} \frac{\sU(U)}{\mathcal{I}(U)\sU(U)} \]
by the right-module version of Lemma \ref{Revwotimes}. Because of the functorial nature of this construction, this defines a presheaf $\iota_+\sN$ on $\B$.

\begin{lem} Let $\sN$ be a coadmissible $\sW$-module on $Y$.
\be
\item For every $V \subset  U$ in $\B$, there is a natural isomorphism of $\sU(V)$-modules
\[(\iota_+\sN)(U) \underset{\sU(U)}{\w\otimes} \sU(V) \stackrel{\cong}{\longrightarrow} (\iota_+\sN)(V).\]
\item $\iota_+\sN$ is a sheaf on $\B$.
\ee
\end{lem}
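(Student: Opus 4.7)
The plan is to prove (a) by assembling a chain of canonical isomorphisms from three ingredients --- associativity of $\w\otimes$, co-admissibility of $\sN$ as a sheaf, and an explicit base-change identification for the transfer bimodule --- and then to deduce (b) as a formal consequence of (a) via Theorem \ref{RevLocaff}.

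For (a), I would first verify that $(\iota_+\sN)(U)$ is a co-admissible $\sU(U)$-module for each $U \in \B$; this uses the right-module form of Lemma \ref{Revwotimes}, together with Proposition \ref{Basis} (which implies that $U \cap Y \in Y_w(\sL_Y)$, so that $\sN(U \cap Y)$ is a co-admissible $\sW(U \cap Y)$-module). With this in hand the required isomorphism would be built as follows:
\begin{align*}
(\iota_+\sN)(U) \w\otimes_{\sU(U)} \sU(V)
&\cong \sN(U\cap Y) \w\otimes_{\sW(U\cap Y)} \Bigl(\tfrac{\sU(U)}{\I(U)\sU(U)} \w\otimes_{\sU(U)} \sU(V)\Bigr) \\
&\cong \sN(U\cap Y) \w\otimes_{\sW(U\cap Y)} \tfrac{\sU(V)}{\I(V)\sU(V)} \\
&\cong \sN(V\cap Y) \w\otimes_{\sW(V\cap Y)} \tfrac{\sU(V)}{\I(V)\sU(V)} = (\iota_+\sN)(V).
\end{align*}
The first and third isomorphisms are instances of the associativity of $\w\otimes$ combined with the identification $\sN(V \cap Y) \cong \sN(U \cap Y) \w\otimes_{\sW(U \cap Y)} \sW(V \cap Y)$, which expresses the fact that $\sN$ is a co-admissible sheaf.

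The middle isomorphism is the main technical step, and the one I expect to require the most care. To establish it, I would apply the right-exact functor $- \w\otimes_{\sU(U)} \sU(V)$ to the exact sequence $\I(U)\sU(U) \to \sU(U) \to \sU(U)/\I(U)\sU(U) \to 0$ of co-admissible right $\sU(U)$-modules. The image of the first arrow in the resulting sequence is the right submodule $\I(U)\cdot \sU(V) \subset \sU(V)$, and this coincides with $\I(V) \cdot \sU(V)$ because $\I(V) = \I(U) \cdot \O(V)$ by coherence of $\I$ and flatness of the restriction $\O(U) \to \O(V)$.

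For (b), fix $U \in \B$ and set $M_U := (\iota_+\sN)(U)$, a co-admissible $\sU(U)$-module. By Theorem \ref{RevLocaff}, the localisation $\Loc(M_U)$ is a sheaf on $U_{\rig}$ with vanishing higher \v{C}ech cohomology, and $\Loc(M_U)(V) = M_U \w\otimes_{\sU(U)} \sU(V)$ for every affinoid subdomain $V \subset U$. Part (a) identifies this with $(\iota_+\sN)(V)$ whenever $V \in \B$, so the restriction of $\iota_+\sN$ to affinoid subdomains of $U$ lying in $\B$ coincides with the sheaf $\Loc(M_U)$. Since $\B$ is closed under taking affinoid subdomains, this suffices to conclude that $\iota_+\sN$ is a sheaf on $\B$.
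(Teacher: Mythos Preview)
Your proposal is correct and follows essentially the same route as the paper: both arguments assemble (a) from associativity of $\w\otimes$, the co-admissibility restriction isomorphism $\sN(U\cap Y)\w\otimes_{\sW(U\cap Y)}\sW(V\cap Y)\cong \sN(V\cap Y)$, and the base-change identity $\frac{\sU(U)}{\I(U)\sU(U)}\w\otimes_{\sU(U)}\sU(V)\cong \frac{\sU(V)}{\I(V)\sU(V)}$ (via coherence of $\I$ and right-exactness of $\w\otimes$), and both derive (b) by identifying $(\iota_+\sN)|_{U_w}$ with $\Loc((\iota_+\sN)(U))$ and invoking Theorem \ref{RevLocaff}. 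The only difference is the order in which the chain of isomorphisms is composed, which is immaterial.
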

\begin{proof} Since $\sN$ is coadmissible, by the right-module version of \cite[Theorem 9.4]{DCapOne} there is a natural isomorphism 
\[\sN(U\cap Y) \underset{\sW(U\cap Y)}{\w\otimes} \sW(V\cap Y) \stackrel{\cong}{\longrightarrow} \sN(V \cap Y)\]
of $\sW(V\cap Y)$-modules. The associativity of $\w\otimes$ given by the right-module version of \cite[Proposition 7.4]{DCapOne}  induces an isomorphism
\[ \sN(U \cap Y) \underset{\sW(U \cap Y)}{\w\otimes} \frac{ \sU(V) }{\I(V) \sU(V)} \stackrel{\cong}{\longrightarrow} \sN(V \cap Y) \underset{\sW(V \cap Y)}{\w\otimes} \frac{ \sU(V) }{\I(V) \sU(V)}.  \]
Since $\I$ is coherent and $\w\otimes$ is right exact, there is an isomorphism of left $\sW(U\cap Y)$-modules
\[ \frac{\sU(U)}{\I(U) \sU(U)} \underset{\sU(U)}{\w\otimes}\sU(V) \cong \frac{\sU(V)}{\I(V) \sU(V)}.\]
Substituting this expression for $\sU(V) / \I(V)\sU(V)$ into the previous isomorphism and applying \cite[Proposition 7.4]{DCapOne} again gives part (a). Now  $(\iota_+\sN)|_{U_w}$ is isomorphic to $\Loc( (\iota_+\sN)(U))$ for every $U \in \B$ by part (a), which is a sheaf on $U_w$ by Theorem \ref{RevLocaff}, and part (b) follows. \end{proof}

\begin{defn} Let $\sN$ be a coadmissible $\sW$-module on $Y$. We call the canonical extension of $\iota_+\sN$ to a sheaf $\iota_+\sN$ on $X_{\rig}$ given by \cite[Theorem 9.1]{DCapOne} the \emph{push-forward of $\sN$ along $\iota$}.
\end{defn}

\begin{prop} $\iota_+$ is a functor from coadmissible $\sW$-modules on $Y$ to coadmissible $\sU$-modules on $X$.
\end{prop}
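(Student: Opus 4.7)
The plan is to establish two things: (i) that $\iota_+$ takes morphisms to morphisms, and (ii) that for every co-admissible $\sW$-module $\sN$ the sheaf $\iota_+\sN$ is a co-admissible $\sU$-module on $X$.

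For functoriality, given a morphism $\varphi \colon \sN \to \sN'$ of co-admissible $\sW$-modules, the restriction $\varphi(U \cap Y)$ is a morphism of co-admissible $\sW(U \cap Y)$-modules for each $U \in \B$. Applying the functor $- \w\otimes_{\sW(U \cap Y)} \sU(U)/\I(U)\sU(U)$, which exists and is functorial by the right-module version of Lemma \ref{Revwotimes}, produces a $\sU(U)$-linear map $(\iota_+\sN)(U) \to (\iota_+\sN')(U)$. Naturality in $\sN$ of the isomorphism from the preceding Lemma ensures that these maps are compatible with the transition maps associated to inclusions $V \subset U$ in $\B$; they therefore assemble into a morphism of presheaves on $\B$, which lifts uniquely to a morphism of sheaves $\iota_+\sN \to \iota_+\sN'$ on $X_{\rig}$ via the canonical extension of \cite[Theorem 9.1]{DCapOne}.

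For co-admissibility, I would verify the conditions of Definition \ref{RevCoad} using the admissible cover of $X$ by elements of $\B$ supplied by Theorem \ref{StdBasis}. Every $U \in \B$ lies in $X_w(\sL)$ by definition, and the preceding Lemma delivers $(\iota_+\sN)|_{U_w} \cong \Loc\bigl((\iota_+\sN)(U)\bigr)$, so only the co-admissibility of $(\iota_+\sN)(U)$ as a right $\sU(U)$-module remains. If $\I(U) \neq \I(U)^2$, then $\sL(U)$ admits an $\I(U)$-standard basis by the definition of $\B$; Proposition \ref{TransBi} then makes $\sU(U)/\I(U)\sU(U)$ into a $\sU(U)$-co-admissible $(\sW(U \cap Y), \sU(U))$-bimodule, and the right-module version of Lemma \ref{Revwotimes} makes $(\iota_+\sN)(U)$ a co-admissible $\sU(U)$-module.

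The main obstacle is the residual case $\I(U) = \I(U)^2$, to which Proposition \ref{TransBi} does not directly apply. I would pass to the finitely many connected affinoid components of $U$, each of which still lies in $\B$ since $\B$ is closed under passing to affinoid subdomains. Lemma \ref{BasisB} then forces every component $U'$ to satisfy either $\I(U') = \O(U')$ or $\I(U') = 0$. In the first case $(\iota_+\sN)(U') = 0$ is trivially co-admissible; in the second case $U'$ embeds as an open affinoid of $Y$, the normal sheaf vanishes over $U'$, so $\sL_Y(U') \cong \sL(U')$ and $\sW(U') \cong \sU(U')$, yielding $(\iota_+\sN)(U') \cong \sN(U')$, which is co-admissible by assumption on $\sN$. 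Replacing the cover by this component refinement completes the verification.
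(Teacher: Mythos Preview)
Your proof is correct and follows essentially the same route as the paper's. The paper's own argument is a one-liner: functoriality is ``clear'', and co-admissibility follows from \cite[Theorem 9.4]{DCapOne} together with part (a) of the preceding Lemma, the point being that the co-admissibility of each $(\iota_+\sN)(U)$ was already asserted at the moment $(\iota_+\sN)(U)$ was defined, via Proposition~\ref{TransBi} and Lemma~\ref{Revwotimes}.

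Where you go beyond the paper is in your explicit treatment of the degenerate case $\I(U)=\I(U)^2$. The paper applies Proposition~\ref{TransBi} uniformly to every $U\in\B$, but that Proposition is stated under the standing hypothesis of Section~\ref{IplusInat} that an $I$-standard basis exists, which is not part of the definition of $\B$ when $\I(U)=\I(U)^2$. Your decomposition into connected components and invocation of Lemma~\ref{BasisB} to reduce to the two trivial situations $\I(U')=0$ and $\I(U')=\O(U')$ is exactly the right patch; the paper leaves this implicit. So your argument is a slightly more careful version of the same proof.
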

\begin{proof} The functorial nature of $\iota_+$ is clear, and $\iota_+\sN$ is a coadmissible $\sU$-module by \cite[Theorem 9.4]{DCapOne} and part (a) of the Lemma.
\end{proof}

\subsection{The pullback functor \ts{\iota^\natural}}\label{SheafyInat}

Let $U \in \B$ and let $L = \sL(U)$. Recall from Subsection \ref{TransBi} the pullback $L_Y$ of $L$ to $Y$. Then $\sL_Y(U\cap Y) = L_Y$ by Lemma \ref{LXsmooth}, so 
\[ \sU(U) = \w{U(L)} \qmb{and} (\iota_\ast \sW)(U) = \sW(U \cap Y) = \w{U(L_Y)}.\]
For any sheaf of $\sU$-modules $\sM$ on $X_{\rig}$, $\sM(U)$ is a $\sU(U)$-module and 
\[ (\iota^\natural_\ast \sM)(U) := \sM(U)[\I(U)]\]
is a $(\iota_\ast \sW)(U)$-module as we saw in Subsection \ref{FrechKash}. This construction defines a presheaf $\iota^\natural_\ast \sM$ of $\iota_\ast \sW$-modules on $\B$.

\begin{lem} $\iota^\natural_\ast \sM$ is a sheaf of $\iota_\ast \sW$-modules on $\B$.
\end{lem}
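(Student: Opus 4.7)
The plan is to verify that $\iota^\natural_\ast \sM$ is a subpresheaf of the sheaf $\sM|_\B$ which inherits both the sheaf property and the module structure by a direct pointwise argument. The $(\iota_\ast\sW)$-module structure on each $(\iota^\natural_\ast\sM)(U) = \sM(U)[\I(U)]$ is already supplied by Subsection \ref{FrechKash} combined with the identification $\w{U(L_Y)} \cong V/FV$ from Lemma \ref{VoverFV}, so the only real content is the sheaf condition.

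First I would check that the assignment $U \mapsto \sM(U)[\I(U)]$ is genuinely functorial in $U \in \B$: for $V \subset U$ in $\B$, I must show the restriction map $r\colon\sM(U) \to \sM(V)$ sends $\sM(U)[\I(U)]$ into $\sM(V)[\I(V)]$. Since $\I$ is a coherent $\O_X$-ideal, $\I(V) = \O(V)\cdot r(\I(U))$. Given $m \in \sM(U)[\I(U)]$, $a \in \O(V)$ and $g \in \I(U)$, commutativity of $\O(V) \subset \sU(V)$ yields
\[ r(m)\cdot(a\,r(g)) = r(m)\cdot(r(g)\,a) = (r(m)\cdot r(g))\cdot a = r(m\cdot g)\cdot a = 0,\]
and summing over generators shows $r(m)$ is killed by all of $\I(V)$. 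The compatibility of this restriction with the $\iota_\ast\sW$-action follows because the $\w{U(L_Y)}$-module structure on $M[F]$ in Subsection \ref{FrechKash} is simply the restriction of the $\sU(U)$-module structure along the idealiser, and restrictions in $\sU$ and in $\iota_\ast\sW$ are functorial in parallel.

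Next I would verify the sheaf axioms on $\B$. Let $\{U_i\}_{i\in J}$ be an admissible cover of $U \in \B$ by elements of $\B$. Separation is immediate: an element of $\sM(U)[\I(U)]$ whose restriction to each $U_i$ vanishes is already zero in $\sM(U)$ since $\sM$ is a sheaf. For gluing, suppose $(m_i) \in \prod_i \sM(U_i)[\I(U_i)]$ agrees on overlaps. By the sheaf property of $\sM$ on $X_{\rig}$, there is a unique $m \in \sM(U)$ with $m|_{U_i} = m_i$; for any $f \in \I(U)$,
\[ (m \cdot f)|_{U_i} = m_i \cdot f|_{U_i} = 0\]
because $f|_{U_i} \in \I(U_i)$ annihilates $m_i$. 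Separation for $\sM$ then forces $m \cdot f = 0$, so $m \in \sM(U)[\I(U)] = (\iota^\natural_\ast\sM)(U)$.

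I expect no serious obstacle here: the lemma is essentially a formal consequence of the fact that the left-exact functor $(-)[F]$ commutes with the inverse limits defining the sheaf condition, together with coherence of $\I$. The only minor subtlety is checking that intersections $U_i \cap U_j$ need not themselves lie in $\B$, but since $\B$ is closed under passage to affinoid subdomains (as both "admitting a smooth Lie lattice" and "admitting an $\I(U)$-standard basis" localize well, cf.\ the discussion in Subsection \ref{BasisB}) any admissible cover of $U_i \cap U_j$ can be refined inside $\B$, and the argument above applies unchanged after such a refinement.
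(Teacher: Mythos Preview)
Your proof is correct and follows essentially the same route as the paper: both arguments use the sheaf property of $\sM$ on $X_{\rig}$ and then invoke coherence of $\I$ to identify $\sM(V)[\I(U)]$ with $\sM(V)[\I(V)]$ for $V \subset U$. The paper packages separation and gluing together as exactness of a single equaliser sequence, while you treat them separately, but the content is identical. Your final worry is unnecessary: since each $U_i$ is an affinoid subdomain of the affinoid $U$, so is $U_i \cap U_j$, and the paper already notes in Subsection~\ref{BasisB} that $\B$ is closed under passing to smaller affinoid subdomains, so $U_i \cap U_j \in \B$ automatically.
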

\begin{proof} Let $\{U_i\}$ be an admissible affinoid covering of $U \in \B$, and let $\{W_{ijk}\}_k$ be an admissible affinoid covering of $U_i \cap U_j$ for each $i,j$. Since $\sM$ is a sheaf on $X_{\rig}$, the sequence
\[0 \to \sM(U) \to \prod \sM(U_i) \rightrightarrows \prod \sM(W_{ijk})\]
is exact, by \cite[Definition 9.1]{DCapOne}. Therefore
\[0 \to \sM(U)[I] \to \prod \sM(U_i)[I] \rightrightarrows \prod \sM(W_{ijk})[I]\]
is also exact, where $I := \I(U)$. But $I$ generates $\I(V)$ as an $\O(V)$-module for all $V \in U_w$ because $\I$ is coherent. Therefore
\[0 \to \sM(U)[\I(U)] \to \prod \sM(U_i)[\I(U_i)] \rightrightarrows \prod \sM(W_{ijk})[\I(W_{ijk})]\]
is exact and hence $\iota^\natural_\ast \sM$ is a sheaf on $\B$. 
\end{proof}

By \cite[Theorem 9.1]{DCapOne}, $\iota^{\natural}_\ast \sM$ extends to a sheaf $\iota^{\natural}_\ast \sM$ of $\iota_\ast \sW$-modules on $X_{\rig}$. 

\begin{defn} Let $\sM$ be a $\sU$-module on $X$. The \emph{pull-back of $\sM$ along $\iota$} is 
\[ \iota^{\natural} \sM := \iota^{-1} (\iota^{\natural}_\ast \sM).\]
This is a sheaf of $\sW$-modules on $Y$.
\end{defn}

Since $\iota_\ast \sW$ is supported on $Y$, $\iota^\natural_\ast \sM$ is also supported on $Y$, so Theorem \ref{ShSuppClsdSub} implies that there is a natural isomorphism
\[ \iota_\ast (\iota^\natural \sM) \stackrel{\cong}{\longrightarrow} \iota^\natural_\ast \sM.\]
In particular, we see that $(\iota^\natural \sM)(U \cap Y) \cong (\iota^\natural_\ast \sM)(U) = \sM(U)[\I(U)]$ for every open affinoid subvariety $U$ of $X$. 

\subsection{Locally topologically nilpotent actions}\label{LtnX1/g} In this Subsection, we explain what it means for a local section of $\O$ to act locally topologically nilpotently on local sections of coadmissible $\w{\sU(\sL)}$-modules in geometric terms. 

We suppose that $X$ is affinoid, $\A$ is an affine formal model in $\O(X)$, $\L$ is a smooth $(\R,\A)$-Lie algebra and that $g \in \O(X)$ is a non-zero element such that $\L \cdot g \subset \A$. Recall the sheaf $\cS := \hsULK$ on the $\L$-admissible $G$-topology $X_w(\L)$ on $X$ from \cite[\S 3.2]{DCapOne}. The rational subdomain $X(1/g)$ is $\L$-accessible by \cite[Definition 4.6]{DCapOne} and is therefore $\L$-admissible by \cite[Proposition 4.6]{DCapOne}.

\begin{prop} Let $M$ be a finitely generated $\cS(X)$-module, and let $M_\infty$ be a generating set for $M$ as an $\cS(X)$-module. Then the following are equivalent:
\be 
\item $M \otimes_{\cS(X)}\cS(X(1/g))=0$,
\item $M\langle t\rangle (1 - gt)  = M\langle t\rangle$,
\item $\lim\limits_{k\to \infty} v g^k = 0$ for all $v \in M_\infty$.
\ee\end{prop}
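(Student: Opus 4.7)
The plan is to establish $(a)\Leftrightarrow(b)$ by a structural description of $\cS(X(1/g))$, and then $(b)\Leftrightarrow(c)$ by direct formal inversion of $1-gt$ on $M\langle t\rangle$.

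For $(a)\Leftrightarrow(b)$, I will use the identification
\[ \cS(X(1/g)) \;\cong\; \cS(X)\langle t\rangle \big/ (1-gt)\cS(X)\langle t\rangle, \]
which follows from the construction of the completed enveloping algebra sheaf on $\L$-accessible rational subdomains in \cite{DCapOne}; the hypothesis $\L \cdot g \subset \A$ is exactly what lifts $\L$ to a Lie--Rinehart structure on the relevant formal model of $\O(X)\langle g^{-1}\rangle$. Base-changing $M$ along this identification yields
\[ M \w{\otimes}_{\cS(X)} \cS(X(1/g)) \;\cong\; M\langle t\rangle \big/ M\langle t\rangle(1-gt), \]
so $(a)$ is precisely surjectivity of right multiplication by $1-gt$ on $M\langle t\rangle$, which is $(b)$. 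The direction $(b)\Rightarrow(c)$ is then immediate: for $v\in M_\infty$ viewed as a constant in $M\langle t\rangle$, solving $m(1-gt)=v$ forces $m=\sum m_kt^k$ with $m_0=v$ and $m_k = m_{k-1}g = vg^k$, and membership of $m$ in $M\langle t\rangle$ reads exactly $vg^k\to 0$.

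The hard direction is $(c)\Rightarrow(b)$. For arbitrary $w = \sum w_j t^j \in M\langle t\rangle$, the formal candidate $m = \sum m_k t^k$ with $m_k = \sum_{j=0}^k w_j g^{k-j}$ satisfies $m(1-gt)=w$, and the task is to show $m_k\to 0$ in $M$. The plan is to first propagate $(c)$ from $M_\infty$ to all of $M$ by showing that
\[ N \;:=\; \{v\in M : vg^k\to 0 \text{ as } k\to\infty\} \]
is a closed $\cS(X)$-submodule containing $M_\infty$. Closure of $N$ under right multiplication by $x\in\L$ will follow from the identity
\[ (vx)g^k \;=\; (vg^k)x + k(vg^{k-1})(x\cdot g), \]
using $\L\cdot g\subset\A$ (so $x\cdot g\in\A$ acts continuously on $M$) together with the elementary bound $|k|\le 1$; closure under $\A$ is automatic since $\A$ commutes with $g$. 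Topological closure of $N$ in $M$ will then come from a routine $\pi$-adic estimate, provided $g$ preserves a chosen $\h{U(\L)}$-lattice $\M\subset M$; this reduction (absorbing bounded multiples of $g$ into a slightly larger formal model so that $g\in\A$) is a technical but standard step. Since $\h{U(\L)}$ is dense in $\hK{U(\L)}$, $N$ is then stable under $\cS(X)$, and $N\supseteq M_\infty$ forces $N=M$. Finally, fixing $r\ge 0$ and $J$ with $w_j\in\pi^r\M$ for $j\ge J$, I split
\[ m_k \;=\; \sum_{j<J} w_j g^{k-j} \;+\; \sum_{j=J}^k w_j g^{k-j}; \]
the tail lies in $\pi^r\M$ term-by-term by $g$-stability of $\pi^r\M$, and for $k$ large each of the finitely many head terms lies in $\pi^r\M$ by the extended $(c)$. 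Hence $m_k\to 0$.

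\emph{The main obstacle} will be the topological closure of $N$ in $M$, which hinges on $g$ preserving a chosen lattice; once this is secured by the reduction to $g\in\A$, the commutator formula $[x,g^k] = kg^{k-1}(x\cdot g)$ together with the integrality bound $|k|\le 1$ makes the propagation of $(c)$ through the Lie algebra action go through with no further combinatorial difficulty.
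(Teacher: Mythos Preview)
Your treatment of $(a)\Leftrightarrow(b)$ and $(b)\Rightarrow(c)$ matches the paper's. For $(c)\Rightarrow(b)$ you take a genuinely different route: you first propagate condition $(c)$ from the generating set $M_\infty$ to all of $M$ by showing $N=\{v:vg^k\to 0\}$ is a closed $\cS(X)$-submodule, and then invert $1-gt$ explicitly with tail estimates. The paper instead observes that $1-gt$ is a \emph{normal} element of $\hK{U(\L_2)}$ (from \cite[Proposition 4.3(c)]{DCapOne}), so $M\langle t\rangle(1-gt)$ is automatically a $\hK{U(\L_2)}$-submodule; since it contains each $v\in M_\infty$ by the explicit inverse $\sum t^ig^iv$, and $M_\infty$ generates $M\langle t\rangle$ over $\hK{U(\L_2)}$, it equals $M\langle t\rangle$. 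This sidesteps both the propagation step and the lattice construction entirely.

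There is, however, a gap in your justification of the key technical point. You need a $g$-stable $\h{U(\L)}$-lattice $\M$ in $M$ for both the closure of $N$ and the tail estimate, and you propose to obtain this by ``absorbing bounded multiples of $g$ into a slightly larger formal model so that $g\in\A$''. This does not work as stated: enlarging $\A$ to $\A'\ni g$ forces you to replace $\L$ by $\L'=\A'\L$, which changes the Banach algebra $\cS(X)=\hK{U(\L)}$ and hence the module $M$ and its topology. The element $g$ is only assumed to lie in $A$, not in any formal model compatible with the fixed $\L$. What \emph{does} work is to build a $g$-stable lattice directly from hypothesis $(c)$, in the spirit of the paper's Proposition \ref{DivPow}: starting from $\M_0=\sum_{v\in M_\infty} v\,\h{U(\L)}$, the assumption $vg^k\to 0$ bounds all $vg^k$ inside $\pi^{-t}\M_0$ for some $t$, and then $\M:=\sum_{v,k} vg^k\,\h{U(\L)}$ is a $g$-stable $\h{U(\L)}$-lattice (using $[u,g]\in\h{U(\L)}$ for $u\in\h{U(\L)}$, which follows from $\L\cdot g\subset\A$). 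With this correction your argument goes through, but the normality argument in the paper is noticeably shorter.
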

\begin{proof} Let $\L_2 := \A\langle t \rangle \otimes_{\A} \L$. In \cite[Proposition 4.2]{DCapOne} we constructed an $(R, \A\langle t\rangle)$-Lie algebra structure $(\L_2, \sigma_2)$. By \cite[Proposition 4.3(c)]{DCapOne}, there is an isomorphism of left $\hK{U(\L_2)}$-modules
\[ \cS(X(1/g)) \cong \frac{\hK{U(\L_2)}}{\hK{U(\L_2)} (1 - gt)}.\]

(a) $\Leftrightarrow$ (b). Letting $N := M \otimes_{\cS(X)} \hK{U(\L_2)}$, we see that $M \otimes_{\cS(X)} \cS(X(1/g)) \cong N / N(1-gt)$ by the above. Now $N \cong M \langle t\rangle$ as an $A\langle t \rangle$-module by \cite[Lemma 4.4]{DCapOne}.

(b) $\Rightarrow$ (c). Fix an element $v\in M_\infty$. Since $M\langle t\rangle(1 - gt)  = M \langle t \rangle$, there is an element $\sum_{i=0}^\infty t^im_i\in M\langle t\rangle$ such that 
\[\sum_{i=0}^\infty t^im_i \cdot (1 - gt)=v.\] 
Then, comparing the coefficients of $t^i$, we see that $m_0=v$ and $m_i = m_{i-1}g$ for each $i \geq 1$. Thus $m_k=vg^k$ for each $k\ge 0$. Since $\sum_{i=0}^\infty t^im_i\in M\langle t\rangle$ we must have $vg^k\to 0$ as $k\to \infty$. 

(c) $\Rightarrow$ (b) By assumption, $\sum_{i=0}^\infty t^i g^iv$ lies in $M\langle t\rangle$ and
\[\sum_{i=0}^\infty t^i g^iv \cdot ( 1 - gt ) = v,\]
so $v\in M \langle t \rangle (1 - gt) $ for any $v \in M_\infty$. It follows from \cite[Proposition 4.3(c)]{DCapOne} that the element $1 - gt$ is \emph{normal} in $\h{U(\L_2)_K}$. Hence $M\langle t\rangle (1 - gt)$ is an $\h{U(\L_2)_K}$-submodule of $M\langle t\rangle$ which contains a generating set for $M$. Since $M$ generates $M\langle t \rangle$ as  an $\h{U(\L_2)_K}$-module, we see that $M\langle t \rangle (1 - gt)  = M\langle t \rangle$.
\end{proof}

\begin{cor} Suppose that $X$ is affinoid, $g\in \O(X)$ is non-zero and $\sL(X)$ has a smooth $(R,\A)$-Lie lattice. The following are equivalent for a coadmissible $\w{\sU(\sL)}$-module $\sM$ on $X$:
\be\item $\sM(X(1/g)) = 0$,
\item $g$ acts locally topologically nilpotently on $\sM(X)$.
\ee\end{cor}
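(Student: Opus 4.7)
The plan is to reduce the corollary to Proposition \ref{LtnX1/g} by working level-by-level in a Fréchet--Stein presentation. Fix a smooth $(\R,\A)$-Lie lattice $\L$ in $L := \sL(X)$; after replacing $\L$ by $\pi^{n_0}\L$ for some sufficiently large $n_0$, I may assume $\L \cdot g \subset \A$, whence $\pi^n\L \cdot g \subset \A$ for every $n \geq 0$ and $X(1/g)$ is $\pi^n\L$-accessible for each $n$. Write $U := \sU(X) = \w{U(L)}$, $U_n := \hK{U(\pi^n\L)}$, $\cS_n := \hn{\sU(\pi^n\L)_K}$, $M := \sM(X)$, and $M_n := M \otimes_U U_n$, so that $M = \invlim M_n$ is co-admissible and $\sU(X(1/g)) = \invlim \cS_n(X(1/g))$ is a Fréchet--Stein algebra.

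Next I identify the level-$n$ reductions of the co-admissible $\sU(X(1/g))$-module $\sM(X(1/g))$. By co-admissibility and the right-module version of \cite[Proposition 7.4]{DCapOne},
\[
\sM(X(1/g)) \cong M \underset{U}{\w\otimes} \sU(X(1/g)), \qquad \sM(X(1/g)) \underset{\sU(X(1/g))}{\otimes} \cS_n(X(1/g)) \cong M_n \underset{U_n}{\otimes} \cS_n(X(1/g)) =: N_n.
\]
Since $\sM(X(1/g))$ is co-admissible, it vanishes if and only if $N_n = 0$ for every $n$. For each $n$, Proposition \ref{LtnX1/g} applied to the finitely generated $U_n$-module $M_n$ and the element $g$ (with $\pi^n\L \cdot g \subset \A$) gives that $N_n = 0$ is equivalent to $M_n\langle t\rangle(1 - gt) = M_n\langle t\rangle$. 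The key observation is that this condition is independent of any chosen generating set $M_\infty$, so by the implication (b) $\Rightarrow$ (c) of the Proposition applied with $M_\infty := M_n$, $N_n = 0$ is equivalent to $vg^k \to 0$ in $M_n$ for \emph{every} $v \in M_n$.

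Finally I assemble the two directions. If $g$ acts locally topologically nilpotently on $M$, then since the map $M \to M_n$ is continuous, for every $m \in M$ its image $m_n$ satisfies $m_n g^k \to 0$ in $M_n$; since the image of $M$ generates $M_n$ as a $U_n$-module, applying (c) $\Rightarrow$ (a) of the Proposition with this generating set forces $N_n = 0$, whence $\sM(X(1/g)) = 0$. Conversely, if $\sM(X(1/g)) = 0$ then $N_n = 0$ for every $n$, so by the strong form of (c) just noted, every element of $M_n$ is topologically nilpotent under $g$; since the Fréchet topology on $M$ is the inverse-limit topology from the $M_n$, this forces $mg^k \to 0$ in $M$ for every $m \in M$.

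The main obstacle is the identification $\sM(X(1/g)) \otimes_{\sU(X(1/g))} \cS_n(X(1/g)) \cong N_n$ in the second step, which requires compatibility between the Fréchet--Stein presentation of $\sU(X(1/g))$ built from the localised sheaves $\cS_n$ of \cite[\S 3.3, \S 4]{DCapOne} and the completed tensor product $\w\otimes$; once this is in place, everything else is formal manipulation of the already-established Proposition.
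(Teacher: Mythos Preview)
Your proposal is correct and follows essentially the same route as the paper: rescale the lattice so that $\L\cdot g\subset\A$, identify the level-$n$ piece of $\sM(X(1/g))$ with $M_n\otimes_{U_n}\cS_n(X(1/g))$, apply the Proposition at each level, and pass to the inverse limit. The only minor discrepancy is in the citations for the identification step: the paper obtains $\sM(X(1/g))\cong M\w\otimes_U U'$ from \cite[Theorem 9.4]{DCapOne} and then $M'\otimes_{U'}U'_n\cong M\otimes_U U'_n$ from the very definition of $\w\otimes$ together with \cite[Corollary 3.1]{ST}, rather than from the associativity statement \cite[Proposition 7.4]{DCapOne} you invoke.
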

\begin{proof} By \cite[Lemma 7.6(a)]{DCapOne} there is a smooth $(R,\A)$-Lie lattice $\L$ in $\sL(X)$ such that $\L\cdot g\subset A$. Let $X' = X(1/g)$, and write $U := \w{\sU(\sL)}(X)$ and $U' := \w{\sU(\sL)}(X')$. Let $M = \sM(X)$ and $M' = \sM(X')$ so that $M' \cong M \w\otimes_U U'$ by \cite[Theorem 9.4]{DCapOne}. Let $\cS_n$ be the sheaf $\hK{U(\pi^n \L)}$ on $X_{ac}(\pi^n \L)$ and note that $X'$ lies in $X_{ac}(\pi^n \L)$ for all $n\geq 0$. Write $U_n = \cS_n(X)$ and $U'_n = \cS_n(X')$; then 
\[ U \cong \invlim U_n \qmb{and} U' \cong \invlim U'_n\]
give presentations of the Fr\'echet-Stein algebras $U$ and $U'$, respectively. In the language of \cite[\S 3]{ST}, $(M \otimes_U U'_n)_n$ is a coherent sheaf for $U'$ whose module of global sections $\invlim M \otimes_U U'_n$ is isomorphic to $M'$ by the definition of $M \w\otimes_U U'$. Now
\[ M' \otimes_{U'}U'_n \cong M \otimes_U U'_n \qmb{for all} n\geq 0\]
by \cite[Corollary 3.1]{ST}, so $M' = 0$ if and only if $M \otimes_U U'_n = 0$ for all $n\geq 0$.

Now $M \otimes_U U_n$ is a finitely generated $U_n$-module since $M$ is a coadmissible $U$-module, and the image of $M$ in $M\otimes_U U_n$ is a $U_n$-generating set. Hence for any $n\geq 0$, $(M \otimes_U U_n) \otimes_{U_n} U_n' = 0$ if and only if $g$ acts locally topologically nilpotently on the image of $M$ in $M \otimes_U U_n$ by the Proposition. Since the topology on $M \cong \invlim M \otimes_UU_n$ is the inverse limit topology, this is in turn equivalent to $g$ acting locally topologically nilpotently on $M$.
\end{proof}

\subsection{Support and \ts{M_\infty(I)}}\label{SuppMinfty}

Armed with Corollary \ref{LtnX1/g}, we can now explain the geometric meaning of the submodule $M_\infty(I)$ that featured in Theorem \ref{KashAffinoids}.

\begin{thm} Suppose that $X$ is affinoid and that $\sL(X)$ has an $\I(X)$-standard basis.  The following are equivalent for a coadmissible $\w{\sU(\sL)}$-module $\sM$ on $X$:
\be\item $\sM$ is supported on $Y$,
\item $\sM(X) = \sM(X)_\infty(\I(X))$.
\ee\end{thm}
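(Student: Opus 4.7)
The strategy is to reduce both implications to Corollary \ref{LtnX1/g}, which translates the vanishing of $\sM$ on a rational subdomain $X(1/g)$ into the local topological nilpotence of $g$ on $\sM(X)$. The standard-basis hypothesis ensures that $\sL(X)$ admits a smooth $(\R,\A)$-Lie lattice, namely $\A x_1+\cdots+\A x_d$ after rescaling, so Corollary \ref{LtnX1/g} is applicable.

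For (a) $\Rightarrow$ (b), fix $f\in\I(X)$. Since $f$ vanishes identically on $Y$, the rational subdomain $X(1/f)=\{x\in X:|f(x)|\geq 1\}$ is disjoint from $Y$ and hence contained in $X\setminus Y$. Because $\sM$ is supported on $Y$, this forces $\sM(X(1/f))=0$. Corollary \ref{LtnX1/g} then yields that $f$ acts locally topologically nilpotently on $\sM(X)$; as $f$ was arbitrary in $\I(X)$, we conclude $\sM(X)=\sM(X)_\infty(\I(X))$.

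For (b) $\Rightarrow$ (a), let $F=\{f_1,\ldots,f_r\}$ be the generating set of $\I(X)$ coming from the standard basis. Since $\I(X)$ is an ideal in $\O(X)$ and $K\subseteq\O(X)$, each scaled element $f_i/\pi^n$ lies in $\I(X)$ and so acts locally topologically nilpotently on $\sM(X)$ by hypothesis. Applying Corollary \ref{LtnX1/g} with $g=f_i/\pi^n$ gives $\sM(X(\pi^n/f_i))=0$ for all $1\leq i\leq r$ and all $n\geq 0$. Because the family $\{X(\pi^n/f_i)\}$ is an admissible cover of the admissible open $X\setminus Y$, this yields $\sM|_{X\setminus Y}=0$; that is, $\sM$ is supported on $Y$.

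The argument is quite direct given Corollary \ref{LtnX1/g}; the only points requiring care are verifying that the rational subdomains $\{X(\pi^n/f_i)\}$ do form an admissible cover of $X\setminus Y$ (which follows from $\I(X)$ being generated by $f_1,\ldots,f_r$), and that the smooth-Lie-lattice hypothesis of Corollary \ref{LtnX1/g} is met for each relevant $g$, which follows from the standard-basis assumption together with \cite[Lemma 7.6(a)]{DCapOne} permitting rescaling of the Lie lattice so that $\L\cdot g\subset\A$.
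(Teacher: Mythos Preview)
Your proof is correct and follows essentially the same route as the paper, reducing both directions to Corollary \ref{LtnX1/g} applied to the rational subdomains $X(\pi^n/f_i)$, which admissibly cover $X\setminus Y$ via the Zariski opens $D(f_i)$. Your (a)$\Rightarrow$(b) is in fact slightly more direct: by applying Corollary \ref{LtnX1/g} to an arbitrary nonzero $f\in\I(X)$ you bypass the reduction to the generators $f_i$ via Lemma \ref{Minfty} and Corollary \ref{Minfty} that the paper carries out.
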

\begin{proof} Write $A = \O(X)$, $M = \sM(X)$ and choose a generating set $f_1,\ldots, f_r$ for $I = \I(X)$. The complement of $Y$ in $X$ is an admissible open subset which admits a Zariski covering 
\[X \backslash Y = D(f_1) \cup \cdots \cup D(f_r).\]
It follows from \cite[Corollary 9.1.4/7]{BGR} that this covering is admissible, so $\sM$ is supported on $Y$ if and only if it is supported on the closed analytic subset $V(f_i)$ for all $i=1,\ldots, r$. Now
\[ D(f_i) = \bigcup_{n\geq 0} X(\pi^n/f_i)\]
is an admissible covering of the Zariski open subset $D(f_i)$ of $X$, so $\sM$ is supported on $Y$ if and only if its restriction to $X(\pi^n /f_i)$ is zero for all $n\geq 0$ and all $i=1,\ldots, r$. Since $\sM$ is coadmissible, by \cite[Theorem 9.4]{DCapOne} this is equivalent to $\sM(X(\pi^n/f_i)) = 0$ for all $n\geq 0$ and all $i$, which is in turn equivalent to $f_i/\pi^n$ acting locally topologically nilpotently on $M$ for all $n\geq 0$ and all $i$ by Corollary \ref{LtnX1/g}. This is the same as $M = M_\infty(Kf_i)$ for all $i=1,\ldots, r$. But 
\[M_\infty(I) = M_\infty(Af_1) \cap \cdots \cap M_\infty(Af_r)\]
by Lemma \ref{Minfty}(b), and $M_\infty(Kf_i) = M_\infty(Af_i)$ by Corollary \ref{Minfty}. So $M = M_\infty(Kf_i)$ for all $i$ if and only if $M = M_\infty(I)$. 
\end{proof}

\subsection{The algebra \ts{\sV}}\label{SheafyV}
Suppose that $X$ is affinoid and $\sL(X)$ has an $\I(X)$-standard basis. Choose a generating set $F = \{f_1,\ldots, f_r\}$ for $\I(X)$ satisfying Definition \ref{Basis}. Recall that the centraliser $C_{\sL(X)}(F)$ of $F$ in $\sL(X)$ is a free $(K,\O(X))$-Lie algebra by Proposition \ref{Basis}.

\begin{prop} Let $C = C_{\sL(X)}(F)$ and $\sV := \w{\sU(C)}$. Then there is an isomorphism
\[ \sV / F \sV \cong \iota_\ast \sW\]
of sheaves of $K$-algebras on $X$.
\end{prop}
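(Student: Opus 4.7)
The strategy is to establish the isomorphism locally on the basis $\B$ introduced in Subsection \ref{BasisB}, and then invoke the sheaf property. Both $\sV/F\sV$ and $\iota_\ast \sW$ are already sheaves of $K$-algebras on $X_{\rig}$, so it suffices to produce a compatible family of algebra isomorphisms $\psi_U : \sV(U)/F\sV(U) \stackrel{\cong}{\to} (\iota_\ast\sW)(U)$ indexed by $U \in \B$; since $\B$ is a basis for the $G$-topology by Theorem \ref{StdBasis}, these will glue to the required sheaf isomorphism. The key input is Lemma \ref{VoverFV}, which is exactly the global-sections version of the statement and can be applied directly on each $U \in \B$ of the non-degenerate type.

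\textbf{Local isomorphism.} Fix $U \in \B$. Using the global description $C = \O(X) x_{r+1} \oplus \cdots \oplus \O(X) x_d$ from Proposition \ref{Basis}(a), we have $\sV(U) = \w{U(\O(U) \otimes_{\O(X)} C)} = \w{U(\O(U) x_{r+1} \oplus \cdots \oplus \O(U)x_d)}$. If $\I(U) \neq \I(U)^2$, then the image of $\{x_1,\ldots,x_d\}$ in $\sL(U)$ is an $\I(U)$-standard basis for $\sL(U)$ with corresponding generators (the images of) $F$ for $\I(U)$; Proposition \ref{Basis}(a) applied on $U$ identifies $C_{\sL(U)}(F) = \O(U) \otimes_{\O(X)} C$, so $\sV(U)$ coincides with the algebra $V$ appearing in Lemma \ref{VoverFV} for the data $(\O(U),\I(U),\sL(U))$. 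That lemma then yields the desired isomorphism $\psi_U : \sV(U)/F\sV(U) \stackrel{\cong}{\to} \w{U(\sL_Y(U\cap Y))} = (\iota_\ast\sW)(U)$. If instead $\I(U) = \I(U)^2$, we may pass to the (finitely many) connected components of $U$ and assume $U$ connected; by Lemma \ref{BasisB} then $\I(U)$ is either $0$ or $\O(U)$. The case $\I(U) = 0$ would force $U \subset Y$, which is impossible for a non-empty affinoid subdomain of $X$ as $Y$ has positive codimension. In the case $\I(U) = \O(U)$, we have $U \cap Y = \emptyset$, so $(\iota_\ast\sW)(U) = 0$; meanwhile each $f_j$ is central in $\sV(U)$ (since it is annihilated by $C\otimes\O(U)$) and $F$ generates the unit ideal of $\O(U)$, hence $F\sV(U) = \sV(U)$ and both sides vanish.

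\textbf{Compatibility and main obstacle.} The substantive work is in checking that for $V \subset U$ in $\B$ the restriction squares
\[ \xymatrix{ \sV(U)/F\sV(U) \ar[r]^-{\psi_U} \ar[d] & (\iota_\ast\sW)(U) \ar[d] \\ \sV(V)/F\sV(V) \ar[r]_-{\psi_V} & (\iota_\ast\sW)(V) } \]
commute. Both vertical arrows come from the single Lie-algebra base-change map $\O(U)\otimes_{\O(X)}C \to \O(V)\otimes_{\O(X)}C$ after passing to quotients, modulo the identification $\O(W)\otimes_{\O(X)}C \,/\, \I(W)(\O(W)\otimes_{\O(X)}C) \cong \sL_Y(W\cap Y)$ supplied by Proposition \ref{Basis}(d). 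Commutativity then reduces to the naturality of the construction in Lemma \ref{VoverFV}, which in turn rests on choosing compatible $\sL$-stable affine formal models in $\O(U)$ and $\O(V)$ (e.g.\ via \cite[Lemma 7.5(b)]{DCapOne}) so that the $\pi$-adic completions and quotients that produce $\psi_U$ and $\psi_V$ are themselves functorial in the affinoid. This bookkeeping is the main obstacle: it is routine but requires carefully tracking the isomorphisms through the Fr\'echet--Stein presentations $V_n/FV_n \cong \hK{U(\pi^n\D)}$ used in Lemma \ref{VoverFV}. Once the compatibility is in place, the sheaf property and the basis theorem yield $\sV/F\sV \cong \iota_\ast \sW$ on $X_{\rig}$.
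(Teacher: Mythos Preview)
Your approach is essentially the same as the paper's---localize, apply Lemma \ref{VoverFV}, and glue---but you have introduced an unnecessary case distinction. Since the proposition assumes $X$ is affinoid and $\sL(X)$ has an $\I(X)$-standard basis $\{x_1,\ldots,x_d\}$ with generators $F=\{f_1,\ldots,f_r\}$ for $\I(X)$, this same data restricts to an $\I(U)$-standard basis for $\sL(U)$ on \emph{every} affinoid subdomain $U$ of $X$ (because $\sL$ and $\I$ are coherent and the relations $x_i\cdot f_j=\delta_{ij}$ persist). The paper therefore applies Lemma \ref{VoverFV} directly on each such $U$, obtaining the exact sequence $\sV(U)^r\to\sV(U)\to\sW(U\cap Y)\to 0$, and then invokes \cite[Theorem 9.1]{DCapOne}; there is no need to split off the degenerate cases $\I(U)=\I(U)^2$ or to pass through $\B$.

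There is also a small error in your handling of the degenerate case. The claim that ``$Y$ has positive codimension'' is not among the hypotheses and need not hold in general. The correct reason $\I(U)=0$ cannot occur for non-empty $U$ (when $r\geq 1$) is that $f_1=0$ in $\O(U)$ together with $x_1\cdot f_1=1$ forces $1=0$ in $\O(U)$. With that fix (or by simply avoiding the case split as the paper does), your argument goes through.
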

\begin{proof} Let $U$ be an affinoid subdomain of $X$ and let $V = U \cap Y$. By Theorem \ref{RevLiealg}, there is an isomorphism $\sW(V) \cong \w{U(\sL_Y(V))}$, and 
\[ \sL_Y(V) \cong \frac{N_{\sL(U)}\left(\I(U)\right)}{\I(U)\sL(U)}\]
by Lemma \ref{LXsmooth}. Since $\sL$ is a coherent $\O$-module, Proposition \ref{Basis} implies that there is a natural isomorphism
\[\O(U) \otimes_{\O(X)} C_{\sL(X)}(F) \stackrel{\cong}{\longrightarrow} C_{\sL(U)}(F),\]
so there is a natural isomorphism $\sV(U) \cong \w{U(C_{\sL(U)}(F))}$. Now Lemma \ref{VoverFV} implies that the sequence of $\sV(U)$-modules
\[ \sV(U)^r \to \sV(U) \to \sW(U \cap Y) \to 0\]
where the first arrow sends $(v_1,\ldots, v_r)$ to $\sum_{i=1}^r f_i v_i$, is exact. The result now follows from \cite[Theorem 9.1]{DCapOne}.
\end{proof}

\subsection{Theorem}\label{InatCoadm} Suppose that $X$ is affinoid and that $\sL(X)$ has an $\I(X)$-standard basis. Let $\sM$ be a coadmissible $\sU$-module supported on $Y$ and let $X'$ be an affinoid subdomain of $X$. Then the natural map
\[ \sM(X)[\I(X)] \underset{\sW(Y)}{\w\otimes} \sW(Y') \longrightarrow \sM(X')[\I(X')]\]
is an isomorphism, where $Y' = Y \cap X'$.
\begin{proof} By Proposition \ref{SheafyV}, it is enough to show that
\[ \alpha : \sM(X)[\I(X)] \underset{\sV(X)}{\w\otimes} \sV(X') \longrightarrow \sM(X')[\I(X')]\]
is an isomorphism. Write $M = \sM(X)$, $M' = \sM(X')$, $U = \sU(X)$, $U' = \sU(X')$, $V = \sV(X)$, $V' = \sV(X')$, $I = \I(X)$ and $I' = \I(X')$. Note that with this notation $M' \cong M \w\otimes_U U'$ by \cite[Theorem 9.4]{DCapOne}, because $\sM$ is coadmissible. Now there is a natural commutative diagram 
\[\xymatrix{ (M[I] \underset{V}{\w\otimes} V') \underset{V'}{\w\otimes} U'\ar[rrrr]^{\alpha \w\otimes 1}\ar[d]_{\cong} & & && M'[I']\underset{V'}{\w\otimes} U'\ar[d]^{\epsilon_{M'}} \\
(M[I] \underset{V}{\w\otimes} U) \underset{U}{\w\otimes} U' \ar[d]_{\epsilon_M \w\otimes 1}& &  &  & M'_{\infty}(I') \ar[d]\\
M_\infty(I)\underset{U}{\w\otimes}U' \ar[rr] & & M \underset{U}{\w\otimes}U'  \ar[rr]_{\cong} & & M' }\]
Since $\sM$ is supported on $Y$ by assumption, Theorem \ref{SuppMinfty} implies that 
\[ M = M_\infty(I) \qmb{and} M' = M'_\infty(I').\]
Hence the unmarked arrows in the above diagram are equalities. Also, $\epsilon_M$ and $\epsilon_{M'}$ are isomorphisms by Theorem \ref{KashAffinoids}, so the diagram shows that $\alpha \w\otimes 1$ is an isomorphism. But $U'$ is a faithfully c-flat $V'$-module by Corollary \ref{Lieflat} and the right-module version of \cite[Proposition 7.5(c)]{DCapOne}, so $\alpha$ is an isomorphism as required.
\end{proof}

We conjecture that this result also holds when the condition that $\sM$ is supported on $Y$ is removed, but are unable to prove this at present. 

\subsection{Kashiwara's Theorem for right \ts{\w{\sU(\sL)}} modules}\label{KashiwaraGeneral}
We can now state and prove our version of Kashiwara's equivalence for right modules. 

\begin{thm} Let $X$ be a rigid analytic variety and let $\sL_X$ be a Lie algebroid on $X$. Let $\iota : Y \hookrightarrow X$ be the inclusion of a closed, analytic subvariety such that $\theta \circ \iota^\ast \rho : \iota^\ast \sL \to \N_{Y/X}$ is surjective, and such that $\N_{Y/X}$ is locally free.
\be
\item If $\sM$ is a coadmissible right $\w{\sU(\sL_X)}$-module supported on $Y$, then $\iota^\natural \sM$ is a coadmissible right $\w{\sU(\sL_Y)}$-module.
\item The restriction of $\iota^\natural$ to coadmissible right $\w{\sU(\sL_X)}$-modules supported on $Y$ is right adjoint to $\iota_+$.
\item These functors induce an equivalence of abelian categories
\[\left\{ 
				\begin{array}{c} 
					co\hspace{-0.1cm}-\hspace{-0.1cm}admissible\hspace{0.1cm} right \hspace{0.1cm}\\
					\w{\sU(\sL_Y)}-\hspace{-0.1cm}modules\hspace{0.1cm}
				\end{array}
\right\} \cong \left\{
				\begin{array}{c}
				 co\hspace{-0.1cm}-\hspace{-0.1cm}admissible \hspace{0.1cm} right \\ 
				 \w{\sU(\sL_X)} \hspace{-0.1cm}-\hspace{-0.1cm}modules\hspace{0.1cm} \hspace{0.1cm} supported \hspace{0.1cm} on \hspace{0.1cm} Y
				\end{array}
\right\}.\]
\ee\end{thm}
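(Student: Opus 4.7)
The strategy is to reduce the global theorem to the affinoid Kashiwara equivalence of Theorem~\ref{KashAffinoids} via the admissible basis $\B$ introduced in Subsection~\ref{BasisB}. By Theorem~\ref{StdBasis}, $X$ has an admissible covering $\{U_i\}$ by elements of $\B$, each either carrying an $\I(U_i)$-standard basis for $\sL(U_i)$ or satisfying $\I(U_i) = \I(U_i)^2$. In the latter case, after passing to connected components Lemma~\ref{BasisB} reduces us to the trivial situations $U_i \subseteq Y$ or $U_i \cap Y = \emptyset$, so all substantive checks concern $U_i$ with an $\I(U_i)$-standard basis. For such $U_i$, Proposition~\ref{Basis} supplies a smooth Lie lattice for $\sL_Y(U_i \cap Y)$, so $\{U_i \cap Y\}$ is an admissible covering of $Y$ by elements of $Y_w(\sL_Y)$.

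For part (a), by construction $(\iota^\natural \sM)(U_i \cap Y) \cong \sM(U_i)[\I(U_i)]$, which is a co-admissible module over $\w{\sU(\sL_Y)}(U_i \cap Y)$ by Theorem~\ref{MFcoadm} combined with the identification of the quotient $V/FV$ with $\w{U(L_Y)}$ in Lemma~\ref{VoverFV}. The compatibility
\[ \iota^\natural \sM|_{U_i \cap Y} \cong \Loc\bigl( \sM(U_i)[\I(U_i)] \bigr) \]
under restriction to smaller affinoids is exactly the statement of Theorem~\ref{InatCoadm}, and this verifies the defining local structure for co-admissibility of the sheaf $\iota^\natural \sM$.

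For part (b), the claimed adjunction is assembled from the affinoid adjunctions of Theorem~\ref{KashAffinoids}(b) applied on each pair $U_i \cap Y \subset U_i$. On the basis $\B$ the two Hom-presheaves agree by the affinoid statement, and since a morphism between co-admissible sheaves is determined by its sections on an admissible covering, the pointwise natural bijections glue to a global natural isomorphism of Hom-groups.

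Part (c) then asserts that these functors are mutually inverse. The unit $\sN \to \iota^\natural \iota_+ \sN$ and counit $\iota_+ \iota^\natural \sM \to \sM$ are checked to be isomorphisms on the admissible covering $\{U_i\}$, where they reduce to the affinoid equivalence of Theorem~\ref{KashAffinoids}(c). The main technical point, which I expect to be the subtlest piece of bookkeeping, is to verify that $\iota_+ \sN$ is supported on $Y$ so that $\iota^\natural$ can legitimately be applied to it when forming the unit: this follows by applying Theorem~\ref{SuppMinfty} to $(\iota_+ \sN)(U_i)$, using the fact that Theorem~\ref{KashAffinoids}(c) guarantees that $\I(U_i)$ acts locally topologically nilpotently on this module. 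Combining this with part (b) and the fact that $\iota_+$ reflects isomorphisms (via faithful c-flatness, as in the affinoid case) completes the equivalence.
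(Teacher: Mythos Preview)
Your proposal is correct and follows essentially the same route as the paper: reduce via the basis $\B$ to the affinoid case, invoke Theorems~\ref{KashAffinoids} and~\ref{InatCoadm} for parts (a) and (b), and then use the counit together with the fact that $\iota_+$ reflects isomorphisms (Proposition~\ref{CatEq}) for part (c). One small simplification: the paper observes directly from the definition of $\iota_+\sN$ that it vanishes on any $U$ with $U\cap Y=\emptyset$, so support on $Y$ is immediate and does not require the detour through Theorem~\ref{SuppMinfty} and Theorem~\ref{KashAffinoids}(c).
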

\begin{proof}
Let $\I$ be the radical coherent ideal of $\O_X$ consisting of functions vanishing on $Y$, and let $\B$ be the set of open affinoid subvarieties $U$ of $X$ such that either $\I(U) = \I(U)^2$ or $\sL(U)$ has an $\I(U)$-standard basis. By Theorem \ref{StdBasis}, $\B$ is a basis for the strong $G$-topology on $X$.

(a) Pick an admissible covering $\{U_j\}$ of $X$ with each $U_j \in \B$. By passing to a finite refinement of each $U_j$, we may assume that each $U_j$ is connected. Then $\{ U_j \cap Y\}$ is an admissible affinoid covering of $Y$, so it is enough to show that $(\iota^\natural \sM)|_{U_j \cap Y}$ is a coadmissible $\sW|_{U_j \cap Y}$-module for each $j$. If $\iota_j$ denotes the inclusion $U_j \cap Y \hookrightarrow U_j$, then it follows from Theorem \ref{ShSuppClsdSub} and \cite[Theorem 9.1]{DCapOne} that 
\[ (\iota^\natural \sM)|_{U_j \cap Y} \cong \iota_j^\natural(\sM|_{U_j}).\]
We may thus assume that $X \in \B$ and that $X$ is connected; Lemma \ref{BasisB} then implies that either $Y = \emptyset$, or $Y = X$ or $\sL(X)$ has an $\I(X)$-standard basis. When $Y$ is empty, $\I(X) = \O(X)$ and the definition of $\iota^\natural \sM$ shows that $\iota^\natural \sM = 0$, which is coadmissible. When $Y = X$, the Lie algebroid $\sL_Y$ is equal to $\sL_X$, $\sW = \sU$ and $\iota^\natural \sM = \sM$ is coadmissible. Suppose that $\sL(X)$ has an $\I(X)$-standard basis. Now $(\iota^\natural \sM)(X) = \sM(X)[\I(X)]$ is a coadmissible $\sW(X)$-module by Theorem \ref{KashAffinoids}(a), and Theorem \ref{InatCoadm} implies that the natural map
\[ \iota_\ast \Loc\left(\sM(X)[\I(X)]\right) \longrightarrow \iota^\natural_\ast \sM\]
of $\iota_\ast\sW$-modules is an isomorphism. Now Theorem \ref{ShSuppClsdSub} implies that 
\[ \Loc \left(\sM(X)[\I(X)]\right) \longrightarrow \iota^\natural \sM\]
is an isomorphism, so $\iota^\natural\sM$ is a coadmissible $\sW$-module by \cite[Definition 8.3]{DCapOne}. 

(b) Let $\sN$ be a coadmissible $\sW$-module on $Y$ and let $\sM$ be a coadmissible $\sU$-module on $X$ which is supported on $Y$.  By \cite[Theorem 9.1]{DCapOne} there is a natural isomorphism
\[ \Hom_{\sU}(\iota_+ \sN, \sM) \stackrel{\cong}{\longrightarrow} \Hom_{\sU|_{\B}}( (\iota_+\sN)|_{\B}, \sM|_{\B}), \]
and by \cite[Theorem 9.1]{DCapOne} and Theorem \ref{ShSuppClsdSub} there is a natural isomorphism
\[ \Hom_{\sW}(\sN, \iota^\natural \sM) \stackrel{\cong}{\longrightarrow} \Hom_{\iota_\ast \sW|_{\B}} ( (\iota_\ast \sN)|_{\B}, (\iota^\natural_\ast\sM)|_{\B}).\]
For any morphism $\alpha :  (\iota_+\sN)|_{\B} \to \sM|_{\B}$ of $\sU|_{\B}$-modules and any $U \in \B$, let
\[ \beta(U) : \sN(U \cap Y) \to \sM(U)[\I(U)]\]
be given by $\beta(U)(x) = \alpha(U)(x \w\otimes 1)$. This is a map of $\sW(U \cap Y)$-modules. Since $\alpha$ is a morphism of $\sU|_{\B}$-modules, the diagram
\[\xymatrix{   \sN(U \cap Y) \underset{\sW(U \cap Y)}{\w\otimes} \frac{\sU(U)}{\I(U)\sU(U)} \ar[r]^(0.7){\alpha(U)}\ar[d] & \sM(U) \ar[d] \\
\sN(V \cap Y) \underset{\sW(V \cap Y)}{\w\otimes} \frac{\sU(V)}{\I(V)\sU(V)} \ar[r]_(0.7){\alpha(V)} & \sM(V) }\]
commutes, so the diagram
\[\xymatrix{  \sN(U \cap Y) \ar[r]^{\beta(U)} \ar[d] & \sM(U)[\I(U)] \ar[d] \\
\sN(V \cap Y) \ar[r]_{\beta(V)} & \sM(V) [\I(V)] }\]
also commutes because
\[ \beta(U)(x)|_{V} = \alpha(U)(x \w\otimes 1)|_{V} = \alpha(V)( (x\w\otimes 1)|_{V}) = \alpha(V)(x|_{V} \w\otimes 1) = \beta(V) (x|_{V}).\]
Thus $\beta : (\iota_\ast \sN)|_{\B} \to (\iota^\natural_\ast\sM)|_{\B}$ is a morphism of $(\iota_\ast \sW)|_{\B}$-modules, and applying Theorem \ref{KashAffinoids}(b) we obtain a bi-functorial injection
\[ \Phi(\sN,\sM) : \Hom_{\sU|_{\B}}( (\iota_+\sN)|_{\B}, \sM|_{\B}) \hookrightarrow \Hom_{\iota_\ast \sW|_{\B}} ( (\iota_\ast \sN)|_{\B}, (\iota^\natural_\ast\sM)|_{\B})\]
by setting $\Phi(\sN,\sM)(\alpha) := \beta$. Since a $\sU(U)$-linear morphism 
\[\sN(U \cap Y) \underset{\sW(U \cap Y)}{\w\otimes} \frac{\sU(U)}{\I(U)\sU(U)} \to \sM(U)\] is determined by its restriction to the image of $\sN(U\cap Y)$, $\Phi(\sN,\sM)$ is actually a bijection. Putting everything together gives an adjunction
\[\Hom_{\sU}(\iota_+ \sN, \sM) \stackrel{\cong}{\longrightarrow}  \Hom_{\sW}(\sN, \iota^\natural \sM).\]

(c) Note that the definition of $\iota_+\sN$ for a coadmissible $\sW$-module $\sN$ on $Y$ shows that $\iota_+\sN$ is supported on $Y$. By part (b) we have an adjunction $(\iota_+, \iota^\natural)$ between the categories of interest. Let $\sM$ be a coadmissible $\sU$-module supported on $Y$ and consider the local sections of the co-unit morphism 
\[ \epsilon_{\sM}(U) : (\iota_+ \iota^\natural \sM)(U) \to \sM(U)\]
for some connected $U \in \B$. If $U \cap Y$ is empty then $\sM(U) = 0$ since $\sM$ is supported on $Y$. If $U \cap Y = U$ then $(\iota_+\iota^\natural \sM)(U) = \sM(U)$ and $\epsilon_{\sM}(U)$ is the identity map. By Lemma \ref{BasisB} we can therefore assume that $\sL(U)$ has an $\I(U)$-standard basis. Then $\I(U)$ acts locally topologically nilpotently on $\sM(U)$ by Theorem \ref{SuppMinfty} because $\sM|_{U}$ is supported on $U \cap Y$. Hence $\epsilon_{\sM}(U)$ is an isomorphism by Theorem \ref{KashAffinoids}(c) for all $U \in \B$ and hence $\epsilon_{\sM}$ is an isomorphism. A similar argument shows that $\iota_+$ reflects isomorphisms. So $\iota_+$ and $\iota^\natural$ are mutually inverse equivalences of categories by Proposition \ref{AffKash}.\end{proof}

\section{Main Results}\label{Main}

\subsection{Kashiwara's Theorem for left \ts{\w{\sU(\sL)}} modules}\label{KashGenLeft}
In the setting of Theorem \ref{KashiwaraGeneral}, recall from Section \ref{LRSwitch} the invertible sheaves
\[\Omega_{\sL_X}:=\mathpzc{Hom}_{\O_X}(\bigwedge^{\rk \sL_X} \sL_X, \O_X) \quad \qmb{and} \quad\Omega_{\sL_Y}:=\mathpzc{Hom}_{\O_Y}(\bigwedge^{\rk \sL_Y} \sL_Y, \O_Y)\]
that implement the side-switching operations on $X$ and $Y$ respectively. We obtain Kashiwara's equivalence for left  $\w{\sU(\sL_X)}$-modules by combining the equivalence for right $\w{\sU(\sL_X)}$-modules given by Theorem \ref{KashiwaraGeneral} together with appropriate side-switching operations. 
\begin{thm} Let $X$ be a rigid analytic variety and let $\sL_X$ be a Lie algebroid on $X$. Let $\iota : Y \hookrightarrow X$ be the inclusion of a closed, analytic subset such that $\theta \circ \iota^\ast \rho : \iota^\ast \sL \to \N_{Y/X}$ is surjective, and such that $\N_{Y/X}$ is locally free. Then the functors $\iota_+$ and $\iota^\natural$ given by
\[\begin{array}{rcl} \iota_+ \sN &:=& \mathpzc{Hom}_{\O_X}\left(\Omega_{\sL_X} , \iota_+(\Omega_{\sL_Y} \otimes_{\O_Y} \sN)\right)\qmb{and}\\

\iota^\natural \sM &:=& \mathpzc{Hom}_{\O_Y}\left(\Omega_{\sL_Y} , \hspace{0.05cm}\iota^\natural\hspace{0.05cm}(\Omega_{\sL_X} \otimes_{\O_X} \sM)\right)\end{array}\]
are mutually inverse equivalences of abelian categories
\[\left\{ 
				\begin{array}{c} 
					co\hspace{-0.1cm}-\hspace{-0.1cm}admissible\hspace{0.1cm} left \hspace{0.1cm}\\
					\w{\sU(\sL_Y)}-\hspace{-0.1cm}modules\hspace{0.1cm}
				\end{array}
\right\} \cong \left\{
				\begin{array}{c}
				 co\hspace{-0.1cm}-\hspace{-0.1cm}admissible \hspace{0.1cm} left \\ 
				 \w{\sU(\sL_X)} \hspace{-0.1cm}-\hspace{-0.1cm}modules\hspace{0.1cm} \hspace{0.1cm} supported \hspace{0.1cm} on \hspace{0.1cm} Y
				\end{array}
\right\}.\]
\end{thm}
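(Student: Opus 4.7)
The plan is to deduce the left-module version from the right-module version (Theorem \ref{KashiwaraGeneral}) by conjugating with the side-switching equivalences established in the theorem of Subsection \ref{wULswitch}. On $X$, that theorem provides mutually inverse equivalences
\[ \Omega_{\sL_X}\otimes_{\O_X} - \quad \text{and} \quad \mathpzc{Hom}_{\O_X}(\Omega_{\sL_X}, -)\]
between co-admissible left and co-admissible right $\w{\sU(\sL_X)}$-modules, and similarly on $Y$ with $\Omega_{\sL_Y}$. In both cases the hypothesis of Theorem \ref{wULswitch} is satisfied: $\sL_X$ is locally free by assumption, and $\sL_Y$ is locally free because $\N_{Y/X}$ is, so the sequence $0\to \sL_Y\to \iota^\ast\sL_X\to \N_{Y/X}\to 0$ splits locally. (If the ranks are not constant one works on each connected component separately.)

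The first step is to check that each side-switching functor preserves the property ``supported on $Y$''. Since $\Omega_{\sL_X}$ is an invertible $\O_X$-module, for any admissible open $U\subseteq X$ the natural evaluation map yields $(\Omega_{\sL_X}\otimes_{\O_X}\sM)|_U = 0$ if and only if $\sM|_U = 0$, and likewise for $\mathpzc{Hom}_{\O_X}(\Omega_{\sL_X},-)$. Hence the side-switching equivalences on $X$ restrict to mutually inverse equivalences between co-admissible left $\w{\sU(\sL_X)}$-modules supported on $Y$ and co-admissible right $\w{\sU(\sL_X)}$-modules supported on $Y$.

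The second step is simply to compose three equivalences. Given a co-admissible left $\w{\sU(\sL_Y)}$-module $\sN$, the object $\Omega_{\sL_Y}\otimes_{\O_Y}\sN$ is a co-admissible right $\w{\sU(\sL_Y)}$-module; applying the right-module $\iota_+$ of Theorem \ref{KashiwaraGeneral} gives a co-admissible right $\w{\sU(\sL_X)}$-module supported on $Y$; and applying $\mathpzc{Hom}_{\O_X}(\Omega_{\sL_X},-)$ returns a co-admissible left $\w{\sU(\sL_X)}$-module supported on $Y$. This is the functor written as $\iota_+$ in the statement. The analogous composite for $\iota^\natural$ runs in the opposite direction through the three right-hand categories. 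Being the composite of three mutually inverse pairs of equivalences of abelian categories, the left-module $\iota_+$ and $\iota^\natural$ form a mutually inverse pair as well.

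There is no substantive obstacle: the only point that requires verification is the compatibility of support with side-switching, and this is immediate from the invertibility of $\Omega_{\sL_X}$. All of the real work — the construction of $\iota_+$ and $\iota^\natural$, the adjunction, co-admissibility of $\iota^\natural$ (Theorem \ref{InatCoadm}), and the counit and unit being isomorphisms — has already been done in the right-module case in Theorem \ref{KashiwaraGeneral}.
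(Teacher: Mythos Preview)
Your proposal is correct and follows essentially the same approach as the paper: deduce the left-module equivalence by conjugating the right-module equivalence of Theorem \ref{KashiwaraGeneral} with the side-switching equivalences of Theorem \ref{wULswitch}, after noting that the side-switching functors preserve support on $Y$. Your additional remarks---that $\sL_Y$ is locally free so Theorem \ref{wULswitch} applies on $Y$, and that the constant-rank hypothesis can be arranged by passing to connected components---fill in minor points the paper leaves implicit.
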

\begin{proof}This follows immediately from Theorems \ref{wULswitch} and \ref{KashiwaraGeneral}, once it is observed that the side-switching functors preserve the support condition: $\Omega_{\sL_X} \otimes_{\O_X} \sM$ and $\mathpzc{Hom}_{\O_X}(\Omega_{\sL_X}, \sM)$ are supported on $Y$ whenever $\sM$ is supported on $Y$.
\end{proof}
Just like in the classical case --- see, for example, \cite[Definition 1.3.3]{HTT} --- it is possible to realise the push-forward functor $\iota_+$ for left $\w{\sU(\sL_X)}$-modules as tensoring on the left with an appropriate bimodule $\w{\sU(\sL)}_{X \leftarrow Y}$, but we do not give these details here. 

Theorem \ref{Kashiwara} from the Introduction follows immediately from Theorem \ref{KashGenLeft} because if $X$ and $Y$ are both smooth then the second fundamental sequence is exact by \cite[Proposition 2.5]{BLR3}, so the normal bundle $\N_{Y/X}$ is locally free and $\theta : \iota^\ast \T \to \N_{Y/X}$ is surjective. More generally, for $X$ and $Y$ still both smooth, the conditions of the Theorem hold whenever the anchor map $\sL_X\to\T$ is surjective, such as when $\sL_X$ is an Atiyah algebra in the sense of \cite{BeilinsonSchechtman}. There are also interesting cases where the Theorem applies but the anchor map is not surjective as illustrated in the following example. 

\begin{example}[cf \cite{MOLogQ}] Suppose that $X$ is a smooth rigid analytic variety with simple normal crossings divisor $D$. There is a stratification of $X$ defined as follows: $X_0=X\backslash D$; $X_1$ is the non-singular part of $D$; and $X_{k+1}$ the non-singular part of $\overline{X_k}\backslash X_k$ for each $k\geq 1$. 

We say a smooth closed analytic subvariety $Y$ of $X$ meets $D$ \emph{transversely} if $Y$ meets each stratum $X_k$ transversely; i.e. if $T_pY+T_pX_k=T_pX$ for each $p\in Y\cap X_k$. 

The conditions of Theorem \ref{KashGenLeft} hold for the logarithmic tangent sheaf $\sL_X=\T_X(-\log D)$ whenever $Y$ meets $D$ transversely.
\end{example}

\subsection{A special class of \ts{\w{U(L)}}-modules}\label{RestToUL} From now on, all modules will be \emph{left} modules, unless explicitly stated otherwise.

\begin{lem} Let $\A$ be an admissible $\R$-algebra and let $\L$ be a coherent $(\R,\A)$-Lie algebra. Suppose that the $U(\L)$-module $\M$ is finitely generated as an $\A$-module. Then the natural map $\M \to \h{U(\L)} \otimes_{U(\L)} \M$ is an isomorphism.
\end{lem}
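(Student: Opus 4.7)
The plan is to use the $\pi$-adic completeness of $\M$ to extend the $U(\L)$-action to a $\h{U(\L)}$-action, producing an inverse to the canonical map $\phi : \M \to N := \h{U(\L)}\otimes_{U(\L)}\M$.

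First I would observe that, since $\A$ is an admissible $\R$-algebra, it is Noetherian and $\pi$-adically complete, so the finitely generated $\A$-module $\M$ is itself $\pi$-adically complete and separated. The key continuity observation is that $\pi$ is central in $U(\L)$, so $\pi^n U(\L)\cdot \M \subseteq \pi^n \M$ for every $n$. It follows that for any $u \in \h{U(\L)}$ approximated by a Cauchy sequence $u_k \in U(\L)$, the sequence $(u_k\cdot m)$ is Cauchy in $\M$ and converges to an element $u\cdot m \in \M$ which is independent of the approximating sequence. This extends the $U(\L)$-action uniquely to an associative $\h{U(\L)}$-action on $\M$, and the universal property of tensor product gives a $\h{U(\L)}$-linear map $\psi : N \to \M$, $u\otimes m \mapsto u\cdot m$, with $\psi\circ\phi = \id_\M$. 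In particular, $\phi$ is split injective.

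For the surjectivity I would exploit the canonical identification $\h{U(\L)}/\pi^n\h{U(\L)} \cong U(\L)/\pi^n U(\L)$ to conclude that $N/\pi^n N \cong \M/\pi^n\M$ via $\phi$ for every $n \geq 0$. Given $u\otimes m \in N$, writing $u = s_n + \pi^n t_n$ with $s_n \in U(\L)$ and $t_n \in \h{U(\L)}$ and using $(s_n\cdot m - u\cdot m) \in \pi^n\M$ yields
\[ u\otimes m - 1\otimes(u\cdot m) \in \pi^n N \]
for every $n$. Hence $\phi(\M) + \pi^n N = N$ for every $n$, and for any $x \in N$ one can build a Cauchy sequence $m_k \in \M$ with $x - \phi(m_k) \in \pi^k N$ whose limit $m \in \M$ satisfies $\phi(m) - x \in \bigcap_n \pi^n N = \ker\psi$.

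The main obstacle will be showing $\ker\psi = \bigcap_n \pi^n N = 0$, equivalently that $N$ is $\pi$-adically separated. I would attack this via a Nakayama-type argument: the same partial-sum trick, applied to a general relation $\sum u_i\otimes m_i \in \ker\psi$, shows directly that $\pi\cdot\ker\psi = \ker\psi$. Since $\A$ is Noetherian, $\M$ is finitely presented as an $\A$-module; combining an $\A$-presentation with the finitely many Lie relations $x_l e_i - \sum_j a^{x_l}_{ij}e_j$ arising from the action of a finite $\A$-generating set $\{x_l\}$ of $\L$ exhibits $\M$ as a finitely presented $U(\L)$-module. Applying $\h{U(\L)}\otimes_{U(\L)}-$ presents $N$ as a finitely presented $\h{U(\L)}$-module, and I would use this finite presentation together with the fact that $\pi$ lies in the Jacobson radical of the $\pi$-adically complete ring $\h{U(\L)}$ to apply a suitable version of Nakayama's lemma, forcing $\ker\psi = 0$ and completing the proof.
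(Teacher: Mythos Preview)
Your proof is correct, but it is considerably more elaborate than necessary. The paper's argument is a two-line factorisation: since $\A$ is $\pi$-adically complete and $\M$ is finitely generated over $\A$, the canonical map $\M \to \h{\M}$ is an isomorphism; this map factors as $\M \to \h{U(\L)}\otimes_{U(\L)}\M \to \h{\M}$, and the second arrow is an isomorphism because $\M$ is finitely generated over the Noetherian ring $U(\L)$ (the standard fact that $\pi$-adic completion agrees with $\h{U(\L)}\otimes_{U(\L)}-$ on finitely generated modules). Hence the middle arrow is an isomorphism.

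Your route instead constructs the inverse $\psi$ explicitly via continuity, then proves $\ker\psi=\pi\ker\psi$ by a partial-sum trick and kills it with Nakayama over the $\pi$-adically complete Noetherian ring $\h{U(\L)}$. This works, and it has the virtue of being self-contained rather than invoking completion lemmas from the literature. Two remarks: first, once you know $\ker\psi=0$ the map $\psi$ is already a bijection (it is surjective since $\psi\phi=\id$), so the separate surjectivity-of-$\phi$ paragraph is redundant; second, your explicit finite presentation of $\M$ over $U(\L)$ is unnecessary, since $U(\L)$ is Noetherian (its associated graded is a quotient of $\Sym_{\A}(\L)$, finitely generated over the Noetherian $\A$) and $\M$ is finitely generated over $\A\subset U(\L)$, so $\ker\psi$ is automatically finitely generated. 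With those simplifications your argument is perfectly sound, just longer than the factorisation the paper uses.
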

\begin{proof} The algebra $\A$ is $\pi$-adically complete, so the finitely generated $\A$-module $\M$ is also $\pi$-adically complete by \cite[\S 3.2.3(v)]{Berth}: the canonical map $\M \to \h{\M}$ is an isomorphism. But this map factorises as $\M \to \h{U(\L)} \otimes_{U(\L)} \M \to \h{\M}$ and the second map is an isomorphism by \cite[\S 3.2.3(iii)]{Berth} because $\M$ is a finitely generated module over the Noetherian ring $U(\L)$. The result follows.
\end{proof}

\begin{prop} Let $A$ be a $K$-affinoid algebra and let $L$ be a $(K,A)$-Lie algebra which admits a smooth Lie lattice. Suppose that the $U(L)$-module $M$ is finitely generated as an $A$-module. Then 
\be
\item $M$ is a coadmissible $\w{U(L)}$-module, and
\item the natural map $M \to \w{U(L)} \otimes_{U(L)} M$ is a $\w{U(L)}$-linear isomorphism.
\ee\end{prop}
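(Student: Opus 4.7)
The plan is to bootstrap from the preceding Lemma. First I would pick an affine formal model $\A$ in $A$, a smooth $\A$-Lie lattice $\L$ in $L$, and an $A$-generating set $m_1, \ldots, m_k$ of $M$, and form the finitely generated $\A$-submodule $\M_0 := \A m_1 + \cdots + \A m_k$, which satisfies $K \otimes_\R \M_0 = M$. Since $\L$ is a finitely generated $\A$-module and each product $x_j \cdot m_i$ lies in $M = K \M_0$, some power $\pi^N$ will send all these products back into $\M_0$; after replacing $\L$ by $\pi^N \L$, I may assume $\L \cdot \M_0 \subseteq \M_0$, so that $\M_0$ is a finitely generated $\A$-module and a $U(\pi^n \L)$-submodule of $M$ for every $n \geq 0$.

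To prove (a), write $U := \w{U(L)}$ and $U_n := \hK{U(\pi^n \L)}$, so that $U \cong \invlim U_n$ is the Fr\'echet--Stein presentation. Applying the preceding Lemma to $\M_0$ regarded as a $U(\pi^n \L)$-module and then inverting $\pi$, I would obtain, for each $n \geq 0$, a canonical isomorphism
\[ M \stackrel{\cong}{\longrightarrow} U_n \otimes_{U(L)} M. \]
Setting $M_n := U_n \otimes_{U(L)} M$, the required coherence isomorphisms $U_n \otimes_{U_{n+1}} M_{n+1} \cong M_n$ follow from associativity of tensor products, making $(M_n)$ a coherent sheaf for $U_\bullet$. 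Under the identifications above the transition maps $M_{n+1} \to M_n$ become identities, so the inverse limit $\invlim M_n$ is canonically $M$, equipping $M$ with a co-admissible $U$-module structure.

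For (b), I would observe that $M$ is in particular finitely generated over $U(L)$, so $U \otimes_{U(L)} M$ is a finitely generated $U$-module and therefore co-admissible by \cite[Corollary 3.4]{ST}. The action map $\alpha \colon U \otimes_{U(L)} M \to M$, $u \otimes m \mapsto u \cdot m$, is then a $U$-linear morphism between two co-admissible $U$-modules. Reducing via $U_n \otimes_U (-)$, associativity produces canonical identifications
\[ U_n \otimes_U (U \otimes_{U(L)} M) \cong U_n \otimes_{U(L)} M = M_n \cong U_n \otimes_U M, \]
under which $\alpha_n$ becomes the identity on $M_n$. Since a morphism between co-admissible $U$-modules is an isomorphism precisely when each of its reductions is, $\alpha$ is an isomorphism; the natural map $M \to U \otimes_{U(L)} M$ is then its two-sided inverse, and so is $U$-linear and an isomorphism as well.

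The main obstacle will be the fact that $U \otimes_{U(L)} M$ is an \emph{uncompleted} tensor product and is a priori much larger than $M$, so a direct verification of (b) by explicit computation seems infeasible. The argument above circumvents this by promoting both sides to the co-admissible setting and comparing them through their Fr\'echet--Stein reductions, where the preceding Lemma collapses every reduction back onto $M$ itself.
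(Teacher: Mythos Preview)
Your proof is correct. Part (a) is essentially the paper's argument: both of you build an $\A$-lattice $\M_0$ in $M$ stable under a sufficiently deep rescaling of $\L$, invoke the preceding Lemma to get $M \stackrel{\cong}{\to} U_n \otimes_{U(L)} M$, and deduce coherence. The only cosmetic difference is that you rescale $\L$ once at the start so the isomorphism holds for all $n \geq 0$, whereas the paper keeps $\L$ fixed and works with $n \geq t$.

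Part (b) is where you and the paper genuinely diverge. The paper proves the auxiliary fact that the comparison map $\eta_N : \w{U} \otimes_{U(L)} N \to \invlim U_n \otimes_{U(L)} N$ is an isomorphism for \emph{every} finitely generated $U(L)$-module $N$, by observing that both sides are right exact in $N$ (citing \cite[Theorem B]{ST}) and applying the Five Lemma from the case $N = U(L)$; it then reads off the result from a commutative triangle. You instead observe that $U \otimes_{U(L)} M$ is finitely presented over $\w{U}$ (since $U(L)$ is Noetherian) and hence co-admissible by \cite[Corollary 3.4]{ST}, so the action map lives in the category of co-admissible modules, where the equivalence with coherent sheaves \cite[Corollary 3.3]{ST} lets you test the isomorphism levelwise. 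Your route is slightly more economical for the statement at hand and avoids the Five Lemma; the paper's route yields the marginally stronger intermediate statement about $\eta_N$ for arbitrary finitely generated $N$, though this is not used elsewhere.
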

\begin{proof} (a) Let $\L$ be a smooth $\A$-Lie lattice in $L$ for some affine formal model $\A$ in $A$. Let $S$ be a finite generating set for $M$ as an $A$-module and let $X$ be a finite generating set for $\L$ as an $\A$-module. Then $\M := \A S$ generates $M$ as a $K$-vector space and $XS$ is finite, so $\pi^t XS \subset \M$ for some integer $t$. Fix $n\geq t$; then  $(\pi^n \L) \M \subset \M$ so $\M$ is a $U(\pi^n \L)$-module which is finitely generated as an $\A$-module.  Hence $\M \to \h{U(\pi^n \L)} \otimes_{U(\pi^n \L)} \M$ is an isomorphism by the Lemma. But $U(L) \cong K \otimes_\R U(\pi^n \L)$ and $M \cong K \otimes_\R \M$ so $M \to \hK{U(\pi^n \L)} \otimes_{U(L)} M$ is an isomorphism whenever $n \geq t$. In particular, by transport of structure $M$ is naturally a $U_n:= \hK{U(\pi^n \L)}$-module whenever $n \geq t$.

Now, consider the commutative diagram of $U := U(L)$-modules
\[\xymatrix{ 
M \ar[r]\ar[d] & U_n \otimes_UM \ar[d] \\
U_n \otimes_{U_{n+1}} M \ar[r] & U_n \otimes_{U_{n+1}} (U_{n+1} \otimes_U M). }\]
The horizontal arrows are isomorphisms whenever $n \geq t$ by the above, and the vertical arrow on the right is an isomorphism by the associativity of tensor product. Hence $U_n\otimes_{U_{n+1}} M \to M$ is a bijection with inverse $m \to 1 \otimes m$ whenever $n \geq t$. So $(M)$ is a coherent sheaf for $U_\bullet$ and $M = \invlim M$ is a coadmissible $\w{U}$-module.

(b) If $N$ is a finitely generated $U$-module then $(U_n \otimes_U N)$ is a coherent sheaf for $U_\bullet$, and it follows from \cite[Theorem B]{ST} that the functor $N \mapsto \invlim U_n \otimes_U N$ is right exact on finitely generated $U$-modules. There is a natural map $\eta_N : \w{U} \otimes_U N \to \invlim U_n \otimes_U N$ which is an isomorphism when $N = U$. So $\eta_N$ is always an isomorphism by the Five Lemma. Now consider the natural commutative triangle of $U$-modules:
\[\xymatrix{ 
M \ar[r] \ar[dr]_\alpha & \invlim U_n \otimes_{U} M \\
& \w{U} \otimes_{U} M.\ar[u]_{\eta_M}
}\]
The horizontal map is an isomorphism by the discussion in the proof of  part (a), and $M$ is a finitely generated $U$-module, so $\eta_M$ is an isomorphism. Hence $\alpha$ is an isomorphism. Let $\beta : \w{U} \otimes_{U}M \to M$ be the action map; then $\beta \circ \alpha = 1_M$ so $\beta$ is also an isomorphism. So $\alpha$ is $\w{U}$-linear because its inverse $\beta$ is $\w{U}$-linear.\end{proof}

\subsection{\ts{\O}-coherent coadmissible \ts{\w{\sU(\sL)}}-modules}\label{OxCoh} It turns out that all $\O$-coherent coadmissible $\w{\sU(\sL)}$-modules are algebraic in the following precise sense.

\begin{thm} Let $\sL$ be a Lie algebroid on the rigid analytic space $X$. Then the forgetful functor 
\[\left\{ 
				\begin{array}{c} 
					\O-coherent \hspace{0.1cm} \\
					co\hspace{-0.1cm}-\hspace{-0.1cm}admissible\hspace{0.1cm} \w{\sU(\sL)}-\hspace{-0.1cm}modules\hspace{0.1cm}
				\end{array}
\right\} \longrightarrow \left\{
				\begin{array}{c}
				 \O-coherent \hspace{0.1cm} \\
				 U(\sL) \hspace{-0.1cm}-\hspace{-0.1cm}modules\hspace{0.1cm} 
				\end{array}
\right\}\]
is an equivalence of categories.\end{thm}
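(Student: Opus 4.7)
The plan is to reduce to the affinoid local case and invoke Proposition \ref{RestToUL}. Both $\O$-coherence and co-admissibility may be verified on any admissible affinoid covering, and the forgetful functor is compatible with restriction, so it suffices to construct a quasi-inverse on the basis $X_w(\sL)$ for the $G$-topology of $X$ provided by \cite[Lemma 9.3]{DCapOne} and then verify compatibility under passage to smaller affinoids in $X_w(\sL)$.

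Fix $X' \in X_w(\sL)$ and write $A = \O(X')$, $L = \sL(X')$. By \cite[\S 9.5]{DCapOne}, the global-section functor is an equivalence between co-admissible $\w{\sU(\sL)}|_{X'}$-modules and co-admissible $\w{U(L)}$-modules, while Kiehl's Theorem \cite[Theorem 9.4.3/3]{BGR} identifies $\O$-coherent $\sU(\sL)$-modules on $X'$ with finitely generated $A$-modules equipped with a compatible $U(L)$-action. So the theorem on $X'$ reduces to showing that the forgetful functor from $A$-coherent co-admissible $\w{U(L)}$-modules to $A$-coherent $U(L)$-modules is an equivalence. Essential surjectivity is precisely Proposition \ref{RestToUL}(a). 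For fullness and faithfulness, given a $U(L)$-linear map $\phi : M \to N$ between $A$-coherent $U(L)$-modules, Proposition \ref{RestToUL}(b) supplies $\w{U(L)}$-linear isomorphisms $M \cong \w{U(L)} \otimes_{U(L)} M$ and $N \cong \w{U(L)} \otimes_{U(L)} N$ under which $\phi$ becomes the restriction to $1 \otimes M$ of the $\w{U(L)}$-linear map $1_{\w{U(L)}} \otimes \phi$; hence $\phi$ is automatically $\w{U(L)}$-linear.

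For the gluing step, the quasi-inverse attaches to any $\O$-coherent $U(\sL)$-module $\sM$ the co-admissible $\w{U(\sL(X'))}$-module structure on $\sM(X')$ furnished by Proposition \ref{RestToUL}(a), for each $X' \in X_w(\sL)$. Given an affinoid subdomain $X'' \subseteq X'$ in $X_w(\sL)$, I would verify that the sheaf-restriction map $\sM(X') \to \sM(X'')$ is $\w{U(\sL(X'))}$-linear, where $\sM(X'')$ is viewed as a $\w{U(\sL(X'))}$-module by restriction of scalars along the natural continuous homomorphism $\w{U(\sL(X'))} \to \w{U(\sL(X''))}$. This follows from the uniqueness built into Proposition \ref{RestToUL}(b): any extension of a given $U(L)$-action on an $A$-coherent module to a $\w{U(L)}$-action is automatically unique, being pinned down by the isomorphism $M \cong \w{U(L)} \otimes_{U(L)} M$. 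The technical heart of the theorem is Proposition \ref{RestToUL}, which is already established; the remainder is formal sheaf-theoretic bookkeeping, and the uniqueness clause in Proposition \ref{RestToUL}(b) is precisely what removes the main potential obstacle to the gluing.
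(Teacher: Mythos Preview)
Your approach and the paper's are essentially the same --- reduce to affinoids in $X_w(\sL)$ and invoke Proposition \ref{RestToUL} --- and your treatment of fullness matches the paper's commutative-diagram argument exactly. However, there is a genuine gap in your essential surjectivity step.

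You assert that ``the theorem on $X'$ reduces to showing that the forgetful functor from $A$-coherent co-admissible $\w{U(L)}$-modules to $A$-coherent $U(L)$-modules is an equivalence.'' This reduction silently assumes that $\Loc$ carries $A$-coherent co-admissible $\w{U(L)}$-modules to $\O$-coherent sheaves, i.e.\ that $\Loc(M)(X'') \cong \O(X'')\otimes_A M$ for every affinoid subdomain $X''\subset X'$. Equivalently: having put a $\w{U(L)}$-module structure on each $\sM(X')$ via Proposition \ref{RestToUL}(a), and having checked in your gluing paragraph that the restriction maps are $\w{U(\sL(X'))}$-linear, you have made $\sM$ into a $\w{\sU(\sL)}$-module, but you have \emph{not} verified that it is co-admissible. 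Co-admissibility requires $\sM|_{X'}\cong \Loc(\sM(X'))$, which is the statement
\[
\w{U(\sL(X''))}\underset{\w{U(\sL(X'))}}{\w\otimes}\sM(X')\;\cong\;\sM(X'')
\]
for affinoid subdomains $X''\subset X'$. Linearity of the restriction map alone does not give this isomorphism. The paper's proof supplies exactly this missing computation: writing $U=U(\sL(X'))$, $U'=U(\sL(X''))$, $M=\sM(X')$, $M'=\sM(X'')$, one first observes $U'\otimes_U M\cong A'\otimes_A M\cong M'$ (Five Lemma, since $U'\cong A'\otimes_A U$), then applies Proposition \ref{RestToUL}(b) on both $X'$ and $X''$, together with right exactness of $\w\otimes$, to obtain $\w{U'}\w\otimes_{\w{U}} M\cong \w{U'}\otimes_{U'}(U'\otimes_U M)\cong M'$. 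You should insert this calculation; the rest of your argument is fine.
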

\begin{proof} The forgetful functor is faithful, so it will be enough to show that is (a) essentially surjective on objects and (b) full. 

(a) Suppose that $\sM$ is an $\O$-coherent $U(\sL)$-module and that $Z \subset Y$ are affinoid subdomains of $X$ such that $\sL(Y)$ admits a smooth Lie lattice.

Let $\sU := \w{\sU(\sL)}$, $U := U(\sL(Y))$, $U' := U(\sL(Z))$, $A := \O(Y)$, $A' := \O(Z)$, $M := \sM(Y)$ and $M' := \sM(Z)$, so that $\sU(Y) = \w{U}$ and $\sU(Z) = \w{U'}$. Then $M$ is a $U$-module which is finitely generated as an $A$-module, and $M' \cong A' \otimes_A M$ because $\sM$ is $\O$-coherent.  Because $U' \cong A' \otimes_A U$, the functors $U' \otimes_U -$ and $A' \otimes_A -$ are isomorphic on finitely presented $U$-modules by the Five Lemma. Hence the natural map $U' \otimes_U M \to M'$ is an isomorphism. Now $M$ is a coadmissible $\w{U}$-module and $M'$ is a coadmissible $\w{U'}$-module by Proposition \ref{RestToUL}(a), and the maps 
\[ M \to \w{U} \otimes_{U} M \qmb{and} M' \to \w{U'} \otimes_{U'} M\]
are isomorphisms by Proposition \ref{RestToUL}(b). Hence there are isomorphisms
\[ \w{U'} \underset{\w{U}}{\w\otimes} M \cong \w{U'} \underset{\w{U}}{\w\otimes} \left(\w{U} \underset{U}{\otimes} M\right) \cong \w{U'} \underset{U}{\otimes}  M \cong  \w{U'} \underset{U'}{\otimes} \left(U' \underset{U}{\otimes} M\right) \cong M'\]
because $\w\otimes$ is right exact by \cite[Proposition 7.5(a)]{DCapOne} and because $M$ is a finitely presented $U$-module. Therefore $\sM$ is a coadmissible $\sU$-module as required.

(b) We have to show that every $U(\sL)$-linear morphism between two $\O$-coherent $U(\sL)$-modules is $\sU$-linear. This is a local problem, so we may assume that $X$ is affinoid and $L := \sL(X)$ admits a smooth Lie lattice. Now if $f : M \to N$ is a $U := U(L)$-linear map between two finitely generated $\O(X)$-modules, then there is a commutative diagram of $U$-modules
\[\xymatrix{M \ar[r]^{f}\ar[d] & N \ar[d] \\ 
\w{U}\otimes_UM \ar[r]_{1 \otimes f} & \w{U}\otimes_U N.
}\]
It follows that $f$ is $\w{U}$-linear because the vertical arrows are $\w{U}$-linear isomorphisms by Proposition \ref{RestToUL}(b) and the bottom arrow is $\w{U}$-linear.
\end{proof}

\begin{cor} With the notation of the Theorem, a coadmissible $\w{\sU(\sL)}$-submodule of an $\O$-coherent coadmissible $\w{\sU(\sL)}$-module is $\O$-coherent.
\end{cor}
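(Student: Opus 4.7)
The plan is to reduce to the affinoid case and exploit Noetherianity of the affinoid algebra together with the equivalence of Theorem \ref{OxCoh}. Since $\O$-coherence is a local property, and the affinoids $U \subseteq X$ on which $\sL(U)$ admits a smooth Lie lattice form a basis for the $G$-topology (Subsection \ref{RevLiealg}), I may assume that $X$ is affinoid, that $L := \sL(X)$ admits a smooth Lie lattice, and work with $A := \O(X)$, $M := \sM(X)$, and $N := \sN(X)$. Under these hypotheses the theorem recalled in Subsection \ref{RevCoad} exhibits $N$ as a co-admissible $\w{U(L)}$-submodule of $M$ with $\sN|_X \cong \Loc(N)$.

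The first step is to observe that $N$ is finitely generated as an $A$-module. Indeed, $M$ is finitely generated over $A$ since $\sM$ is $\O$-coherent, and the ring map $A \hookrightarrow \w{U(L)}$ makes $N$ into an $A$-submodule of $M$; Noetherianity of $A$ then yields the claim.

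The second step is to identify $\Loc(N)$ with the $\O$-coherent sheaf $\widetilde{N}$ on $X$ associated with $N$. Viewing $N$ as a $U(L)$-module by restriction of scalars along $U(L) \to \w{U(L)}$, Proposition \ref{RestToUL} reconciles the ambient $\w{U(L)}$-action on $N$ with the completion of this restricted action, and the chain of isomorphisms in the proof of Theorem \ref{OxCoh}(a) then gives, for every affinoid subdomain $Y \subseteq X$,
\[ \Loc(N)(Y) \;=\; \w{\sU(\sL)}(Y) \underset{\w{U(L)}}{\w\otimes} N \;\cong\; \O(Y) \otimes_A N \;=\; \widetilde{N}(Y). \]
Hence $\sN|_X \cong \widetilde{N}$ is $\O$-coherent. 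As the only substantive ingredients beyond Noetherianity of $A$ are Proposition \ref{RestToUL} and the argument of Theorem \ref{OxCoh}(a), I do not anticipate any real obstacle; the one point that warrants a little care is checking that the $\w{U(L)}$-action on $N$ inherited from $M$ matches the one obtained by completing its $U(L)$-action, and this is exactly what Proposition \ref{RestToUL}(b) delivers.
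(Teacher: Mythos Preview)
Your proof is correct and follows essentially the same approach as the paper: reduce to the affinoid case, use Noetherianity of $\O(X)$ to get that the global sections are finitely generated over $A$, and then invoke the machinery behind Theorem \ref{OxCoh} together with Proposition \ref{RestToUL} to identify $\Loc(N)$ with the coherent sheaf $\widetilde{N}$. The only difference is packaging: the paper produces an $\O$-coherent co-admissible module via the essential surjectivity of the forgetful functor and then uses fullness to match it with $\sN$, whereas you unpack the computation from the proof of Theorem \ref{OxCoh}(a) directly; your remark that Proposition \ref{RestToUL}(b) handles the compatibility of the inherited and completed $\w{U(L)}$-structures on $N$ is exactly the content of the paper's appeal to fullness.
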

\begin{proof} This is a local problem, so we may assume that $X$ is a $K$-affinoid variety and that $\sL(X)$ admits a smooth Lie lattice. Let $\sM$ be the coadmissible $\sU := \w{\sU(\sL)}$-submodule; then $M := \sM(X)$ is an $\O(X)$-submodule of a finitely generated $\O(X)$-module by assumption and is therefore itself finitely generated over $\O(X)$ because $\O(X)$ is Noetherian.

Consider the coherent $\O$-module $\widetilde{M}$ associated to $M$. Since $M$ is an $\sL(X)$-module and $\sL(U) \cong \O(U) \otimes_{\O(X)} \sL(X)$ for every affinoid subdomain $U$ of $X$, we see that $\widetilde{M}$ is naturally an $\O$-coherent $U(\sL)$-module. So $\widetilde{M} \cong \sN$ as $U(\sL)$-modules for some $\O$-coherent coadmissible $\sU$-module $\sN$ by the Theorem. In particular, $M = \widetilde{M}(X) \cong \sN(X)$ as $U(\sL(X))$-modules. Because the restriction functor is full, $M \cong \sN(X)$ also as $\sU(X)$-modules by the Theorem. Therefore
\[\sM \cong \Loc(M) \cong \Loc(\sN(X)) \cong \sN \]
as $\sU$-modules by \cite[Theorem 9.5]{DCapOne}. It follows that  $\sM \cong \sN \cong \widetilde{M}$ as $\O$-modules, so $\sM$ is $\O$-coherent as required.
\end{proof}
\subsection{Construction of simple \ts{\w{\sU(\sL_X)}}-modules}\label{ConstrSimple}
We finish this paper by presenting a representation-theoretic application of our Kashiwara equivalence. Given a $K$-affinoid algebra $A$, let $X = \Sp(A)$ and let $A_\fr{m} := \O_{X,x}$ be the stalk of the structure sheaf $\O_X$ at the point $x \in X$ defined by a maximal ideal $\fr{m}$ of $A$. Thus $A_{\fr{m}}$ is the direct limit the affinoid algebras $\O(Y)$ running over all affinoid subdomains $Y$ of $X$ containing $x$.

\begin{lem} Let $A$ be a $K$-affinoid algebra and let $\mathfrak{m}$ be a maximal ideal of $A$ such that $A_\mathfrak{m}$ is a regular local ring. Let $L$ be a $(K,A)$-Lie algebra which admits an $\mathfrak{m}$-standard basis. Then $L_\mathfrak{m}=A_\mathfrak{m}\otimes_A L$ is naturally a $(K,A_\mathfrak{m})$-Lie algebra and the $U(L_{\mathfrak{m}})$-module $A_\mathfrak{m}$ is simple.
\end{lem}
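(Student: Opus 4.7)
The plan is to split the argument into two parts: first establishing the Lie-Rinehart structure on $L_\mathfrak{m}$, and then proving simplicity by induction on the order of vanishing at $\mathfrak{m}$, using the $\mathfrak{m}$-standard basis to produce the requisite partial derivatives.

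The $(K, A_\mathfrak{m})$-Lie algebra structure on $L_\mathfrak{m}$ is routine: the anchor $\rho \colon L \to \Der_K(A)$ takes values in derivations that extend uniquely to $A_\mathfrak{m}$ by the quotient rule, and the bracket extends to $L_\mathfrak{m}$ via $[ax, by] = ab[x,y] + a\rho(x)(b)y - b\rho(y)(a)x$. For the simplicity statement, let $\{x_1, \ldots, x_d\}$ be the $\mathfrak{m}$-standard basis and $\{f_1, \ldots, f_r\}$ the corresponding generators of $\mathfrak{m}$. I first check that $f_1, \ldots, f_r$ form a regular system of parameters in $A_\mathfrak{m}$: if $\sum_j c_j f_j \in \mathfrak{m}^2 A_\mathfrak{m}$ with $c_j \in A_\mathfrak{m}$, applying $x_i$ and using $x_i(\mathfrak{m}^2 A_\mathfrak{m}) \subseteq \mathfrak{m} A_\mathfrak{m}$ yields $c_i \in \mathfrak{m} A_\mathfrak{m}$. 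So the images of $f_1, \ldots, f_r$ are linearly independent in $\mathfrak{m}A_\mathfrak{m}/\mathfrak{m}^2 A_\mathfrak{m}$; combined with regularity, this forces $r = \dim A_\mathfrak{m}$ and yields $\gr_\mathfrak{m} A_\mathfrak{m} \cong \kappa[\overline{f}_1, \ldots, \overline{f}_r]$ as graded $\kappa$-algebras, where $\kappa := A_\mathfrak{m}/\mathfrak{m} A_\mathfrak{m}$.

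Now given a nonzero $U(L_\mathfrak{m})$-submodule $M \subseteq A_\mathfrak{m}$, pick $0 \neq m \in M$ and let $n \geq 0$ be the unique integer with $m \in \mathfrak{m}^n A_\mathfrak{m} \setminus \mathfrak{m}^{n+1} A_\mathfrak{m}$; such $n$ exists by Krull's intersection theorem. An easy Leibniz induction shows that $x_i \cdot (\mathfrak{m}^k A_\mathfrak{m}) \subseteq \mathfrak{m}^{k-1} A_\mathfrak{m}$ for each $i \leq r$, and that the induced operator on $\gr_\mathfrak{m} A_\mathfrak{m}$ is the partial derivative $\partial/\partial \overline{f}_i$. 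The initial form $\sigma(m) \in \mathfrak{m}^n A_\mathfrak{m}/\mathfrak{m}^{n+1} A_\mathfrak{m}$ is a nonzero homogeneous polynomial of degree $n$ in the $\overline{f}_i$'s, and since $\mathrm{char}(\kappa) = 0$ one can choose a multi-index $\alpha$ of total degree $n$ with $\partial^\alpha \sigma(m)$ a nonzero element of $\kappa$. The commutators $[x_i, x_j]$ still map $\mathfrak{m}^k A_\mathfrak{m}$ to $\mathfrak{m}^{k-1} A_\mathfrak{m}$, so reordering $x_{i_n} \cdots x_{i_1}$ inside $U(L_\mathfrak{m})$ introduces only terms of strictly higher filtration degree than the leading one. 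Consequently the iterate $y := x_{i_n} \cdots x_{i_1} \cdot m \in M$ corresponding to $\alpha$ has nonzero image in $\kappa$ and so is a unit in the local ring $A_\mathfrak{m}$; since $A_\mathfrak{m} \subseteq U(L_\mathfrak{m})$ acts on $M$, this gives $1 = y^{-1} y \in M$ and hence $M = A_\mathfrak{m}$.

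The main obstacle is the final filtration bookkeeping: one must verify carefully that the non-commutativity of $U(L_\mathfrak{m})$ does not interfere with the commutative-algebra computation on the graded pieces. The characteristic-zero hypothesis on $K$ enters essentially at the point where one invokes nonvanishing of a suitable iterated partial derivative of a nonzero homogeneous polynomial over the residue field $\kappa$.
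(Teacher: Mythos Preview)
Your proof is correct and follows the same strategy as the paper: pass to $\gr_\mathfrak{m} A_\mathfrak{m} \cong \kappa[y_1,\ldots,y_r]$, where the $x_i$ act as $\partial/\partial y_i$, and use characteristic zero; the paper packages this as ``$\gr_\mathfrak{m} J$ is a differentially stable ideal of the polynomial ring, hence $0$ or everything, and the filtration is Zariskian,'' while you explicitly differentiate a chosen nonzero element down to a unit, but these are two presentations of the same argument. One caveat: in this paper $A_\mathfrak{m}$ denotes the rigid-analytic stalk $\O_{X,x}$, a direct limit over affinoid neighbourhoods rather than the algebraic localisation, so the phrase ``quotient rule'' is not quite the right mechanism for extending the anchor---the paper instead invokes functoriality on affinoid subdomains via \cite[Corollary 2.4, Lemma 2.2]{DCapOne}---but this does not affect the simplicity argument.
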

\begin{proof} We first note that for each affinoid subdomain $Y$ of $\Sp(A)$, $\O(Y)\otimes_A L$ has a canonical $(K,\O(Y))$-Lie algebra structure determined by the $(K,A)$-Lie algebra structure on $L$ by \cite[Corollary 2.4]{DCapOne}. By considering the affinoid neighbourhoods of $x$ and using \cite[Lemma 2.2]{DCapOne} we can thus deduce that $L_\mathfrak{m}$ is a $(K,A_\mathfrak{m})$-Lie algebra. 

Let $\{x_1,\ldots,x_d\}$ be the $\mathfrak{m}$-standard basis for $L$ and let $f_1,\ldots, f_r$ be the corresponding generating set for $\mathfrak{m}$. Since $A_\mathfrak{m}$ is a regular local ring by assumption, the associated graded ring $\gr_{\mathfrak{m}} A$ of $A$ with respect to the $\mathfrak{m}$-adic filtration is isomorphic to the polynomial ring $F[y_1,\ldots,y_r]$ where $F = A / \mathfrak{m}$ and $y_i = f_i + \mathfrak{m}^2$. The derivations $ \rho(x_i)$ of $A$ send $\mathfrak{m}^n$ to $\mathfrak{m}^{n-1}$ for all $n \geq 1$ and induce the $F$-linear derivations $\partial / \partial y_i$ on $\gr_\mathfrak{m} A$. Since $A / \mathfrak{m}$ is a field of characteristic zero, it is well-known that $F[y_1,\ldots, y_r]$ has no non-trivial ideals stable under all these derivations. Now if $J$ is a $U(L_{\mathfrak{m}})$-submodule of $A_\mathfrak{m}$ then $\gr_\mathfrak{m} J$ is an ideal of $\gr (A_\mathfrak{m}) \cong \gr_\mathfrak{m} A$ stable under all $\partial/\partial y_i$. So $\gr_\mathfrak{m}J$ is either zero or all of $\gr A_\mathfrak{m}$ and the result follows because the $\mathfrak{m}_{\mathfrak{m}}$-adic filtration on $A_\mathfrak{m}$ is Zariskian.
\end{proof}

\begin{prop} Let $Y$ be a smooth, connected rigid analytic variety and let $\sL_Y$ be a Lie algebroid on $Y$ with surjective anchor map $\rho_Y : \sL_Y \to \T_Y$. Then $\O_Y$ is a simple coadmissible $\w{\sU(\sL_Y)}$-module.
\end{prop}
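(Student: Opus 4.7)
The plan is to show that any nonzero co-admissible $\w{\sU(\sL_Y)}$-submodule $\sM$ of $\O_Y$ must equal $\O_Y$. First, I would invoke the corollary in Subsection \ref{OxCoh} to conclude that $\sM$ is automatically $\O_Y$-coherent, so $\sM = \I$ is a coherent ideal sheaf of $\O_Y$, and the theorem in Subsection \ref{OxCoh} shows that its local sections are stable under the natural $U(\sL_Y)$-action. The goal then reduces to showing that this coherent $U(\sL_Y)$-stable ideal $\I$ is all of $\O_Y$.

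The heart of the argument will be to verify that $\I_y \in \{0, \O_{Y,y}\}$ at every $y \in Y$. For a fixed $y$, consider the closed embedding $\iota : \{y\} \hookrightarrow Y$ defined by the corresponding maximal ideal sheaf. The normal bundle $\N_{\{y\}/Y}$ is locally free because $Y$ is smooth at $y$; for the same reason the second fundamental sequence of \cite[Proposition 2.5]{BLR3} is exact, so $\theta$ is surjective. Combined with the surjectivity of $\rho_Y$, this makes $\theta\circ\iota^\ast\rho_Y$ surjective. Theorem \ref{StdBasis} then provides a connected affinoid neighborhood $U \in \B$ of $y$ such that, setting $A = \O(U)$, $L = \sL_Y(U)$, and $\mathfrak{m}$ for the maximal ideal of $A$ corresponding to $y$, either $L$ admits an $\mathfrak{m}$-standard basis or $\mathfrak{m}^2 = \mathfrak{m}$. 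In the latter case, Lemma \ref{BasisB} together with the properness of $\mathfrak{m}$ forces $\mathfrak{m} = 0$, so $U$ is a single point and $A = A_\mathfrak{m}$ is a field. In the former case, smoothness of $Y$ at $y$ makes $A_\mathfrak{m}$ a regular local ring, and the lemma in Subsection \ref{ConstrSimple} shows that $A_\mathfrak{m}$ is simple as a $U(L_\mathfrak{m})$-module. Since $\I_y \cong \I(U)_\mathfrak{m}$ is an $L_\mathfrak{m}$-stable $A_\mathfrak{m}$-submodule of $A_\mathfrak{m}$, either case gives $\I_y \in \{0, \O_{Y,y}\}$.

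Finally, I would use connectedness of $Y$. The subsets $V := \{y \in Y : \I_y = \O_{Y,y}\}$ and $W := \{y \in Y : \I_y = 0\}$ are both admissible open in $Y$: if $\I_y = \O_{Y,y}$ then $1$ is a local section of $\I$ near $y$, so $\I$ agrees with $\O_Y$ on some neighborhood of $y$; and coherence of $\I$ ensures that if $\I_y = 0$ then $\I$ vanishes on a neighborhood of $y$. By the previous paragraph, $Y = V \sqcup W$. The assumption $\sM \neq 0$ makes $V$ nonempty, so connectedness of $Y$ forces $V = Y$, giving $\sM = \I = \O_Y$.

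The main obstacle I anticipate is confirming that the hypotheses of Theorem \ref{StdBasis} really do hold for the point inclusion $\{y\} \hookrightarrow Y$, so that local $\mathfrak{m}$-standard bases are genuinely available at every point, and then cleanly dispatching the degenerate case $\mathfrak{m}^2 = \mathfrak{m}$; the remainder of the argument is routine bookkeeping with coherent sheaves together with standard connectedness considerations.
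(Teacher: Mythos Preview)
Your proposal is correct and follows essentially the same approach as the paper: reduce to showing the submodule is a coherent $\sL_Y$-stable ideal via Corollary~\ref{OxCoh}, use Proposition~\ref{StdBasis} and the lemma in Subsection~\ref{ConstrSimple} to show the stalk at each point is $0$ or everything, and then conclude by connectedness. The only cosmetic differences are that the paper phrases the last step as showing $\Supp(\J)\cap\Supp(\O_Y/\J)=\emptyset$ (rather than partitioning $Y$ into two admissible opens), and it disposes of the degenerate case $\mathfrak{m}=\mathfrak{m}^2$ by assuming at the outset that $Y$ is not a single point; your handling of that case is equally valid.
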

\begin{proof} We suppose that $Y$ is not a single point as the statement is trivially true in this case. Since $\O_Y$ is an $\O_Y$-coherent $\sL_Y$-module, it is a coadmissible $\w{\sU(\sL_Y)}$-module by Theorem \ref{OxCoh}. Let $\J$ be a coadmissible $\w{\sU(\sL_Y)}$-submodule of $\O_Y$. Then $\J$ is a coherent ideal of $\O_Y$ by Corollary \ref{OxCoh}, and $\J$ is stable under $\rho(\sL_Y) = \T_Y$. We will show that $\Supp(\J) \cap \Supp(\O_Y/\J)$ is empty. Because $\J$ is coherent, these supports are closed analytic subspaces of $Y$, so the connectedness of $Y$ will then imply that one of them is empty and thus either $\J = 0$ or $\J = \O_Y$.

Suppose for a contradiction that $y \in  \Supp(\J) \cap \Supp(\O_Y/\J)$. Choose a connected affinoid subdomain $U$ of $Y$ containing $y$ and let $\mathfrak{m}$ be the ideal of functions in $\O(U)$ vanishing at $y$. If $\mathfrak{m} = \mathfrak{m}^2$ then $\mathfrak{m} = 0$ by Lemma \ref{BasisB} so $U = \{y\}$. The connectedness of $Y$ then forces $Y = \{y\}$ which we assumed not to be the case at the outset. So $\mathfrak{m} \neq \mathfrak{m}^2$, and after shrinking $U$ and applying Proposition \ref{StdBasis} we may assume that $\sL_Y(U)$ has an $\mathfrak{m}$-standard basis. 

Now because $y \in \Supp(\O_Y/J) \cap \Supp(\J)$ and because $Y$ is smooth, the stalk $\J_y$ of $\J$ at $y$ is a proper, non-zero ideal of the regular local ring $\O_{Y,y} = \O(U)_{\mathfrak{m}}$. Because it is $\sL_{Y,y} = \sL_Y(U)_\fr{m}$-stable by construction we obtain a contradiction after applying the Lemma.
\end{proof}

\begin{thm} Let $X$ be a smooth rigid analytic variety and let $\sL_X$ be a Lie algebroid on $X$ with surjective anchor map $\rho_X : \sL_X \to \T_X$.
\be
\item $\iota_+ \O_Y$ is a simple coadmissible $\w{\sU(\sL_X)}$-module whenever $\iota : Y \to X$ is the inclusion of a smooth, connected, closed subvariety $Y$. 
\item If $\iota' : Y\to X$ is another such inclusion and $\iota_+ \O_Y \cong \iota'_+ \O_{Y'}$ as coadmissible $\w{\sU(\sL_X)}$-modules, then $Y = Y'$.
\ee \end{thm}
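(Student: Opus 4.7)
The plan is to deduce both parts from Kashiwara's equivalence (Theorem \ref{KashGenLeft}) together with the preceding Proposition on the simplicity of $\O_Y$. First I will verify that the hypotheses of Theorem \ref{KashGenLeft} hold in the present setting: since $X$ and $Y$ are smooth, the second fundamental sequence is exact, so $\N_{Y/X}$ is locally free and $\theta : \iota^\ast \T_X \to \N_{Y/X}$ is surjective; composing with the surjection $\iota^\ast \rho_X$ then yields surjectivity of $\theta \circ \iota^\ast \rho_X$. The resulting short exact sequence $0 \to \sL_Y \to \iota^\ast \sL_X \to \N_{Y/X} \to 0$ has locally free outer terms, so $\sL_Y$ is locally free and hence a Lie algebroid on $Y$. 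A small diagram chase in the square of Subsection \ref{LXsmooth}, exploiting the surjectivity of $\iota^\ast \rho_X$, shows moreover that $\rho_Y : \sL_Y \to \T_Y$ is surjective.

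For part (a), the preceding Proposition then gives that $\O_Y$ is simple as a co-admissible $\w{\sU(\sL_Y)}$-module, and Kashiwara's equivalence transports this to simplicity of $\iota_+\O_Y$ in the category of co-admissible $\w{\sU(\sL_X)}$-modules supported on $Y$. To upgrade this to simplicity in the full category of co-admissible $\w{\sU(\sL_X)}$-modules, I will observe that any co-admissible submodule of $\iota_+\O_Y$ is itself supported on $Y$, hence already lies in the image of $\iota_+$, and so must be zero or all of $\iota_+\O_Y$.

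For part (b), I will argue by contradiction. Given an isomorphism $\iota_+\O_Y \cong \iota'_+\O_{Y'}$, applying $\iota^\natural$ yields $\O_Y \cong \iota^\natural(\iota'_+\O_{Y'})$ as co-admissible $\w{\sU(\sL_Y)}$-modules. Suppose $Y \not\subseteq Y'$ and pick $y \in Y \setminus Y'$. Since $Y'$ is closed, $y$ admits an admissible affinoid neighbourhood $U$ in $X$ that is disjoint from $Y'$; the defining formula for $\iota'_+$ in Subsection \ref{SheafyInat}, together with the fact that $\O_{Y'}(V \cap Y') = 0$ for every affinoid $V \subseteq U$, shows that $(\iota'_+\O_{Y'})|_U = 0$. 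Consequently $(\iota^\natural_\ast(\iota'_+\O_{Y'}))(U) = 0$, and applying $\iota^{-1}$ forces the stalk of $\iota^\natural(\iota'_+\O_{Y'})$ at $y$ to vanish, contradicting $\O_{Y,y} \neq 0$. By symmetry $Y' \subseteq Y$, so $Y = Y'$.

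The main obstacle I expect is in part (b), specifically ensuring that the stalk of $\iota^\natural(\iota'_+\O_{Y'})$ at $y \in Y \setminus Y'$ really does vanish. This requires care with the identification $\iota_\ast \iota^\natural \cong \iota^\natural_\ast$ established via Theorem \ref{ShSuppClsdSub}, and with the behaviour of co-admissible sheaves supported on a closed subvariety when restricted to open subvarieties disjoint from it.
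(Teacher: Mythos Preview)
Your proof is correct. Part (a) is essentially identical to the paper's argument: verify the hypotheses of Theorem \ref{KashGenLeft}, deduce that $\rho_Y$ is surjective, apply the Proposition to get simplicity of $\O_Y$, and transfer via the equivalence; your explicit remark that every co-admissible submodule of $\iota_+\O_Y$ is automatically supported on $Y$ is exactly the step the paper leaves implicit when it asserts the bijection of submodules.

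For part (b) you take a genuinely different route. The paper proves the stronger statement $\Supp(\iota_+\O_Y)=Y$: it shows, using faithful c-flatness of $\sU(U)$ over $\sV(U)$ (Corollary \ref{Lieflat} and \cite[Proposition 7.5(c)]{DCapOne}), that for $U\in\B$ one has $(\iota_+\O_Y)(U)=0$ if and only if $U\cap Y=\emptyset$, and then reads off $Y=Y'$ from equality of supports. Your argument instead only uses the ``easy'' inclusion $\Supp(\iota'_+\O_{Y'})\subseteq Y'$, pulls the isomorphism back through $\iota^\natural$ to obtain $\O_Y\cong\iota^\natural(\iota'_+\O_{Y'})$, and observes that the right-hand side has zero stalk at any $y\in Y\setminus Y'$ while $\O_{Y,y}\neq 0$; symmetry then finishes. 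This is a perfectly valid alternative that trades the faithful-flatness input for the unit isomorphism of the Kashiwara adjunction (which you have already invoked in part (a)). The paper's approach yields the slightly sharper support computation as a byproduct; yours is marginally more elementary. One small correction: the construction of $\iota'_+$ is not in Subsection \ref{SheafyInat} (that is $\iota^\natural$) but in the preceding subsection on the pushforward; the vanishing you need follows directly from $\I'(V)=\O(V)$ when $V\cap Y'=\emptyset$, so that $\sU(V)/\I'(V)\sU(V)=0$.
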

\begin{proof} (a) Since $\rho_X : \sL_X \to \T_X$ is surjective, so is $\iota^\ast \rho_X : \iota^\ast \sL_X \to \iota^\ast \T_X$. Because $Y$ is smooth, the normal sheaf $\N_{Y/X}$ is locally free and $\theta : \iota^\ast \T_X \to \N_{Y/X}$ is surjective by \cite[Proposition 2.5]{BLR3}. Thus the hypotheses of Theorem \ref{KashGenLeft} are satisfied, so by the equivalence of categories the coadmissible $\w{\sU(\sL_X)}$-submodules of $\iota_+ \O_Y$ are in bijective correspondence with the coadmissible $\w{\sU(\sL_Y)}$-submodules of $\O_Y$. But $\rho_Y : \sL_Y \to \T_Y$ is surjective because $\iota^\ast \rho_X : \iota^\ast \sL_X \to \iota^\ast \T_X$ is surjective, so that $\O_Y$ is a simple coadmissible $\w{\sU(\sL_Y)}$-module by the Proposition. 

(b) It will be enough to show that $\Supp (\iota_+ \O_Y) = Y$. We recall the basis $\B$ from the proof of Theorem \ref{KashiwaraGeneral}. Now if $\sM$ is a coadmissible $\w{\sU(\sL_X)}$-module then for any $U \in \B$, $\sM(U) = 0$ if and only if $\sM|_U = 0$. Hence $\Supp \sM$ is the complement of the union of all $U \in \B$ such that $\sM(U) = 0$. The definition of $\iota_+\O_Y$, Corollary \ref{Lieflat} and \cite[Proposition 7.5(c)]{DCapOne} now show if $U \in \B$ then $(\iota_+\O_Y)(U) = 0$ if and only if $U \cap Y = \emptyset$. Thus
\[\Supp (\iota_+ \O_Y) = X \backslash \bigcup \{ U \in \B : U \cap Y = \emptyset \} = Y\]
because $X \backslash Y$ is an admissible open subset of $X$. 
\end{proof}

\appendix
\section{Pullback and pushforward of abelian sheaves}
\subsection{Closed embeddings}\label{ShSuppClsdSub}

Suppose that $\iota\colon Y\to X$ is a closed embedding of rigid $K$-analytic spaces defined by the radical, coherent $\O_X$-ideal $\I$. We will frequently identify $Y$ with its image in $X$. By \cite[Definition 9.3.1/4]{BGR} and \cite[\href{http://stacks.math.columbia.edu/tag/00X6}{Proposition 00X6}]{stacks-project}, there is a morphism of sites $\iota : Y_{\rig} \to X_{\rig}$ given by the continuous functor which sends an admissible open subset $U$ of $X$ to $\iota^{-1}(U) = U \cap Y$. It induces a pair of adjoint functors between categories of abelian sheaves
\[ \iota^{-1} : \Ab(X_{\rig}) \to \Ab(Y_{\rig}) \qmb{and} \iota_\ast : \Ab(Y_{\rig}) \to \Ab(X_{\rig})\]
by \cite[\href{http://stacks.math.columbia.edu/tag/00WX}{Lemma 00WX}]{stacks-project}. We recall the explicit definitions of these functors, following the discussion in \cite[\href{http://stacks.math.columbia.edu/tag/00VC}{Section 00VC}]{stacks-project}. Let $\F \in \Ab(Y_{\rig})$ and $\G \in \Ab(X_{\rig})$, and let admissible open $V\subset Y$ and $U \subset X$ be given. Then
\[ (\iota^{-1}\G)(V) = \dirlim \G(W) \qmb{and} \iota_\ast\F(U) = \F(U \cap Y) \]
where the direct limit is taken over all admissible open $W \subset X$ such that $V \subset W \cap Y$. 

We define the \emph{support} of the abelian sheaf $\G$ on $X_{\rig}$ as follows:
\[ \Supp \G := X \backslash \bigcup \{ U \subseteq X : U \mbox{ is an admissible open and } \G|_U = 0\}\]
and we say that \emph{$\G$ is supported on $Y$} if $\Supp \G \subseteq Y$, or equivalently, if $\G|_U= 0$ for every admissible open $U \subseteq X$ such that $U \cap Y = \emptyset$. 
\begin{thm} Let $\iota : Y \to X$ be a closed embedding of rigid $K$-analytic spaces. Then the functor $\iota_\ast$ induces an equivalence of categories between $\Ab(Y_{\rig})$ and the full subcategory $\Ab(X^Y_{\rig})$ consisting of sheaves supported on $Y$.
\end{thm}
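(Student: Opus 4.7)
The plan is to invoke the adjunction $\iota^{-1} \dashv \iota_\ast$ and check in turn that: (a) the image of $\iota_\ast$ lands in $\Ab(X^Y_{\rig})$; (b) the counit $\iota^{-1}\iota_\ast \to \id_{\Ab(Y_{\rig})}$ is a natural isomorphism (so $\iota_\ast$ is fully faithful); (c) for every $\G \in \Ab(X^Y_{\rig})$, the unit $\G \to \iota_\ast\iota^{-1}\G$ is an isomorphism (giving essential surjectivity of $\iota_\ast$ onto $\Ab(X^Y_{\rig})$). Together these three facts yield the claimed equivalence.

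Fact (a) is immediate: if $U' \subseteq U$ are admissible open with $U \cap Y = \emptyset$ then $(\iota_\ast\F)(U') = \F(U' \cap Y) = \F(\emptyset) = 0$. For (b), given an admissible open $V$ of $Y$, the counit is the natural map
\[ \dirlim_{W : V \subseteq W \cap Y} \F(W \cap Y) \longrightarrow \F(V) \]
induced by restriction. Since the $G$-topology on the closed analytic subspace $Y$ is induced from that of $X$, every admissible open $V$ of $Y$ is of the form $W_0 \cap Y$ for some admissible open $W_0$ of $X$; intersecting each index $W$ with $W_0$ yields $(W \cap W_0) \cap Y = V$, so every restriction $\F(W \cap Y) \to \F(V)$ factors through $\F(V)$ itself, and the direct limit collapses to $\F(V)$.

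The substance of the argument is (c). Fix $\G \in \Ab(X_{\rig}^Y)$ and an admissible open $U$ of $X$; the unit at $U$ is the restriction map
\[ \G(U) \longrightarrow (\iota_\ast\iota^{-1}\G)(U) = \dirlim_{W : U \cap Y \subseteq W \cap Y} \G(W). \]
By cofinality we may restrict the index set to those admissible opens $W \subseteq U$ with $W \supseteq U \cap Y$. Since $Y$ is a closed analytic subset, $U \setminus Y$ is admissible open in $U$, and $\{W, U \setminus Y\}$ forms an admissible covering of $U$ for each such $W$. Applying the sheaf axiom for $\G$ yields an exact sequence
\[ 0 \longrightarrow \G(U) \longrightarrow \G(W) \oplus \G(U \setminus Y) \longrightarrow \G(W \setminus Y), \]
and the support hypothesis $\G|_{X \setminus Y} = 0$ forces $\G(U \setminus Y) = \G(W \setminus Y) = 0$. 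Hence the restriction $\G(U) \to \G(W)$ is an isomorphism for every $W$ in the directed system, so the direct limit collapses to $\G(U)$ and the unit is an isomorphism.

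The principal technical obstacle is verifying that $\{W, U \setminus Y\}$ is an admissible covering of $U$. This reduces to the affinoid case: writing $U = \Sp A$ with $\I(U)$ generated by $f_1, \ldots, f_r$, the tube neighborhoods $T_n = U\langle f_1/\pi^n, \ldots, f_r/\pi^n \rangle$ form a descending family of affinoid subdomains with $\bigcap_n T_n = V(\I(U)) \subseteq W$. A compactness argument shows $T_n \subseteq W$ for $n$ sufficiently large, and then $\{T_n\} \cup \{U\langle \pi^n / f_i \rangle\}_{i=1}^r$ provides a finite affinoid refinement of $\{W, U \setminus Y\}$, as is standard in the theory of rigid $G$-topologies.
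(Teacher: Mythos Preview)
Your overall strategy matches the paper's: use the adjunction $(\iota^{-1},\iota_\ast)$ and verify that both the unit (on sheaves supported on $Y$) and the counit are isomorphisms. Part (c) is essentially the paper's argument, reorganised: the ``compactness argument'' you invoke for $T_n \subseteq W$ is exactly the paper's Proposition \ref{TubNhd} (attributed to Kisin and Conrad), and your refinement $\{T_n\}\cup\{U\langle\pi^n/f_i\rangle\}$ is a coarsening of the rational covering generated by $f_1,\ldots,f_r,\pi^n$ that the paper uses directly to show $\G(U)\cong\G(T_n)$.

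The genuine gap is in part (b). You assert that ``the $G$-topology on the closed analytic subspace $Y$ is induced from that of $X$, so every admissible open $V$ of $Y$ is of the form $W_0\cap Y$''. For ordinary topological spaces this is the definition of the subspace topology, but for rigid $G$-topologies it is a nontrivial statement that requires proof; indeed the paper's authors remark that they ``were unable to locate a complete proof in the literature'' and that the argument ``involves some non-trivial ideas''. The paper does \emph{not} prove your general lifting claim. Instead it observes that, since the counit is a morphism of sheaves, it suffices to check the isomorphism on a basis for $Y_{\rig}$; it then restricts to rational subdomains $V$ of $Y$ and proves a lifting lemma (Lemma \ref{TubNhd}) only in that case, by factoring $V$ through a chain of domains $V_k(g_k)$ or $V_k(1/g_k)$ and lifting each $g_k$ along the surjection $\O(X)\to\O(Y)$. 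Your direct-limit collapse argument is fine once one has $V=W_0\cap Y$, but as written you have assumed the hardest part of the proof.

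The fix is straightforward: replace your global assertion with the reduction to rational subdomains and invoke (or reproduce) the explicit lifting of rational subdomains.
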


The corresponding result for ordinary topological spaces is completely standard, and Theorem \ref{ShSuppClsdSub} is presumably well-known, but we were unable to locate a complete proof in the literature. Since our proof involves some non-trivial ideas, we have decided to give the details here for the convenience of the reader. 

\subsection{Two useful results on affinoid varieties}\label{TubNhd}

Suppose now that $X$ is affinoid, so that $Y$ is a closed analytic subset defined by the ideal $\I(X)$ of $\O(X)$. Let $f_1,\ldots,f_r$ generate $\I(X)$ as an ideal. For every $n\geq 0$, we call 
\[ Y_n := X\left( \frac{f_1}{\pi^n}, \ldots, \frac{f_r}{\pi^n} \right)\]
of $X$ a \emph{tubular neighbourhood} of $Y$. Clearly $Y_n$ is an affinoid subdomain of $X$ containing $Y$.

\begin{prop} Let $Y$ be a closed analytic subset of the $K$-affinoid variety $X$. Every admissible open subset $U$ of $X$ containing $Y$ also contains $Y_n$ for some $n \geq 0$.
\end{prop}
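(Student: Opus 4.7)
The plan is first to use admissibility of $U$ together with the quasi-compactness of $Y$ to reduce to the case where $U$ is a finite union of affinoid subdomains of $X$, and then to deduce the resulting statement from an elementary property of open neighbourhoods of closed subschemes on the special fibre of a formal model.

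First I would exploit that $U$ is admissible open: by definition it has an admissible covering $\{V_j\}_{j \in J}$ by affinoid subdomains of $X$. Applying the admissibility condition to the closed embedding $Y \hookrightarrow X$, which factors through $U$, the pulled-back family $\{V_j \cap Y\}$ is an admissible covering of the affinoid $Y$ and therefore admits a finite refinement. Hence there are finitely many $V_{j_1}, \ldots, V_{j_k}$ whose union $W$ satisfies $Y \subseteq W \subseteq U$, and it is enough to show that $Y_n \subseteq W$ for some $n$. By the Gerritzen--Grauert theorem, I may further assume that each $V_{j_i}$ is a rational subdomain of $X$.

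The second step would be a formal-model argument. After rescaling the $f_i$ and the defining functions of the $V_{j_i}$ by appropriate powers of $\pi$, I would choose an affine formal model $\mathcal{A} \subset \O(X)$ containing all of them and form an admissible formal blow-up $\mathfrak{X}' \to \Spf\mathcal{A}$ on which the rational subdomains $V_{j_i}$ are the rigid generic fibres of open formal subschemes $\mathfrak{V}_{j_i}$, with union $\mathfrak{W}$. By Raynaud's correspondence between admissible opens of the generic fibre and admissible formal blow-ups, each tubular neighbourhood $Y_n$ likewise arises as the generic fibre of an explicit open formal subscheme $\mathfrak{Y}_n \subset \mathfrak{X}'$ extracted from the ideal $(\mathcal{I},\pi^n)$ where $\mathcal{I} = (f_1,\ldots,f_r)\mathcal{O}_{\mathfrak{X}'}$. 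The inclusion $Y \subseteq W$ then specializes to an inclusion $\mathfrak{Y}_s \subseteq \mathfrak{W}_s$ of a closed subscheme inside a Zariski-open subscheme of the Noetherian special fibre $\mathfrak{X}'_s$. The reductions $(\mathfrak{Y}_n)_s$ form a decreasing chain of closed neighbourhoods of $\mathfrak{Y}_s$ in $\mathfrak{X}'_s$, so the elementary fact that in a Noetherian scheme any Zariski open containing a closed subscheme contains all sufficiently small thickenings of that subscheme yields $(\mathfrak{Y}_n)_s \subseteq \mathfrak{W}_s$ for $n$ large. Lifting back to generic fibres gives $Y_n \subseteq W \subseteq U$ as required.

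The main technical obstacle will be the bookkeeping in the second step: producing a single formal model $\mathfrak{X}'$ on which both the finite union $W$ and all of the Weierstrass subdomains $Y_n$ are simultaneously visible in a form compatible with the specialization argument. A more topological alternative, which I would keep in reserve, is to analytify everything in the sense of Berkovich, observe that $Y^{\mathrm{an}}$ is compact, $W^{\mathrm{an}}$ is open, and the $Y_n^{\mathrm{an}}$ form a fundamental system of compact neighbourhoods of $Y^{\mathrm{an}}$ inside $X^{\mathrm{an}}$, and then conclude from the elementary topology of compact Hausdorff spaces.
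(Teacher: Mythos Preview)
The paper does not actually prove this Proposition: it simply cites \cite[p.~52]{ConradSurvey} and \cite[Lemma~2.3]{Kisin99}, both of which record this result as a standard fact about tubular neighbourhoods in rigid geometry. So there is no ``paper's own proof'' to compare against, only the arguments in those references.

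Your first reduction step is exactly the standard one and matches what is done in those references: use admissibility of $U$ and quasi-compactness of the affinoid $Y$ to replace $U$ by a finite union $W$ of rational subdomains. Your second step, via a formal model and specialization to the special fibre, would eventually work, but it is heavier machinery than the problem requires, and (as you correctly flag) arranging a single blow-up on which both the $V_{j_i}$ and all the $Y_n$ are simultaneously visible as open formal subschemes is genuinely fiddly. The arguments in the cited references instead proceed directly via the maximum modulus principle: once $W = V_{j_1}\cup\cdots\cup V_{j_k}$, the set-theoretic complement $X\setminus W$ is covered by finitely many affinoid (Laurent) pieces, on each of which the ideal $(f_1,\ldots,f_r)$ is the unit ideal since that piece misses $Y$; hence $\max_i |f_i(x)|$ is bounded below by some $|\pi|^n$ on each piece, and taking the worst $n$ gives $Y_n\subseteq W$. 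This avoids formal models entirely. Your Berkovich backup is also perfectly valid and is essentially the same maximum-modulus argument rephrased topologically.
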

\begin{proof} See \cite[p.52]{ConradSurvey} or \cite[Lemma 2.3]{Kisin99}.\end{proof}

The other ingredient in our proof of Theorem \ref{ShSuppClsdSub} is the following

\begin{lem} Let $Y$ be a closed analytic subset of the $K$-affinoid variety $X$, and let $V$ be a rational subdomain of $Y$. Then there is a rational subdomain $U$ of $X$ such that $V = U \cap Y$.
\end{lem}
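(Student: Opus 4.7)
The plan is to lift the rational subdomain data defining $V$ from $\O(Y)$ to $\O(X)$, and then correct for the fact that these lifts need not generate the unit ideal in $\O(X)$ by including generators of the ideal of $Y$ among the defining inequalities; these extra conditions will be vacuous on $Y$ itself.

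More concretely, since $Y$ is a closed analytic subset of the affinoid $X$, it is itself affinoid with $\O(Y) = \O(X)/I$ where $I := \I(X)$. Write $V = Y(g_1/g_0, \ldots, g_s/g_0)$ for some $g_0, g_1, \ldots, g_s \in \O(Y)$ generating the unit ideal, and choose arbitrary lifts $\tilde{g}_i \in \O(X)$ of $g_i$. Let $f_1, \ldots, f_r$ be a finite generating set for $I$. The plan is to consider
\[ U := X\!\left(\frac{\tilde{g}_1}{\tilde{g}_0}, \ldots, \frac{\tilde{g}_s}{\tilde{g}_0}, \frac{f_1}{\tilde{g}_0}, \ldots, \frac{f_r}{\tilde{g}_0}\right). \]

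The two things to verify are that $U$ is a genuine rational subdomain of $X$, and that $U \cap Y = V$. For the first, one needs $\tilde{g}_0, \tilde{g}_1, \ldots, \tilde{g}_s, f_1, \ldots, f_r$ to generate the unit ideal in $\O(X)$; but $g_0, \ldots, g_s$ generate the unit ideal in $\O(X)/I$ by hypothesis, so some $\O(X)$-linear combination of the $\tilde{g}_i$ differs from $1$ by an element of $I$, hence by an $\O(X)$-linear combination of the $f_j$. For the second, a point $y \in Y$ lies in $U$ exactly when $|\tilde{g}_i(y)| \leq |\tilde{g}_0(y)|$ for $1 \leq i \leq s$ and $|f_j(y)| \leq |\tilde{g}_0(y)|$ for $1 \leq j \leq r$; since each $f_j$ lies in $I$ and therefore vanishes identically on $Y$, the latter inequalities are automatic, while the former reduce to $|g_i(y)| \leq |g_0(y)|$, which is exactly the defining condition for $V$.

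There is no real obstacle here: the only subtle point is the book-keeping in the first verification, namely that throwing the ideal generators $f_j$ into the list of numerators salvages the unit-ideal condition while being harmless on $Y$.
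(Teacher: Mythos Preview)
Your proof is correct and takes a genuinely different route from the paper's. The paper invokes \cite[Proposition 7.2.4/1]{BGR} to factor $V$ into a chain of Laurent domains $V_m \to V_{m-1} \to \cdots \to V_1 = Y$, each step being $V_{k+1} = V_k(g_k)$ or $V_{k+1} = V_k(1/g_k)$, and then lifts one function at a time by induction. Your argument is more direct: you lift all the defining data of the rational subdomain at once and repair the unit-ideal condition by throwing the generators $f_j$ of $I$ into the list of numerators. This avoids the decomposition theorem entirely and works in one shot; the paper's approach has the minor advantage that at each inductive step only a single Laurent domain is involved, so the verification that $U \cap Y = V$ is a one-line Banach-algebra isomorphism rather than a pointwise check. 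Your pointwise verification is fine (two affinoid subdomains of $Y$ with the same underlying set coincide), though if you wanted to match the paper's algebraic style you could observe that $g_0$ becomes a unit in $\O(V)$, so the extra relations $g_0 S_j = 0$ coming from the $f_j$ force $S_j = 0$ and the algebras agree.
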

\begin{proof} Note that $Y$ is itself affinoid by \cite[Proposition 9.4.4/1]{BGR}, and that $\O(Y) \cong \O(X) / \I(X)$.  By \cite[Proposition 7.2.4/1]{BGR}, there is a factorisation \[V= V_{m}\to V_{m-1}\to \cdots\to V_1 = Y\] such that $V_{k+1}=V_k(g_k)$ or $V_{k+1}=V_k(1/g_k)$ for some $g_k\in \O(V_k)$. By induction on $m$ we can therefore assume that $V = Y(g)$ or $V = Y(1/g)$ for some $g \in Y$. Suppose first that $V = Y(g)$. Since the map $\O(X) \to \O(Y)$ is surjective, we can find some preimage $h \in \O(X)$ of $g \in \O(Y)$ and define $U := X(h)$. Now applying the $\Sp$ functor to the natural isomorphism
\[  \frac{\O(X)\langle t\rangle}{\langle t - h\rangle} \underset{\O(X)}{\otimes} \frac{\O(X)}{\I(X)} \stackrel{\cong}{\longrightarrow} \frac{ \O(Y) \langle t \rangle }{ \langle t - g \rangle } \]
of $K$-Banach algebras shows that 
\[ U \cap Y = X(h) \cap Y = Y(g) =  V\]
as required. The case where $V = Y(1/g)$ is entirely similar.
\end{proof}

\subsection{Proof of Theorem \ref{ShSuppClsdSub}}
Let $\F$ be an abelian sheaf on $Y$. Then $\iota_\ast \F$ is supported on $Y$. We will show that the counit morphism 
\[\epsilon_\F : \iota^{-1} \iota_\ast \F \to \F\]
is an isomorphism. By \cite[Lemma A.1]{DCapOne} we can assume that $X$ is affinoid. Let $Y'\subset Y$ be an admissible open subset, and choose an admissible covering of $Y'$ consisting of rational subdomains in $Y$. By appealing to \cite[Lemma A.1]{DCapOne} again, it is enough to show that 
\[ (\iota^{-1}\iota_\ast\F)(V) \to \F(V)\]
is an isomorphism for every rational subdomain $V$ of $Y$. Now $(\iota^{-1}\iota_\ast\F)(V)$ is the direct limit of the $\F(W \cap Y)$ where $W$ ranges over all open subdomains of $X$ such that $V \subset W \cap Y$. By Lemma \ref{TubNhd}, we can find a rational subdomain $U$ of $X$ such that $V = U \cap Y$, and the result follows.

Now let $\G$ be an abelian sheaf on $X$ which is supported on $Y$; we will show that the unit morphism $\eta_\G : \G \to \iota_\ast \iota^{-1} \G$ is an isomorphism. By \cite[Lemma A.1]{DCapOne}, it is enough to show that $\eta_\G(X)$ is an isomorphism whenever $X$ is affinoid. Let $f_1,\ldots,f_r$ generate $\I(X)$. By Proposition \ref{TubNhd}, there is an isomorphism
\[ \dirlim \G(Y_n) \stackrel{\cong}{\longrightarrow} (\iota^{-1}\G)(Y) = \iota_\ast \iota^{-1}\G(X) \]
so it will be sufficient to show that the restriction morphism $\G(X) \to \G(Y_n)$ is an isomorphism for any $n\geq 0$. Let $f_{r+1} = \pi^n$. Since the elements $f_1,\ldots,f_r,f_{r+1} \in \O(X)$ have no common zero on $X$, we may consider the rational covering $\U$ of $X$ generated by these elements in the sense of \cite[\S 8.2.2]{BGR}:
\[ \U = \{U_1,\ldots,U_{r+1}\} \qmb{where} U_i = X\left(\frac{f_1}{f_i},\ldots, \frac{f_{r+1}}{f_i}\right).\]
Thus $U_{r+1} = Y_n$ and $U_i \cap Y = \emptyset$ for all $i=1,\ldots,r$. Since $\G$ is supported on $Y$ by assumption, we see that $\G(U_i) = 0$ for all $i \leq r$, and now the sheaf condition satisfied by $\G$ shows that the restriction map $\G(X) \to \G(U_{r+1}) = \G(Y_n)$ is an isomorphism. \qed
\bibliography{references}
\bibliographystyle{plain}
\end{document}